\documentclass[preprint,12pt]{elsarticle}

\usepackage{amssymb,amsmath,color,comment,tikz,tikz-3dplot,pgfplots,mathtools}
\usetikzlibrary{3d,shadings}
\usepgfplotslibrary{fillbetween}

\newtheorem{ntheorem}{\bf Theorem}[section]
\newtheorem{theorem}[ntheorem]{\bf Theorem}
\newtheorem{lemma}[ntheorem]{\bf Lemma}

\newtheorem{corollary}[ntheorem]{\bf Corollary}

\newtheorem{proposition}[ntheorem]{\bf Proposition}
\newtheorem{assumption}[ntheorem]{\bf Assumption}
\newtheorem{example}[ntheorem]{\bf Example}

\newcommand{\eoe}
           {\hspace*{\fill}{$\vcenter{\hrule height1pt 
                     \hbox{\vrule width1pt height3pt 
            \kern3pt \vrule width1pt} \hrule height1pt}$} }

\newcommand{\eop}
           {\hspace*{\fill}{$\vcenter{\hrule height1pt 
                     \hbox{\vrule width1pt height5pt 
            \kern5pt \vrule width1pt} \hrule height1pt}$} }

\DeclareFontFamily{U}{mathx}{\hyphenchar\font45}
\DeclareFontShape{U}{mathx}{m}{n}{
  <5> <6> <7> <8> <9> <10>
  <10.95> <12> <14.4> <17.28> <20.74> <24.88>
  mathx10
}{}
\DeclareSymbolFont{mathx}{U}{mathx}{m}{n}
\DeclareMathAccent{\widecheck}{0}{mathx}{"71}

\renewcommand{\Re}{\mathbb{R}}
\newcommand{\ZZ}{\mathbb{Z}}
\newcommand{\dom}{\mbox{\rm dom }}
\newcommand{\Ball}{\mathbb{B}}

\newcommand{\R}{\mathcal{R}}

\usepackage{hyperref} 
\definecolor{mygreen}{RGB}{0,128,0}

\newcommand{\ooomega}{\mbox{\Large{$\boldsymbol\omega$}}}

\renewcommand{\L}{\mathcal{L}}
\renewcommand{\S}{\mathcal{S}}

\newcommand{\nats}{{\mathbb N}}
\newcommand{\reals}{{\mathbb R}}
\newcommand{\realsplus}{\reals_{\geq 0}}
\newcommand{\gph}{\mathop{\rm gph}\nolimits}

\newcommand{\sol}{\phi}

\newcommand{\length}{\mathop{\rm length}}
\newcommand{\wisol}{\widetilde{\sol}}

\newcommand{\sola}{\psi}
\newcommand{\Sol}{{\mathcal S}^{\mathcal H}}
\newcommand{\Sola}{\Psi}

\newcommand{\soldot}{\dot{\sol}}
\newcommand{\Suzie}{\Theta}
\newcommand{\SuzieS}{\left. \Suzie \right|_{S}}

\newcommand{\pcTsi}{\sola_{\chi,T}^{(s,i)}}
\newcommand{\pcTsidot}{\dot{\sola}_{\chi,T}^{(s,i)}}

\newcommand{\Omjk}{\Omega_{\overline{\jmath}_k\leq j}}
\newcommand{\Omkj}{\Omega_{\overline{k}_j\leq k}}
\newcommand{\Omjkinf}{\Omega_{\overline{\jmath}_k<\infty}}
\newcommand{\Omjke}{\Omega_{\overline{\jmath}_k = j}}
\newcommand{\Omjkei}{\Omega_{\overline{\jmath}_k = i}}
\newcommand{\Omkje}{\Omega_{\overline{k}_j = k}}
\newcommand{\Omjkminusone}{\Omega_{\overline{\jmath}_k \leq j-1}}
\newcommand{\Omkjei}{\Omega_{\overline{k}_j = i}}
\newcommand{\Omkjminusone}{\Omega_{\overline{k}_j \leq k-1}}

\renewcommand{\H}{\mathcal{H}}

\newcommand{\natz}{\mathbb{N}_0}
\newcommand{\ball}{{\mathbb B}}
\newcommand{\B}{{\mathcal B}}

\newcommand{\ve}{\varepsilon}

\newcommand{\rge}{\mathop{\rm rge}}

\renewcommand{\eop}
           {\hspace*{\fill}{$\vcenter{\hrule height1pt 
                     \hbox{\vrule width1pt height5pt 
            \kern5pt \vrule width1pt} \hrule height1pt}$} }

\newenvironment{proof}
{\par\noindent\textbf{Proof.}}{\eop\smallskip\vskip 3 pt}

\journal{arXiv}

\begin{document}

\begin{frontmatter}

\title{
Chain transitivity 
in generalized hybrid dynamics
with application to simulation
and stochastic approximation of hybrid systems}

\author[label1]{Rafal K. Goebel}
\affiliation[label1]{organization={Loyola University Chicago},
            city={Chicago},
            postcode={60660},
            state={IL},
            country={USA}}

\author[label2]{Andrew R. Teel}
\affiliation[label2]{organization={University of California},
            city={Santa Barbara},
            postcode={93106},
            state={CA},
            country={USA}}

\begin{abstract}
Asymptotic properties of discrete, stochastic approximations to
hybrid systems, modeled as hybrid inclusions, are studied. 
First, the internal chain transitivity of omega-limits of solutions
is concluded, along with other properties related to chain recurrence
and transitivity. A concept of an asymptotic solution is proposed 
to describe any mapping that, asymptotically, resembles a solution, and
for which the chain transitivity properties also turn out to hold. 
The mentioned developments are carried out in an abstract setting of
a generalized hybrid system defined by a set of hybrid curves, each
defined on a hybrid time domain, and possibly consisting of all solutions
to a given hybrid inclusion. 
Then, more specific kinds of perturbed solutions to a hybrid inclusion
are proposed and shown to include the solutions of a discretization and
of a stochastic approximation to the hybrid inclusion. 
Consequently, appropriate discretizations and stochastic approximations
of a hybrid inclusion produce mappings whose omega limits are 
internally chain transitive for the underlying hybrid inclusion. 
\end{abstract}

\begin{keyword}
omega-limit sets, internal
chain transitivity, hybrid systems, simulators, 
stochastic approximation.
\end{keyword}

\end{frontmatter}

\section{Introduction}

This paper aims
to characterize the omega-limit set of each bounded, complete sample path of a 
stochastic, discrete signal
that is an asymptotic simulation of a hybrid system.
The findings are similar to
those in the literature on stochastic approximation
of differential equations and inclusions; see,
for example, \cite{Benaimetal2005} and \cite{NguyenYin23}, and the related
\cite{BorkarShah23}.
In particular, conditions are given under which
the omega-limit set of each sample path of the stochastic
signal
is internally
chain transitive for the underlying hybrid system.
The conditions involve the step sizes of the simulation
and conditional means and variances of the stochastic
signal.
Compared to the results on stochastic approximation
of hybrid systems in \cite{TeelSanfeliceGoebel25ARC},
no explicit simulator model is prescribed,
randomness is allowed in both the ``flows'' and the jumps, no global
attractor is required, and the conditions on the step sizes and the conditional means and variances
are relaxed.

The notions of chain recurrence and chain transitivity date back to 
\cite{Conley78}, where they were presented in the setting of flows. 
Chain recurrence lays the foundation for Conley's decomposition, which has been 
called ``the fundamental theorem of dynamical systems'' \cite{Norton95CMUC}.
The decomposition, among other things, identifies the subset of an invariant set 
--- its chain recurrent part --- to which all trajectories converge. 
The notion of chain recurrence and the decomposition have been extended 
to multivalued dynamics in continuous time \cite{BronsteinKopanskii88NA} 
and discrete time \cite{Akin93}, 
to an abstract setting of multivalued semiflows, with applications to 
infinite-dimensional dynamics, \cite{KapustyanKasyanovValero20CPAA} 
to hybrid dynamics 
\cite{KvalheimGustafsonKoditschek2021SIAD}, \cite{Goebel23SCL}
and to other settings as well.
Internal chain transitivity, considered early by \cite{BenaimHirsch95DCDS}, 
\cite{HirschSmithZhao01JDDE}, further narrows down the sets 
to which a trajectory of a dynamical system can converge. 
The key to this  narrowing is that the omega-limit of a trajectory
of regular-enough dynamical system is internally chain transitive. 
Converse results to this exist too; see \cite{MeddaughRaines12FM} 
and some references therein. An early and related result on internal 
chain recurrence is \cite{Robinson77RMJM}.
The concepts of internal chain recurrence and transitivity have not 
been studied in the case of general hybrid dynamics, and this inspires 
Section \ref{section: recurrence and transitivity in generalized hybrid dynamics} 
below. There, the concepts are defined and studied in the setting of 
an abstract hybrid system,  defined as a set of ``hybrid curves'' and subject 
to some structural properties.  
For an overview of abstract approaches to continuous-time dynamics, multivalued
or without uniqueness, see \cite{CaraballoMarin-RubioRobinson03SVA}, 
\cite[Chapter I]{ZgurovskyKasyanovKapustyanValeroZadoianchukIII}, 
or \cite{Suda24JDE}, and the references therein.

Chain transitivity properties are known to hold not just for trajectories 
of a regular-enough dynamical system, but may hold for functions 
or more general mappings that asymptotically resemble the said trajectories. 
This applies, for example, to ``asymptotic pseudotrajectories'' 
in \cite{BenaimHirsch96JDDE}, \cite{FaureRoth12SD} 
or solutions to asymptotically autonomous dynamics as in 
\cite{MischaikowSmithThieme95TAMS}. 
The properties of asymptotic pseudotrajectories were used
in \cite{Benaimetal2005}
to establish
asymptotic properties of non-stochastic
and stochastic simulations of a differential
inclusion, which is similar to the motivation
for the results presented here for hybrid 
systems.
The generalizations
in \cite{BenaimHirsch96JDDE}, \cite{FaureRoth12SD} 
and
\cite{MischaikowSmithThieme95TAMS}
inspire Section \ref{section: asymptotic curves and their asymptotic properties}.
In that section, a broad concept of an asymptotic hybrid curve 
is proposed and studied. 
Basic tools of set-valued analysis facilitate making precise 
the request that an asymptotic hybrid curve behave, asymptotically, 
as a hybrid curve from a desired set of hybrid curves, the latter perhaps
being the set of all solutions to a hybrid inclusion. 

Hybrid systems modeled by hybrid inclusions, as in \cite{Goebel12a}, fit 
in the setting of sections 
\ref{section: recurrence and transitivity in generalized hybrid dynamics} 
and 
\ref{section: asymptotic curves and their asymptotic properties}. 
Section \ref{section: back to earth} recalls the basics of hybrid inclusions,
translates some of the results in Sections
\ref{section: recurrence and transitivity in generalized hybrid dynamics} 
and 
\ref{section: asymptotic curves and their asymptotic properties} 
to this setting --- for example, one
result says that the omega-limit of an asymptotic solution to hybrid inclusion
is internally chain transitive for that hybrid inclusion ---
and introduces new concepts of perturbed solutions to a hybrid inclusion. 

The main application of the results in
Sections \ref{section: recurrence and transitivity in generalized hybrid dynamics} and \ref{section: asymptotic curves and their asymptotic properties} is to
help with the characterization of
the asymptotic behavior of what
are called ``asymptotic simulations'' of the hybrid systems
discussed in Section \ref{section:hybrid-systems};
asymptotic simulations are introduced in
Section \ref{section:asymptotic simulations of a hybrid system}
where their asymptotic properties are also established. 
That section also includes a connection to a recent result on asymptotic simulations
of hybrid systems given in \cite{GoebelTeel25SCLsubmission}.
Section \ref{section:stochastic-asymptotic-simulations} considers
the situation where the asymptotic simulations
are stochastic. It provides assumptions
on conditional means and variances of the stochastic hybrid
signal in order to guarantee that the stochastic signal
is an asymptotic simulation of a given hybrid system. This application provides new
results on stochastic approximations
that generalize results
for differential inclusions in
\cite{Benaimetal2005} and \cite{NguyenYin23}, and for
hybrid systems
in \cite{TeelSanfeliceGoebel25ARC}.
Section \ref{section:an example} provides an
illustrative
example that is related to simulated annealing, as studied
in \cite{Kushner87} for example.

\section{Recurrence and transitivity in generalized hybrid dynamics}
\label{section: recurrence and transitivity in generalized hybrid dynamics}

This section formulates an abstract, data-free, model of a hybrid system, by
considering a set of hybrid curves that is subject to some structural properties,
and studies the internal chain recurrence and transitivity properties of hybrid systems
in this abstract setting. Subsection \ref{section: sets of hybrid curves} introduces
hybrid curves and the properties that sets of them need to possess to be susceptible
to the analysis that follows. Subsection 
\ref{section: chain and internal chain transitivity and recurrence}
introduces chain and internal chain recurrence and transitivity. 
Subsection \ref{section: concatenation} defines a set-valued way to concatenate
hybrid curves if one ends not at the same point that the next one starts. 
This concept and several results from the mentioned three subsections are
preliminary work for the main developments in 
Section \ref{section: asymptotic curves and their asymptotic properties}. 
This section concludes with a related result on the internal chain recurrence
of a chain recurrent part of an invariant set, in Subsection
\ref{section: chain recurrent sets}

\subsection{Sets of hybrid curves}
\label{section: sets of hybrid curves} 

Hybrid time domains and arcs constitute basic building blocks in the definition of
a solution to a hybrid inclusion \cite{Goebel12a}. 
These concepts are recalled here. 
A definition of a hybrid curve is also proposed.

A set $E\subset\reals^2$ is a {\em compact hybrid time domain} 
\index{hybrid time domains}
if 
\begin{equation}
\label{chtd}
E=\left([t_0,t_1],0\right)\, \cup\, \left([t_1,t_2],1\right)\, \cup \, \dots \, \cup
\left([t_{J-1},t_J],J-1\right)
\end{equation}
where $J\in\nats$ and $0=t_0\leq t_1\leq t_2 \leq \dots \leq t_J$ form a finite sequence of real numbers. 
A set $E\subset\reals^2$ is a {\em hybrid time domain} if it is the union of a nondecreasing sequence
$E_1\subset E_2\subset E_3\subset\dots$ of compact hybrid time domains. 
The {\em length} of a hybrid time domain $E$ is $\length E:=\sup\{t+j\, |\, (t,j)\in E\}$.

A function $\sol:\dom\sol\to\reals^d$ is a {\em hybrid curve} if $\dom\sol$ is a hybrid time domain and, when $(t,j),(t',j) \in \dom \sol$ with $t<t'$,
$t \mapsto \sol(t,j)$ is continuous on $[t,t']$, and it is a
{\em hybrid arc} if, additionally, $t \mapsto \sol(t,j)$ is absolutely
continuous on $[t,t']$. 
A hybrid curve/arc $\sol$ is {\em trivial} if $\dom\sol=\{(0,0)\}$
and {\em nontrivial} otherwise.
A hybrid curve/arc $\sol$ is
{\em complete} if $\dom\sol$ is unbounded.
Throughout the paper, a hybrid curve or arc $\sol$ is occasionally identified with
a set-valued mapping from $\reals^2$ to $\reals^d$ that has empty values outside of $\dom\sol$. 

Let $\L$ be a set of hybrid curves. This notation applies throughout the paper.
For example, $\L$ can be the set of some, or all, solutions to 
a hybrid inclusion, as discussed later in Section \ref{section:hybrid-systems}.
Any $\sol\in\L$ may be called an {\em $\L$-curve}.

The set $\L$ is {\em locally bounded} if for every $r,T>0$ there exists $R>0$ such that, for every $\sol\in\L$ 
with $\sol(0,0)\in r\ball$ and every $(t,j)\in\dom\sol$ with $t+j\leq T$, 
one has $\sol(t,j)\in R\ball$. 
Similarly, a sequence $\sol_i\in\L$ is {\em locally eventually bounded} if for every $T>0$ 
there exist $i_0,R>0$ such that, for every $i\geq i_0$ and every $(t,j)\in\dom\sol$ with $t+j\leq T$, one has $\sol_i(t,j)\in R\ball$. 
The set $\L$ is {\em nominally well-posed} if 
\begin{itemize} 
\item[(i)]
every graphically convergent and locally eventually bounded sequence of hybrid curves 
$\left\{ \sol_n \right\}_{n=1}^{\infty}$ with
$\sol_n\in\L$ for all $n \in \nats$ has its graphical limit in $\L$, and
\item[(ii)]
if there exists no hybrid curve $\sol\in\L$ with $\sol(0,0)=x$ that {\em blows up in finite time},
in the sense that $\dom\sol$ is bounded and $\sol$ is not bounded, 
then every sequence of hybrid curves
$\left\{ \sol_n \right\}_{n=1}^{\infty}$ with
$\sol_n\in\L$ for all $n \in \nats$
and with $\lim_{n \rightarrow \infty} \sol_n(0,0) =x$ is locally eventually
bounded. 
\end{itemize} 
Note that local boundedness of $\L$ ensures that there is no blow up in 
finite time, as described in (ii) above, and thus that every sequence of $\L$-curves
with convergent initial conditions is locally eventually bounded.

Graphical convergence of $\sol_i$ to $\sol$ means that the sequence of graphs
$\gph\sol_i$ converges to the graph $\gph\sol$, in terms of set convergence. 
For a hybrid curve $\sol$, its {\em graph} is the subset of $\reals^{d+2}$ defined by
$$\gph\sol:=\left\{(t,j,x)\in\reals^{d+2}\, :\, (t,j)\in\dom\sol,\ x=\sol(t,j)\right\}$$
and a similar definition applies to a set-valued mapping $\sola:\dom\sola\rightrightarrows\reals^d$ with $\dom\sola\subset[0,\infty)^2$,
with $x\in\sola(t,j)$ replacing $x=\sol(t,j)$. 
For details, see \cite[Chapter 5, Section E]{RockafellarWets98}, 
or \cite[Section 5.3]{Goebel12a} in the context of hybrid arcs.
If $\L$ is the set of all solutions to a hybrid inclusion, the concept of nominal
well-posedness essentially agrees with \cite[Definition 6.2]{Goebel12a}. 
If $\L$ is locally bounded then every sequence of hybrid curves
$\left\{ \sol_n \right\}_{n=1}^{\infty}$ with
$\sol_n\in\L$ for all $n \in \nats$ and with
convergent initial conditions is locally eventually bounded, and nominal well-posedness
reduces to condition (i) above.

\begin{lemma}
\label{lemma: L-osc}
Let $\L$ be a locally bounded and nominally well-posed set of hybrid curves. 
For every compact $K\subset\reals^d$ and every $T,\ve>0$ there exists 
$\delta>0$ such that, for every $\sol\in\L$ with $\sol(0,0)\in K+\delta\ball$, 
there exists $\sola\in\L$ with $\sola(0,0)\in K\cap(\sol(0,0)+\ve\ball)$
such that
\begin{equation} 
\label{tau epsilon close}
\gph_{t+j\leq T}\phi\subset\gph\psi+\ve\ball \quad \mbox{and} \quad 
\gph_{t+j\leq T}\psi\subset\gph\phi+\ve\ball,
\end{equation} 
and for every such $\sol$ that, additionally, has range in $K+\delta\ball$,
one can take $\sola$ with range in $K$. 
\end{lemma}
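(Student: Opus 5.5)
We argue by contradiction, using sequential compactness of locally eventually bounded sequences of hybrid curves under graphical convergence together with condition (i) of nominal well-posedness. Suppose the claim fails for some compact $K$ and some $T,\ve>0$. Then for each $n\in\nats$, with $\delta=1/n$, there is $\sol_n\in\L$ with $\sol_n(0,0)\in K+\frac1n\ball$ for which no $\sola\in\L$ with $\sola(0,0)\in K\cap(\sol_n(0,0)+\ve\ball)$ satisfies \eqref{tau epsilon close}.

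First, pass to a subsequence so that $\sol_n(0,0)\to x\in K$; this is possible since $K$ is compact. Local boundedness of $\L$ makes $\{\sol_n\}$ locally eventually bounded, and in fact uniformly bounded on $\{t+j\leq T+1\}$. The domains are hybrid time domains, whose truncations to $t+j\leq T+1$ have at most $T+1$ jumps. By the standard compactness result for hybrid curves (\cite[Theorem 5.7]{RockafellarWets98} as applied to hybrid arcs in \cite[Section 5.3]{Goebel12a}), a further subsequence converges graphically to some limit $\sola$. By condition (i) of nominal well-posedness, $\sola\in\L$, and $\sola(0,0)=x$, since $(0,0,\sol_n(0,0))\to(0,0,x)$ and the limit is single-valued at $(0,0)$ as a hybrid curve.

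Second, we must show that graphical convergence plus uniform boundedness on the compact window $t+j\leq T$ yields the two inclusions in \eqref{tau epsilon close} for large $n$. This is the main obstacle, since set convergence on compact regions gives Hausdorff-type closeness only for graphs restricted to a bounded box, and the truncation at $t+j\leq T$ is not continuous under graphical convergence. For example, $\sola$ may jump exactly near $t+j=T$. We handle this as follows.
\begin{itemize}
\item For the inclusion $\gph_{t+j\leq T}\sol_n\subset\gph\sola+\ve\ball$, argue by contradiction. Points of $\gph_{t+j\le T}\sol_n$ lie in a compact set, and a cluster point of points at distance at least $\ve$ from $\gph\sola$ would lie in the outer limit, which is $\gph\sola$; this is impossible.
\item For the reverse inclusion $\gph_{t+j\leq T}\sola\subset\gph\sol_n+\ve\ball$, cover the compact set $\gph_{t+j\leq T}\sola$ by finitely many balls of radius $\ve/2$. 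The inner limit contains each center, so for large $n$ the graph $\gph\sol_n$ meets each ball of radius $\ve/2$ about a center. This relies on $\gph_{t+j\le T}\sola$ being bounded and closed, which follows from local boundedness and closedness of graphical limits.
\end{itemize}
Also $|\sola(0,0)-\sol_n(0,0)|\to 0<\ve$, so $\sola$ qualifies for large $n$, contradicting the choice of $\sol_n$.

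For the final clause, repeat the same argument while additionally assuming that the range of $\sol_n$ lies in $K+\frac1n\ball$. Every point of the graphical limit is a limit of points $(t_n,j_n,\sol_n(t_n,j_n))$, so the range of $\sola$ lies in $\bigcap_n (K+\frac1n\ball)=K$ because $K$ is closed. Hence the contradicting $\sola$ can be taken with range in $K$, which proves the stronger statement.
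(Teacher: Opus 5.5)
Your proof is correct and takes essentially the same approach as the paper. Both argue by contradiction with $\delta_n=1/n$, extract a graphically convergent subsequence, use item (i) of nominal well-posedness to place the limit in $\L$, use local boundedness to turn graphical convergence into the two inclusions, and use closedness of $K$ for the final clause. Your two bullet arguments simply spell out the step the paper dismisses as ``essentially the characterization'' of graphical convergence from \cite[Theorem 4.10]{RockafellarWets98}; the compactness result you need is the one the paper cites, \cite[Theorem 5.36]{RockafellarWets98}.
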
 

Above, $\gph_{t+j\leq T}\sol$ is the set
$$\left\{(t,j,x)\in\reals^{d+2}\, :\, (t,j)\in\dom\sol,\, t+j\leq T,\, x=\sol(t,j)\right\}.$$
By \cite[Theorem 4.10]{RockafellarWets98}, a bounded sequence of set-valued
mappings $\sol_i:\reals^2\rightrightarrows\reals^d$ with $\dom\sol_i\subset[0,\infty)^2$
converges graphically to $\sol:\reals^2\rightrightarrows\reals^d$ with $\dom\sol\subset[0,\infty)^2$ if and only if, for every $T,\ve>0$, for all large
enough $i$ \eqref{tau epsilon close} holds with $\sol_i$ in place of $\sola$.
In \cite{Goebel12a}, the notion of $(T,\ve)$-closeness was used to express 
properties like \eqref{tau epsilon close}. If $\sol$ and $\sola$ satisfy 
\eqref{tau epsilon close} with $T>0$ and $\ve\in(0,1)$ then they are 
$(T,\ve)$-close in the sense of \cite[Definition 5.23]{Goebel12a}, while if
$\sol$ and $\sola$ are $(T,\ve)$-close with $T,\ve>0$ then \eqref{tau epsilon close}
holds with $\ve$ replaced by $\sqrt{2}\ve$. 
Thus, the lemma statement and its proof are similar to those for
\cite[Theorem 5.25]{Goebel12a} but in the context of the set $\L$.
The final observation of the lemma, regarding $\sola$ with range in $K$
for $\sol$ with range in $K+\delta\ball$, is new, even if trivial.
 
\

\begin{proof}
Suppose not: there is a compact $K\subset\reals$ and $T,\ve>0$ such that, by taking
$\delta_n=1/n$, there exists $\sol_n\in\L$ with $\sol_n(0,0)\in K+n^{-1}\ball$ such that,
for every $\sola\in\L$ with $\sola(0,0)\in K\cap(\sol(0,0)+\ve\ball)$, \eqref{tau epsilon close} fails.
Use \cite[Theorem 5.36]{RockafellarWets98} to pass to a subsequence, without relabeling, such
that the sequence $\left\{ \sol_n \right\}_{n=1}^{\infty}$ converges graphically, to some set-valued mapping $\sola:\reals^2\rightrightarrows\reals^d$ and 
the sequence $\left\{\sol_n(0,0)\right\}_{n=1}^{\infty}$ converges to some $x\in K$
with $\sola(0,0)=x$.
Because $\L$ is locally bounded, the subsequence is locally eventually bounded. 
Then, because $\L$ is nominally well-posed, $\sola\in\L$. Because $\L$ is locally bounded,
again, \eqref{tau epsilon close} is, essentially, the characterization of the convergence of $\gph\sol_n$ to $\gph\sola$. 
Thus \eqref{tau epsilon close} cannot fail for every $\sola\in\L$ with $\sola(0,0)\in\sol(0,0)+\ve\ball$. This is a contradiction. 
The same reasoning, carried out for a sequence of $\sol_n$ with ranges in $K+n^{-1}\ball$,
and thus leading to $\sola$ with range in $K$, confirms the final observation. 
\end{proof}

Given a hybrid curve $\sol$ and any $(s,i)\in\dom\sol$, define the ``tail'' $\sol^{(s,i)}$ of $\sol$ after $(s,i)$ by
\begin{equation}
\label{tail}
\begin{aligned}
\dom\sol^{(s,i)} &:= \left\{(t,j)\in[0,\infty)^2\, :\, (s+t,i+j)\in\dom\sol\right\}, \\
\sol^{(s,i)}(t,j) &:= \sol(s+t,i+j) \qquad \forall (t,j)\in\dom\sol^{(s,i)}.
\end{aligned}
\end{equation}
A set $\L$ of hybrid curves is {\em tail invariant} if, for every $\sol\in\L$ and every 
$(s,i)\in\dom\sol$, one has $\sol^{(s,i)}\in\L$. 

\begin{lemma}
\label{lemma: tail invariance to locally uniformly continuous}
Let $\L$ be nominally well posed, locally bounded, and tail invariant. 
Then $\L$ is {\em locally uniformly continuous}, in the sense that for every
$R>0$, $\ve>0$ there exists $\delta>0$ such that, for every $\sol\in\L$,
every $(t,j),(t',j')\in\dom\sol$ with $|t+j-(t'+j')|<\delta$ and 
$\sol(t,j),\sol(t',j')\in R\ball$, one has 
$\sol(t,j)\subset\sol(t',j')+\ve\ball$. 
\end{lemma}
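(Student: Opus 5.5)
We argue by contradiction, combining tail invariance with compactness under graphical convergence, in the same spirit as the proof of Lemma \ref{lemma: L-osc}. Suppose the claim fails for some $R,\ve>0$. Then for each $n$ there are $\sol_n\in\L$ and $(t_n,j_n),(t'_n,j'_n)\in\dom\sol_n$ such that:
\begin{itemize}
\item $|t_n+j_n-(t'_n+j'_n)|<1/n$;
\item $\sol_n(t_n,j_n),\sol_n(t'_n,j'_n)\in R\ball$;
\item $|\sol_n(t_n,j_n)-\sol_n(t'_n,j'_n)|>\ve$.
\end{itemize}
Without loss of generality, $(t_n,j_n)$ precedes $(t'_n,j'_n)$ in the natural order of the hybrid time domain. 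By tail invariance, $\sola_n:=\sol_n^{(t_n,j_n)}\in\L$. Its initial point is $\sola_n(0,0)=\sol_n(t_n,j_n)\in R\ball$, and $(\tau_n,k_n):=(t'_n-t_n,j'_n-j_n)\in\dom\sola_n$ satisfies $\tau_n+k_n<1/n$. Since $k_n$ is a nonnegative integer, $k_n=0$ for $n\geq 2$. Hence $\tau_n<1/n$, and the two points lie on the same flow interval $[0,\tau_n]\times\{0\}$ of $\sola_n$.

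Next we extract a limit. The initial conditions lie in the compact set $R\ball$, so local boundedness of $\L$ gives some $R'$ with $\sola_n(t,j)\in R'\ball$ whenever $t+j\leq 1$. Passing to a subsequence via \cite[Theorem 5.36]{RockafellarWets98}, we may assume:
\begin{itemize}
\item $\sola_n$ converges graphically to some $\sola$, and $\sola\in\L$ by nominal well-posedness (i);
\item $\sola_n(0,0)\to x$;
\item $\sola_n(\tau_n,0)\to y$ with $|x-y|\geq\ve$.
\end{itemize}
The points $(0,0,x)$ and $(0,0,y)$ are limits of points $(0,0,\sola_n(0,0))$ and $(\tau_n,0,\sola_n(\tau_n,0))$ in the graphs, so both belong to $\gph\sola$. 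Since $\sola$ is single valued, $\sola(0,0)=x$ and also $\sola(0,0)=y$, a contradiction with $|x-y|\geq\ve$.

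The main obstacle is the claim that $(0,0,y)\in\gph\sola$ forces $y=\sola(0,0)$. That claim needs care, because a graphical limit of hybrid curves might a priori acquire several values at $(0,0)$, with a fast transient in $\sola_n$ on $[0,\tau_n]$ collapsing into a vertical segment. This is exactly what membership $\sola\in\L$ rules out: the graphical limit is a hybrid curve, hence a function, so it has one value at $(0,0)$. We therefore have to verify that nominal well-posedness indeed places the limit mapping in $\L$, which in turn requires local eventual boundedness. That bound comes from local boundedness of $\L$, as in the proof of Lemma \ref{lemma: L-osc}.

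Finally, we handle the degenerate case where $\sola$ might have trivial domain. This causes no trouble: $(0,0)$ is always in the domain, and the argument uses only the value there.
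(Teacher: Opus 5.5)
Your proof is correct and follows essentially the same argument as the paper. You use tail invariance to move the two offending points to $(0,0)$ and to a point with $t+j<1/n$, extract a graphically convergent subsequence that is locally eventually bounded by local boundedness of $\L$, and contradict the single-valuedness of the limit, which lies in $\L$ by nominal well-posedness. You also make explicit details that the paper's terse proof leaves implicit, namely the ordering of the two hybrid times, that $j'_n=j_n$, and that both limit points lie over $(0,0)$.
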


\begin{proof}
It the opposite case, and using the tail invariance property of $\L$,
there exist $R,\ve>0$, $\sol_n\in\L$ with $\sol_n(0,0)\in R\ball$,
and $(t_n,j_n)\in\dom\sol_n$ with $t_n+j_n<1/n$ and $\sol_n(t_n,j_n)\in R\ball$
such that $\|\sol_n(t_n,j_n)-\sol_n(0,0)\|>\ve$. Passing to a graphically
convergent subsequence, and relying on compactness of $R\ball$, suggest
that the graphical limit of $\sol_n$ is multivalued at $(0,0)$. 
However, local boundedness and nominal well posedness ensure that this graphical 
limit is an $\L$-curve, and cannot be multivalued.
\end{proof}

\subsection{Chain and internal chain transitivity and recurrence}
\label{section: chain and internal chain transitivity and recurrence}

Given two points $x,y\in\reals^d$ and $\tau,\ve>0$, a
{\em $(\tau,\varepsilon)$-$\L$-chain from $x$ to $y$} consists
of finite sequences of points  $x = x_0, x_1, \ldots , x_{k^*} = y$ in $\reals^d$, 
and curves $\sol_0,\sol_1,...,\sol_{k^*-1}$ in $\L$ such that
$\sol_k(0,0) = x_k$,
$x_{k+1} \in \sol_k(t_k,j_k) + \varepsilon \Ball$ for some $(t_k,j_k)\in\dom\sol_k$ 
with $t_k +j_k\geq\tau$. 
A {\em generalized $(\tau,\varepsilon)$-$\L$-chain from $x$ to $y$} 
{\em relaxes $x_0=x$ to $x_0\in x+\varepsilon\ball$.} 
When a nonempty and closed set $K\subset\reals^d$ is under consideration,
given two points $x,y \in K$ and $\tau,\ve>0$,
an {\em internal $(\tau,\varepsilon)$-$\L$-chain from $x$ to $y$} 
is a $(\tau,\varepsilon)$-$\L$-chain from $x$ to $y$
such that $\sol_k(t,j)\in K$ for all $(t,j)\in\dom\sol_i$ with $0\leq t+j\leq t_k+j_k$,
for $0\leq k<k^*$. A similar restriction defines a
{\em generalized internal $(\tau,\varepsilon)$-$\L$-chain from $x$ to $y$}.

The set $K\subset\reals^d$ is {\em (internally) $\L$-chain transitive} if, for every $x,y\in K$ 
and every $\tau,\ve>0$, there exists an (internal) $(\tau,\varepsilon)$-$\L$-chain from $x$ to $y$, 
and {\em generalized (internally) $\L$-chain transitive} if generalized (internal) $(\tau,\varepsilon)$-$\L$-chains from $x$ to $y$ exist. 
Further, $K$ is {\em (internally) $\L$-chain recurrent} if, for every $x\in K$ and every $\tau,\ve>0$, there exists an (internal) $(\tau,\varepsilon)$-$\L$-chain from $x$ to $x$. 
Clearly, if $K$ is (internally) $\L$-chain transitive then
it is (internally) $\L$-chain recurrent, by taking $y=x$.

Establishing the generalized version of (internal) chain recurrence or transitivity is often easier than establishing the not generalized version.
However, the two variations are often equivalent. 
For hybrid inclusions, the equivalence of generalized chain recurrence to
chain recurrence is shown in \cite[Lemma 5]{Goebel23SCL}. A similar
argument verifies the next result. 

\begin{lemma}
\label{lemma:from-generalized-2-not}
Let $\L$ be locally bounded and nominally well posed.
A compact set $K\subset\reals^d$ is internally $\L$-chain transitive 
if and only if it is generalized internally $\L$-chain transitive. 
\end{lemma}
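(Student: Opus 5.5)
The implication from internal chain transitivity to generalized internal chain transitivity is immediate, since an internal chain with $x_0=x$ satisfies $x_0\in x+\ve\ball$. For the converse, fix $x,y\in K$ and $\tau,\ve>0$. The plan is to take a generalized internal $(\tau',\ve')$-$\L$-chain from $x$ to $y$, with suitably chosen $\tau'\geq\tau$ and $\ve'\leq\ve$, and to replace its first curve $\sol_0$ by a curve $\sola_0\in\L$ that starts exactly at $x$, stays in $K$, and is close enough to $\sol_0$ that the second point $x_1$ can still be reached. The remaining pieces of the chain are left unchanged.

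\emph{Choice of constants.} Apply Lemma~\ref{lemma: L-osc} to the compact set $K$, the horizon $T:=\tau+2$, and tolerance $\ve/2$ (any value below $\min\{\ve/2,1\}$ will do), and obtain $\delta>0$. Using Lemma~\ref{lemma: tail invariance to locally uniformly continuous} together with local boundedness would give uniform continuity, but that lemma requires tail invariance, which is not assumed here. So the argument should rely only on Lemma~\ref{lemma: L-osc}, mimicking \cite[Lemma 5]{Goebel23SCL}.

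\emph{Modifying the first link.} Take a generalized internal $(\tau,\ve'')$-chain from $x$ to $y$, where $\ve'':=\min\{\delta,\ve/2\}$. Then $x_0\in x+\ve''\ball$. A direct application of Lemma~\ref{lemma: L-osc} with $K$ replaced by $\{x\}$ produces a $\sola_0\in\L$ with $\sola_0(0,0)=x$ and $\sola_0$ graphically $\ve/2$-close to $\sol_0$ on $t+j\leq T$. However, this does not keep $\sola_0$ inside $K$. The point $x\in K$ alone does not solve this either: for the range of $\sola_0$ to lie in $K$, the final clause of Lemma~\ref{lemma: L-osc} asks for the range of $\sol_0$ to lie in $K+\delta\ball$, which holds (the chain is internal), but then it yields $\sola_0(0,0)\in K$ rather than $\sola_0(0,0)=x$.

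This is the step I expect to be the main obstacle. The fix is to rerun the compactness argument behind Lemma~\ref{lemma: L-osc} with the initial condition pinned. Suppose, for contradiction, that for some $x,y,\tau,\ve$ no internal chain exists, while generalized internal $(\tau_n,1/n)$-chains exist with $\tau_n=\tau+1$. Their first curves $\sol_0^n$ satisfy $\sol_0^n(0,0)\to x$ and have ranges in $K$ up to times $t_0^n+j_0^n\geq\tau+1$. We may truncate at the first such time, which is bounded only if the times are bounded; to ensure this, cut $\sol_0^n$ at the first $(t,j)$ with $t+j\geq\tau$. The sequence is locally eventually bounded by local boundedness, so by \cite[Theorem 5.36]{RockafellarWets98} and nominal well-posedness it has a graphical limit $\sola_0\in\L$ with $\sola_0(0,0)=x$.

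Because $K$ is closed, every point of $\gph\sola_0$ with $t+j\leq\tau$ is a limit of graph points of $\sol_0^n$ and therefore lies in $K$. Graphical convergence also gives points $(t_0^n,j_0^n,\sol_0^n(t_0^n,j_0^n))$ converging, along a subsequence, to some $(t^*,j^*,z)\in\gph\sola_0$ with $t^*+j^*\geq\tau$ and $z\in K$. Hence $\|\sola_0(t^*,j^*)-x_1^n\|\leq\|z-\sol_0^n(t_0^n,j_0^n)\|+1/n<\ve$ for large $n$. One subtlety remains: graph points of $\sola_0$ with $t+j\leq t^*+j^*$ must also lie in $K$. This follows because $K$ is closed and each such point is a limit of points $(t,j,\sol_0^n(t,j))$ with $t+j$ slightly below $t_0^n+j_0^n$, provided $\sola_0$ is single-valued (which holds since $\sola_0\in\L$).

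Replacing $\sol_0^n$ by $\sola_0$ and keeping $x_1^n,\dots$ and $\sol_1^n,\dots$ produces an internal $(\tau,\ve)$-$\L$-chain from $x$ to $y$, contradicting the assumption.
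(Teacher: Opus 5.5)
There is a genuine gap in your last step: it conflates two different times. The estimate $\|\sola_0(t^*,j^*)-x_1^n\|\leq\|z-\sol_0^n(t_0^n,j_0^n)\|+1/n$ is available only at the \emph{terminal} times $(t_0^n,j_0^n)$ of the first links, since only there do you know $x_1^n\in\sol_0^n(t_0^n,j_0^n)+n^{-1}\ball$. It also needs these times to have a convergent subsequence, and nothing bounds them. A chain only asks $t_0^n+j_0^n\geq\tau_n$, and without tail invariance a long link cannot be subdivided (Lemma \ref{lemma: long chain} assumes tail invariance). If $t_0^n+j_0^n\to\infty$, there is no $(t^*,j^*)$, and the graphical limit $\sola_0$ says nothing about where $\sol_0^n$ ends.

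Cutting $\sol_0^n$ at the first $(s_n,i_n)$ with $s_n+i_n\geq\tau$ does bound the times. But then the endpoint $\sol_0^n(s_n,i_n)$ is unrelated to $x_1^n$, so the links $x_1^n,\sol_1^n,\dots$ cannot be kept. Also, truncated curves need not be $\L$-curves, so take the graphical limit of the uncut $\sol_0^n$ and use the cut times only to locate a point.

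The paper avoids this by splitting $\sol_0$ at some $(t',j')$ with $\tau+\ve\leq t'+j'<\tau+\ve+1$. It inserts the tail $\sol_0^{(t',j')}$ as an extra link, after obtaining $\sola$ with $\sola(0,0)=x$ and range in $K$ from Lemma \ref{lemma: L-osc} applied to the subfamily $\L'$ of curves that stay in $K$ up to time $2\tau+1+\ve$. Your compactness rerun replaces that $\L'$ step adequately. Note, however, that the paper's splitting tacitly needs $\sol_0^{(t',j')}\in\L$, that is, tail invariance, which, as you rightly observed, is not among the hypotheses.

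Your argument can be completed without tail invariance by discarding the old chains after the cut. Your compactness step gives $\sola_0\in\L$ with $\sola_0(0,0)=x$. Passing to a subsequence, it also gives a limit $(s,i)\in\dom\sola_0$ of the cut times with $s+i\geq\tau$ and $\sola_0(s,i)=\lim\sol_0^n(s_n,i_n)\in K$, and $\sola_0(t,j)\in K$ whenever $t+j\leq s+i$. Set $w:=\sola_0(s,i)$ and invoke generalized internal chain transitivity once more, for the pair $w,y$ with the original $\tau,\ve$. That chain starts at some $x_0'\in w+\ve\ball$, so prepending $\sola_0$ with terminal time $(s,i)$ gives an internal $(\tau,\ve)$-$\L$-chain from $x$ to $y$. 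This uses only local boundedness, nominal well-posedness and compactness of $K$, so it proves the lemma under exactly its stated hypotheses.
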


\begin{proof}
One implication is obvious. For the other, suppose that $K$ is 
generalized internally $\L$-chain transitive. 
Fix $x,y\in K$, $\tau>0$, and $\varepsilon\in(0,1)$. 
Let $\L'$ be the set of all $\sol\in\L$ for which $\sol(t,j)\in K$ for all $(t,j)\in\dom\sol$
with $t+j< 2\tau+1+\ve$. Note that $\L'$ is locally bounded and nominally well posed.
Apply Lemma \ref{lemma: L-osc} to $\L'$ and the compact set $\{x\}$ to obtain $\delta\in(0,\varepsilon/2)$ so that for every
$\phi\in\L'$ with $\phi(0,0)\in x+\delta\ball$
there exists $\psi\in\L'$ with $\sola(0,0)=x$ such that
\eqref{tau epsilon close} holds with $T=\tau+\ve+1$.
(The rest of the proof is, essentially, as the proof of \cite[Lemma 5]{Goebel23SCL}.)
By assumption, there exists a generalized $(2\tau+1+\varepsilon,\delta)$-$\L$-chain 
from $x$ to $y$, with range in $K$, determined by $\phi_0,\phi_1,\dots,\phi_{k^*-1}$. 
Let $(t',j')\in\mathop{\rm dom}\nolimits\phi_0$ be such that $\tau+\varepsilon\leq t'+j'<\tau+\varepsilon+1$. 
By the choice of $\delta$, there exists $\psi\in\L'$ such that $\psi(0,0)=x$ and
$\gph_{t+j\leq t'+j'}\phi_0\subset\gph\psi+\ve\ball$.
Let $(t'',j'')\in\mathop{\rm dom}\nolimits\psi$ be such that $|t''-t'|\leq\varepsilon$, $j''=j'$, 
and $\phi'_0(t',j')\in\psi(t'',j'')+\varepsilon\mathbb{B}$. 
Then $\psi(0,0)=x$, 
$\phi_0^{(t',j')}(0,0)=\phi_0(t',j')\in\psi(t'',j'')+\varepsilon\mathbb{B}$ and $t''+j''\geq\tau$, and $\phi_0^{(t',j')}(t_0-t',j_0-j')=\phi_0(t_0,j_0)$ with $(t_0-t')+(j_0-j')\geq\tau$. Consequently, $\psi,\phi_0^{(t',j')},\phi_1,\dots,\phi_{k^*-1}$ determine an internal $(\tau,\varepsilon)$-$\L$-chain from $x$ to $y$. 
\end{proof}

``Internal'' above underlines that the relevant ranges of $\sol_i$ are in $K$,
but does not require that the whole ranges of $\sol_i$ be in $K$. 
In what follows, the {\em range} of a $(\tau,\ve)$-$\L$-chain is understood 
to be the union of the relevant ranges of $\sol_k$, i.e., the union of
$\sol_k(t,j)$ over all $(t,j)\in\dom\sol_k$ with $0\leq t+j\leq t_k+j_k$,
and $0\leq k < k^*$.

Chain recurrence implies some invariance properties. The set $K\subset\reals^n$ 
is {\em weakly forward $\L$-invariant} if, for every $x\in K$, there exists a complete $\sol\in\L$ with $\sol(0,0)=x$ and range contained in $K$,
and {\em weakly backward $\L$-invariant} if, for every $x\in K$ and $T>0$,
there exists $\sol\in\L$ and $(t,j)\in\dom\sol$ with $t+j\geq T$ such that $\sol(t,j)=x$
and $\sol(s,i)\in K$ for all $(s,i)\in\dom\sol$ with $s+i\leq t+j$.

The following result generalizes \cite[Lemma 3.5]{Benaimetal2005}
to the hybrid setting and $\L$-chains and $\L$-invariance, also with some more general assumptions.

\begin{proposition}
\label{proposition:brilliant-Benaim}
Let $\L$ be nominally well-posed and $K\subset\reals^d$ be a closed set that is internally 
$\L$-chain recurrent. 
\begin{itemize} 
\item[(a)]
If $\L$ is locally bounded or $K$ is compact then $K$ is weakly forward $\L$-invariant.
\item[(b)] 
If $\L$ is tail-invariant and $K$ is compact then $K$ is weakly backward $\L$-invariant.
\end{itemize} 
\end{proposition}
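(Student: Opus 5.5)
For part (a), fix $x\in K$. For each $n\in\nats$, internal $\L$-chain recurrence gives an internal $(n,1/n)$-$\L$-chain from $x$ to $x$. Let $\sol_n\in\L$ be its first curve. Then $\sol_n(0,0)=x$, and there is $(t_n,j_n)\in\dom\sol_n$ with $t_n+j_n\geq n$ such that $\sol_n(t,j)\in K$ whenever $(t,j)\in\dom\sol_n$ and $t+j\leq t_n+j_n$. Only this first piece of each chain is used.

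Next I show the sequence $\sol_n$ is locally eventually bounded. If $\L$ is locally bounded, this holds because the initial points are all equal to $x$. If instead $K$ is compact, then for every $T$ and every $n\geq T$, the part of $\sol_n$ with $t+j\leq T$ lies in $K$, which is bounded. By \cite[Theorem 5.36]{RockafellarWets98}, a subsequence converges graphically to some $\sol$. Nominal well-posedness (i) gives $\sol\in\L$.

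It remains to check three properties of $\sol$.
\begin{itemize}
\item[1.] $\sol(0,0)=x$. Local eventual boundedness keeps the graph near $(0,0)$ single-valued in the limit, so the limit of the initial points is the initial point of $\sol$.
\item[2.] The range of $\sol$ lies in $K$. Each graph point of $\sol$ is a limit of graph points $(s_n,i_n,\sol_n(s_n,i_n))$ with $s_n+i_n$ bounded. For large $n$ these values lie in the closed set $K$, so the limit does too.
\item[3.] $\sol$ is complete. Fix $T$. For $n\geq T$, $\dom\sol_n$ contains a point with $t+j\in[T,T+1]$, because hybrid time domains are connected in the $t+j$ direction up to unit jumps. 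By local eventual boundedness, the corresponding graph points stay in a compact set and have a cluster point, which lies in $\gph\sol$ with $t+j\geq T$.
\end{itemize}
Since $T$ is arbitrary, $\dom\sol$ is unbounded. I expect property 3, together with the single-valuedness at $(0,0)$ in property 1, to be the main technical points. Both follow the pattern of the argument in Lemma \ref{lemma: L-osc}.

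For part (b), fix $x\in K$ and $T>0$, and assume $K$ is compact and $\L$ is tail invariant. For each $n$, take an internal $(T,1/n)$-$\L$-chain from $x$ to $x$ with curves $\sol^n_0,\dots,\sol^n_{k_n^*-1}$. Use its last curve $\sola_n:=\sol^n_{k_n^*-1}$. It satisfies $x\in\sola_n(\tau_n,i_n)+n^{-1}\ball$ for some $(\tau_n,i_n)\in\dom\sola_n$ with $\tau_n+i_n\geq T$, and $\sola_n$ stays in $K$ on $t+j\leq\tau_n+i_n$.

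The difficulty is that $\tau_n+i_n$ may be unbounded, so I want times in a fixed window. By tail invariance, I replace $\sola_n$ by its tail $\sola_n^{(s_n,l_n)}$, choosing $(s_n,l_n)\in\dom\sola_n$ so that the remaining time $(\tau_n-s_n)+(i_n-l_n)$ lies in $[T,T+1]$. Such a point exists by the same connectedness of hybrid time domains. The tail still lies in $\L$ and in $K$ up to its endpoint, and its initial point lies in the compact set $K$.

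Now pass to a subsequence along which the initial points converge, the shifted endpoint times converge to some $(\bar t,\bar j)$ with $\bar t+\bar j\in[T,T+1]$, and the curves converge graphically. The sequence is locally eventually bounded on $t+j\leq T+1$ because it lies in $K$ there. I would handle $t+j>T+1$ by truncating the domains to $t+j\leq T+1$, which I expect is harmless: either truncations are allowed in $\L$, or one can appeal to Lemma \ref{lemma: L-osc}, whose only requirement there is boundedness on $t+j\leq T$.

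Let $\sol$ be the graphical limit. Then $\sol\in\L$ by nominal well-posedness. Its values on $t+j\leq\bar t+\bar j$ lie in $K$ because $K$ is closed. Finally, the graph points $(\tau_n-s_n,\,i_n-l_n,\,\sola_n(\tau_n,i_n))$ converge to $(\bar t,\bar j,x)$. Hence $(\bar t,\bar j)\in\dom\sol$ with $\bar t+\bar j\geq T$, and $\sol(\bar t,\bar j)=x$, using the single-valuedness of $\L$-curves as in Lemma \ref{lemma: tail invariance to locally uniformly continuous}. This is exactly weak backward $\L$-invariance at $x$ for the given $T$.
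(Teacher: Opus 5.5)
Your part (a) is correct and is the paper's argument: first links of internal $(n,1/n)$-chains, a graphically convergent subsequence, membership in $\L$ by item (i) of nominal well-posedness, range in $K$ by closedness, and completeness because the domains grow in length.

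Part (b) follows the paper's route: take the last link, use tail invariance so that the near-return to $x$ occurs at a time in $[T,T+1]$, then pass to a graphical limit. However, the step you flag is a genuine gap, and neither of your remedies is available.
\begin{itemize}
\item[$\bullet$] $\L$ is not assumed closed under truncation; it could be, e.g., the set of complete solutions of a hybrid inclusion.
\item[$\bullet$] Lemma~\ref{lemma: L-osc} requires $\L$ to be locally bounded, which (b) does not assume.
\end{itemize}
Your tails are known to lie in $K$ only up to the near-return time. After that time they may blow up within bounded time. Then the sequence is not locally eventually bounded, and item (i) of nominal well-posedness says nothing about its graphical limit. The paper's proof has the same hole. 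It asserts that compactness of $K$ makes the tails locally eventually bounded, which is only true up to the near-return time.

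In fact (b) is false under its stated hypotheses. Take $d=1$ and flow-only curves, writing $\sol(t)$ for $\sol(t,0)$:
\begin{itemize}
\item[$\bullet$] $\alpha(t)=\min\{t,1\}$ on $[0,\infty)$;
\item[$\bullet$] $\gamma(t)=\tan(\pi t/2)$ on $[0,1)$;
\item[$\bullet$] for $n\geq 2$, $\beta_n$ on $[0,n+2)$, equal to $1$ on $[0,n]$, affine from $1$ to $1/n$ on $[n,n+1]$, and equal to $1/n+\tan(\pi(t-n-1)/2)$ on $[n+1,n+2)$.
\end{itemize}
Let $\L$ be the set of all tails of these curves; it is tail invariant.

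$\L$ is nominally well-posed. Each tail of $\gamma$ or of $\beta_n$ is unbounded on its bounded domain. Hence a locally eventually bounded sequence in $\L$ contains only finitely many tails of $\gamma$, and its tails $\beta_{n_i}^{(s_i,0)}$ must have $n_i-s_i\to\infty$. So its graphical limit is a tail of $\alpha$, and item (i) holds. Item (ii) is vacuous: every $x\geq 0$ is the initial point of a tail of $\gamma$, which blows up in finite time, and no $\L$-curve starts near any $x<0$.

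The compact set $K=[0,1]$ is internally $\L$-chain recurrent. For $x=1$, use the constant curve $\alpha^{(1,0)}\equiv 1$. For $x\in[0,1)$, follow $\alpha^{(x,0)}$ to time $\max\{\tau,1\}$, where it equals $1$. Then follow $\beta_n$ with $n\geq\tau$ large. Its affine piece passes through $x$ if $x>0$, or ends at $1/n\leq\ve$ if $x=0$, and everything stays in $K$.

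Yet no $\L$-curve takes the value $0$ at a positive time, so $K$ is not weakly backward $\L$-invariant. The tails $\beta_n^{(n+1-T,0)}$ do converge graphically to a curve reaching $0$ at time $T$ inside $K$. But that sequence is not locally eventually bounded, and its limit is not in $\L$.

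What is missing is an extra hypothesis. One option is local boundedness of $\L$: then tails starting in the compact $K$ are locally eventually bounded. Another is closure of $\L$ under truncation: truncate the tails at the near-return time. This is legitimate for $\S_\H$, so it rescues Corollary~\ref{corollary: obvious 1}. With either hypothesis, your argument goes through as written.
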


\begin{proof}
Pick $x\in K$. For an arbitrary $\ve>0$ and $\tau=n$, $n\in\nats$, take an internal
$(\tau,\ve)$-$\L$-chain from $x$ to $x$, and let $\sol^n:=\sol_0^n$ be the first ``link'' in it. 
Use \cite[Theorem 5.36]{RockafellarWets98} to pass to a subsequence $\sol^n$, without relabeling,
so that $\sol^n$ converges graphically. Because $\sol^n(0,0)=x$ for all $n\in\nats$ and 
$\L$ is locally bounded or $K$ is compact, the sequence $\sol^n$ is locally eventually bounded.
Because $\L$ is well posed, the graphical limit of the sequence, say $\sol$, is in $\L$.
Because $K$ is closed and, for each $n\in\nats$, 
$\sol^n(t,j)\in K$ for all $(t,j)\in\dom\sol^n$ with $t+j\leq n$, 
the range of $\sol$ is in $K$. 
Finally, \cite[Example 5.19]{Goebel12a} implies that $\dom\sol$ is unbounded and 
so $\sol$ is complete, because $\dom\sol^n$ are ``increasing in length,'' 
Obviously, $\sol(0,0)=x$. The weak invariance of $K$ was shown. 

For backward invariance, pick $x\in K$ and $T>0$. 
For $\ve=1/n$ and $\tau=n$, $n\in\nats$, take an internal $(\tau,\ve)$-$\L$-chain from $x$ to $x$, 
and let $\sola_{k^*_n}^n$ be the last ``link'' in it. 
For every $n$, there exists $(t_n,j_n)\in\dom\sola_{k^*_n}^n$ with $\sola_{k^*_n}^n(t_n,j_n)\in x+n^{-1}\ball$,
and for every large enough $n$, there exists $(t'_n,j'_n)\in\dom\sola_{k^*_n}^n$ with 
$t_n+j_n-T-1\leq t'_n+j'_n\leq t_n+j_n-T$. For every large enough $n$, let $\sola_n$ be the
tail $\left(\sola_{k^*_n}^n\right)^{(t'_n,j'_n)}$ of $\sola_{k_n}^n$. By the tail invariance
assumption, each such $\sola^n$ is in $\L$. 
Compactness of $K$ ensures that the sequence $\sola^n$ is locally eventually bounded. 
Then \cite[Theorem 5.36]{RockafellarWets98}, again, lets one pass to subsequence, 
without relabeling, so that $\sola^n$ converge graphically to some $\sola\in\L$ which is complete and has range in $K$, for reasons similar to those in the previous paragraph, while $(t_n-t'_n,j_n-j'_n)$ converge to some $(t,j)\in\dom\sola$ with $t+j\geq T$ and $\sola(t,j)=x$. 
This confirms weak backward invariance of $K$. 
\end{proof}

\subsection{Generalized concatenation}
\label{section: concatenation}

A {\em concatenation} of two $\L$-curves $\sol_1$ and $\sol_2$ at
$(s,i)\in\dom\sol_1$ such that $\sol_1(s,i)=\sol_2(0,0)$ is the mapping $\sola$
defined by
$$\sola(t,j):=
\left\{
\begin{array}{ll}
\sol_1(t,j) & \mbox{if}\ (t,j)\in\dom\sol_1,\ t+j\leq s+i, \\[3pt]
\sol_2(t-s,j-i) & \mbox{if}\ (t-s,j-i)\in\dom\sol_2.
\end{array}
\right. 
$$
This definition naturally extends to more than two curves. 
The set $\L$ is {\em closed under concatenation}, if for every $\sol_1,\sol_2\in\L$ and every $(s,i)\in\dom\sol_1$ such that $\sol_1(s,i)=\sol_2(0,0)$, the concatenation
$\sol$ is in $\L$. 
Note that if the set $\L$ is closed under concatenation, and if $K$ is 
weakly forward $\L$-invariant then for every internal 
$(\tau,\ve)$-$\L$-chain from $x\in K$ to $y\in K$ there exists one for 
which the curves $\sol_0,\sol_1,...,\sol_{k^*-1}$ are complete
and have ranges in $K$.

A generalized concept of concatenation is now proposed.
Consider a finite sequence of $\L$-curves $\sola_0,\sola_1,\dots,\sola_{k^*}$ 
or an infinite one $\sola_0,\sola_1,\dots$ --- in which case $k^*=\infty$ ---
and an associated sequence $(t_k,j_k)\in\dom\sola_k$ for $0\leq k<k^*$.
The {\em generalized concatenation} of the sequence is the (likely) 
set-valued mapping $\Sola:\reals^2\rightrightarrows\reals^d$ where
$$\Sola(t,j):=
\left\{
\begin{array}{lll}
\sola_0(t,j) & \mbox{for} & (t,j)\in\dom\sola_0,\ t+j<t_0+j_0 \\[3pt]
\{\sola_0(t_0,j_0),\sola_1(0,0)\} & \mbox{for} & (t,j)=(t_0,j_0) \\[3pt] 
\sola_1(t-t_0,j-j_0) & \mbox{for} & 
\begin{array}{l}
(t,j)\in\dom\sola_1+(t_0,j_0),\\ \ \ \ \ \ \ t+j < t_0+t_1+j_0+j_1
\end{array}\\[3pt]
\{\sola_1(t_1,j_1),\sola_2(0,0)\} & \mbox{for} & (t,j)=(t_0+t_1,j_0+j_1) \\[3pt]
\vdots 
\end{array}
\right.
$$
and $\Sola(t,j):=\emptyset$ otherwise. This way, $\dom\Sola$ is a hybrid time
domain that is the expected concatenation of the hybrid time domains 
$\dom\sola_k|_{t+j\leq t_k+j_k}$, and --- in the finite case ---
the last point in $\dom\Sola$ 
is $$(t_0+t_1+\dots+t_{k^*},j_0+j_1+\dots+j_{k^*}).$$
Given a generalized concatenation $\Sola$, its {\em concatenation times} are
the points in $\dom\Sola$ given by $(t_0,j_0)$, $(t_0+t_1,j_0+j_1)$, etc.
and the {\em length} of the $k$-th segment in the generalized 
concatenation is $t_{k-1}+j_{k-1}$.
The notion of $\L$ being closed under generalized concatenation can be formulated but,
because $\L$-curves are single-valued on their domains, a generalized concatenation
of $\L$-curves that belongs to $\L$ is also a usual concatenation. 

The construction of a generalized concatenation can be applied to 
$\L$-curves that form an $(\tau,\ve)$-$\L$-chain. 
The main result of this section, Lemma \ref{lemma: limit of concatenations is a solution},
shows that when a graphically convergent sequence of generalized concatenations comes from 
a sequence of  $(\tau,\ve)$-$\L$-chains with $\ve$ decreasing to $0$, the limit
is an $\L$-curve. The technicalities of the proof are easier handled when the
curves in each concatenation have roughly the same length. 
The next results help with this.

\begin{lemma}
\label{lemma: long chain}
Let $\L$ be tail-invariant and $\tau>0$.  
Then, every $\sol\in\L$ with $\length\dom\sol>\tau$ is a concatenation of finitely many
$\sol_1,\sol_2,\dots,\sol_K\in\L$ or infinitely many $\sol_1,\sol_2,\dots\in\L$ such
that each concatenation segment has length in $[\tau,2\tau+1)$. 
\end{lemma}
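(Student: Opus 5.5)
The plan is to cut $\dom\sol$ at a sequence of points $(s_k,i_k)\in\dom\sol$ with consecutive differences $(s_{k+1}+i_{k+1})-(s_k+i_k)\in[\tau,2\tau+1)$. Then $\sol$ is recovered as the concatenation of the tails $\sol^{(s_k,i_k)}$, each truncated at the next cut. Tail invariance of $\L$ guarantees that every tail $\sol_k:=\sol^{(s_k,i_k)}\in\L$. By the definition of concatenation, only the portion of $\sol_k$ up to $(s_{k+1}-s_k,i_{k+1}-i_k)$ is used, and $\sol_k(s_{k+1}-s_k,i_{k+1}-i_k)=\sol(s_{k+1},i_{k+1})=\sol_{k+1}(0,0)$. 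So the matching condition holds automatically, and the concatenation reproduces $\sol$ exactly. The last curve $\sol_K$ (in the finite case) is the full tail, and its segment is the remainder of the domain.

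The cut points are chosen inductively. Set $(s_0,i_0)=(0,0)$. Given $(s_k,i_k)$ with $\length\dom\sol-(s_k+i_k)\geq\tau$, I need $(s,i)\in\dom\sol$ with $s+i-(s_k+i_k)\in[\tau,2\tau+1)$. The key observation is that the function $(t,j)\mapsto t+j$ on a hybrid time domain takes values that fill an interval, except for gaps of length exactly $1$ at jumps. Indeed, at a jump from $(t_{j},j-1)$ to $(t_j,j)$ the value increases by exactly $1$, and along flow intervals it varies continuously. Hence the set $\{t+j:(t,j)\in\dom\sol\}$ meets every half-open interval of length at least $1$ lying in $[0,\length\dom\sol)$. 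The interval $[s_k+i_k+\tau,\,s_k+i_k+2\tau+1)$ has length $\tau+1>1$, so a point is found whenever $s_k+i_k+\tau<\length\dom\sol$, or when equality is attained by a point of the domain.

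Termination is handled as follows. If $\length\dom\sol=\infty$, the induction never stops and yields infinitely many curves. Since each step advances $t+j$ by at least $\tau$, the cuts exhaust $\dom\sol$. If the length $L$ is finite, stop at the first $k$ for which $L-(s_k+i_k)<2\tau+1$. The main obstacle is ensuring that the final segment, of length $L-(s_K+i_K)$, also lies in $[\tau,2\tau+1)$. To handle this, I would choose cuts more carefully near the end. When $L-(s_k+i_k)\in[2\tau+1,\infty)$, pick the next cut so that the remaining length stays at least $\tau$, which means choosing $s+i\in[s_k+i_k+\tau,\min(s_k+i_k+2\tau+1,L-\tau))$. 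This interval has length at least $\min(\tau+1,\,L-(s_k+i_k)-2\tau)$, which can be small, so the ``gaps of size $1$'' argument may fail there. In that case I would adjust by merging or rebalancing the last two segments: their total lies in $[2\tau+1,3\tau+2)$, and I would look for a split point with both parts in $[\tau,2\tau+1)$. The splitting window has length $\tau+1$ in the worst case, so the gap argument applies again. Points with $s+i=L$ are handled by compactness of the final interval of the domain when $L$ is attained, and the case where $L$ is not attained is treated as an unbounded-in-$j$ or open-end situation. The initial hypothesis $\length\dom\sol>\tau$ guarantees that at least one segment of the required length exists.
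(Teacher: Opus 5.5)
Your argument is correct and is essentially the paper's proof. Both cut $\dom\sol$ inductively at points $(s_k,i_k)$, use tail invariance to get $\sol^{(s_k,i_k)}\in\L$, and rely on the fact that the values of $t+j$ on a hybrid time domain have gaps of length exactly $1$; the paper uses this without comment when it picks a cut with $\tau\le s_1+i_1<\tau+1$. The end-game obstacle you raise does not actually occur: when $L-(s_k+i_k)\ge 2\tau+1$, your window $[s_k+i_k+\tau,\min(s_k+i_k+2\tau+1,L-\tau))$ has length at least $\min(\tau+1,1)=1$, not something small. The paper sidesteps the issue by always advancing $t+j$ by an amount in $[\tau,\tau+1)$, which keeps the remainder above $\tau$. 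Your merge-and-resplit fallback is also valid, since its split window has length at least $1$ (that is its worst case, not $\tau+1$), but it is never needed.
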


\begin{proof}
Suppose that $\length\dom\sol>\tau$ is finite. 
If $\length\dom\sol<2\tau+1$, there is nothing to do. If $\length\dom\sol\geq 2\tau+1$,
pick $(s_1,i_1)\in\dom\sol$ so that
\begin{equation}
\label{Paco}
\tau\leq s_1+i_1<\tau+1.
\end{equation}
Let $\sol_1:=\sol$ and $\sol_2:=\sol^{(s_1,i_1)}$. Then $\sol$ is a concatenation
of $\sol_1$ and $\sol_2$ at $(s_1,i_1)$, \eqref{Paco} holds, and
$\length\dom\sol_2>\tau$. If $\length\dom\sol_2<2\tau+1$, the process stops. 
Otherwise, it is iterated finitely many times. If $\length\dom\sol=\infty$,
the process is the same but is iterated ad nauseam. 
\end{proof}

Consequently, if $\Sola$ is a generalized concatenation of 
a $(\tau,\ve)$-$\L$-chain, then $\Sola$ is a generalized concatenation of a
$(\tau,\ve)$-$\L$-chain for which each link has length in $[\tau,2\tau+1)$.

\begin{lemma}
\label{lemma: limit of concatenations is a solution}
Let $\L$ be nominally well-posed, locally bounded, tail-invariant, 
and closed under concatenation. 
Given $\tau>0$ and a sequence $\ve_n>0$, $n\in\nats$, consider a sequence of 
(finite or infinite) $(\tau,\ve_n)$-$\L$-chains and a sequence of their 
generalized concatenations $\Sola_n$ such that
\begin{itemize} 
\item[(i)] $\ve_n\to 0$,
\item[(ii)] the sequence $\Sola_n$ converges graphically,
\item[(iii)] $\Sola_n(0,0)$ converge.
\end{itemize} 
Then the graphical limit of the sequence $\Sola_n$ is an $\L$-curve
or a truncation of an $\L$-curve. 
\end{lemma}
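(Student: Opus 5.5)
By the remark following Lemma \ref{lemma: long chain}, I may assume without loss of generality that every link of every chain has length in $[\tau,2\tau+1)$. Write the $n$-th chain as $\sola^n_0,\sola^n_1,\dots$ with concatenation times $(T^n_k,J^n_k)$, so $T^n_0=J^n_0=0$ and $(T^n_{k+1}+J^n_{k+1})-(T^n_k+J^n_k)\in[\tau,2\tau+1)$. Let $\Sola$ denote the graphical limit.

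First I will fix a bounded window $t+j\le T$. Only boundedly many links, at most $T/\tau+1$, meet this window. I argue link by link. Since $\Sola_n(0,0)\ni\sola^n_0(0,0)$ converges and $\L$ is locally bounded, the curves $\sola^n_0$ are eventually bounded on $t+j\le 2\tau+1$. Passing to a subsequence with \cite[Theorem 5.36]{RockafellarWets98}, $\sola^n_0$ converges graphically to some $\sola_0\in\L$ by nominal well-posedness. The first concatenation times converge to some $(T_1,J_1)\in\dom\sola_0$ with $T_1+J_1\geq\tau$, using a bounded-domain argument as in \cite[Example 5.19]{Goebel12a}. The next initial points satisfy $\sola^n_1(0,0)\in\sola^n_0(T^n_1-T^n_0,J^n_1)+\ve_n\ball$, so they converge to a point of the limit at $(T_1,J_1)$.

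The main obstacle is showing that this limit point equals $\sola_0(T_1,J_1)$, i.e.\ that the limit is single-valued at concatenation times. Graphical limits can add extra values where the times converge. I will use Lemma \ref{lemma: tail invariance to locally uniformly continuous}: local uniform continuity gives $\sola^n_0(T^n_1,J^n_1)$ close to $\sola^n_0$ evaluated at nearby domain points, uniformly in $n$. Together with $\ve_n\to0$ and the fact that the $\sola^n_0$ are bounded and converge graphically to a single-valued curve, this forces $\sola^n_0(T^n_1,J^n_1)\to\sola_0(T_1,J_1)$. Hence $\sola_1(0,0)=\sola_0(T_1,J_1)$, where $\sola_1$ is the graphical limit of $\sola^n_1$, which lies in $\L$ by the same argument as for $\sola_0$. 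I must also check that the limit domain does not continue past $(T_1,J_1)$ in a way that disagrees with $\sola_1$. This follows because the graph of $\Sola_n$ restricted near the $k$-th segment is exactly the shifted graph of $\sola^n_k$ truncated, and truncations converge to truncations, since the endpoints converge. One subtlety is a segment whose endpoint times converge so that $j$ differs. Convergence of integer $J^n_k$ means they are eventually constant, so this does not arise.

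By induction over $k$ and a diagonal subsequence, $\Sola$ on every bounded window equals the concatenation of $\sola_0$ truncated at $(T_1,J_1)$, then $\sola_1$ shifted, and so on. Because $\L$ is closed under concatenation and the junction points match, each finite concatenation is in $\L$. If infinitely many segments survive in the limit, their lengths are at least $\tau$, so the limit domain is unbounded. Every finite truncation is then an $\L$-curve, and the whole limit is the graphical limit of these truncations-extended-by-tails. Those are bounded locally by local boundedness, so nominal well-posedness (i) puts the limit in $\L$. If only finitely many segments survive, for instance because the chains are finite or have lengths tending to a finite limit, the limit is a concatenation ending at a point of $\dom\sola_K$ for the last limiting link. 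That is a truncation of the $\L$-curve $\sola_K$ concatenated with its predecessors, which gives the alternative in the statement. Since the full sequence converges graphically, the subsequence limit is the limit of $\Sola_n$.
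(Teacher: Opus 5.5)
Your proposal is correct and follows essentially the paper's route: you normalize link lengths via Lemma \ref{lemma: long chain}, pass to subsequences along which each link converges graphically to an $\L$-curve (by local boundedness plus nominal well-posedness), identify the values at the limiting concatenation times, and conclude by closure under concatenation. Your infinite case writes the limit as the graphical limit of the untruncated finite concatenations, which lie in $\L$, and then invokes nominal well-posedness (i); this makes explicit a step the paper leaves implicit, and your junction matching already follows from single-valuedness of the limiting links, so local uniform continuity is not essential there.
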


Above and in what follows, a {\em truncation} of an $\L$-curve $\sol$
is the restriction $\sol|_{t+j\leq T}$ for some $T\geq 0$. 

\begin{proof}
Without loss of generality, by Lemma \ref{lemma: long chain}, 
for each $n\in\nats$ let the $(\tau,\ve_n)$-$\L$-chain defining $\Sola_n$, 
be given by $\sola_0^n,\sola_1^n,\dots,\sola_{K_n}^n$ for some $K_n\in\natz$ 
or by $\sola_0^n,\sola_1^n,\dots$ (in which case $K_n:=\infty$)
and hybrid times $(t_k^n,j_k^n)\in\dom\sola_k^n$ with 
$t_l^n+j_k^n\in[\tau,2\tau+1)$ for $0 \leq k <K_n+1$, 
such that 
\begin{equation}
\label{dog park}
\sola_{k+1}^n(0,0)\in\sola_k^n(t_k^n,j_k^n)+\ve_n\ball \qquad 0\leq k<K_n.
\end{equation}  

Local boundedness of $\L$ and item (iii) ensure that the sequence 
is locally eventually bounded. 
Denote the graphical limit of the sequence $\Sola_n$ by $\Sola$. 
Each $\dom\Sola_n$ is a hybrid time domain.
Eventual local boundedness of the sequence ensures that $\dom\Sola$ 
is the limit of $\dom\Sola_n$,
and the limit of hybrid time domains is a hybrid time domain. (For details, see
\cite[Example 5.3, Example 5.19]{Goebel12a}.)

If $\dom\Sola$ is bounded, then, subject to passing to a subsequence of $\Sola_n$,
there exists $K\in\natz$ such that $K_n=K$ for all $n\in\nats$, and
\begin{itemize} 
\item[(a)]
$(t_k^n,j_k^n)$ converge, as $n\to\infty$, to some $(t_k,j_k)$,
for all $0\leq k < K+1$;
\item[(b)]
$\sola_k^n(t_k^n,j_k^n)$ converge, as $n\to\infty$, to some $x_k\in\reals^d$, 
for all $0\leq k < K+1$;
\item[(c)] 
by assumption (i) and \eqref{dog park}, 
$\sola_k^n(0,0)$ converge, as $n\to\infty$, to the same $x_k$, 
for all $0\leq k < K$, 
\item[(d)] 
by \cite[Theorem 5.36]{RockafellarWets98}, $\sola_k^n$ converge graphically, 
as $n\to\infty$, and because $\L$ is nominally well-posed, the graphical limit,
denoted $\sola_k$, is in $\L$, for all $0\leq k < K+1$. 
\end{itemize} 
If $\dom\Sola$ is unbounded, set $K:=\infty$ and observe that assumption (ii)
implies that for every $k\in\nats$, one has that $k_l\geq k$ for all large 
enough $l$. This observation ensures that (a), (b), (c), and (d) above hold,
although with the understanding that for any particular $k$ and an $n$ that is
not large enough, $\sola_k^n$ and thus $(t_k^n,j_k^n)\in\dom\sola_k^n$ 
need not exist. 

In either case, with $K\in\natz$ or $K=\infty$, 
one can show that, for each $0\leq k<K$, 
the graphical limit of $\sola_k^l$ evaluated at $(t_k,j_k)$ 
and the graphical limit of $\sola_{k+1}^n$ evaluated at $(0,0)$ 
are both equal to $x_k$. This relies on $\L$ being locally uniformly 
continuous, as in Lemma \ref{lemma: tail invariance to locally uniformly continuous}. 
Thus, if $K=\infty$, $\Sola$ is the concatenation of 
$\sola_k$, $0\leq k$, at points $(t_k,j_k)$, $0\leq k$. 
If $K\in\natz$, $\Sola$ is a truncation, to 
$0\leq t+j\leq \sum_{k=0}^K t_k+j_k$, of the concatenation of 
$\sola_k$, $0\leq k\leq K$, at points $(t_k,j_k)$, $0\leq k<K$. 
The mentioned concatenations are in $\L$, because $\L$ is closed
under concatenation, and the proof is done. 
\end{proof}

Equipped with this result, one can generalize Lemma \ref{lemma: L-osc},
which is about $\L$-curves, to $(\tau,\ve)$-$\L$-chains. 
This generalization helps with the main result of the next subsection. 

\begin{lemma}
\label{lemma: closeness of solutions to chains}
Let $\L$ be nominally well-posed and closed under concatenation and
let $K\subset\reals^d$ be a compact set. For every $T,\tau,\rho>0$ there exist
$\ve,\delta>0$ such that, for every $(\tau,\ve)$-$\L$-chain $\sola_0,\sola_1,\dots,\sola_k$
with range in $K+\delta\ball$ and every $(s,i)\in\dom\Sola$, where $\Sola$ is the generalized
concatenation of $\sola_0,\sola_1,\dots,\sola_k$, there exists $\sol\in\L$ 
with range in $K$ and such that 
\begin{equation}
\label{Klose}
\gph\Sola^{(s,i)}|_{t+j\leq T}\subset\gph\sol+\rho\ball, \quad
\gph\sol|_{t+j\leq T}\subset\gph\Sola^{(s,i)}+\rho\ball.
\end{equation}
\end{lemma}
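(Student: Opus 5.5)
The plan is a contradiction argument modeled on the proof of Lemma \ref{lemma: L-osc}, with Lemma \ref{lemma: limit of concatenations is a solution} playing the role that nominal well-posedness played there.

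Suppose the claim fails for some compact $K$ and some $T,\tau,\rho>0$. Take $\ve=\delta=1/n$. For each $n$ there is then a $(\tau,1/n)$-$\L$-chain with range in $K+n^{-1}\ball$, with generalized concatenation $\Sola_n$, and a point $(s_n,i_n)\in\dom\Sola_n$, such that \eqref{Klose} fails for $\Sola_n^{(s_n,i_n)}$ and every $\sol\in\L$ with range in $K$.

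The first step is to replace $\Sola_n^{(s_n,i_n)}$ by the generalized concatenation of a $(\tau,1/n)$-$\L$-chain starting at time $(0,0)$. The point $(s_n,i_n)$ lies in some segment, coming from a link $\sola_k^n$, possibly at a concatenation time. Take the tail of $\sola_k^n$ from the corresponding local time; this requires tail invariance, which I would assume as in Lemma \ref{lemma: limit of concatenations is a solution}. Together with the subsequent links, this tail gives a chain whose generalized concatenation agrees with $\Sola_n^{(s_n,i_n)}$. There is a technical issue: the first truncated link may have length less than $\tau$. I would handle it by merging it with the next link through ordinary concatenation, which is legitimate only up to the $1/n$ mismatch. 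The cleaner alternative is to observe that the proof of Lemma \ref{lemma: limit of concatenations is a solution} never uses the lower bound $\tau$ on the length of the first segment. I would take this second route and note the needed remark.

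Next, since the ranges lie in the bounded set $K+\ball$, \cite[Theorem 5.36]{RockafellarWets98} lets us pass to a subsequence along which $\Sola_n^{(s_n,i_n)}$ converges graphically and the initial values converge to a point of $K$. Lemma \ref{lemma: limit of concatenations is a solution} then shows that the limit $\Sola$ is an $\L$-curve or a truncation of one. Its range lies in $K$ because $K$ is closed and the ranges lie in $K+n^{-1}\ball$. The case to watch is the truncation case, where $\dom\Sola$ is bounded. There I would restrict attention to length at most $T$ (graphical closeness only involves $t+j\leq T$) and need a curve $\sol\in\L$ with range in $K$ whose truncation to $t+j\leq T$ is close to $\Sola$.

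A truncation $\sol|_{t+j\leq T'}$ of an $\L$-curve whose graph up to $T'$ lies in $K$ may leave $K$ afterwards. So if $T'<T$, I would need weak forward invariance of $K$, in the spirit of Proposition \ref{proposition:brilliant-Benaim}, and closure under concatenation to extend $\sol$ inside $K$. Alternatively, I would argue that $\dom\Sola_n^{(s_n,i_n)}$ has length $\to$ something $\geq T$. I expect this to be the main obstacle, and would first try to read the statement as allowing $\sol$ to be required in $K$ only on the relevant window.

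Finally, graphical convergence with uniform boundedness, via \cite[Theorem 4.10]{RockafellarWets98}, gives \eqref{Klose} for large $n$ with $\sol$ equal to the limit curve. This contradicts the choice of the chains.
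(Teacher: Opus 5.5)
Your argument is the paper's. You argue by contradiction with $\ve=\delta=1/n$, extract a graphically convergent subsequence of the tails $\Sola_n^{(s_n,i_n)}$, identify the limit through Lemma \ref{lemma: limit of concatenations is a solution}, and use \cite[Theorem 4.10]{RockafellarWets98} to get \eqref{Klose} for large $n$. The paper's proof consists of exactly these steps. It simply asserts that the limit ``is in $\L$'' and passes over all three issues you raise:
\begin{itemize}
\item You are right that Lemma \ref{lemma: limit of concatenations is a solution} needs tail invariance and local boundedness, and neither is assumed here.
\item You are right that its proof does not need the lower bound $\tau$ on the first segment; a first segment of vanishing length just collapses to the initial point in the limit.
\item You are right that the limit may be only a truncation of an $\L$-curve.
\end{itemize}
On the first two points your write-up is more careful than the paper's.

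The truncation case, however, is a genuine gap, in your proposal and equally in the paper, and neither of your repairs closes it. Nothing forces the limit length $T'$ to be at least $T$, because $(s_n,i_n)$ may be the last point of $\dom\Sola_n$. If $T'+\rho<T$, any $\sol$ satisfying the second inclusion of \eqref{Klose} must have its domain end before roughly $T'+\rho$. So extending the limit curve inside $K$ by weak invariance and concatenation destroys that inclusion rather than rescuing it.

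In fact the lemma is false as stated. Let $\L$ be the constant functions on $\realsplus\times\{0\}$ with values in $\reals$. This set is nominally well-posed, locally bounded, tail invariant and closed under concatenation. Take $K=\{0\}$, $T=10$ and $\tau=\rho=1$. For any $\ve,\delta>0$, the one-link chain $\sola_0\equiv 0$ with $(t_0,j_0)=(1,0)$ has $\dom\Sola=[0,1]\times\{0\}$. The only $\sol\in\L$ with range in $K$ is $\sol\equiv 0$, and $(10,0,0)\in\gph\sol|_{t+j\leq T}$ lies at distance $9>\rho$ from $\gph\Sola^{(0,0)}$.

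Your other worry, that the curve may leave $K$ after $T'$, is also real. Take the complete solutions of $\dot x=1$, $K=[0,1]$, and the chain $\sola_0(t,0)=t$ with $(t_0,j_0)=(1,0)$. Then no $\L$-curve has range in $K$ at all.

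So the statement must be changed before your argument can be finished. In either of the following options, add tail invariance and local boundedness so that Lemma \ref{lemma: limit of concatenations is a solution} applies.
\begin{itemize}
\item Also assume $\L$ is closed under truncation, as $\S_\H$ is. Then the truncated limit is itself an admissible $\sol$ with range in $K$.
\item Alternatively, restrict to $(s,i)$ with $\length\dom\Sola^{(s,i)}\geq T$ and require only $\rge\sol|_{t+j\leq T}\subset K$. This forces $T'\geq T$. It is also all that the proof of Theorem \ref{theorem: almost time to watch EPL} uses, since only the first inclusion is invoked there and the chain can be extended periodically as far as needed.
\end{itemize}
With either change, your argument goes through as written.
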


\begin{proof}
Suppose not: there exist $T,\tau,\rho>0$ such that, with $\delta_n=\ve_n=1/n$, one
has $(\tau,\ve_n)$-chains $\sola^n_0,\sola^n_1,\dots,\sola^n_{k_n}$ with range in
$K+\delta_n\ball$ and $(s_n,i_n)\in\dom\Sola_n$ such that, for every $\sol\in\L$
with range in $K$, \eqref{Klose} fails with $\Sola_n$ in place of $\Sola$ and
$(s_n,i_n)$ in place of $(s,i)$. Without relabeling, pass to a subsequence so that
$$\Sola'_n:=\Sola_n^{(s_n,i_n)}$$
converge graphically. 
By Lemma \ref{lemma: limit of concatenations is a solution},
the graphical limit of $\Sola'_n$, denoted $\sol$, is in $\L$. 
Then
$$\gph\Sola'_n|_{t+j\leq T+\rho}\subset\gph\sol+\rho\ball, \quad
\gph\sol|_{t+j\leq T+\rho}\subset\gph\Sola^{(s,i)}+\rho\ball$$
for all large enough $n$. This is a contradiction. 
\end{proof}

\subsection{Chain recurrent sets}
\label{section: chain recurrent sets}

Throughout this section, $X\subset\reals^d$ is a nonempty and  compact set.

Let 
$$\L|_X:=\left\{\sol\in\L\, |\, \rge\sol\subset X\right\}.$$ 
Obviously, $X$ is {\em strongly forward $\L|_X$-invariant}, 
in the sense that every $\sol\in\L|_X$ with $\sol(0,0)\in X$ has 
$\rge\sol\subset X$. 
The {\em chain-recurrent set} of $X$, denoted $\R(X)$, is the set of all 
chain recurrent points $x\in X$, i.e., points such that, for every $\tau,\ve>0$
there exists a $(\tau,\ve)$-$\L|_X$-chain from $x$ to $x$. Equivalently, one
can request that there exist an internal, with respect to $X$, $(\tau,\ve)$-$\L$-chain.

The proof of the next result relies on the preliminary 
Lemma \ref{lemma: closeness of solutions to chains} and is similar to what was done in \cite{Robinson77RMJM} in a simpler setting. 

\begin{theorem}
\label{theorem: almost time to watch EPL}
Let $\L$ be nominally well-posed, tail invariant, and closed under concatenation.
Then the chain recurrent set $\R(X)$ of $X$ is internally chain recurrent. 
\end{theorem}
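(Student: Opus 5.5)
Fix $x\in\R(X)$ and $\tau,\ve>0$; the goal is a $(\tau,\ve)$-$\L$-chain from $x$ to $x$ whose links, on their relevant portions, stay in $\R(X)$. The plan follows \cite{Robinson77RMJM}. Because $x$ is chain recurrent, for every $n\in\nats$ there is a $(\tau,1/n)$-$\L|_X$-chain from $x$ to $x$. The points of these chains need not be chain recurrent. The idea is to show that the chains, as $n\to\infty$, accumulate only on points of $\R(X)$. Then Lemma \ref{lemma: closeness of solutions to chains}, with $K=\R(X)$, replaces each link by a nearby $\L$-curve that has range in $\R(X)$.

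First I would check that $\R(X)$ is closed, hence compact. This is a routine $\ve/3$ argument using Lemma \ref{lemma: L-osc} applied to $\L|_X$, which is locally bounded since $X$ is compact and nominally well-posed. Next comes the key claim. Let $Y_n$ be the range of the $(\tau,1/n)$-chain from $x$ to $x$, using Lemma \ref{lemma: long chain} to cut links into lengths in $[\tau,2\tau+1)$. Pass to a Hausdorff-convergent subsequence with limit $Y\subset X$. I claim $Y\subset\R(X)$. Take $z\in Y$, so $z$ is a limit of points $z_n$ on the $n$-th chain. The chain from $z_n$ to $x$ followed by the chain from $x$ to $z_n$ gives a $(\tau,1/n)$-$\L|_X$-chain from $z_n$ to $z_n$. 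The only exception is the first, partial, link, which is a tail; it lies in $\L$ by tail invariance, though it may have length below $\tau$. Merging it with the next link via closure under concatenation, after a $1/n$ jump, fixes this only approximately. Instead, I would start the chain at the nearest preceding concatenation time. Its point $z'_n$ is joined to $z_n$ by an $\L$-curve of length less than $2\tau+1$. Applying this argument with arbitrary $\tau'$ in place of $\tau$ (regrouping links so each new link has length at least $\tau'$) and perturbing starting points by Lemma \ref{lemma: L-osc} shows that $z$ is chain recurrent for all $\tau',\ve'$. This gives $Y\subset\R(X)$.

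To conclude, take $\delta,\ve'$ from Lemma \ref{lemma: closeness of solutions to chains} with $K=\R(X)$, $T=2\tau+2$, and $\rho$ small relative to $\ve$. For large $n$ the $n$-th chain has $1/n<\ve'$ and range in $Y+\delta/2\ball\subset\R(X)+\delta\ball$. Apply the lemma at each concatenation time $(s,i)$ of $\Sola_n$ to get $\sol_k\in\L$ with range in $\R(X)$ that are $\rho$-close to the $k$-th link over $t+j\le T$. The endpoint $\sol_k(t'_k,j'_k)$, at a time with $t'_k+j'_k\ge\tau$ after adjusting by $\rho$, is within about $2\rho+1/n$ of the start of $\sol_{k+1}$. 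The resulting chain is a generalized internal $(\tau,\ve)$-chain in $\R(X)$. Lemma \ref{lemma:from-generalized-2-not}, applied to the compact set $\R(X)$, converts this into a genuine internal chain.

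The main obstacle is the claim $Y\subset\R(X)$ with \emph{arbitrary} $(\tau',\ve')$. Two issues arise: chains through $z_n$ only start near $z$, and partial links are too short. I would handle both by regrouping links and using Lemma \ref{lemma: closeness of solutions to chains} with $K=X$ to shadow the start. A further technical point is matching times of $\sol_k$ with link endpoints under graphical $\rho$-closeness across jumps; I would manage this by choosing $\rho$ after using the local uniform continuity of Lemma \ref{lemma: tail invariance to locally uniformly continuous}.
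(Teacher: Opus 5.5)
Your proposal is correct in outline, and its core is the same as the paper's. Both arguments take loops from $x$ to $x$ with $\ve=1/n$ and form the limit $Y$ of their ranges. Both then go around a loop through a point $z_n$ near $z\in Y$, shadow pieces of it with Lemma~\ref{lemma: closeness of solutions to chains}, and convert the result with Lemma~\ref{lemma:from-generalized-2-not}.

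The difference is organizational. The paper applies Lemma~\ref{lemma: closeness of solutions to chains} with $K$ equal to the limit set $Y$ itself, not with $K=X$. Since $Y_n\subset Y+\delta\ball$ for large $n$, the shadowing curves have range in $Y$, and one pass shows that $Y$ is internally chain recurrent. That yields $Y\subset\R(X)$ and, because $x\in Y$, internal chains for $x$ inside $Y\subset\R(X)$ at once.

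Your key step proves only $Y\subset\R(X)$, so you need two extra ingredients:
\begin{itemize}
\item closedness of $\R(X)$, so that Lemma~\ref{lemma: closeness of solutions to chains} can be invoked with $K=\R(X)$;
\item a second shadowing pass of the original loops from $x$.
\end{itemize}
Replacing $X$ by $Y$ in your key step makes both unnecessary. In exchange, your route proves explicitly that $\R(X)$ is compact. The paper leaves this tacit, even though its notion of internal chains is set up for closed sets.

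Three details need fixing when you write it out.
\begin{itemize}
\item Starting at the nearest preceding concatenation time does not remove the short partial links at both ends of the loop through $z_n$. What removes them is that Lemma~\ref{lemma: closeness of solutions to chains} applies to $\Sola^{(s,i)}$ for every $(s,i)\in\dom\Sola$. You should therefore cut the concatenation at times measured from the time of $z_n$, not at concatenation times, as the paper does.
\item If $\tau'$ exceeds the length of the loop, regrouping alone cannot produce links of length at least $\tau'$. You must traverse the loop several times, which is the paper's periodic extension.
\item Graphical $\rho$-closeness and the tail shift at the start can each cost up to $\rho$ in link length. Build this margin in beforehand: use segment lengths at least $\tau'+2\rho$, as in \eqref{Mike Oldfield}, or start your last step from $(\tau+1,1/n)$-chains rather than adjusting afterwards.
\end{itemize}
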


\begin{proof}
Recall that $\R(X)$ is defined using $\L|_X$, and --- without loss of 
generality --- suppose that the range of $\L$ is a subset of $X$. 
Fix $\tau>0$. Pick $x\in\R(X)$.

For $n\in\nats$, let $\sola^n_0,\sola^n_1,\dots,\sola^n_{k_n}$ be a $(\tau,1/n)$-chain
from $x$ to $x$ and let $K_n\subset X$ be its range, which is compact. 
Pass to a subsequence so that $K_n$ converge to some $K\subset X$. The following
will be shown:

\vskip.1cm

\noindent 
{\it Claim.} $K$ is internally chain recurrent. 

\vskip.1cm 

\noindent 
From the claim, it follows that $K\subset\R(X)$. Furthermore, because $x\in K$ and
$x$ is an arbitrary point in $\R(X)$, the set $\R(X)$ is internally chain recurrent. 

To prove the claim, pick $T>0$, $\ve'\in(0,1)$ and apply 
Lemma \ref{lemma: closeness of solutions to chains}
with $2T+4\rho+1$, $\tau$, and 
$\rho:=.5\ve'$ to obtain $\delta,\ve$ with properties as described in the lemma.  
Pick $y\in K$. 
Pick $n$ large enough so that $1/n<\ve$, $K_n\subset K+\delta\ball$, and there exists
$z\in K_n$ with $z\in y+\rho\ball$. 
Extend, if needed, the $(\tau,1/n)$-chain $\sola^n_0,\sola^n_1,\dots,\sola^n_{k_n}$
periodically, so that there exist $(s,i),(s',i')\in\dom\Sola$ with
$s'+i'\geq s+i+T+2\rho$ so that $z\in\Sola(s,i)=\Sola(s',i')$,
where $\Sola$ is the generalized concatenation of the (extended, if needed)
chain $\sola^n_0,\sola^n_1,\dots,\sola^n_{k_n}$.

The idea now is to break up $\Sola$ into ``shorter segments'', approximate each
segment by an $\L$-curve with ranges in $K$ using 
Lemma \ref{lemma: closeness of solutions to chains}, 
and from these $\L$-curves produce a generalized $(T,\ve')$-chain from $y$ to $y$. 

By Lemma \ref{lemma: long chain}, there exist
$(s_0,i_0)=(s,i)$, $(s_1,i_1)$, ..., $(s_{k^*},i_{k^*})=(s',i')$ in $\dom\Sola$ 
such that
\begin{equation}
\label{Mike Oldfield}
T+2\rho\leq s_k+i_k-s_{k-1}-i_{k-1}<2T+4\rho+1.
\end{equation}
for $k=1,2,\dots,k^*$. 
By Lemma \ref{lemma: closeness of solutions to chains}, for $k=0,1,\dots,k^*-1$
there exist $\sol'_k\in\L$ with range in $K$ such that
\begin{equation}
\label{Kloser}
\gph\Sola^{(s_k,i_k)}|_{t+j\leq 2T+4\rho+1}\subset\gph\sol_k+\rho\ball. 
\end{equation}
Let $y_0=z=y_{k^*}$ and for $k=1,2,\dots,k^*-1$ pick any $y_k\in\Phi(s_k,i_k)$. 
Then, for $k=0,1,\dots,k^*-1$, there exist $(t_k,j_k),(t'_k,j'_k)\in\dom\sol_k$ 
with $t_k\in[0,\rho]$, $j_k=i_k$ and $t'_k\in[s_{k+1}-s_k-\rho,s_{k+1}-s_k]$, $j'_k=i_{k+1}$
such that
$$y_k\in\sol_k(t_k,j_k)+\rho\ball, \quad
y_{k+1}\subset\sol_k(t'_k,j'_k)+\rho\ball.$$
Replacing $k$ by $k+1$ in the first inclusion above yields, for $k=0,1,\dots,k^*-2$, 
$$\sol_{k+1}(t_{k+1},j_{k+1})\in\sol_k(t'_k,j'_k)+2\rho\ball.$$
Because $y\in z+\rho\ball$, 
$$y\in\sol_0(t_0,j_0)+\rho\ball+\rho\ball, \qquad 
y\in\sol_{k^*-1}(t'_{k^*-1},j'_{k^*-1})+\rho\ball+\rho\ball.$$
Now, for $k=0,1,\dots,k^*{-1}$, let 
$$\sola_k:=\sol_k^{(t_k,j_k)}, \qquad (t''_k,j''_k):=(t'_k,j'_k)-(t_k,j_k).$$
Then
$$y\in\sola_0(0,0)+\ve'\ball, \qquad y\in\sola_{k^*-1}(t''_{k^*-1},j''_{k^*-1})+\ve'\ball,$$
and, for $k=0,1,\dots,k^*-2$,
$$\sola_{k+1}(0,0)\in\sola_k(t''_k,j''_k)+\ve'\ball.$$
Finally, for $k=0,1,\dots,k^*{-1}$,
$$t''_k+j''_k=t'_k+j'_k-t_k-j_k\geq s_{k+1}-s_k-\rho+i_{k+1}-\rho-i_k\geq T,$$
where the last inequality comes from \eqref{Mike Oldfield}. Thus
$\sola_0,\sola_1,\dots,\sola_{k^*-1}$ is a generalized $(T,\ve')$-chain from $y$ to $y$,
with range in $K$. 
Lemma \ref{lemma:from-generalized-2-not} implies that $K$ is internally chain recurrent. 
\end{proof}

\section{Asymptotic curves and their asymptotic properties}
\label{section: asymptotic curves and their asymptotic properties}

An asymptotic $\L$-curve, defined formally in 
Subsection \ref{section: asymptotic L-curves},
is a potentially set-valued mapping for which, roughly, the later its tail 
starts the more it resembles an $\L$-curve.
Subsection \ref{section: omega-limit sets of hybrid curves} shows that the $\omega$-limit
of any asymptotic $\L$-curve is internally $\L$-chain transitive, so that every asymptotic
$\L$-curve converges to a set that can be identified through $\L$.
Subsection \ref{section: abstract material} concludes the study of the abstract
hybrid dynamics represented by $\L$, by characterizing the smallest family $\L$ of 
hybrid curves for which a given hybrid curve is an asymptotic $\L$-curve. 

\subsection{Asymptotic $\L$-curves}
\label{section: asymptotic L-curves}

A set-valued mapping $\sol:\reals^2\rightrightarrows\reals^d$ with
$\dom\sol\subset[0,\infty)^2$ is an {\em asymptotic $\L$-curve} if:
\begin{itemize} 
\item[(i)] 
$\sol$ is bounded and $\dom\sol$ is unbounded, and
\item[(ii)]
for every graphically convergent sequence of the valued-valued mappings 
$\sol^{(t_n,j_n)}:[0,\infty)^2\rightrightarrows\reals^d$, 
with $(t_n,j_n)\in\dom\sol$ and $t_n+j_n\to\infty$ as $n\to\infty$,
its graphical limit is a complete $\L$-curve. 
\end{itemize} 
Above, $\sol^{(t,j)}$ represents the tail of $\sol$, with the definition identical to \eqref{tail}. 

For a simple example, consider $\L:=\{\sola_1,\sola_2\}$ with
$\dom\sola_1=\realsplus\times\{0\}$ and $\sola_1(t,0)=1$, and
$\dom\sola_2=\{0\}\times\natz$ and $\sola_2(0,j)=2$. Then the set-valued
$\sol$ with 
$\dom\sol=\left(\realsplus\times\{0\}\right) \cup\left(\{0\}\times\natz\right)$ 
and $\sol(0,0)=\{1,2\}$, $\sol(t,0)=1$ for all $t>0$, 
and $\sol(0,j)=2$ for all $j\in\nats$, 
is an asymptotic $\L$-curve. Indeed, for any sequence $(t_i,j_i)\in\dom\sol$
with $t_i+j_i\to\infty$, $\sol_i:=\sol^{(t_i,j_i)}$ converge graphically 
only if $\sol_i=\sola_1$ for all large enough $i$, or if $\sol_i=\sola_2$
for all large enough $i$, and in such case, the graphical limit is either
$\sola_1$ or $\sola_2$, and thus in $\L$. 

\begin{figure}[h]

\centering
\begin{tikzpicture}[scale=1.1]

\draw[thick,->] (-1,0) -- (10,0) node[anchor=north east] {$t$};
\draw[thick,->] (-0,-1) -- (0,3) node[anchor=north west] {$\phi(t,0)$};

\foreach \x in {1,2,3,4,5}
\draw (1.5*\x cm, 2pt) -- (1.5*\x cm, -2pt) node[anchor=north] {$\x$};

\foreach \x in 
{1.0000, 1.5000, 1.8333, 2.0833, 2.2833, 2.4500, 2.5929, 2.7179, 2.8290, 2.9290, 3.0199, 3.1032, 3.1792, 3.2485, 3.3118, 3.3693, 3.4220, 3.4709, 3.5169, 3.5600, 3.6005, 3.6389, 3.6755, 3.7106, 3.7446, 3.7771, 3.8085, 3.8392, 3.8687, 3.8976, 3.9258, 3.9528, 3.9791, 4.0049, 4.0301, 4.0547, 4.0787, 4.1020, 4.1247, 4.1470, 4.1687, 4.1900, 4.2108, 4.2312, 4.2512, 4.2707, 4.2899, 4.3088, 4.3273, 4.3454, 4.3633, 4.3808, 4.3980, 4.4149, 4.4316, 4.4479, 4.4640, 4.4798, 4.4953, 4.5106, 4.5256, 4.5404, 4.5550, 4.5693, 4.5834, 4.5973, 4.6110, 4.6245, 4.6378, 4.6509, 4.6638, 4.6765, 4.6891, 4.7015, 4.7137, 4.7257, 4.7376, 4.7493, 4.7609, 4.7723, 4.7836, 4.7947, 4.8057, 4.8166, 4.8273, 4.8379, 4.8484, 4.8588, 4.8690, 4.8791, 4.8891, 4.8990, 4.9088, 4.9185, 4.9280, 4.9375, 4.9469, 4.9562, 4.9654, 4.9745, 4.9835, 4.9924, 5.0012, 5.0099, 5.0185, 5.0270, 5.0354, 5.0437, 5.0519, 5.0600, 5.0681, 5.0760, 5.0839, 5.0917, 5.0994, 5.1070, 5.1145, 5.1220, 5.1294, 5.1367, 5.1440, 5.1512, 5.1583, 5.1653, 5.1723, 5.1792, 5.1860, 5.1928, 5.1995, 5.2061, 5.2127, 5.2192, 5.2256, 5.2320, 5.2383, 5.2446, 5.2508, 5.2570, 5.2631, 5.2691, 5.2751, 5.2811, 5.2870, 5.2928, 5.2986, 5.3043, 5.3100, 5.3156, 5.3212, 5.3267, 5.3322, 5.3376, 5.3430, 5.3484, 5.3537, 5.3589, 5.3641, 5.3693, 5.3744, 5.3795, 5.3845, 5.3895, 5.3945, 5.3994, 5.4042, 5.4090, 5.4138, 5.4185, 5.4232, 5.4279, 5.4325, 5.4371, 5.4416, 5.4461, 5.4506, 5.4550, 5.4594, 5.4637, 5.4680, 5.4723, 5.4765, 5.4807, 5.4849, 5.4890, 5.4931, 5.4971, 5.5011, 5.5051, 5.5090, 5.5129, 5.5168, 5.5206, 5.5244, 5.5281, 5.5319, 5.5356, 5.5392, 5.5428, 5.5464, 5.5500, 5.5535, 5.5570, 5.5605, 5.5639, 5.5673, 5.5707, 5.5740, 5.5774, 5.5807, 5.5840, 5.5872, 5.5905, 5.5937, 5.5969, 5.6001, 5.6033, 5.6064, 5.6096, 5.6127, 5.6158, 5.6189, 5.6220, 5.6251, 5.6282, 5.6312, 5.6343, 5.6373, 5.6403, 5.6433, 5.6463, 5.6493, 5.6522, 5.6552, 5.6581, 5.6610, 5.6639, 5.6668, 5.6697, 5.6726, 5.6754, 5.6783, 5.6811, 5.6840, 5.6868, 5.6896, 5.6924, 5.6952, 5.6980, 5.7007, 5.7035, 5.7062, 5.7090, 5.7117, 5.7144, 5.7171, 5.7198, 5.7225, 5.7252, 5.7279, 5.7305, 5.7332, 5.7359, 5.7385, 5.7411, 5.7438, 5.7464, 5.7490, 5.7516, 5.7542, 5.7568, 5.7594, 5.7619, 5.7645, 5.7671, 5.7696, 5.7722, 5.7747, 5.7772, 5.7798, 5.7823, 5.7848, 5.7873, 5.7898, 5.7923, 5.7947, 5.7972, 5.7997, 5.8021, 5.8046, 5.8070, 5.8095, 5.8119, 5.8143, 5.8168, 5.8192, 5.8216, 5.8240, 5.8264, 5.8288, 5.8312, 5.8336, 5.8360, 5.8384, 5.8407, 5.8431, 5.8455, 5.8478, 5.8502, 5.8525, 5.8549, 5.8572, 5.8595, 5.8619, 5.8642, 5.8665, 5.8688, 5.8711, 5.8734, 5.8757, 5.8780, 5.8803, 5.8826, 5.8848, 5.8871, 5.8894, 5.8916, 5.8939, 5.8961, 5.8984, 5.9006, 5.9029, 5.9051, 5.9073, 5.9096, 5.9118, 5.9140, 5.9162, 5.9184, 5.9206, 5.9228, 5.9250, 5.9272, 5.9294, 5.9316, 5.9338, 5.9360, 5.9382, 5.9403, 5.9425, 5.9447, 5.9468, 5.9490, 5.9511, 5.9533, 5.9554, 5.9575, 5.9597, 5.9618, 5.9640, 5.9661, 5.9682, 5.9703, 5.9725, 5.9746, 5.9767, 5.9788, 5.9809, 5.9830, 5.9851, 5.9872, 5.9893, 5.9914, 5.9935, 5.9956, 5.9977}
\draw[blue] (1.5*\x , {sin(deg(\x))-5/exp(\x)} ) -- (1.5*\x , {sin(deg(\x))+5/exp(\x)} ) ;

\end{tikzpicture} 

\caption{An asymptotic $\L$-curve for $\L=\left\{(t,0)\mapsto\sin(t+r)\, |\, r\in[0,2\pi)\right\}$.}
\label{mountain rain}

\end{figure}

For another example, where all curves are functions of $t$ only but graphical
convergence manifests itself, consider $\L=\{\sola_r\, |\, r\in[0,2\pi)\}$
with $\dom\sola_r=\realsplus\times\{0\}$, $\sola_r(t,0)=\sin(t+r)$.  
Let $T=\{s_1,s_2,\dots\}\subset\realsplus$ be such that $s_n\nearrow\infty$
while $s_{n+1}-s_n\to 0$, and define a set-valued $\sol$ on
$\dom\sol=T\times\{0\}$ by
$$\sol(t,0):=\left[\sin(t)-e^{-t},\sin(t)+e^{-t}\right].$$
Then $\sol$ is an asymptotic $\L$-curve. Note that for any sequence
$(t_n,0)\in\dom\sol$ with $t_n\nearrow\infty$, the domains of $\sol_n:=\sol^{(t_n,0)}$
are different from one another, but subject to passing to a subsequence,
$\sol_n$ converge graphically to one of the $\L$-curves $\sola_r$. 
See Figure \ref{mountain rain}.

Some of the results below hold if some or all of item (i) above is not
assumed, but not assuming (i) leads to somewhat pathological situations.
For example, every set-valued mapping $\sol:\reals^2\rightrightarrows\reals^d$
with a bounded domain vacuously satisfies item (ii). 
Also, any set-valued mapping $\sol:\reals^2\rightrightarrows\reals^d$
that ``diverges to $\infty$'' has the graphical limit of every
$\sol^{(t_n,j_n)}$, with $(t_n,j_n)\in\dom\sol$ and $t_n+j_n\to\infty$,
empty-valued at every $x\in\reals^2$ and thus not an $\L$-curve, no
matter what $\L$ is. Thus, any divergent to $\infty$ $\L$-curve cannot
satisfy (ii). Such pathologies are excluded by (i), and (i) is in fact
present in applications in later sections. 

The first result below shows that asymptotic $\L$-curves cannot have
persistent large ``gaps'' in their domains. 

\begin{lemma}
\label{lemma: ursaCs}
Let $\sol$ be an asymptotic $\L$-curve. For every $\ve>0$ there exists
$\kappa>0$ such that, for every $T\geq\kappa$, there exists
$(t,j)\in\dom\sol$ with $T\leq t+j<T+1+\ve$.
\end{lemma}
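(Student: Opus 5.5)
We argue by contradiction, using compactness of graphical convergence together with the fact, from item (ii) of the definition of an asymptotic $\L$-curve, that graphical limits of tails are complete $\L$-curves, whose domains are hybrid time domains with no gaps.

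Suppose the claim fails for some $\ve>0$. Then there are $T_n\to\infty$ such that no $(t,j)\in\dom\sol$ satisfies $T_n\leq t+j<T_n+1+\ve$. Since $\dom\sol$ is unbounded, for each $n$ we pick $(t_n,j_n)\in\dom\sol$ with $t_n+j_n<T_n$ and maximizing $t+j$ below $T_n$, or at least with $t_n+j_n$ within $\ve/4$ of $\sup\{t+j:(t,j)\in\dom\sol,\ t+j<T_n\}$. That supremum is over a nonempty set for large $n$, because $(0,0)$ may be assumed in the domain, or by unboundedness some point lies below $T_n$. Boundedness of $\sol$ gives $t_n+j_n\to\infty$: if $t_n+j_n$ stayed bounded along a subsequence while $T_n\to\infty$, then every domain point below $T_n$ would have $t+j\leq C$. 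Since the domain is unbounded, there would be a point with $t+j>C$, which must then lie at or beyond $T_n+1+\ve$, and this fails for $T_n$ large enough. The details here need a little care: I would choose $T_n$ to be increasing and use the supremum formulation.

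By construction, the tail $\sol^{(t_n,j_n)}$ has no domain point $(t,j)$ with $\ve/4< t+j<1+\ve$ (roughly). Because $\sol$ is bounded, \cite[Theorem 5.36]{RockafellarWets98} lets us pass to a graphically convergent subsequence of these tails. By (ii), the limit $\sola$ is a complete $\L$-curve, so $\dom\sola$ is an unbounded hybrid time domain containing $(0,0)$. Such a domain contains a point $(t,j)$ with $t+j$ arbitrarily close to any value in $[0,\infty)$, up to jumps of size $1$ in $j$. In particular, since consecutive points of a hybrid time domain differ in $t+j$ by at most $1$ at jumps and vary continuously during flows, there is $(t,j)\in\dom\sola$ with $t+j\in[\ve/2,1+\ve/2]$.

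Graphical convergence then provides points $(t'_n,j'_n)\in\dom\sol^{(t_n,j_n)}$ converging to $(t,j)$, so for large $n$ we get $\ve/4<t'_n+j'_n<1+\ve$, contradicting the gap. The main obstacle is this step: making rigorous that the gap in the tails persists in the limit, which means the limiting domain misses an open window of width exceeding $1$ in $t+j$. This is impossible for a complete hybrid time domain, since the values $t+j$ over such a domain form a set whose gaps have length at most $1$. The slack $\ve$ is exactly what makes the window strictly longer than $1$.
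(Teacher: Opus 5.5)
Your proof is correct and follows essentially the same route as the paper: you take near-maximal domain points below $T_n$, pass to a graphically convergent subsequence of the tails, and reach a contradiction because the gap in $t+j$ persists in the limit, which a complete $\L$-curve cannot have. The differences are cosmetic: you use a fixed slack $\ve/4$ where the paper uses $1/n$, and you verify explicitly that $t_n+j_n\to\infty$, which the paper leaves implicit.
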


\begin{proof}
Suppose, for the sake of contradiction, that there exists $\ve>0$ such that
that for every $n\in\nats$ there exists $T_n\geq n$
such that there exist no $(t,j)\in\dom\sol$ with $T_n\leq t+j<T_n+1+\ve$.
For each $n$, let 
$(s_n,i_n)\in\dom\sol$ be such that
$$
s_n+i_n \geq\sup\{t+j\, :\, (t,j)\in\dom\sol,\ t+j\leq T_n\}-1/n.
$$
Then, after passing to a convergent subsequence of $\sol^{(s_n,i_n)}$,
its graphical limit contains in its domain no $(t,j)$ with $0<t+j\leq 1+\ve$ 
but is also supposed to be a complete $\L$-curve. This is impossible.
\end{proof}

The next two lemmas provide characterizations of the graphical convergence
of sequences of tails of an asymptotic $\L$-curve to $\L$-curves, in terms of
some uniform bounds applicable to compact sections of the graphs of the tails
and the $\L$-curves, like \eqref{tau epsilon close}.

\begin{lemma}
\label{lemma: uraCs}
Let $\sol$ be an asymptotic $\L$-curve.
For every $\ve,T>0$ there exists $\kappa>0$ such that, for every $(s,i)\in\dom\sol$
with $s+i\geq\kappa$, there exists an $\L$-curve $\sola$ such that
\eqref{tau epsilon close} holds and $\sola(0,0)\in\sol(s,i)+\ve\ball$.
\end{lemma}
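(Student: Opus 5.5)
The plan is to argue by contradiction, in the same style as the proofs of Lemma \ref{lemma: L-osc} and Lemma \ref{lemma: ursaCs}. Here \eqref{tau epsilon close} is read with $\sol^{(s,i)}$ in place of $\sol$. Suppose the claim fails for some $\ve,T>0$. Then for every $n\in\nats$ there is $(s_n,i_n)\in\dom\sol$ with $s_n+i_n\geq n$ such that no $\L$-curve $\sola$ satisfies both
\[
\gph_{t+j\leq T}\sol^{(s_n,i_n)}\subset\gph\sola+\ve\ball,\qquad
\gph_{t+j\leq T}\sola\subset\gph\sol^{(s_n,i_n)}+\ve\ball,
\]
together with $\sola(0,0)\in\sol(s_n,i_n)+\ve\ball$.

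Since $\sol$ is bounded by item (i) of the definition, the tails $\sol^{(s_n,i_n)}$ are uniformly bounded. By \cite[Theorem 5.36]{RockafellarWets98} (the bounded case of set-convergence compactness), we can pass to a subsequence, without relabeling, along which $\sol^{(s_n,i_n)}$ converges graphically to some $\sola$. Because $s_n+i_n\to\infty$, item (ii) of the definition says $\sola$ is a complete $\L$-curve. Note that $(0,0)\in\dom\sol^{(s_n,i_n)}$ for every $n$, so $(0,0)$ lies in the domain of the limit.

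Next, apply the characterization from \cite[Theorem 4.10]{RockafellarWets98} recalled after Lemma \ref{lemma: L-osc}. For a bounded sequence, graphical convergence gives, for all large $n$, the two inclusions in \eqref{tau epsilon close} with any prescribed tolerance. Use tolerance $\ve' <\ve$ and horizon $T$ (or $T+1$, to avoid boundary effects at $t+j=T$). This produces a contradiction once we also know that $\sola(0,0)\in\sol(s_n,i_n)+\ve\ball$ for large $n$.

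That last requirement is the main obstacle, because $\sol$ may be set-valued at $(s_n,i_n)$ and graphical convergence only controls points of the graph up to small shifts in $(t,j)$. The plan is as follows. Pick any $x_n\in\sol(s_n,i_n)$, so that $(0,0,x_n)\in\gph\sol^{(s_n,i_n)}$. Pass to a further subsequence with $x_n\to x$. Graphical convergence then gives $(0,0,x)\in\gph\sola$, i.e.\ $x=\sola(0,0)$, since $\sola$ is single-valued on its domain. Hence $\sola(0,0)\in x_n+\ve\ball\subset\sol(s_n,i_n)+\ve\ball$ for large $n$. Here it matters that $\sol(s_n,i_n)+\ve\ball$ means the union over points of $\sol(s_n,i_n)$, so one good point suffices.

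For large $n$ the $\L$-curve $\sola$ therefore satisfies all the required conditions, contradicting the choice of $(s_n,i_n)$. The only care needed is in applying \cite[Theorem 4.10]{RockafellarWets98} to the truncated graphs $\gph_{t+j\leq T}$. Truncation is not continuous under set convergence, which is why I would run the argument with horizon $T+1$ and a tolerance below $\ve$: enlarging $T$ only strengthens the inclusions, and shifts of size less than $\ve$ in $t+j$ are absorbed by the $\ve\ball$ enlargement.
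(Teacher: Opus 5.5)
Your proof is correct and follows essentially the same route as the paper. You argue by contradiction, extract a graphically convergent subsequence of the tails $\sol^{(s_n,i_n)}$ by compactness, identify the limit as a complete $\L$-curve via item (ii), and get the inclusions in \eqref{tau epsilon close} for large $n$ from the bounded-sequence characterization of graphical convergence. Your handling of the initial condition, selecting $x_n\in\sol(s_n,i_n)$ and using single-valuedness of the limit at $(0,0)$, is a more careful version of the paper's brief ``$\sol_n(0,0)$ converge.'' The detour through horizon $T+1$ is harmless but unnecessary, since the right-hand sides in \eqref{tau epsilon close} are untruncated graphs.
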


\begin{proof}
Suppose not, so that there exist $\ve,T>0$ and, by taking $\kappa=n\in\nats$, there
exist $(s_n,i_n)\in\dom\sol$ with $s_n+i_n\geq n$ such that, for every 
$\sola\in\L$ one has
$\gph_{0\leq t+j\leq T}\sol_n \not\subset \gph\sola+\ve\ball$,
or $\gph_{0\leq t+j\leq T}\sola \not\subset \gph\sol_n+\ve\ball$
or $\sola(0,0)\not\in\sol_n(0,0)+\ve\ball$,
where $\sol_n:=\sol^{(s_n,i_n)}$. 
By \cite[Theorem 5.36]{RockafellarWets98}, because $\sol$ and thus
the sequence $\sol_n$ are bounded, one can pass to a subsequence, 
without relabeling, so that $\sol_n$ converge graphically to a set-valued
mapping with a nonempty graph and, moreover, that $\sol_n(0,0)$ converge.
The definition of an asymptotic $\L$-curve implies that the graphical
limit of $\sol_n$ is some $\sola\in\L$. Then $\sol_n(0,0)\to\sola(0,0)$ and
for all large enough $n$, \eqref{tau epsilon close} holds, 
resulting in a contradiction.
\end{proof}

\begin{lemma} 
\label{lemma: unwind your mind}
Let $\L$ be a nonempty set of hybrid curves that is nominally well posed.
Suppose that a bounded and complete set-valued mapping $\sol:\reals^2\rightrightarrows\reals^d$ with
$\dom\sol\subset[0,\infty)^2$ is such that 
for every $\ve,T>0$ there exists $\kappa>0$ such that, for every $(s,i)\in\dom\sol$
with $s+i\geq\kappa$, there exists an $\L$-curve $\sola$ such that
\eqref{tau epsilon close} holds with 
$\sol^{(s,i)}$ in place of $\sol$. Then $\sol$ is an asymptotic $\L$-curve.
\end{lemma}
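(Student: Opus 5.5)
Item (i) of the definition of an asymptotic $\L$-curve is part of the hypothesis: $\sol$ is bounded and complete. So only item (ii) needs proof. Fix a sequence $(t_n,j_n)\in\dom\sol$ with $t_n+j_n\to\infty$ such that $\sol_n:=\sol^{(t_n,j_n)}$ converges graphically to some set-valued $\sola^*$. I will show that $\sola^*$ is a complete $\L$-curve by producing $\L$-curves $\sola_n$ with the same graphical limit, and then invoking nominal well-posedness. Completeness of $\sola^*$ and its membership in $\L$ will come out of this comparison.

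First, build the approximants. Take $\ve_m=1/m$ and $T_m=m$. The hypothesis gives $\kappa_m$ such that, for each $(s,i)\in\dom\sol$ with $s+i\geq\kappa_m$, some $\L$-curve satisfies \eqref{tau epsilon close} with $T_m,\ve_m$ relative to $\sol^{(s,i)}$. Since $t_n+j_n\to\infty$, we can choose $m(n)\to\infty$ nondecreasing with $t_n+j_n\geq\kappa_{m(n)}$. Let $\sola_n\in\L$ be the corresponding curve, so that $\gph_{t+j\le m(n)}\sol_n\subset\gph\sola_n+\tfrac1{m(n)}\ball$ and $\gph_{t+j\le m(n)}\sola_n\subset\gph\sol_n+\tfrac1{m(n)}\ball$.

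Second, check boundedness and the common limit. Since $\sol$ is bounded, say by $R$, the second inclusion gives $\sola_n(t,j)\in (R+1)\ball$ whenever $t+j\le m(n)$. Hence $\{\sola_n\}$ is locally eventually bounded, with no appeal to local boundedness of $\L$. By \cite[Theorem 5.36]{RockafellarWets98}, pass to a subsequence along which $\sola_n$ converges graphically to some $\sola$. The characterization of graphical convergence via \eqref{tau epsilon close} for bounded sequences (the \cite[Theorem 4.10]{RockafellarWets98} remark) then applies. The two families of inclusions, with $T_{m(n)}\to\infty$ and $\ve_{m(n)}\to0$, force the graphical limits of $\sol_n$ and $\sola_n$ to coincide on every truncation $t+j\le T$. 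So $\sola=\sola^*$. By nominal well-posedness (i), $\sola^*\in\L$. Since the whole sequence $\sol_n$ converges to $\sola^*$, the subsequence causes no loss.

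Third, completeness, which I expect to be the delicate point. I need $\dom\sola^*$ to be unbounded. Since $\dom\sol$ is unbounded, each $\dom\sol_n$ contains points with $t+j$ arbitrarily large, and in particular points with $t+j\in[T,\,m(n)]$ once $m(n)\ge T$. The first inclusion then yields points of $\gph\sola_n$ within $1/m(n)$ of such points. Passing to limits using the uniform bound $(R+1)$, we obtain points of $\gph\sola^*$ with $t+j\ge T-1$ for every $T$. The subtle part is to make sure these points lie in $\dom\sol_n$ up to level $T$ and are not beyond a gap; here boundedness of the values is what lets outer limits of graph points exist. Alternatively, one can argue as in Proposition \ref{proposition:brilliant-Benaim} via \cite[Example 5.19]{Goebel12a}: the truncated domains of the $\sola_n$ have lengths at least $m(n)-1\to\infty$, so the limit domain is unbounded. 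Thus $\sola^*$ is a complete $\L$-curve, and $\sol$ is an asymptotic $\L$-curve.
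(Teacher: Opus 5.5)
Your Steps~1 and~2 are correct and follow the paper's route. You approximate the tails by $\L$-curves $\sola_n$, extract a graphically convergent subsequence, use nominal well-posedness, and identify the two limits. Your version is actually cleaner than the paper's. The paper fixes one pair $(T,\ve)$, so its limit $\chi$ depends on that pair, and it never checks that the $\sola_n$ are locally eventually bounded. Your diagonal choice of $m(n)$ and the bound $\sola_n(t,j)\in(R+1)\ball$ for $t+j\leq m(n)$ handle both issues.

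The gap is Step~3. Unboundedness of $\dom\sol$ does not give points of $\dom\sol_n$ with $t+j\in[T,m(n)]$. Here $\dom\sol$ is an arbitrary subset of $[0,\infty)^2$ and may have gaps of growing length. Moreover, \eqref{tau epsilon close} only forces $\sola_n$ to shadow the points that $\sol_n$ actually has below level $m(n)$, so $\length\dom\sola_n\geq m(n)-1$ is unjustified as well. Boundedness of the values does not help with either point.

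In fact, completeness cannot be derived from the stated hypotheses. Let $\L$ be the set of all trivial curves; alternatively take $\S_\H$ for a hybrid inclusion satisfying the Hybrid Basic Conditions with $0\in C\cup D$, which contains the trivial solution at $0$. Let $\sol\equiv 0$ on $\dom\sol=\{(n^2,0):n\in\nats\}$. For $(s,i)=(n^2,0)$ with $2n+1>T$, one has $\gph_{t+j\leq T}\sol^{(s,i)}=\{(0,0,0)\}$, so the trivial curve at $0$ satisfies \eqref{tau epsilon close}, and the hypothesis holds with $\kappa=T^2$. Yet every sequence of tails with $t_n+j_n\to\infty$ converges graphically to that trivial curve, which is not complete.

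The paper's own proof only shows that the limit lies in $\L$ and is silent on completeness. So this is a defect of the statement, not only of your proposal. What is missing is a hypothesis excluding persistent long gaps in $\dom\sol$, namely the conclusion of Lemma \ref{lemma: ursaCs}. This holds, for instance, when $\dom\sol$ is a hybrid time domain, as for the hybrid arc in Proposition \ref{proposition:PAS-2-pseudo}. With that hypothesis, your first idea in Step~3 works directly. For each $T$ and all large $n$, pick $(a_n,b_n,x_n)\in\gph\sol_n$ with $a_n+b_n\in[T,T+2]$ and $|x_n|\leq R$. Any cluster point of these lies in $\gph\sola^*$, so $\dom\sola^*$ is unbounded.
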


\begin{proof}
Let $\sol$ be as described and suppose it is not an asymptotic $\L$-curve:
there exist $(t_n,j_n)\in\dom\sol$ with $t_n+j_n\to\infty$
such that $\sol_n:=\sol^{(t_n,j_n)}$ converge graphically to a set-valued
$\sola:\reals^2\rightrightarrows\reals^d$ that is not an $\L$-curve. 
Pick $T>0$, $\ve\in(0,1)$.
By \cite[Theorem 4.10]{RockafellarWets98}, there exists
$\kappa$ such that, for every $i\geq\kappa$, 
$$\gph_{t+j\leq T}\sol_n\subset\gph\sola+\ve\ball \quad \mbox{and} \quad 
\gph_{t+j\leq T}\sola\subset\gph\sol_n+\ve\ball,$$
and because $\ve<1$, 
\begin{align*}
\gph_{t+j\leq T}\sol_n & \subset\gph_{t+j\leq T+1}\sola+\ve\ball, \\
\gph_{t+j\leq T}\sola & \subset\gph_{t+j\leq T+1}\sol_n+\ve\ball.
\end{align*}
By the assumption on $\sol$, one can increase $\kappa$ so that, for every 
$n\geq\kappa$, one also has
$$\gph_{t+j\leq T+1}\sol_n\subset\gph\sola_n+\ve\ball \quad \mbox{and} \quad 
\gph_{t+j\leq T+1}\sola_n\subset\gph\sol_n+\ve\ball$$
for some $\sola_n\in\L$, and then
\begin{align*}
\gph_{t+j\leq T+1}\sol_n\subset\gph_{t+j\leq T+2}\sola_n+\ve\ball, \\
\gph_{t+j\leq T+1}\sola_n\subset\gph_{t+j\leq T+2}\sol_n+\ve\ball.
\end{align*}
Subject to passing to a subsequence, and without relabeling,
$\sola_n$ converge graphically to $\chi\in\L$, thanks to $\L$ being nominally well
posed. Then, subject to increasing $\kappa$, one has, for all $n\geq\kappa$, that
$$\gph_{t+j\leq T+2}\sola_n\subset\gph\chi+\ve\ball \quad \mbox{and} \quad 
\gph_{t+j\leq T+2}\chi\subset\gph\sola_n+\ve\ball.$$ 
Consequently, for every $T,\ve>0$ there exists $\kappa$ so that, for every
$i\geq\kappa$, 
$$\gph_{t+j\leq T}\sola\subset\gph\chi+3\ve\ball \quad \mbox{and} \quad 
\gph_{t+j\leq T}\chi\subset\gph\sola+3\ve\ball.$$
Because both $\sola$ and $\chi$ have closed graphs, in turn because both mappings
are graphical limits, the inclusions above ensure that $\sola=\chi$. This is a
contradiction. 
\end{proof}

\subsection{$\omega$-limit sets of hybrid curves}
\label{section: omega-limit sets of hybrid curves}

The $\omega$-limit of a hybrid curve $\sol$ is the set $\omega(\sol)$ defined by
\begin{equation}
\label{omega limit of a hybrid curve}
\omega(\sol):=\left\{z\in\reals^d\, :\
\exists (t_n,j_n)\in\dom\sol\ \mbox{s.t.}\ t_n+j_n\to\infty,\ \sol(t_n,j_n)\to z\right\}.
\end{equation}
This set is closed. If $\sol$ is complete and bounded, this set is nonempty, bounded and thus
compact, and $\sol$ converges to $\omega(\sol)$. 
All these basic properties generalize.

The {$\omega$-limit} of an asymptotic $\L$-curve $\sol$, also denoted $\omega(\sol)$,
is the set
$$\left\{z\in\reals^d\, |\, \exists (t_n,j_n)\in\dom\sol,\ y_n\in\sol(t_n,j_n)\ \mbox{s.t.}\ t_n+j_n\to\infty, y_n\to z\right\}.$$
Equivalently, 
$$\omega(\phi)=\bigcap_{n\in\nats} S_n \quad \mbox{where} \quad 
S_n:=\overline{ \sol\left(\{(t,j)\in[0,\infty)^2\, |\, t+j\geq n\}\right) },$$
or $\omega(\sol)$ is the outer limit, or just the limit, of the non-increasing sequence
of sets $S_n$. As a direct consequence of these alternative representations,
$\omega(\sol)$ is closed, and because $\phi$ is bounded and complete, $\omega(\sol)$  
is nonempty, bounded and hence compact, and $\sol$ converges to $\omega(\sol)$
in the sense that for every $\ve>0$ there exists $\tau\geq 0$ such that
\begin{equation} 
\label{ride to result}
\sol(t,j)\subset\omega(\sol)+\ve\ball\ \quad \forall (t,j)\in\dom\sol,\ t+j\geq\tau.\
\end{equation}
For details, see \cite[Exercise 4.3, Proposition 4.4, Theorem 4.10]{RockafellarWets98}.
These properties rely only on item (i) of the definition of an asymptotic $\L$-curve: 
that $\sol$ is bounded and $\dom\sol$ is not. 
The convergence concluded in \eqref{ride to result} implies
that the graphical limit of tails of $\sol$, in item (ii) of the definition 
of an asymptotic $\L$-curve is not just an $\L$-curve but has range in $\omega(\sol)$.

It is standard that for regular-enough dynamics, including for hybrid inclusions
subject to some assumptions, that $\omega$-limits of solutions are invariant, in 
appropriate senses. 
For an asymptotic $\L$-curve, $\omega(\sol)$ is weakly invariant, as made precise
below.
The proof is essentially the same as that of \cite[Proposition 6.21]{Goebel12a} 
although it relies on the definition of an asymptotic $\L$-curve and not on the hybrid
system, which is the setting of \cite{Goebel12a}, being nominally well-posed. 

\begin{proposition}
\label{proposition: PSG or Real}
Let $\sol$ be an asymptotic $\L$-curve. Then
$K:=\omega(\sol)$ is nonempty, 
compact, weakly $\L$-invariant: for every $x\in K$ and $T>0$ there exists a complete $\L$-curve
$\sola$ with range in $K$ such that $\sola(t,j)=x$ for some $(t,j)\in\dom\sola$ with
$t+j\geq T$, and $\sol$ converges to $K$.
\end{proposition}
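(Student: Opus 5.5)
Nonemptiness, compactness and the convergence of $\sol$ to $K:=\omega(\sol)$ were already observed above, directly from item (i) of the definition and the representation $\omega(\sol)=\bigcap_n S_n$; see \eqref{ride to result}. So the plan is to prove only the weak invariance property. I will do this by taking tails of $\sol$ that start exactly $T$ (or a bit more) units of hybrid time before the times realizing $x$, and passing to a graphical limit.

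Fix $x\in K$ and $T>0$. By definition of $\omega(\sol)$, there are $(t_n,j_n)\in\dom\sol$ and $y_n\in\sol(t_n,j_n)$ with $t_n+j_n\to\infty$ and $y_n\to x$. Because $\dom\sol$ is a hybrid-time-domain-like set (or, more safely, using Lemma \ref{lemma: ursaCs}), for every large $n$ there is $(s_n,i_n)\in\dom\sol$ with
$$t_n+j_n-T-2\leq s_n+i_n\leq t_n+j_n-T,$$
and $s_n+i_n\to\infty$. Lemma \ref{lemma: ursaCs} with $\ve=1$ gives such a point once $t_n+j_n-T-2\geq\kappa$. Moreover, I would choose $(s_n,i_n)$ with $s_n\le t_n$ and $i_n\le j_n$, so that $(t_n-s_n,j_n-i_n)$ lies in $[0,\infty)^2$ and belongs to $\dom\sol^{(s_n,i_n)}$. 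This componentwise ordering is the delicate point. For a general set-valued $\dom\sol$, I would pick $(s_n,i_n)$ as the element of $\dom\sol$ maximizing $s+i$ subject to $s\le t_n$, $i\le j_n$ and $s+i\le t_n+j_n-T$, up to a small error. The gap estimate from Lemma \ref{lemma: ursaCs} then has to be applied to this ordered subset. I expect this to be the main obstacle. If it fails for a pathological domain, I would instead use the structure of the graphical limit: an $\L$-curve's domain is a hybrid time domain, hence ordered.

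Set $\sol_n:=\sol^{(s_n,i_n)}$. Since $\sol$ is bounded, \cite[Theorem 5.36]{RockafellarWets98} lets one pass to a graphically convergent subsequence, with $(t_n-s_n,j_n-i_n)\to(t,j)$ and $t+j\in[T,T+2]$. By item (ii) of the definition, the limit $\sola$ is a complete $\L$-curve. By the remark after \eqref{ride to result}, its range lies in $K$: points of the limit graph come from $\sol$ at times $\ge s_n+i_n\to\infty$. Finally, $(t_n-s_n,j_n-i_n,y_n)\in\gph\sol_n$ converges to $(t,j,x)$. By graphical convergence, $(t,j,x)\in\gph\sola$, that is, $(t,j)\in\dom\sola$ and $\sola(t,j)=x$, with $t+j\ge T$. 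This establishes weak invariance.
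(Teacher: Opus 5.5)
Your route is the one the paper intends. It defers to the proof of \cite[Proposition 6.21]{Goebel12a}, which starts tails about $T$ before the times realizing $x$ and then passes to a graphical limit, and everything you do after choosing $(s_n,i_n)$ is correct. The step you flag, however, is a genuine gap, and neither of your repairs closes it.

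\begin{itemize}
\item Lemma \ref{lemma: ursaCs} controls only the sum $s+i$. It says nothing about membership in $\{(s,i)\in\dom\sol : s\le t_n,\ i\le j_n\}$.
\item Maximizing $s+i$ over that ordered subset can leave $t_n+j_n-s_n-i_n$ unbounded, so the point carrying $x$ escapes in the limit.
\item The ordering of the limit's domain does not help. Whether $(t_n-s_n,j_n-i_n)\in\dom\sol^{(s_n,i_n)}$ is decided by $\dom\sol$ itself, not by the limit.
\end{itemize}

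In fact the gap cannot be closed, because the statement fails when $\dom\sol$ is not totally ordered. Let $\dom\sol=(\realsplus\times\{1\})\cup\{(n^2,0): n\in\nats\}$, with $\sol(t,1)=0$ and $\sol(n^2,0)=1$.
\begin{itemize}
\item Every tail $\sol^{(s,1)}$ equals $\sola$, where $\dom\sola=\realsplus\times\{0\}$ and $\sola\equiv 0$.
\item The graph of $\sol^{(n^2,0)}$ is $\{(0,0,1)\}\cup(\realsplus\times\{1\}\times\{0\})$ plus the points $(m^2-n^2,0,1)$ with $m^2-n^2\geq 2n+1$. Hence these tails converge graphically to $\chi$, where $\dom\chi=\{(0,0)\}\cup(\realsplus\times\{1\})$, $\chi(0,0)=1$ and $\chi(t,1)=0$.
\item Sequences mixing the two kinds of base points do not converge graphically.
\end{itemize}
So $\sol$ is an asymptotic $\L$-curve for $\L=\{\sola,\chi\}$. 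This $\L$ is even locally bounded, nominally well posed, tail invariant and closed under concatenation. Yet $\omega(\sol)=\{0,1\}$, and no $\L$-curve takes the value $1$ at any $(t,j)$ with $t+j>0$. In this example Lemma \ref{lemma: ursaCs} returns points on the line $j=1$, which are never below $(n^2,0)$. Your maximization over the ordered subset returns $((n-1)^2,0)$, at distance $2n-1$ from $(n^2,0)$.

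What is missing is a hypothesis that $\dom\sol$ be totally ordered componentwise. Examples are a hybrid time domain, or the compressed domain $\dom\wisol$ of a hybrid sequence; this covers the later uses in the paper. Under that hypothesis, $s_n+i_n\le t_n+j_n$ forces $s_n\le t_n$ and $i_n\le j_n$, so the point from Lemma \ref{lemma: ursaCs} works as is. The rest of your argument then completes the proof: graphical compactness, item (ii), range in $K$ via \eqref{ride to result}, and $(t_n-s_n,j_n-i_n,y_n)\to(t,j,x)$. This is exactly the cited book argument, which tacitly uses that the domain is a hybrid time domain.

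The same example shows that Theorem \ref{theorem:L-chain-igt} and Corollary \ref{corollary:a confirmation provided} need this hypothesis too. With $\ve<1$, no generalized $(\tau,\ve)$-$\L$-chain leads from $0$ to $1$.
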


The main results of this section are Theorem \ref{theorem:L-chain-igt} 
and the ensuing corollary, which conclude internal chain transitivity of
the $\omega$-limit of every asymptotic $\L$-curve. A preliminary result is needed.

\begin{lemma} 
\label{lemma: Dartmouth v Brown soon}
Suppose that $\L$ is nominally well-posed, locally bounded, and tail invariant. 
For every compact set $K\subset\reals^d$, $\ve>0$, and $T>0$ there exists
$\delta>0$ such that,
for every $\L$-curve $\sol$ with $\rge\sol|_{t+j\leq T}\subset K+\delta\ball$,
there exists a $\L$-curve $\sola$ such that
\begin{equation}
\label{larb}
\gph\sola|_{t+j\leq T}\subset\gph\sol+\ve\ball 
\end{equation}
and either
\begin{equation}
\label{tom} 
\rge\sola|_{t+j\leq T}\subset K \quad \mbox{and} \quad 
\gph\sol|_{t+j\leq T}\subset\gph\sola|_{t+j\leq T}+\ve\ball, 
\end{equation}
or  
\begin{equation}
\label{kha} 
\rge\sola|_{t+j<T}\subset K \quad \mbox{and} \quad 
\gph\sol|_{t+j\leq T}\subset\gph\sola|_{t+j<T}+\ve\ball. 
\end{equation}
\end{lemma}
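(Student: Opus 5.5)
We argue by contradiction, in the style of the proofs of Lemma \ref{lemma: L-osc} and Lemma \ref{lemma: closeness of solutions to chains}. Suppose there are a compact $K$ and $\ve,T>0$ such that, with $\delta_n=1/n$, one finds $\sol_n\in\L$ with $\rge\sol_n|_{t+j\leq T}\subset K+n^{-1}\ball$. For each such $\sol_n$, no $\L$-curve $\sola$ satisfies \eqref{larb} together with \eqref{tom} or \eqref{kha}. Replace each $\sol_n$ by its truncation to $t+j\leq T+1$ when convenient. Since $\sol_n(0,0)\in K+\ball$ and $\L$ is locally bounded, the sequence is locally eventually bounded. By \cite[Theorem 5.36]{RockafellarWets98} we pass to a subsequence, without relabeling, so that $\sol_n$ converges graphically and $\sol_n(0,0)$ converges to some $x\in K$. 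Nominal well-posedness gives that the graphical limit $\sola$ is in $\L$.

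Next we locate the range of the limit. Every point $(t,j,z)\in\gph\sola$ with $t+j<T$ is a limit of points $(t_n,j_n,\sol_n(t_n,j_n))$ with $t_n+j_n<T$ for large $n$, so $z\in K$ because $K$ is closed. Hence $\rge\sola|_{t+j<T}\subset K$. The boundary level $t+j=T$ is the delicate one. A point of $\gph\sola$ with $t+j=T$ may arise only as a limit of points with $t_n+j_n>T$, which carry no range information. This is exactly why the alternative \eqref{kha} appears. If every point of $\gph\sola$ with $t+j=T$ is a limit of points with $t_n+j_n\leq T$, then $\rge\sola|_{t+j\leq T}\subset K$ and we aim for \eqref{tom}. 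Otherwise we aim for \eqref{kha}.

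Then we extract the inclusions. Graphical convergence with local eventual boundedness gives, for large $n$, that $\gph\sola|_{t+j\leq T}\subset\gph\sol_n+\ve\ball$, which is \eqref{larb}, and that $\gph\sol_n|_{t+j\leq T}\subset\gph\sola+\frac{\ve}{2}\ball$ (cf.\ \cite[Theorem 4.10]{RockafellarWets98}). The main obstacle is sharpening the latter to $\gph\sola|_{t+j\leq T}$, respectively $\gph\sola|_{t+j<T}$. A point of $\gph\sol_n$ with $t+j\leq T$ lies within $\ve/2$ of a point of $\gph\sola$ with $t+j\leq T+\ve/2$. When that point has $t+j>T$ (or $\geq T$ in the \eqref{kha} case), we move it back along $\dom\sola$ to a point with $t+j$ slightly smaller. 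This requires $\dom\sola$ to contain points just below level $T$, which holds because hybrid time domains are connected in the appropriate sense.

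The move also requires the value of $\sola$ to change little, and here Lemma \ref{lemma: tail invariance to locally uniformly continuous} applies, using tail invariance. It gives local uniform continuity of $\L$-curves on the bounded set containing the ranges, so a shift in $t+j$ by less than some $\eta$ moves the graph point by less than $\ve/2$. Choosing $\eta$ accordingly and $n$ large, we obtain \eqref{tom} or \eqref{kha} with $\ve$. This holds for all large $n$ and contradicts the choice of $\sol_n$. One subtlety to check is that the case distinction must be made consistently along the subsequence. It is fixed by the limit $\sola$ alone, so this causes no problem.
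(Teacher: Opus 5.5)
Your setup matches the paper: contradiction, the graphical limit $\sola\in\L$, $\rge\sola|_{t+j<T}\subset K$, and \eqref{larb} from graphical convergence. The step you identify as the main obstacle, however, fails in exactly the branch where \eqref{kha} is needed.

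In that branch, the unique point $(t',j')\in\dom\sola$ with $t'+j'=T$ is entered by a jump from $(t',j'-1)$. (Had it been entered by flow, continuity of $\sola$ on the flow interval would put it in the liminf of the truncated graphs, giving $\sola(t',j')\in K$.) Hence $\dom\sola$ contains no point with $T-1<t+j<T$, and your claim that it has points just below level $T$ is false there. Moving back from a graph point at level $j'$ means crossing the jump. That shifts the $j$-coordinate by $1$ and $x$ by $|\sola(t',j')-\sola(t',j'-1)|$, which Lemma \ref{lemma: tail invariance to locally uniformly continuous} does not control.

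So your argument says nothing about points of $\gph\sol_n|_{t+j\leq T}$ at level $j'$, i.e.\ curves $\sol_n$ that jump at a time $\leq t'$. Such points lie at distance at least $1$ from $\gph\sola|_{t+j<T}=\gph\sola|_{j\leq j'-1}$. Your case split does not exclude them either. Curves jumping at $t'-1/n$ and curves jumping at $t'+1/n$ can have the same limit $\sola$ yet fall into different branches, so the split is not fixed by $\sola$ alone. A sequence alternating between the two puts you in the \eqref{kha} branch, while \eqref{kha} fails for every even $n$.

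The fix is to choose the split so that such points cannot occur; then no moving back is needed. Split according to whether $\rge\sola|_{t+j\leq T}\subset K$, i.e.\ whether $\sola(t',j')\in K$, which genuinely depends only on $\sola$.

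The only tool required is this: every cluster point of a sequence $p_n\in\gph\sol_n|_{t+j\leq T}$ lies in $\gph\sola$ and has $t+j\leq T$. With the uniform bound from local boundedness, this gives $\gph\sol_n|_{t+j\leq T}\subset\gph\sola|_{t+j\leq T}+\ve\ball$ for all large $n$. That yields \eqref{tom} when $\sola(t',j')\in K$, or when no domain point lies on the level $T$.

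If $\sola(t',j')\notin K$, then by the remark above $(t',j')$ is entered by a jump. The point $\sola(t',j')$ is at positive distance from $K$, while the values of $\sol_n$ on $t+j\leq T$ lie in $K+n^{-1}\ball$. So all such cluster points lie in the compact set $\gph\sola|_{j\leq j'-1}=\gph\sola|_{t+j<T}$, and \eqref{kha} follows for large $n$.

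This is the paper's dichotomy: the truncated graphs converge either to $\gph\sola|_{t+j\leq T}$ or to $\gph\sola|_{j\leq j'-1}$. It needs neither tail invariance nor local uniform continuity.
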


\begin{proof}
In the opposite case, for some $K\subset\reals^d$, $\ve>0$, and $T>0$ there exists
a graphically convergent sequence $\sol_n$ with 
\begin{equation}
\label{pineapple}
\rge\sol_n|_{t+j\leq T}\subset K+n^{-1}\ball
\end{equation}
such that, for every $n\in\nats$, either \eqref{larb} fails for every $\sola\in\L$
with $\sol$ replaced by $\sol_n$, 
or \eqref{tom} and \eqref{kha} fail for every $\sola\in\L$
with $\sol$ replaced by $\sol_n$. Let the graphical limit of $\sol_n$ be $\sola$.
The bound \eqref{pineapple} and local boundedness of $\L$ imply that the sequence
of sets $\gph\sol_n|_{t+j\leq T+1}$ is uniformly bounded, and this ensures that
$\gph\sola|_{t+j\leq T}$ is bounded. From \cite[Theorem 4.10]{RockafellarWets98}, or its
consequence in \cite[Exercise 5.34]{RockafellarWets98} one can deduce that for all large enough
$n$,
\begin{equation} 
\gph\sola|_{t+j\leq T} \subset \gph\sol_n+\ve\ball, \qquad
\gph\sol_n|_{t+j\leq T} \subset \gph\sola+\ve\ball. 
\end{equation}
In particular, \eqref{larb} holds for all large enough $n$, with $\sol$ replaced by $\sol_n$,
by \cite[Theorem 4.10]{RockafellarWets98}.

Suppose first that $\sol_n|_{t+j\leq T}$ converge graphically to $\sola|_{t+j\leq T}$. 
By \cite[Theorem 4.10]{RockafellarWets98} again, the second inclusion in \eqref{tom} holds 
for all large enough $n$ with $\sol$ replaced by $\sol_n$. The first inclusion
in \eqref{tom} also holds, by \eqref{pineapple}. Thus \eqref{tom} holds 
for all large enough $n$, which contradicts the choice of $\sol_n$. 

Now suppose that $\sol_n|_{t+j\leq T}$ do not converge graphically to $\sola|_{t+j\leq T}$.
Because $\sol_n$ converge graphically to $\sola$, this can happen only if
there exists $(t',j')\in\dom\sola$ such that $t'+j'=T$ and $(t',j'-1)\in\dom\sola$. 
(And then, the point $(t',j',\sola(t',j'))\in\gph\sola$ is a limit of some 
sequence $(t'_n,j',\sol_n(t_n,j'))$ with $t_n+j'>T$ and not a limit of any such
sequence with $t_n+j'\leq T$.) 
Then 
$$\gph\sola|_{t+j<T}=\gph\sola|_{j\leq j'-1}$$
and, for all large enough $n$, 
$$\gph\sol_n|_{t+j\leq T}=\gph\sol_n|_{t+j<T}=\gph\sola|_{j\leq j'-1}. $$
By the definition of set or graphical convergence and the structure of hybrid time
domains, it must be that 
$\gph\sol_n|_{j\leq j'-1}$ converge to $\gph\sola|_{j\leq j'-1}$.
Thus $\gph\sol_n|_{t+j\leq T}$ converge to $\gph\sola|_{t+j<T}$. 
Then \cite[Theorem 4.10]{RockafellarWets98}, again, the second inclusion in \eqref{kha} holds 
for all large enough $n$ with $\sol$ replaced by $\sol_n$.
By \eqref{pineapple}, the first inclusion in \eqref{kha} holds. 
Thus \eqref{kha} holds 
for all large enough $n$, which contradicts the choice of $\sol_n$. 
\end{proof}

\begin{theorem}
\label{theorem:L-chain-igt}
Let $\L$ be locally bounded, nominally well-posed, and tail-invariant. 
Then, the $\omega$-limit set of every asymptotic $\L$-curve 
is internally generalized $\L$-chain transitive.
\end{theorem}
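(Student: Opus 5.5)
Let $\sol$ be an asymptotic $\L$-curve and $K:=\omega(\sol)$. By Proposition \ref{proposition: PSG or Real}, $K$ is nonempty and compact and $\sol$ converges to $K$. Fix $x,y\in K$, $\tau>0$ and $\ve\in(0,1)$. The plan is to build the chain from $x$ to $y$ by following $\sol$ itself from a late time near $x$ to a later time near $y$. We cut that portion of $\sol$ into segments of length roughly $\tau$ to $2\tau+1$. Each segment is replaced by a nearby $\L$-curve using Lemma \ref{lemma: uraCs}, and that curve is then pushed into $K$ using Lemma \ref{lemma: Dartmouth v Brown soon}. The resulting curves form a generalized internal $(\tau,\ve)$-$\L$-chain.

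\emph{Choice of constants.} Set $T:=2\tau+3$. Apply Lemma \ref{lemma: Dartmouth v Brown soon} with the compact set $K$, the tolerance $\ve/4$ and the horizon $T$ to get $\delta>0$; shrink $\delta$ so that $\delta<\ve/4$. Apply Lemma \ref{lemma: uraCs} with tolerance $\delta/2$ and horizon $T$ to get $\kappa_1$. Use \eqref{ride to result} to get $\kappa_2$ such that $\sol(t,j)\subset K+(\delta/2)\ball$ whenever $t+j\geq\kappa_2$. Use Lemma \ref{lemma: ursaCs} to get $\kappa_3$ beyond which the domain of $\sol$ has no gaps of length greater than $1+\ve$.

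\emph{Construction of the chain.} Since $x,y\in\omega(\sol)$, we can pick $(s_0,i_0)\in\dom\sol$ with $s_0+i_0\geq\max\{\kappa_1,\kappa_2,\kappa_3\}$ and a point of $\sol(s_0,i_0)$ within $\delta/2$ of $x$. We then pick $(s',i')\in\dom\sol$ with $s'+i'\geq s_0+i_0+\tau+2$ and a point of $\sol(s',i')$ within $\delta/2$ of $y$. Using Lemma \ref{lemma: ursaCs}, choose intermediate times $(s_k,i_k)\in\dom\sol$ ending at $(s_{k^*},i_{k^*})=(s',i')$, with consecutive gaps in $[\tau+1,2\tau+3)$. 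This is the device of Lemma \ref{lemma: long chain}, applied to $\dom\sol$ rather than to a single $\L$-curve. For each $k$, Lemma \ref{lemma: uraCs} gives $\sol_k\in\L$ that is $(T,\delta/2)$-close to $\sol^{(s_k,i_k)}$, with $\sol_k(0,0)$ near $\sol(s_k,i_k)$. The range of $\sol_k$ up to time $T$ then lies in $K+\delta\ball$. Lemma \ref{lemma: Dartmouth v Brown soon} now gives $\sola_k\in\L$ whose range is in $K$ on $t+j\leq T$, or on $t+j<T$. Its graph over that window is within $\ve/4$ of $\gph\sol_k$ in both directions, and hence within about $\ve/2$ of $\gph\sol^{(s_k,i_k)}$.

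\emph{Verification, and the main difficulty.} Graph closeness yields a time $(t_k,j_k)\in\dom\sola_k$ with $t_k+j_k\geq s_{k+1}+i_{k+1}-s_k-i_k-\ve\geq\tau$. At that time $\sola_k(t_k,j_k)$ is within about $\ve/2$ of some value of $\sol(s_{k+1},i_{k+1})$. Graph closeness at time $(0,0)$ similarly puts $\sola_{k+1}(0,0)$ within about $\ve/2$ of some value of $\sol(s_{k+1},i_{k+1})$. The main obstacle is that $\sol(s_{k+1},i_{k+1})$ may be a set rather than a single point. Graph closeness only places these two points near \emph{some} values of $\sol$ at nearby hybrid times, not necessarily at the same value, so we cannot simply chain the two estimates together. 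I would first try to resolve this by applying Lemma \ref{lemma: uraCs} with the point selection built in: it controls $\sola(0,0)$ relative to $\sol(s,i)$, and the closeness of $\sol^{(s_{k+1},i_{k+1})}$ to an $\L$-curve forces its value set at time $0$ to have diameter below $\delta$. With that diameter bound, the linking condition $\sola_{k+1}(0,0)\in\sola_k(t_k,j_k)+\ve\ball$ holds. The same estimate gives $\sola_0(0,0)\in x+\ve\ball$ and $y\in\sola_{k^*-1}(t_{k^*-1},j_{k^*-1})+\ve\ball$. Two further points need care. First, the window $t_k+j_k$ must stay below $T$, or strictly below $T$ in case \eqref{kha}; taking the slack $T=2\tau+3$ exceeding the gap bound $2\tau+3-1$ handles this. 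Second, jumps of $\sola_k$ near the end of the window must be accounted for, which is why we keep the margin of $1$ between the gap bound and $T$. Altogether $\sola_0,\dots,\sola_{k^*-1}$ form a generalized internal $(\tau,\ve)$-$\L$-chain from $x$ to $y$, proving the theorem.
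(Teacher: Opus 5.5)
Your overall route is the paper's: cut $\sol$ between a late time near $x$ and a later time near $y$, approximate each piece by Lemma \ref{lemma: uraCs}, then push it into $K$ by Lemma \ref{lemma: Dartmouth v Brown soon}. But the step you flag as the main obstacle is not closed by your repair. The point selection in Lemma \ref{lemma: uraCs} controls $\sol_{k+1}(0,0)$, the initial value of the \emph{intermediate} curve. The link of your chain, however, is $\sola_{k+1}$, and Lemma \ref{lemma: Dartmouth v Brown soon} gives only graph closeness. By \eqref{larb}, all you know is that $\sola_{k+1}(0,0)$ lies within $\ve/4$ of $\sol_{k+1}(t,0)$ for some $t\in[0,\ve/4]$. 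Nothing in your choice of constants bounds how far an $\L$-curve can move in flow time $\ve/4$. If the flow on $K$ oscillates on a time scale much shorter than $\ve$, graph closeness cannot see the phase, and $\sola_{k+1}$ may start anywhere on the orbit. So even when $\sol(s_{k+1},i_{k+1})$ is a single point, your estimates do not give $\sola_{k+1}(0,0)\in\sola_k(t_k,j_k)+\ve\ball$. The same defect affects $\sola_0(0,0)\in x+\ve\ball$. Separately, your diameter claim is misjustified: $(T,\delta/2)$-closeness bounds $\mathrm{diam}\,\sol(s,i)$ only by $\delta$ plus the oscillation of $\sol_{k+1}$ on $[0,\delta/2]$. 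The diameter does tend to $0$, but by a separate compactness argument using item (ii) of the definition of an asymptotic $\L$-curve.

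The missing ingredient is the local uniform continuity of $\L$ (Lemma \ref{lemma: tail invariance to locally uniformly continuous}, which is where tail invariance really enters), together with the right order of constants. The paper first fixes $\mu<\ve$ so that any two values in $K$ of an $\L$-curve, at hybrid times whose $t+j$ differ by less than $2\mu$, are within $\ve$. Only then does it apply Lemma \ref{lemma: Dartmouth v Brown soon} with tolerance $\mu$, and Lemma \ref{lemma: uraCs} with tolerance $\delta<\mu$.

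It also never compares two values of $\sol$ at one time. It fixes a single $y^m\in\sol(t^m,j^m)$ and uses only the inclusion \eqref{dcd} (tail of $\sol$ inside a $2\mu$-neighborhood of the graph of the curve in $K$). That inclusion comes from composing \eqref{dd-aka-DD} with the second inclusion of \eqref{tom} or \eqref{kha}. It is applied at $(0,0,y^m)$ for the start of link $m$, and at $(t^m-t^{m-1},j^m-j^{m-1},y^m)$ for the end of link $m-1$. The first application gives a value of the new link within $2\mu$ of $y^m$ at a hybrid time with $t+j\leq 2\mu$. Uniform continuity then moves it to time $(0,0)$ at cost $\ve$. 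Alternatively, you could replace $\sola_{k+1}$ by its tail from that nearby time, which tail invariance permits.

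Two bookkeeping points also need fixing. First, composing the two closeness inclusions requires a margin between the horizons; the paper uses $2\tau+4$ in Lemma \ref{lemma: uraCs} and $2\tau+5$ in Lemma \ref{lemma: Dartmouth v Brown soon}. Second, your gap window $[\tau+1,2\tau+3)$ is too narrow, since Lemma \ref{lemma: ursaCs} only locates domain points within windows of length $1+\ve$; the paper uses $[\tau+1,2\tau+4]$.
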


\begin{proof} 
Let $\sol$ be an asymptotic $\L$-curve and let $K:=\omega(\sol)$.
Pick $\tau>1$, $\ve \in (0,.25)$, and $x_a,x_b\in K$. 
The proof constructs a generalized internal $(\tau,5\ve)$-chain from $x_a$ to $x_b$. 

First, some constants are picked based on $\ve$ and $\tau$: 
\begin{itemize}
\item[(i)]  
Use Lemma \ref{lemma: tail invariance to locally uniformly continuous} to pick
$\mu<\ve$ such that, for every $\sol\in\L$, every $(t,j),(t',j')\in\dom\sol$ 
with $|t+j-(t'+j')|<2\mu$ (which ensures that $j=j'$)
and $\sol(t,j),\sol(t',j')\in K$, one has 
$\sol(t,j)\subset\sol(t',j')+\ve\ball$. 
\item[(ii)] 
Use Lemma \ref{lemma: Dartmouth v Brown soon}, 
with $\ve=\mu$ and $T=2\tau+5$, to obtain $\delta<\mu$ such that, 
for every $\L$-curve $\sol$ with $\rge\sol|_{t+j\leq T}\subset K+2\delta\ball$,
the conclusions \eqref{larb}, \eqref{tom}, and \eqref{kha} of the lemma hold.  
\item[(iii)] 
Use Lemma \ref{lemma: uraCs}, 
with $\ve=\delta$ and $T=2\tau+4$, to obtain $\kappa>0$.
Increase $\kappa$ if necessary so that the claim of 
Lemma \ref{lemma: ursaCs}
holds, with $\varepsilon=0.5$.
\end{itemize}

By the definition of $\omega(\sol)$, and because $\sol$ converges to $K$, 
there exist
$(t_a,j_a),(t_b,j_b) \in \dom\sol$ and $y_a\in\sol(t_a,j_a)$, $y_b\in\sol(t_b,j_b)$
such that
\begin{subequations}
	\begin{align}
	& t_b+j_b-(\tau+1)  \geq t_a+j_a \geq \kappa, 
	\label{eq:separated-times} \\
	& |y_a -x_a|  \leq \ve, 
	\label{eq:close-to-start} \\
	& |y_b - x_{b}|  \leq \ve, 
	\label{eq:close-to-end} \\
    & \sol(t,j)\in K+\delta\ball \quad \forall (t,j)\in\dom\sol,\ t+j\geq t_a+j_a. 
    \label{caffeine}
	\end{align}
\end{subequations}

Now, with the help of Lemma \ref{lemma: ursaCs}, 
the asymptotic solution $\sol$ between $(t_a,j_a)$ and $(t_b,j_b)$ 
is broken into ``shorter'' pieces,
similarly to what was done to an $\L$-curve in 
the proof of Theorem \ref{theorem: almost time to watch EPL}
using Lemma \ref{lemma: long chain}. Then, each shorter piece
is approximated by an $\L$-curve in $K$, leading to the needed internal 
$(\tau,5\ve)$-$\L$-chain.

There exist $(t^0,j^0),(t^1,j^1),\dots,(t^M,j^M)\in\dom\sol$ such that
$(t^0,j^0)=(t_a,j_a)$, $(t^M,j^M)=(t_b,j_b)$, and 
\begin{equation} 
\label{time gap}
t^{m+1}+j^{m+1}-t^m-j^m\in\left[\tau+1,2\tau+4\right] \quad \mbox{for}\ m=0,1,\dots,M-1.
\end{equation}
Indeed, if $t_b+j_b-t_a-j_a\leq 2\tau+4$, there is nothing to do. In the opposite case, 
use Lemma \ref{lemma: ursaCs} to obtain $(t^1,j^1)\in\dom\sol$
so that 
$$t_a+j_a+\tau+1\leq t^1+j^1 < t_a+j_a+\tau+2.5.$$
Then $t^1+j^1-t^0-j^0\geq \tau+1$ and 
$$t_b+j_b-t^1-j^1>t_b+j_b-t_a-j_a-\tau-2.5>2\tau+4-\tau-1.5>\tau+1,$$
and the process can be iterated.

By the choice of $\kappa$ via Lemma \ref{lemma: uraCs}, 
for $m=0,1,2,\dots,M-1$ there exist curves $\sola^m\in\L$ such that,
among other things, 
\begin{subequations}
	\begin{align}
    & \gph_{t+j\leq 2\tau+4}\sola^m \subset \gph\sol^{(t^m,j^m)}+\delta\ball,
	\label{cc-aka-CC}
	\\
	& \gph_{t+j\leq 2\tau+4}\sol^{(t^m,j^m)} \subset \gph\sola^m+\delta\ball. 
	\label{dd-aka-DD}
	\end{align}
\end{subequations}
Then, by \eqref{caffeine}, \eqref{cc-aka-CC},
and the choice of $\delta$ via Lemma \ref{lemma: Dartmouth v Brown soon}, 
for $m=0,1,\dots,M-1$ there exist curves 
$\sol_m\in\L$ so that a not needed here version of 
\eqref{larb}
holds and either
\begin{equation}
\label{tomi} 
\rge\sol_m|_{t+j\leq 2\tau+5}\subset K \quad \mbox{and} \quad 
\gph\sola^m|_{t+j\leq 2\tau+5}\subset\gph\sol_m|_{t+j\leq 2\tau+5}+\mu\ball, 
\end{equation}
or  
\begin{equation}
\label{khai} 
\rge\sol_m|_{t+j<2\tau+5}\subset K \quad \mbox{and} \quad 
\gph\sola^m|_{t+j\leq 2\tau+5}\subset\gph\sol_m|_{t+j<2\tau+5}+\mu\ball. 
\end{equation}
Either way, for $m=0,1,2,\dots,M-1$, $\rge\sol_m|_{t+j \leq 2\tau+4.5}\subset K$ and
combining \eqref{dd-aka-DD} with \eqref{khai} yields
\begin{equation} 
\label{dcd}
\gph_{t+j\leq 2\tau+4} \sol^{(t_m,j_m)} \subset 
\gph\sol_m|_{t+j\leq 2\tau+4.5}+2\mu\ball
\end{equation}

It remains to show that $\sol_m$, with appropriately picked hybrid times in
$\dom\sol_m$, form a generalized $(\tau,5\ve)$-chain from $x_a$ to $x_b$,
with range in $K$.
Let $y^0:=y_a$, $y^M:=y_b$, and for $m=1,2,\dots,M-1$, pick any $y^m\in\sol(t^m,j^m)$. 

Because $(y^0,0,0)\in\gph\sol^{(t^0,j^0)}$, \eqref{dcd} implies that
$|y^0-\sol_0(t,j)|\leq 2\mu$ for some $(t,j)\in\dom\sol_0$ with $t+j\leq 2\mu$,
and then, by the choice of $\mu$, one has $|y^0-\sol_0(0,0)|\leq 2\mu+\ve$.
This, and \eqref{eq:close-to-start}, imply 
\begin{equation}
\label{obi one}
|x_a-\sol_0(0,0)|=|x^0-\sol_0(0,0)|\leq 2\mu+2\ve\leq 4\ve.
\end{equation} 
 
Because, for $m=1,2,\dots,M$,  
$$(y^m,t^m-t^{m-1},j^m-j^{m-1})\in\gph\sol^{(t^{m-1},j^{m-1})}$$
and because of the bound \eqref{time gap}, the inclusion \eqref{dcd} yields 
$(t_{m-1},j_{m-1})\in\dom\sol_{m-1}$ such that 
\begin{equation} 
\label{drone me}
|y^m-\sol_{m-1}(t_{m-1},j_{m-1})|\leq 2\mu \qquad \mbox{for}\ m=1,2,\dots,M
\end{equation} 
and 
$$|t_{m-1}+j_{m-1}-(t^m-t^{m-1}+j^m-j^{m-1})|\leq 2\mu.$$
The latter inequality and \eqref{time gap} imply that 
\begin{equation}
\label{obi two}
t_{m-1}+j_{m-1}\geq\tau+.5-2\mu\geq\tau 
\qquad \mbox{for}\ m=1,2,\dots,M.
\end{equation}
Because, for $m=1,2,\dots,M-1$,
$$(y^m,0,0)\in\gph\sol^{(t^m,j^m)},$$
the inclusion \eqref{dcd} yields $(t',j')\in\dom\sol_m$ such that
$$|y^m-\sol_m(t',j')|\leq 2\mu, \quad t'+j'\leq 2\mu.$$
The choice of $\mu$ ensures that $|y^m-\sol_m(0,0)|\leq 2\mu+\ve$. Combined
with \eqref{drone me}, this yields
\begin{equation}
\label{obi three}
|\sol_m(0,0)-\sol_{m-1}(t_{m-1},j_{m-1})|\leq 4\mu+\ve\leq 5\ve 
\quad \mbox{for}\ m=1,2,\dots,M-1.
\end{equation}

Finally, 
\eqref{drone me} combined with \eqref{eq:close-to-end} yields 
\begin{equation}
\label{obi four}
|x^b-\sol_{M-1}(t_{M-1},j_{M-1})|\leq 2\mu+\ve\leq 3\ve.
\end{equation}

Based on \eqref{obi one}, \eqref{obi two}, \eqref{obi three}, and \eqref{obi four},
$\sol_m$ and $(t_m,j_m)$, $m=0,1,\dots,M-1$ form a generalized internal 
$(\tau,5\ve)$-chain from $x_a$ to $x_b$. 
Arbitrariness of $\ve$ terminates the proof. 
\end{proof} 

Combining Theorem
\ref{theorem:L-chain-igt} with Lemma \ref{lemma:from-generalized-2-not} yields the following conclusion:

\begin{corollary} 
\label{corollary:a confirmation provided}
Let $\L$ be a nonempty set of hybrid curves that is locally bounded, nominally well posed, and tail invariant.
The $\omega$-limit set of every asymptotic $\L$-curve is internally 
$\L$-chain transitive. 
\end{corollary}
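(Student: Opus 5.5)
The corollary follows by chaining two earlier results: Theorem \ref{theorem:L-chain-igt} and Lemma \ref{lemma:from-generalized-2-not}. Let $\sol$ be an asymptotic $\L$-curve and set $K:=\omega(\sol)$.

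First, I will record the properties of $K$ that the lemma needs. By item (i) of the definition of an asymptotic $\L$-curve, $\sol$ is bounded and $\dom\sol$ is unbounded. So, as discussed right after \eqref{omega limit of a hybrid curve} (and in Proposition \ref{proposition: PSG or Real}), $K$ is nonempty, closed, bounded, and hence compact. Nonemptiness and closedness are what make the notions of internal and generalized internal $\L$-chains with respect to $K$ meaningful. Compactness is exactly the hypothesis on $K$ in Lemma \ref{lemma:from-generalized-2-not}.

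Second, since $\L$ is locally bounded, nominally well-posed, and tail-invariant, Theorem \ref{theorem:L-chain-igt} applies directly. It shows that $K$ is generalized internally $\L$-chain transitive: for all $x,y\in K$ and $\tau,\ve>0$ there is a generalized internal $(\tau,\ve)$-$\L$-chain from $x$ to $y$.

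Third, Lemma \ref{lemma:from-generalized-2-not} requires only local boundedness and nominal well-posedness of $\L$, both of which are assumed. Applied to the compact set $K$, it upgrades generalized internal $\L$-chain transitivity to internal $\L$-chain transitivity. This is the claim.

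There is no real obstacle beyond checking that the hypotheses line up. The one thing to verify is that the generalized chains produced by the theorem are internal with respect to the same set $K$ used in the lemma. Both results take this set to be $\omega(\sol)$, so they match. Nonemptiness of $\L$ is automatic once an asymptotic $\L$-curve exists, since tail limits are $\L$-curves, so it places no extra burden on the argument.
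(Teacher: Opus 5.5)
Your proposal is correct and follows the paper's own argument: the paper also obtains the corollary by combining Theorem \ref{theorem:L-chain-igt} (generalized internal chain transitivity of $\omega(\sol)$) with Lemma \ref{lemma:from-generalized-2-not}, with compactness of $\omega(\sol)$ coming from item (i) of the definition of an asymptotic $\L$-curve. One small remark: the lemma's statement lists only local boundedness and nominal well-posedness, but its proof uses the tail $\sol_0^{(t',j')}$ as a link, so it quietly needs tail invariance too; this is harmless here because tail invariance is among your hypotheses.
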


A kind of a converse result to Corollary 
\ref{corollary:a confirmation provided}
is given next. 
The proof is inspired by the idea of \cite[Lemma 3.1]{HirschSmithZhao01JDDE},
which was done for discrete-time dynamics,
and uses the generalized concatenation concept, to help with the more technical
setting of this paper.

\begin{proposition}
Let $\L$ be nominally well posed, locally bounded, tail invariant, and closed under concatenation.
Let $K\subset\reals^d$ be a compact set that is internally 
$\L$-chain transitive. Then $K=\omega(\Sola)$ for some asymptotic $\L$-curve $\Sola$.
\end{proposition}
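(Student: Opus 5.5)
The plan is to build $\Sola$ as the generalized concatenation of an infinite sequence of internal chains with shrinking tolerances, in the spirit of \cite[Lemma 3.1]{HirschSmithZhao01JDDE}. First, since $K$ is compact, fix for each $n\in\nats$ a finite $1/n$-dense subset $\{z^n_1,\dots,z^n_{p_n}\}\subset K$. Using internal $\L$-chain transitivity with $\tau=1$ and $\ve=1/n$, pick internal $(1,1/n)$-$\L$-chains
\[
z^n_1\to z^n_2\to\cdots\to z^n_{p_n}\to z^{n+1}_1 .
\]
Truncate each link $\sol_k$ at its time $(t_k,j_k)$; the relevant range then lies in $K$. By Lemma \ref{lemma: long chain} and tail invariance, split the links further so that every segment has length in $[1,3)$. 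Stringing all these chains together gives one infinite sequence of $\L$-curves $\sola_0,\sola_1,\dots$ with times $(t_k,j_k)$. Each jump error $|\sola_{k+1}(0,0)-\sola_k(t_k,j_k)|$ is at most $\ve_k$, and $\ve_k\to0$ because the $n$-th block uses tolerance $1/n$. Let $\Sola$ be the generalized concatenation. Its domain is unbounded, since every segment has length at least $1$, and $\Sola$ is bounded because its values lie in $K$.

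Next I show that $\Sola$ is an asymptotic $\L$-curve. Take $(s_n,i_n)\in\dom\Sola$ with $s_n+i_n\to\infty$ and suppose $\Sola^{(s_n,i_n)}$ converges graphically.

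1. Each tail $\Sola^{(s_n,i_n)}$ is itself a generalized concatenation. Its first piece is a tail of some $\sola_k$, which lies in $\L$ by tail invariance, followed by $\sola_{k+1},\dots$.
2. This concatenation is a $(\tau',\ve'_n)$-$\L$-chain with $\ve'_n\to0$, apart from the first segment, which may be shorter than $1$.
3. The initial values $\Sola^{(s_n,i_n)}(0,0)$ lie in $K$, so they can be made convergent by passing to a subsequence.
4. Lemma \ref{lemma: limit of concatenations is a solution} then says the limit is an $\L$-curve or a truncation of one. Since $\dom\Sola^{(s_n,i_n)}$ is unbounded with segment lengths bounded below, the limit domain is unbounded, so the limit is a complete $\L$-curve.

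Note that the limit is then single-valued, even though $\Sola$ is two-valued at concatenation times.

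Finally, I verify $\omega(\Sola)=K$. The inclusion $\omega(\Sola)\subset K$ holds because $\Sola$ takes values in $K$ and $K$ is closed. For the reverse inclusion, take any $x\in K$. For each $n$ some $z^n_{q}$ is $1/n$-close to $x$, and $z^n_q=\Sola(\text{concatenation time in block }n)$. These times go to infinity with $n$, which gives $x\in\omega(\Sola)$.

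The main obstacle is step 2 above, namely applying Lemma \ref{lemma: limit of concatenations is a solution} when the first segment of the tail is short. That lemma assumes a fixed $\tau$ and the normalization of segment lengths from Lemma \ref{lemma: long chain}. I would handle this in one of two ways:

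- Merge the short first piece with the next segment. This is allowed by closedness under concatenation only if the junction error is zero, which it is not in general.
- Alternatively, adapt the proof of Lemma \ref{lemma: limit of concatenations is a solution} directly. Its argument never uses a lower bound on the first segment's length, only that the segment lengths of subsequent pieces stay bounded below so that finitely many pieces cover each compact time window. The junction mismatches vanish by (i), and locally uniform continuity (Lemma \ref{lemma: tail invariance to locally uniformly continuous}) glues the limits.

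I would take the second route and check that the limits of the first pieces are still in $\L$ by nominal well-posedness.
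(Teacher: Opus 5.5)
Your proposal is correct and follows essentially the paper's route: you build a generalized concatenation of internal chains whose tolerances tend to $0$ and which sweep a dense subset of $K$ (the paper uses a dense sequence visited in the pattern $x_0,x_1,x_0,x_1,x_2,\dots$ instead of finite $1/n$-nets), and then apply Lemma~\ref{lemma: limit of concatenations is a solution} to graphically convergent tails. The only difference is how you treat the short first segment; your direct adaptation of that lemma's proof works, but the paper avoids reopening the lemma by a case split: if the distance to the tail's first concatenation time stays above some $\tau'>0$ along a subsequence, the tail is already a $(\tau',1/i)$-$\mathcal{L}$-chain and the lemma applies verbatim, and if that distance tends to $0$, the tail is restarted at that concatenation time, keeping only the next curve's value there, which by $\varepsilon_i\to 0$ and local uniform continuity does not change the graphical limit.
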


\begin{proof}
Let $x_0,x_1,x_2,\dots$ be a dense sequence in $K$. Fix $\tau>0$, and
for each $n\in\nats$, set $\ve_n=1/n$, and find a sequence of $(\tau,\ve_n)$-$\L$-chains
$\sol_0^n,\sol_1^n,\dots,\sol_{k_n}^i$ between consecutive pairs of points in 
this sequence: 
$$x_0,x_1,x_0,x_1,x_2,x_0,x_1,x_2,x_3,x_0,\dots$$ 
By Lemma \ref{lemma: long chain}, without loss of generality, one
can assume that the length of each link in each chain is in $[\tau,2\tau+1)$. 
For each $i\in\nats$, let $\Sol_i$ be the generalized concatenation of the 
$(\tau,\ve_i)$-$\L$-chain $\sol_0^i,\sol_1^i,\dots,\sol_{k_i}^i$,
and then let $\Sola$ be the generalized concatenation of the infinite sequence
$\Sola_1,\Sola_2,\dots$
In other words, $\Sola$ is the generalized concatenation of
\begin{equation}
\label{Arsenal won}
\sola_0^1,\sola_1^1,\dots,\sola_{k_1}^1,\sola_0^2,\sola_1^2,\dots,\sola_{k_2}^2,
\sola_0^3,\sola_1^3,\dots,\sola_{k_3}^3,\dots
\end{equation}
The range of $\Sola$ is contained in $K$ and, by construction, it is dense in $K$.
In fact, $\omega(\Sola)=K$. 
It remains to argue that $\Sola$ is an asymptotic $\L$-curve. 
Clearly, $\Sola$ is bounded and $\dom\Sola$ is not. 
Take any sequence $(t_i,j_i)\in\dom\Sola$ such that $t_i+j_i\to\infty$ 
and $\Sola_i:=\Sola^{(t_i,j_i)}$ converge graphically. For each $i$, let
$(s_i,k_i)\in\dom\Sola_i$ minimize $s+k$ over all $(s,k)\in\dom\Sola_i$ such that
$(t_i+s,j_i+k)$ is a concatenation time of $\Sola$. 
In the case that $s_i+k_i$ do not converge to $0$, subject to passing to a subsequence,
there exists $\tau'\in(0,\tau]$ such that $s_i+k_i\geq\tau'$ for all $i\in\nats$. 
Then $\Sola_i|_{t+j\leq\tau'}$ is a truncation of some $\L$-curve and, in particular,
$\Sola_i(0,0)$ is a singleton. Subject to passing to a further subsequence, one
can assume that $\Sola_i(0,0)$ converge.
Furthermore, by Lemma \ref{lemma: long chain} again, 
every $\Sola_i$ is a generalized concatenation of an infinite $(\tau',1/i)$-$\L$-chain.
Lemma \ref{lemma: limit of concatenations is a solution} implies that
the graphical limit of $\Sola_i$ is an $\L$-curve. 
Now, suppose that $s_i+k_i\to 0$, so that, subject to passing to a subsequence,
$k_i=0$ and $s_i\to 0$. 
For each $i\in\nats$, let $\Sola'_i$ be the tail $\Sola^{(t_i+s_i,j_i)}$
but with one of potentially two values of $\Sola^{(t_i+s_i,j_i)}(0,0)$ removed,
so that $\Sola'_i|_{t+j\leq\tau/2}$ is an $\L$-curve. 
(In other words, if $(t_i+s_i,j_i)$ is a concatenation time where two consecutive
entries $\sola$ and $\sola'$ from \eqref{Arsenal won} are concatenated, 
$\Sola'_i(0,0)=\sola'(0,0)$.) 
Because $\ve_i\searrow 0$ and $\L$ is locally uniformly continuous,
the graphical limit of $\Sola'_i$ is the same as the graphical limit of $\Sola_i$. 
Another application of Lemma \ref{lemma: limit of concatenations is a solution}
concludes the proof. 
\end{proof}

\subsection{A generalized $\omega$-limit in $\L$}
\label{section: abstract material}

Let $\L$ be nominally well-posed.
Given a bounded and complete $\sol\in\L$, one can construct the smallest 
family of hybrid curves $\L'\subset\L$ for which $\sol$ is an asymptotic $\L'$-curve.
Let $X$ be the set of all set-valued mappings $\sola:\reals^2\rightrightarrows\reals^d$
with $\dom\sola\subset[0,\infty)^2$ and nonempty and closed graphs.
Define the {\em $\ooomega$-limit} of $\sol$ as the set $\ooomega(\sol)$ given by
$$\ooomega(\sol):=\left\{\sola\in X\, :\
\exists (t_n,j_n)\in\dom\sol\ \mbox{s.t.}\ t_n+j_n\to\infty,\ \sol^{(t_n,j_n)}\xrightarrow[\mbox{\rm\scriptsize graphically}]{}\sola\right\}.$$
Then $\ooomega(\sol)$ is nonempty, by \cite[Theorem 5.36]{RockafellarWets98}, 
and because $\L$ is nominally well-posed, $\ooomega(\sol)\subset\L$.
Also, $\ooomega(\sol)$ is tail invariant. Indeed,
take $\sola\in\ooomega(\sol)$ and $(t,j)\in\dom\sola$, and let $\sola$ be the graphical
limit of $\sol_n:=\sol^{(s_n,i_n)}$ for some sequence $(s_n,i_n)\in\dom\sol$ with 
$s_n+i_n\to\infty$. Then, there exist $(t_n,j_n)\in\dom\sol_n$ with $(t_n,j_n)\to(t,j)$
and $\sol_n(t_n,j_n)\to\sola(t,j)$. By the definition of a tail, and because $s_n+i_n\to\infty$,
$(s_n+t_n,i_n+t_n)\in\dom\sol$ and $s_n+t_n+i_n+t_n\to\infty$. It is an exercise to verify
that the sequence $\sol^{(s_n+t_n,i_n+t_n)}$ converges graphically and its graphical 
limit equals $\sola^{(t,j)}$. 
Thus, $\sola^{(t,j)}\in\ooomega(\sol)$, by the definition of this set.

For example, if $\sol(t,0):=\sin(t)+e^{-t}$ on $\dom\sol=T\times\{0\}$, 
then $\ooomega(\sol)=\{\sola_r\, |\, r\in[0,2\pi)\}$ with 
$\dom\sola_r=\realsplus\times\{0\}$, $\sola_r(t,0)=\sin(t+r)$.  
Clearly, for every $\sola_r$ and $s\geq 0$, $(t,0)\mapsto\sola_r(s+t,0)$
is in $\ooomega(\sol)$.

The set $X$ can be equipped with a metric that characterizes graphical convergence (see \cite[Theorem 5.50]{RockafellarWets98}) and some properties of 
$\ooomega(\sol)$ can be studied in a metric space setting. For example, 
$\ooomega(\sol)$ is closed, because it is an intersection of closed sets:
$$\ooomega(\sol)=\bigcap_{n\in\nats} N_n\quad \mbox{where}\quad 
N_n=\overline{\left\{\sol^{(s,i)}\, |\, s+i\geq n\right\} }.$$
In earlier terminology, this says that $\ooomega(\sol)$ is nominally well-posed. In summary of
the developments above:

\begin{proposition}
Let $\L$ be nominally well-posed and $\sol\in\L$ be bounded and complete.
The set $\ooomega(\sol)$ is the smallest set of $\L$-curves such that $\sol$ 
is an asymptotic $\ooomega(\sol)$-curve. 
Moreover, it is nonempty, tail invariant, and nominally well-posed. 
\end{proposition}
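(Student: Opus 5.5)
The plan is to check three things about $\ooomega(\sol)$. First, $\sol$ is an asymptotic $\ooomega(\sol)$-curve. Second, $\ooomega(\sol)$ is contained in every set $\L'$ of hybrid curves for which $\sol$ is an asymptotic $\L'$-curve. Third, $\ooomega(\sol)$ has the listed structural properties; most of these were argued in the paragraphs preceding the statement, so they only need to be collected.

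For the first point, item (i) of the definition of an asymptotic curve holds because $\sol$ is assumed bounded and complete. For item (ii), take $(t_n,j_n)\in\dom\sol$ with $t_n+j_n\to\infty$ and $\sol^{(t_n,j_n)}$ converging graphically to some $\sola$. The graph of $\sola$ is closed, being a graphical limit. It is nonempty because the $\sol^{(t_n,j_n)}(0,0)$ lie in a bounded set, so a cluster point gives a point of $\gph\sola$ over $(0,0)$. Hence $\sola\in X$, and by definition $\sola\in\ooomega(\sol)$. Since $\L$ is nominally well-posed and the sequence is bounded (so locally eventually bounded), $\sola\in\L$ is a hybrid curve.

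It remains to see that $\sola$ is complete. I would argue this as in Lemma~\ref{lemma: ursaCs} and Proposition~\ref{proposition:brilliant-Benaim}(a): the domains $\dom\sol^{(t_n,j_n)}$ are unbounded hybrid time domains, so their limit is an unbounded hybrid time domain by \cite[Example 5.19]{Goebel12a}. This step is the only place needing care, namely that $\sol$ itself being an $\L$-curve rules out gaps in $\dom\sol$ that would make the limit domain bounded. Since $\sol\in\L$ is complete, its domain is a hybrid time domain with no gaps, so every truncation of $\dom\sol^{(t_n,j_n)}$ to $t+j\le T$ reaches the value $T$. This property passes to the limit, so $\length\dom\sola=\infty$.

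For minimality, suppose $\sol$ is an asymptotic $\L'$-curve and $\sola\in\ooomega(\sol)$. Then $\sola$ is the graphical limit of some sequence of tails $\sol^{(t_n,j_n)}$ with $t_n+j_n\to\infty$. By item (ii) of the definition applied to $\L'$, this limit is a complete $\L'$-curve, so $\sola\in\L'$. Thus $\ooomega(\sol)\subset\L'$. The claim that $\ooomega(\sol)$ is the smallest set of $\L$-curves with this property follows from this together with the first point and $\ooomega(\sol)\subset\L$.

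Finally, nonemptiness follows from \cite[Theorem 5.36]{RockafellarWets98} applied to the bounded tails $\sol^{(n,\cdot)}$. Tail invariance is the argument given before the statement. Nominal well-posedness follows from the representation $\ooomega(\sol)=\bigcap_n N_n$, which shows $\ooomega(\sol)$ is closed under graphical convergence in the metric of \cite[Theorem 5.50]{RockafellarWets98}, so item (i) of nominal well-posedness holds. For item (ii), all members of $\ooomega(\sol)$ have range in the compact set $\omega(\sol)$, so every sequence of them is locally eventually bounded. I expect the only real obstacle to be the completeness of the limits, handled as in the third paragraph.
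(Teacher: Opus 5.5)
There is a genuine gap where you conclude $\sola\in\L$. Nominal well-posedness only applies to graphically convergent sequences \emph{of $\L$-curves}. The tails $\sol^{(t_n,j_n)}$ are $\L$-curves when $\L$ is tail invariant, but tail invariance is not among the hypotheses, and without it the step fails.

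It is not a technicality. Take $\L=\{\sol\}$ with $\dom\sol=\realsplus\times\{0\}$ and $\sol(t,0)=e^{-t}$. This $\L$ is trivially nominally well-posed, and $\sol$ is bounded and complete. Every graphical limit of tails is the zero curve, which is not in $\L$. So $\ooomega(\sol)$ is not a set of $\L$-curves, and no $\L'\subset\L$ makes $\sol$ an asymptotic $\L'$-curve.

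Your first point also depends on $\sola\in\L$, because that is what tells you $\sola$ is a hybrid curve at all. With $\L=\{\sol\}$ and $\sol$ having ever steeper continuous ramps, limits of tails can be set-valued at $(0,0)$.

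The paper's own justification (``because $\L$ is nominally well-posed, $\ooomega(\sol)\subset\L$'') makes the same leap. The statement therefore needs $\L$ to be tail invariant, or at least every tail of $\sol$ to lie in $\L$. This holds for $\S_\H$ by Proposition~\ref{proposition: CFDG is like L}.

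Once that hypothesis is added, the tails are uniformly bounded $\L$-curves and nominal well-posedness gives $\sola\in\L$. The rest of your argument is then correct and follows the paper's summary. Your minimality argument, and your check of item (ii) of nominal well-posedness via ranges lying in the compact set $\omega(\sol)$, fill in what the paper leaves implicit.

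One small correction in the completeness step: on a hybrid time domain, $t+j$ skips an open interval of length one at each jump, so a truncation to $t+j\leq T$ need not reach $T$. What is true, and suffices, is that it contains a point with $T-1\leq t+j\leq T$. That property passes to the graphical limit because $\sol$ is bounded.
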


In the setting of differential equations or inclusions, leading to flows in
a metric space parameterized by $t\in[0,\infty)$, existing tools and results about 
invariance and chain transitivity of $\omega$-limits for flows on a metric
space can be used to study objects corresponding to $\ooomega(\sol)$ above. 
This was done, for example, in \cite{Benaimetal2005}. For hybrid dynamics,
and even more so for generalized hybrid dynamics given by sets $\L$,
such tools are not yet available.

\section{Hybrid inclusions: solutions and asymptotic solutions}
\label{section:hybrid-systems}

This section focuses the discussion on arcs generated by 
hybrid systems modeled by hybrid inclusions. 
Subsection \ref{section: back to earth} gives the basic definitions and
assumptions.
In Subsection \ref{section: ex-plaining} some of the more important
results from earlier two sections are restated in the current, somewhat
simpler, setting. 
Subsection \ref{section: enough with simplicity} introduces new concepts
of perturbed solutions to a hybrid inclusion, which are then shown to be
asymptotic solutions in the sense the previous section.

\subsection{Basics}
\label{section: back to earth}

A symbolic representation of a hybrid inclusion that models a hybrid system is
\begin{equation}
\label{eq:hybrid}
\begin{array}{lcl}
x \in C & & \dot{x} \in F(x)  \\[3pt]
x \in D & & x^{+} \in G(x). 
\end{array}
\end{equation}
The set $C \subset \Re^{n}$ is called the flow set, 
the set-valued mapping $F:\Re^{n} \rightrightarrows \Re^{n}$ is called
the flow map, 
the set $D \subset \Re^{n}$ is called the jump
set, 
and the set-valued mapping $G:\Re^{n} \rightrightarrows \Re^{n}$
is called the jump map.

The data of \eqref{eq:hybrid} satisfies the {\em Hybrid Basic Conditions}
if
\begin{enumerate}
    \item the sets $C$ and $D$ are closed;
    \item the set-valued mappings $F$ and $G$ are outer semicontinuous and locally bounded;
    \item $F(x)$ is nonempty and convex for all $x \in C$;
    \item $G(x)$ is nonempty for all $x \in D$.
\end{enumerate}
\

Under the Hybrid Basic Conditions, the definition of a solution to 
\eqref{eq:hybrid} is as follows.
A hybrid arc $\sol:\dom\sol\to\reals^d$ is a {\em solution to \eqref{eq:hybrid}} if
$\sol(0,0)\in C\cup D$, and
\begin{itemize} 
\item[$\bullet$] 
for every $([t,t'],j)\subset\dom\sol$ with $t<t'$, $\sol(t,j)\in C$  and 
$\soldot(t,j)\in F(\sol(t,j))$ for almost all $t\in[t,t']$; 
\item[$\bullet$]
for every $(t,j),(t,j+1)\in\dom\sol$, 
$\sol(t,j)\in D$ and $\sol(t,j+1)\in G(\sol(t,j))$.
\end{itemize}
The set of all solutions 
of \eqref{eq:hybrid} is denoted $\mathcal{S}_{\mathcal{H}}$.
To reflect what was done earlier with $\L$-curves, each solution
could be called a $\S_\H$-curve, in fact a $\S_\H$-arc, but this terminology
is not used in what follows. 
Similarly, an asymptotic solution refers below to what could be called
an asymptotic $\S_\H$-arc. All properties, for example invariance,
are understood as being with respect to $\S_\H$, unless explicitly stated
otherwise. 

The following result collects the now standard facts about $\S_\H$ which confirm
that the results developed in earlier sections apply to solutions
to \eqref{eq:hybrid}. 
Nominal well-posedness is concluded in \cite[Theorem 6.8]{Goebel12a},
and the subsequent two properties are obvious. 
The local boundedness conclusion follows from \cite[Proposition 6.13]{Goebel12a}.

\begin{proposition}
\label{proposition: CFDG is like L}
Under the Hybrid Basic Conditions, the set $\S_\H$ is
nominally well posed, tail invariant, and closed under concatenation.
If, additionally, every solution $\sol\in\S_\H$ is bounded or complete, 
then $\S_\H$ is locally bounded. 
\end{proposition}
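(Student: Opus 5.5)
The plan is to verify the four properties one at a time, leaning on the standard theory of \cite{Goebel12a} wherever possible.

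\textbf{Nominal well-posedness.} For item (i), take a graphically convergent, locally eventually bounded sequence $\sol_n\in\S_\H$. The plan is to invoke the sequential compactness result for solutions, \cite[Theorem 6.8]{Goebel12a}, together with the properties of graphical limits of hybrid arcs in \cite[Section 5.3]{Goebel12a}. This yields that the limit is a hybrid arc whose domain is a hybrid time domain, and which is itself a solution.

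The facts needed behind this step are as follows.
\begin{itemize}
\item Local eventual boundedness and local boundedness of $F$ give uniform Lipschitz bounds on the flow pieces. By Arzel\`a--Ascoli, the limit is absolutely continuous on each interval of flow.
\item Closedness of $C$ places flow points in $C$.
\item Convexity and outer semicontinuity of $F$ give $\dot\sol\in F(\sol)$ via the standard convergence theorem for differential inclusions.
\item Closedness of $D$ and outer semicontinuity of $G$ handle the jumps.
\end{itemize}

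Item (ii) also comes from \cite[Theorem 6.8]{Goebel12a}. If the sequence were not locally eventually bounded, one can extract truncated solutions whose limit, by the same closure argument, is a solution from $x$ that blows up in finite time. This contradicts the hypothesis.

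\textbf{Tail invariance and closedness under concatenation.} Both are immediate from the definition of solution, since every condition defining a solution is local in hybrid time.
\begin{itemize}
\item For tail invariance, shifting by $(s,i)\in\dom\sol$ gives a hybrid time domain. The flow and jump conditions are inherited pointwise, and $\sol(s,i)\in C\cup D$ holds because $(s,i)$ either lies on a flow interval (so in $C$) or precedes or follows a jump. The one delicate case is when $\dom\sol^{(s,i)}=\{(0,0)\}$; there one uses $\sol(s,i)\in C\cup D$, which holds for the final point by closedness of $C$ and $D$, or trivially if it is the initial point.
\item For concatenation, $\dom\sol_1|_{t+j\le s+i}$ followed by the shifted $\dom\sol_2$ is a hybrid time domain. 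The matching $\sol_1(s,i)=\sol_2(0,0)$ keeps each flow interval absolutely continuous. The flow and jump conditions then hold piece by piece.
\end{itemize}

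\textbf{Local boundedness.} Suppose every solution is bounded or complete, and suppose local boundedness fails. Then there are $r,T$ and solutions $\sol_n$ with $\sol_n(0,0)\in r\ball$ and points $(t_n,j_n)$, $t_n+j_n\le T$, with $|\sol_n(t_n,j_n)|\to\infty$. Pass to a subsequence with convergent initial points $x$.

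The hypothesis excludes any solution from $x$ that blows up in finite time, since such a solution would be neither bounded nor complete. Nominal well-posedness (ii) then makes the sequence locally eventually bounded, which is a contradiction. This is the content of \cite[Proposition 6.13]{Goebel12a}.

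The main obstacle is verifying that the graphical limit in (i) is genuinely a solution. The step requiring most care is uniform convergence of the flow pieces when the jump times of $\sol_n$ converge, including jump times that merge. Here I would rely directly on \cite[Lemma 5.28, Theorem 6.8]{Goebel12a} rather than reprove them.
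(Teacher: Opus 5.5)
Your overall route is the paper's: it cites \cite[Theorem 6.8]{Goebel12a} for nominal well-posedness, calls tail invariance and closedness under concatenation obvious, and cites \cite[Proposition 6.13]{Goebel12a} for local boundedness. Your arguments for well-posedness, concatenation and local boundedness are fine. The local-boundedness step, via item (ii) and the observation that a solution blowing up in finite time is neither bounded nor complete, is exactly right.

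\textbf{The gap: the trivial-tail case of tail invariance.} Your justification here is wrong. Closedness of $C$ gives $\sol(s,i)\in C$ at the last point of $\dom\sol$ only when that point ends a flow interval. If the domain terminates immediately after a jump, you only know $\sol(s,i)\in G(\sol(s,i-1))$. Nothing in the Hybrid Basic Conditions places this point in $C\cup D$, and closedness of $D$ plays no role.

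Concretely, take $C=\emptyset$, $D=\{0\}$, $F\equiv\{0\}$, $G\equiv\{1\}$. The solution with $\dom\sol=\{(0,0),(0,1)\}$, $\sol(0,0)=0$, $\sol(0,1)=1$ has trivial tail $\sol^{(0,1)}$ with value $1\notin C\cup D$, so $\sol^{(0,1)}\notin\S_\H$. Tail invariance, read literally with the requirement $\sol(0,0)\in C\cup D$, therefore fails in this corner case. The paper's ``obvious'' passes over the same point.

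\textbf{What your argument does establish.} It shows that $\sol^{(s,i)}\in\S_\H$ whenever $(s,i)$ is followed by further points of $\dom\sol$. In that case the next step is either a flow, giving $\sol(s,i)\in C$ by closedness of $C$ and continuity, or a jump, giving $\sol(s,i)\in D$. This weaker property is what the later uses of tail invariance actually need, since each of them takes a tail at a point strictly preceding another point of the domain.

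To get the statement exactly as written, you need a convention or an extra hypothesis. For example, count every trivial arc as a solution, or assume $G(D)\subset C\cup D$.
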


It is often useful to consider ``inflations'' of the data
of the hybrid inclusion \eqref{eq:hybrid} through a small parameter
$\varepsilon>0$. 
The $\varepsilon$-inflation of \eqref{eq:hybrid} is the system represented symbolically
as
\begin{equation}
\label{eq:inflated-system}
\begin{array}{lcl}
x \in C_{\varepsilon} & & \dot{x} \in F_{\varepsilon}(x) \\[3pt]
x \in D_{\varepsilon} & & x^{+} \in G_{\varepsilon}(x) 
\end{array}
\end{equation}
where
\begin{equation}
\label{eq:inflated-data}
\begin{array}{rcl}
    C_{\varepsilon} & := & C+ \varepsilon \Ball \\[2pt]
    F_{\varepsilon}(x) & := & 
    \overline{\mbox{\rm co}} F((x+ \varepsilon \Ball) \cap C) + \varepsilon \Ball \qquad \forall x \in C_{\varepsilon} \\[2pt]
    D_{\varepsilon} & := & D + \varepsilon \Ball \\[2pt]
    G_{\varepsilon}(x) & := & G((x+ \varepsilon \Ball) \cap D) + \varepsilon \Ball  \qquad \forall x \in D_{\varepsilon} .
\end{array}
\end{equation}
In these definitions, $\varepsilon \Ball$ denotes the
closed ball of radius $\varepsilon$.
Among other uses, such inflations are featured in the definition of 
semi-global practical robustness of asymptotic stability, while similar inflations
where $\varepsilon$ is allowed to depend on the state $x$ appear in the
concept of robustness of asymptotic stability for \eqref{eq:hybrid}; 
see \cite[Chapter 7]{Goebel12a}. 
The key to the mentioned robustness is that, roughly, solutions to
\eqref{eq:inflated-data} with small enough $\varepsilon$ are close to solutions
to \eqref{eq:hybrid}. This general principle is used in the results
of Subsection \ref{section: enough with simplicity}.

\subsection{Earlier results applied to hybrid inclusions}
\label{section: ex-plaining}

Some of the earlier results, obtained in the setting of $\L$-curves, 
are now restated and simplified in the setting of hybrid inclusions.
The four corollaries below all apply to the hybrid inclusion \eqref{eq:hybrid} and,
throughout this subsection, \eqref{eq:hybrid} is assumed to satisfy 
the Hybrid Basic Conditions.
The corollaries are facilitated by Proposition \ref{proposition: CFDG is like L}.
Internal chain transitivity has not been studied in this setting before.

Proposition \ref{proposition:brilliant-Benaim} generates this corollary:

\begin{corollary}
\label{corollary: obvious 1}
Let $K\subset\reals^n$ be compact and internally chain recurrent.
Then $K$ is weakly forward and weakly backward invariant. 
\end{corollary}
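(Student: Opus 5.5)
The corollary follows by applying Proposition \ref{proposition:brilliant-Benaim} with $\L:=\S_\H$. The only work is to check which hypotheses of that proposition $\S_\H$ meets under the Hybrid Basic Conditions alone. In particular, the corollary makes no assumption that solutions are bounded or complete, so local boundedness of $\S_\H$ is not available.

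First, by Proposition \ref{proposition: CFDG is like L}, the Hybrid Basic Conditions make $\S_\H$ nominally well posed and tail invariant. $K$ is compact, hence closed, and by hypothesis it is internally chain recurrent, that is, internally $\S_\H$-chain recurrent.

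For weak forward invariance I will use item (a) of Proposition \ref{proposition:brilliant-Benaim}. That item asks for $\L$ to be locally bounded or for $K$ to be compact. The compactness alternative holds here, which gives weak forward $\S_\H$-invariance of $K$.

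For weak backward invariance I will use item (b). It needs tail invariance of $\L$ and compactness of $K$, and both hold, so $K$ is weakly backward $\S_\H$-invariant.

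The one point to watch is that local boundedness of $\S_\H$ is never used. Its role in the argument is to make the graphically convergent sequences of chain links locally eventually bounded, and compactness of $K$ already does this. Since the links are internal to $K$, their relevant ranges lie in $K$, which is exactly how the proof of Proposition \ref{proposition:brilliant-Benaim} handles the case of compact $K$. So I expect no real obstacle.
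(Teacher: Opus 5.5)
Your route is the paper's own: the corollary is read off from Proposition~\ref{proposition:brilliant-Benaim} with $\L=\S_{\H}$, and Proposition~\ref{proposition: CFDG is like L} supplies nominal well-posedness and tail invariance. For item (a) your remark is right. The first links take values in $K$ for all $t+j\le n$, so compactness of $K$ makes them locally eventually bounded, and local boundedness of $\S_{\H}$ is never used.

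For item (b) the remark fails, and this is exactly the step you called unproblematic. The paper's proof of (b) makes the same unjustified claim.

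\textbf{Where (b) breaks.} In (b) the sequence is made of the tails $\sola^n:=(\sola^n_{k^*_n})^{(t'_n,j'_n)}$ of the last links. Each tail starts $\theta_n:=t_n+j_n-t'_n-j'_n\in[T,T+1]$ before the near-return time $(t_n,j_n)$. Internality puts $\sola^n(t,j)$ in $K$ only for $t+j\le\theta_n$. Beyond that, $\sola^n$ follows the last link past $(t_n,j_n)$, where nothing is known. Without local boundedness it may blow up in bounded time, so $\{\sola^n\}$ need not be locally eventually bounded, and nominal well-posedness then says nothing about its limit.

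For abstract $\L$ this breaks (b) itself. Work on $\reals$ with $K=\{0\}\cup\{1/m: m\in\nats\}$. Let $\L$ consist of:
\begin{itemize}
\item[(1)] the curves that jump from $0$, or from some $1/m$, to $1$ and then stay at $1$;
\item[(2)] for each $m\in\nats$ and $L\ge 0$, the curve that stays at $1$ for time $L$, jumps to $1/m$, and then flows as $1/m-\tan(\pi(t-L)/2)$ on $[L,L+1)$;
\item[(3)] all tails of these curves.
\end{itemize}
This $\L$ is tail invariant and nominally well posed, and $K$ is internally $\L$-chain recurrent. Yet no $\sol\in\L$ has $\sol(t,j)=0$ with $t+j>0$ while staying in $K$ up to $(t,j)$, so $K$ is not weakly backward $\L$-invariant.

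\textbf{Repair for $\S_{\H}$.} The fix uses a property that is not among the hypotheses of (b): truncations of solutions are solutions.
\begin{itemize}
\item[(1)] Replace $\sola^n$ by $\sola^n|_{t+j\le\theta_n}$. These truncations lie in $\S_{\H}$ and have ranges in $K$, so they are locally eventually bounded.
\item[(2)] A graphically convergent subsequence therefore has a limit $\chi\in\S_{\H}$ with range in $K$.
\item[(3)] The points $(t_n-t'_n,\,j_n-j'_n,\,\sola^n_{k^*_n}(t_n,j_n))$ have a limit point $(\bar t,\bar j,x)$. It lies in $\gph\chi$ and satisfies $\bar t+\bar j\ge T$.
\end{itemize}
This is weak backward invariance, so the corollary holds. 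The truncation step, however, has to be part of your argument; citing (b) as stated is not enough.
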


Theorem \ref{theorem: almost time to watch EPL} implies this corollary:

\begin{corollary}
\label{corollary: obvious 2}
Let $X\subset\reals^n$ be nonempty and compact. 
Then, the chain recurrent set $\R(X)$ of $X$ is internally chain recurrent. 
\end{corollary}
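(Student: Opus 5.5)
The plan is to reduce Corollary \ref{corollary: obvious 2} directly to Theorem \ref{theorem: almost time to watch EPL} with $\L:=\S_\H$. The theorem needs $\L$ to be nominally well-posed, tail invariant, and closed under concatenation. Under the Hybrid Basic Conditions, which are assumed throughout this subsection, Proposition \ref{proposition: CFDG is like L} gives exactly these three properties for $\S_\H$. Since $X$ is nonempty and compact, as the standing assumption of Subsection \ref{section: chain recurrent sets} requires, the theorem applies verbatim. It yields that $\R(X)$, defined through $\S_\H|_X$-chains, is internally $\S_\H$-chain recurrent.

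The one point I would check carefully is local boundedness. Proposition \ref{proposition: CFDG is like L} provides local boundedness of $\S_\H$ only under an extra hypothesis, and the corollary does not assume it. The proof of Theorem \ref{theorem: almost time to watch EPL} invokes Lemma \ref{lemma: closeness of solutions to chains} and, through it, Lemma \ref{lemma: limit of concatenations is a solution}, which does list local boundedness. The remedy is that the theorem's proof works with $\L|_X$ from the start: it assumes without loss of generality that the range of $\L$ lies in $X$.

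So I would argue that the sets to which the lemmas are really applied are $\S_\H|_X$, or sets of curves with ranges in compact neighborhoods of $X$, never all of $\S_\H$. For such sets, local boundedness is automatic because every curve takes values in a fixed compact set. Nominal well-posedness is inherited, since graphical limits of curves with ranges in the compact $X$ stay in $X$. Tail invariance and closure under concatenation are also inherited, since tails and concatenations of curves with range in $X$ again have range in $X$.

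With this observation the hypotheses of the theorem, and of the lemmas used in its proof, hold for $\L=\S_\H|_X$. Finally, internal chain recurrence relative to $\S_\H|_X$ implies internal chain recurrence relative to $\S_\H$, because $\S_\H|_X\subset\S_\H$ and the chains produced are internal to $\R(X)$. This completes the reduction.
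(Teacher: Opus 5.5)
Your proposal is correct and follows the paper's route: the corollary is obtained by applying Theorem~\ref{theorem: almost time to watch EPL} to $\L=\S_\H$, with the hypotheses supplied by Proposition~\ref{proposition: CFDG is like L}. Your extra check is sound: $\S_\H|_X$ is automatically locally bounded and inherits nominal well-posedness, tail invariance, and closure under concatenation. This makes explicit the ``without loss of generality, the range of $\L$ is a subset of $X$'' step in the theorem's proof, which is what supplies the local boundedness required by the lemmas used there.
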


Recall the definition of an asymptotic $\mathcal{L}$-curve in Section
\ref{section: asymptotic L-curves}.
A hybrid curve is said to be an {\em asymptotic solution of \eqref{eq:hybrid}}
if it is an asymptotic $\mathcal{L}$-curve with $\mathcal{L}=\mathcal{S}_{\mathcal{H}}$.
Under the Hybrid Basic Conditions, every 
bounded and complete
solution to \eqref{eq:hybrid} is,
of course, an asymptotic solution to \eqref{eq:hybrid} as well. 
Corollary \ref{corollary:a confirmation provided}
leads to this corollary: 

\begin{corollary}
\label{corollary: obvious 3}
For every asymptotic solution to \eqref{eq:hybrid}, in particular for
every bounded and complete solution $\sol\in\S_\H$, 
the set $\omega(\sol)$ is internally chain transitive. 
\end{corollary}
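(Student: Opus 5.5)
The plan is to obtain Corollary \ref{corollary: obvious 3} directly from Corollary \ref{corollary:a confirmation provided} with $\L:=\S_\H$. The hypotheses to check are that $\S_\H$ is nonempty, locally bounded, nominally well posed and tail invariant. By Proposition \ref{proposition: CFDG is like L}, under the Hybrid Basic Conditions $\S_\H$ is nominally well posed and tail invariant. Nonemptiness is immediate: an asymptotic solution exists only if some complete $\S_\H$-curve appears as a graphical limit in item (ii) of the definition, and for a bounded complete solution $\sol$, $\sol$ itself lies in $\S_\H$.

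Local boundedness is the main obstacle, since Proposition \ref{proposition: CFDG is like L} gives it only when every solution is bounded or complete, which is not assumed. I would avoid needing it globally by localizing to a compact neighbourhood of $K:=\omega(\sol)$. Concretely, I would replace $\S_\H$ by the solution set $\S_{\H'}$ of the restricted inclusion with data $C':=C\cap\Omega$, $D':=D\cap\Omega$ and the same $F,G$, where $\Omega:=K+\ball$. These data still satisfy the Hybrid Basic Conditions. Every solution of the restricted system either stays in the compact set $\Omega$, or ends by jumping out of it, in which case it is bounded because $G$ is locally bounded. Hence every solution of $\H'$ is bounded, and Proposition \ref{proposition: CFDG is like L} gives local boundedness of $\S_{\H'}$.

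Next I would check that $\sol$ is still an asymptotic $\S_{\H'}$-curve with $\omega(\sol)$ unchanged. By \eqref{ride to result}, tails of $\sol$ eventually lie in $K+\frac12\ball$, so every graphical limit of tails is a complete solution with range in $K$. Such a solution flows and jumps only from points of $K\subset\Omega$, so it belongs to $\S_{\H'}$. Corollary \ref{corollary:a confirmation provided} then says $K$ is internally $\S_{\H'}$-chain transitive. Since $\S_{\H'}\subset\S_\H$, every internal $\S_{\H'}$-chain is an internal $\S_\H$-chain, which gives internal chain transitivity with respect to $\S_\H$.

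Finally, for the particular case, a bounded complete $\sol\in\S_\H$ is an asymptotic solution. It is bounded with unbounded domain, and any graphically convergent sequence of its tails is locally eventually bounded because $\sol$ is bounded. Nominal well-posedness (Proposition \ref{proposition: CFDG is like L}) and tail invariance then place the limit in $\S_\H$, and completeness of the limit follows from the domains being unbounded, as in \cite[Example 5.19]{Goebel12a}. The only delicate step is the localization argument for local boundedness; if it fails, the fallback is to rerun the proof of Theorem \ref{theorem:L-chain-igt}, which uses local boundedness only on curves with range near the compact $K$.
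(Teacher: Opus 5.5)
Your proof is correct. Its backbone is the paper's: the paper gets this corollary in one line by applying Corollary~\ref{corollary:a confirmation provided} with $\L=\S_\H$ and citing Proposition~\ref{proposition: CFDG is like L}. The difference is that the paper never checks local boundedness of $\S_\H$. Proposition~\ref{proposition: CFDG is like L} supplies it only when every solution is bounded or complete, and the corollary does not assume this.

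Your localization closes that gap using the Hybrid Basic Conditions alone:
\begin{itemize}
\item Shrinking only the flow and jump sets to $\Omega=K+\ball$ keeps the data admissible.
\item Every solution of $\H'$ then has range in the bounded set $\Omega\cup G(D\cap\Omega)$, so $\S_{\H'}$ is locally bounded.
\item The graphical limits of tails of $\sol$ have range in $K$, so they belong to $\S_{\H'}$ and $\sol$ is an asymptotic $\S_{\H'}$-curve.
\item Internal $\S_{\H'}$-chains are internal $\S_\H$-chains because $\S_{\H'}\subset\S_\H$.
\end{itemize}

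Keeping $F$ and $G$ unchanged is also safer than the paper's own restriction $\H|_K$ (used for Corollary~\ref{corollary: obvious 4}). Intersecting $G$ with $K$ can violate the nonemptiness requirement on $D\cap K$.

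The paper's route is shorter but tacitly needs something like the absence of finite escape times. Yours costs a paragraph and needs nothing extra, so the fallback you mention is unnecessary.
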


Note that Corollary \ref{corollary: obvious 3}, when combined with
Corollary \ref{corollary: obvious 1}, implies the known invariance properties
of the $\omega$-limit of a solution to \eqref{eq:hybrid}.

Because the set $\S_\H$ is closed under concatenation and $K:=\omega(\sol)$ is
weakly invariant, the internal chain transitivity of $K$ can be 
verified by chains where the solutions $\sol_0,\sol_1,\dots,\sol_{k-1}$ forming them
are complete and have ranges in $K$. Let $\S^c_\H$ be the set of all complete
$\sol\in\S_\H$. Let $\H|_K$ be the hybrid inclusion \eqref{eq:hybrid} restricted to $K$,
i.e., be the hybrid inclusion with data $(C\cap K,F,D \cap K,G^K)$, where
$G^K(x):=G(x)\cap K$ for every $x\in\reals^d$. This data satisfies the Hybrid
Basic Conditions if $(C,F,D,G)$ does.
The data ensures that the range of every $\sol\in\S_{\H|_K}$ is contained in $K$. 
In turn, the conclusion of Corollary \ref{corollary: obvious 3} can be strengthened:

\begin{corollary}
\label{corollary: obvious 4}
For every asymptotic solution to \eqref{eq:hybrid}, in particular for
every bounded and complete solution $\sol\in\S_\H$, 
the set $\omega(\sol)$ is internally $\S^c_{\H|_K}$-chain transitive. 
\end{corollary}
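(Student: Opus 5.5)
The plan is to start from Corollary \ref{corollary: obvious 3}, which says that $K:=\omega(\sol)$ is internally $\S_\H$-chain transitive. Each link of such a chain will then be replaced by a complete solution of $\H|_K$.

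Two preliminary facts are needed. First, by Proposition \ref{proposition: PSG or Real}, $K$ is nonempty, compact and weakly $\S_\H$-invariant. Hence for every $x\in K$ there is a complete $\sol'\in\S_\H$ with $\sol'(0,0)=x$ and $\rge\sol'\subset K$; alternatively, this follows from Corollary \ref{corollary: obvious 1}. Second, any $\sola\in\S_\H$ whose range is contained in $K$ is a solution of $\H|_K$. Indeed, flowing points lie in $C\cap K$ and satisfy the same differential inclusion. At every jump, $\sola(t,j)\in D\cap K$ and $\sola(t,j+1)\in G(\sola(t,j))\cap K=G^K(\sola(t,j))$. Since $\sola(0,0)\in K$, also $\sola(0,0)\in(C\cup D)\cap K$.

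Now fix $x,y\in K$ and $\tau,\ve>0$, and take an internal $(\tau,\ve)$-$\S_\H$-chain $\sol_0,\dots,\sol_{k^*-1}$ with hybrid times $(t_k,j_k)$ and points $x_k$. By internality, $\sol_k(t,j)\in K$ for $t+j\le t_k+j_k$. For each $k$, I would modify the $k$-th link as follows.
\begin{itemize}
\item Truncate $\sol_k$ at $(t_k,j_k)$. The point $z_k:=\sol_k(t_k,j_k)$ lies in $K$.
\item Pick a complete $\sol'_k\in\S_\H$ with $\sol'_k(0,0)=z_k$ and range in $K$.
\item Let $\widetilde\sol_k$ be the concatenation of $\sol_k$ and $\sol'_k$ at $(t_k,j_k)$. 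It is well defined because $\sol_k(t_k,j_k)=\sol'_k(0,0)$.
\end{itemize}
By Proposition \ref{proposition: CFDG is like L}, $\widetilde\sol_k\in\S_\H$. It is complete, since $\sol'_k$ is. Its range is contained in $K$, by internality on the initial segment and by the choice of $\sol'_k$ afterwards. By the second preliminary fact, $\widetilde\sol_k\in\S^c_{\H|_K}$. It agrees with $\sol_k$ up to $(t_k,j_k)$, so $\widetilde\sol_k(0,0)=x_k$ and $x_{k+1}\in\widetilde\sol_k(t_k,j_k)+\ve\ball$ with $t_k+j_k\ge\tau$. Thus the modified curves form an internal $(\tau,\ve)$-$\S^c_{\H|_K}$-chain from $x$ to $y$, and since $x,y,\tau,\ve$ were arbitrary, the claim follows.

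The only delicate point is the concatenation step. One must check that the concatenation of two hybrid arcs satisfying the Hybrid Basic Conditions setup is again a solution at the gluing time. This holds because the concatenation only glues at a common point, so no new jump or flow condition arises there beyond those already satisfied by $\sol_k$ before $(t_k,j_k)$ and by $\sol'_k$ after it. The property is in any case asserted in Proposition \ref{proposition: CFDG is like L}.
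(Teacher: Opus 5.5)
Your proposal is correct and follows essentially the paper's own route. The paper also derives this corollary from Corollary \ref{corollary: obvious 3}: it uses closedness of $\S_\H$ under concatenation and weak forward invariance of $K=\omega(\sol)$ to extend each link of an internal chain to a complete solution with range in $K$, and such a solution is then a solution of $\H|_K$. (The only implicit step is that obtaining a complete solution starting at $x$ from Proposition \ref{proposition: PSG or Real} requires taking a tail, and hence tail invariance of $\S_\H$; your alternative citation of Corollary \ref{corollary: obvious 1} covers this anyway.)
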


Given a set $S\subset\reals^d$, let $\S_\H(S)$ be the set 
of all $\sol\in\S_\H$ with $\sol(0,0)\in S$.
For \eqref{eq:hybrid}, 
a set $A\subset\reals^d$ is i) {\em Lyapunov stable} if for every $\ve>0$ there exists 
$\delta>0$ such that every $\sol\in\S_\H(A+\delta\ball)$ satisfies
$\sol(t,j)\in A+\ve\ball$ for all $(t,j)\in\dom\sol$; ii) 
{\em attractive} if there exists $\delta>0$ such that every
$\sol\in\S_\H(A+\delta\ball)$ is bounded and, if it is complete, then it converges to $A$, 
and iii) {\em asymptotically stable} if it is both Lyapunov stable and attractive.
The {\em basin of attraction} of an asymptotically stable $A$, denoted
$\B(A)$, is the set of all $x\in\reals^d$ for which every $\sol\in\Sol(x)$
converges to $A$. If $\B(A)=\reals^d$, the set $A$ is {\em globally asymptotically
stable}. A particular kind of an asymptotically stable set is provided by a (compact)
attractor for \eqref{eq:hybrid}. See \cite{Goebel23SCL}, \cite{GoebelTeel25SCLsubmission}
for definitions and some details, in the context of \eqref{eq:hybrid}. 

From Corollary \eqref{corollary: obvious 3} one can deduce: 
\begin{corollary}
\label{corollary: obvious 5}
Let $A\subset\reals^n$ be a nonempty and compact asymptotically stable 
for \eqref{eq:hybrid} set. 
Let $\sol$ be an asymptotic solution to \eqref{eq:hybrid}; in particular,
$\sol$ may be a bounded and complete solution $\sol\in\S_\H$. If
\begin{equation}
\omega(\sol)\cap\B(A)\neq \emptyset, 
\end{equation} 
which holds in particular when there exists a compact set $K\subset\B(A)$ 
such that $\rge\sol\subset K$,
then the set $\omega(\sol)$ is an internally chain transitive subset of $\R(A)$. 
\end{corollary}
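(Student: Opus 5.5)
The plan has three steps. First show that $\omega(\sol)\subset\B(A)$ forces $\omega(\sol)\subset A$. Then get internal chain transitivity from Corollary \ref{corollary: obvious 3}. Finally conclude $\omega(\sol)\subset\R(A)$.

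Put $K:=\omega(\sol)$. By Proposition \ref{proposition: PSG or Real}, $K$ is nonempty, compact and weakly $\S_\H$-invariant, and Corollary \ref{corollary: obvious 3} gives that $K$ is internally chain transitive. In particular, for every $x,y\in K$ and $\tau,\ve>0$ there is a $(\tau,\ve)$-chain from $x$ to $y$ whose relevant range lies in $K$. So $\omega(\sol)\subset\R(A)$ follows once $K\subset A$ is known: the internal chains from $x$ to $x$ then use curves whose relevant pieces lie in $A$. Truncated at the chain times these are $\S_\H|_A$-curves, or, as the paper allows, internal chains with respect to $A$. Hence every point of $K$ is chain recurrent in $A$.

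\textbf{Main step: $K\subset A$.} Pick $x_0\in K\cap\B(A)$; the parenthetical case gives this directly, since then $\omega(\sol)\subset K'\subset\B(A)$ for the compact set $K'$ containing $\rge\sol$. Let $\ve>0$, let $\delta>0$ come from Lyapunov stability of $A$, and let $x\in K$ be arbitrary. I would use internal chain transitivity from $x_0$ to $x$, together with the robustness of asymptotic stability on compact subsets of $\B(A)$ and the chain-stability it implies.

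First, I would use uniform attractivity. Under the Hybrid Basic Conditions, local boundedness of $\S_\H$ near the compact set $\{x_0\}\subset\B(A)$ and \cite[Chapter 7]{Goebel12a} give a $T$ such that every solution from a small neighborhood of $x_0$ enters $A+\delta/2\,\ball$ by time $T$. By Lemma \ref{lemma: L-osc}, solutions starting near $x_0$ stay close to solutions starting at $x_0$.

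Next, I would use chain stability of $A$. Asymptotic stability of a compact $A$ yields, for small chain parameter $\ve'$, that $(\tau,\ve')$-chains which start in $A+\delta'\ball$ remain in $A+\ve\ball$. This follows from the existence of a smooth Lyapunov function for $A$ on $\B(A)$, or from semiglobal practical robustness: a $(\tau,\ve')$-chain is essentially a solution of the $\ve'$-inflated system \eqref{eq:inflated-system}, whose solutions starting near $A$ stay near $A$. Take a $(\tau,\ve')$-chain from $x_0$ to $x$ with $\tau\ge T$. Its first link carries $x_0$ into $A+\delta/2\,\ball$, and from there the chain stays in $A+\ve\ball$. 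Since $x$ is the endpoint, $x\in A+\ve\ball$. Because $\ve$ is arbitrary and $A$ is closed, $x\in A$.

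I expect the chain-stability step to be the main obstacle: showing rigorously that concatenated solutions with $\ve'$-jumps starting near $A$ cannot escape $A+\ve\ball$. I would try first to cite the robustness results of \cite[Chapter 7]{Goebel12a} via the inflated system. If that citation does not fit, I would instead argue by contradiction, using Lemma \ref{lemma: limit of concatenations is a solution} on a sequence of escaping chains to produce a genuine solution from $A$ that leaves $A+\ve/2\,\ball$. This contradicts Lyapunov stability, up to handling the limit of the entry points using the compactness of $A+\ve\ball$.
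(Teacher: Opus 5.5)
Your proposal is correct in outline and uses the same mechanism as the paper: uniform attractivity of $A$ from compact subsets of $\B(A)$, together with the fact that chains with long links cannot drift away from a neighborhood of $A$. The paper phrases this as a dichotomy. For $x\in\B(A)\setminus A$ there is no $(\tau,\ve)$-chain from $x$ to $x$, and there is none from $A$ to a point outside $U$. You instead run one chain from $x_0\in K\cap\B(A)$ to an arbitrary $x\in K$; the two packagings are interchangeable.

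The real difference is in the step you flag as the main obstacle, and there the paper's device is much lighter. Take $\ve>0$ with $U:=A+\ve\ball\subset\B(A)$. Let $\tau$ be the uniform attraction time of the compact set $\{x_0\}\cup U$ into $A+\frac{\ve}{2}\ball$. In any $(\tau,\frac{\ve}{2})$-chain from $x_0$, every link starts in $\{x_0\}\cup U$. Hence its endpoint, at hybrid time at least $\tau$, lies in $A+\frac{\ve}{2}\ball$, and the next chain point lies in $U$. By induction, $x\in A+\ve\ball$. This needs no converse Lyapunov function, robustness theorem, or limit-of-concatenations argument. It also avoids the radius bookkeeping in your version, where the attractivity target $A+\frac{\delta}{2}\ball$ from Lyapunov stability must be matched to the chain-stability radius $\delta'$.

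Of your three justifications for chain stability, only the first is sound as stated.
\begin{itemize}
\item The smooth Lyapunov function works: on a compact sublevel set one gets $V(x_{k+1})\le e^{-\tau}V(x_k)+L\ve'$.
\item The inflated-system route does not. The $\ve'$-discontinuities between links occur at arbitrary states, so they are neither flows nor jumps of $(C_{\ve'},F_{\ve'},D_{\ve'},G_{\ve'})$, and a chain is not a solution of that system.
\item The contradiction fallback is only sketched. It still needs some uniform attractivity to bound how many links can occur before an escape.
\end{itemize}

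Lemma \ref{lemma: L-osc} and neighborhoods of $x_0$ play no role, since chains start exactly at $x_0$; that lemma also requires local boundedness of $\S_\H$, which is not assumed. Your last step is fine: you pass from $\omega(\sol)\subset A$ to $\omega(\sol)\subset\R(A)$ by truncating links at the chain times, which makes explicit what the paper leaves implicit.
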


Above, $\R(A)$ is the chain recurrent set of $A$, defined in
Section \ref{section: chain recurrent sets}.
To see that the result holds, take any $x\in\omega(\sol)$.
The uniform attractivity of $A$ from compact subsets of $\B(A)$
(see \cite[Lemma 7.8]{Goebel12a} for this property or \cite[Lemma 10]{Goebel23SCL} 
for a similar use of it) implies the existence of $\tau,\ve,\delta>0$
such that, for the compact set $U:=A+\delta\ball+\ve\ball\subset\B(A)$, one has
$x\not\in U$, $\sola(t,j)\in A+\delta\ball$ for 
every solution $\sola$ with $\sola(0,0)\in\{x\}\cup U$ 
and $(t,j)\in\dom\sola$ such that
$t+j\geq\tau$. Then there is no $(\tau,\ve)$-chain from $x$ to $x$, or from
$A\subset U$ to any point outside of $U$. 
This ensures that any internally chain transitive set is either a subset of $A$
or disjoint from $\B(A)$.

\subsection{Families of asymptotic solutions}
\label{section: enough with simplicity}

Conditions for a hybrid arc to be
an asymptotic solution of \eqref{eq:hybrid}
can be given in terms of the inflated
system \eqref{eq:inflated-system}-\eqref{eq:inflated-data}.
These conditions involve asking that
the arbitrarily late tails of the hybrid
arc, perhaps plus appropriately
vanishing perturbations,
are solutions of \eqref{eq:inflated-system}-\eqref{eq:inflated-data}
for arbitrarily small $\varepsilon >0$.
These conditions are made precise next.

A complete, bounded hybrid arc $\sola$ is said to be a {\em simple
vanishing inflation solution} to \eqref{eq:hybrid} if,
for each $\varepsilon>0$, there exists $(s,i) \in \dom \sola$
such that the tail $\sola^{(s,i)}$ defined in
\eqref{tail} is a solution of \eqref{eq:inflated-system}-\eqref{eq:inflated-data}.
A more general concept is given as follows.
Given a complete, bounded hybrid arc $\sola$,
let $\mathcal{P}(\sola)$ 
denote the class of functions
$\chi:\dom \chi \rightarrow \Re$ satisfying
\begin{itemize}
\item[i.) ]
$\dom \chi  = \left\{ (s,i,t,j)\in\reals^4\, :\, 
(s,i),(t,j) \in \dom \sola, \, s+i\leq t+j
\right\}$,
\item[ii.)]
If $(s,i,t_1,j),(s,i,t_2,j)\in\dom\chi$
with $t_{2} > t_{1}$ then 
$t\mapsto\chi(s,i,t,j)$
is absolutely continuous on $[t_{1},t_{2}]$,
\item[iii.)]
for each $\tau>0$,
\begin{equation}
\label{eq:chi-prop}
\lim_{s+i \rightarrow \infty}
\sup 
\left\{ |\chi(s,i,t,j)|\, :\,
\begin{array}{l} (s,i,t,j) \in \dom \chi \\[4pt]
0\leq t+j-(s+i)\leq\tau
\end{array}
\right\} = 0 .
\end{equation}
\end{itemize}
A complete, bounded hybrid arc $\sola$ is said to be
a {\em vanishing inflation solution} to \eqref{eq:hybrid}
if there exists $\chi \in \mathcal{P}(\sola)$ and, for each $\varepsilon>0$ and $T>0$ 
there exists
$\kappa>0$ such that, for each
$(s,i) \in \dom \sola$ with $s+i \geq \kappa$,
$\pcTsi:\dom\pcTsi\to\reals^d$, given by
\begin{subequations}
\label{eq:sol-chi-def}
\begin{align}
&  \dom\pcTsi : =
\dom \sola^{(s,i)} \cap \left\{(t,j)
\in \Re_{\geq 0} \times \ZZ_{\geq 0} :
t+j \leq T \right\}  \\
&     \pcTsi(t,j):=\sola^{(s,i)}(t,j)+\chi(s,i,s+t,i+j)
\qquad \forall (t,j) \in \dom \pcTsi,
\end{align}
\end{subequations}
is a solution of 
(\ref{eq:inflated-system})-\eqref{eq:inflated-data}.
A simple vanishing inflation solution corresponds to a vanishing inflation
solution with $\chi \equiv 0$.

\begin{proposition}
\label{proposition:PAS-2-pseudo}
Under the Hybrid Basic Conditions, each (bounded, complete)
vanishing inflation solution to (\ref{eq:hybrid}) is an asymptotic solution to (\ref{eq:hybrid}).
\end{proposition}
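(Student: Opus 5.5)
We verify directly items (i) and (ii) of the definition of an asymptotic $\L$-curve with $\L=\S_\H$. Item (i) is immediate: a vanishing inflation solution $\sola$ is bounded and complete by definition. For item (ii), take $(t_n,j_n)\in\dom\sola$ with $t_n+j_n\to\infty$ such that the tails $\sola_n:=\sola^{(t_n,j_n)}$ converge graphically to some $\sol$. We must show that $\sol\in\S_\H$ and that $\sol$ is complete.

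\emph{Step 1: perturbed tails have the same graphical limit.} Fix $T>0$. Let $\chi\in\mathcal{P}(\sola)$ be the function from the definition, and set
$$\sola_n^T:=\sola_{\chi,T}^{(t_n,j_n)},$$
so that $\sola_n^T(t,j)=\sola_n(t,j)+\chi(t_n,j_n,t_n+t,j_n+j)$ on $\dom\sola_n\cap\{t+j\le T\}$. By \eqref{eq:chi-prop} with $\tau=T$, the correction is bounded by some $\eta_n\to 0$ uniformly on this truncated domain. Hence $\gph\sola_n^T$ lies within $\eta_n$ of $\gph\sola_n|_{t+j\le T}$ and conversely. Since $\sola$ is bounded, all these graphs lie in a fixed compact set. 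Therefore $\sola_n^T$ converges graphically to the graphical limit of the truncations $\sola_n|_{t+j\le T}$, which agrees with $\sol$ on $\{t+j<T\}$. The only possible discrepancy lies at the boundary $t+j=T$, as in the proof of Lemma \ref{lemma: Dartmouth v Brown soon}. To avoid this, I would work with $T$ and compare on $t+j\le T-1$.

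\emph{Step 2: the limit solves the original system on truncations.} Choose $\ve_k\searrow 0$. For each $k$, the definition provides $\kappa_k$ such that $\sola_n^T$ is a solution of the $\ve_k$-inflated system \eqref{eq:inflated-system}--\eqref{eq:inflated-data} whenever $t_n+j_n\ge\kappa_k$. Pass to a diagonal subsequence so that the $n$-th arc solves the $\ve_{k(n)}$-inflation with $k(n)\to\infty$. The main obstacle is then the closure fact that a graphically convergent, uniformly bounded sequence of solutions to $\ve_n$-inflated systems with $\ve_n\to 0$ has a graphical limit that solves \eqref{eq:hybrid}. This is the standard robustness/well-posedness result (cf. \cite[Chapter 6, Theorem 6.8 and the perturbed-systems results, e.g.\ Proposition 6.34/Theorem 6.30]{Goebel12a}), which uses the Hybrid Basic Conditions. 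It applies because $\bigcap_\ve C_\ve=C$, and because the inflations $F_\ve,G_\ve$ shrink to $F,G$ in the outer semicontinuous sense with convex values for $F$. I would invoke that result, checking only local eventual boundedness, which follows from the boundedness of $\sola$ plus $\eta_n\to0$. It yields that $\sol|_{t+j\le T-1}$ is a truncation of a solution. Since $T$ is arbitrary and solutions are closed under the relevant restriction/union, $\sol$ itself is a solution: its flow and jump conditions are local in $(t,j)$.

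\emph{Step 3: completeness.} It remains to show $\dom\sol$ is unbounded. Each $\dom\sola_n$ is unbounded and, for each $T$, contains points with $t+j$ within bounded gaps. More precisely, I would argue as in Lemma \ref{lemma: ursaCs}: if $\dom\sola$ had arbitrarily late gaps of length exceeding $1+\ve$, then the tail arcs would be unable to satisfy the inflated dynamics. Hybrid time domains of arcs, however, have no gaps at all; every $\dom\sola_n$ contains points with $t+j=T$ exactly for each $T$. So the limit domain, being the limit of domains containing points with $t+j=T$ in a compact region, contains such a point for each $T$ (cf.\ \cite[Example 5.19]{Goebel12a}). Hence $\sol$ is complete, and item (ii) holds.
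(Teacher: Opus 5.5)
Your argument is correct in substance, but it takes a different route from the paper. The paper works in three steps:
\begin{itemize}
\item It fixes $T$ and $\ve$ and uses \cite[Proposition 6.34]{Goebel12a} to obtain $\rho$ such that every bounded solution of the $\rho$-inflation is $\ve/2$-close, on $t+j\le T$, to a genuine solution of \eqref{eq:hybrid}.
\item It chooses $\kappa$ so that, for $s+i\ge\kappa$, the perturbed tail $\pcTsi$ solves the $\rho$-inflation and $|\chi|\le\ve/2$. This shows that every sufficiently late tail of $\sola$ is uniformly $\ve$-close to some solution.
\item It then invokes Lemma \ref{lemma: unwind your mind}, which uses only nominal well-posedness of $\S_\H$, to turn this uniform closeness into item (ii) of the definition.
\end{itemize}
You instead verify item (ii) sequentially. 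You take a convergent sequence of tails and diagonalize in $\ve$. You then apply the qualitative closure property of vanishing perturbations to the truncations: graphical limits of locally eventually bounded solutions of $\ve_n$-inflations with $\ve_n\to0$ are solutions of \eqref{eq:hybrid}, by the perturbation results of \cite[Chapter 6]{Goebel12a}. You handle the boundary $t+j=T$ by working on $t+j\le T-1$.

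The two routes buy different things. The paper's route keeps the hybrid-specific input to one quantitative closeness estimate plus a reusable abstract lemma, and its intermediate conclusion (late tails are uniformly close to solutions) is informative in its own right. Your route skips the uniform estimates and Lemma \ref{lemma: unwind your mind}, and local eventual boundedness comes for free from the boundedness of $\sola$. It also makes completeness of the limit explicit, a point Lemma \ref{lemma: unwind your mind} leaves implicit.

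One claim in Step 3 is false and needs fixing. A hybrid time domain does not contain a point with $t+j=T$ for every $T$. At each jump the value of $t+j$ skips an open interval of length one; for the domain $\{0\}\times\natz$, only integer values occur. What is true is that every complete hybrid time domain contains, for each $T\ge0$, a point with $t+j\in[T,T+1]$. The values of the tails lie in a fixed compact set, so the corresponding graph points have a cluster point, and it lies in $\gph\sol$. Hence $\dom\sol$ meets $\{T\le t+j\le T+1\}$ for every $T$, and $\sol$ is complete. The appeal to the argument of Lemma \ref{lemma: ursaCs} is unnecessary, since $\dom\sola$ is already a hybrid time domain.
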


\begin{proof}
Let $\sola$ be a vanishing inflation solution, with an associated $\chi$.
Pick $T>0$ and $\ve\in(0,1)$. 
Let $K\subset\reals^d$ be a compact set containing the range of $\sola$. 
By \cite[Proposition 6.34]{Goebel12a}, and because $\ve<1$, 
there exists $\rho\in(0,\ve)$ such that, for every solution $\sola'$ to $(C_{\rho},F_{\rho},D_{\rho},G_{\rho})$
with $\sola\in K+\rho\ball$ there exists a solution $\sol'$ to $(C,F,D,G)$ such that 
\begin{align*}
\gph_{t+j\leq T}\sola'\subset\gph_{t+j\leq T+1}\sol'+.5\ve\ball, \\
\gph_{t+j\leq T}\sol'\subset\gph_{t+j\leq T+1}\sola'+.5\ve\ball.
\end{align*}
Let $\kappa>0$ be such that, for every
$(s,i) \in \dom \sola$ with $s+i \geq \kappa$,
\begin{itemize} 
\item[(i)]
$\pcTsi$ is a solution to $(C_\rho,F_\rho,D_\rho,G_\rho)$ 
(which is done using the definition of a vanishing inflation solution), and
\item[(ii)] 
$|\chi(s,i,t,j)|\leq 0.5\ve$ for all $(s,i,t,j)\in\dom\chi$ with $0\leq t+j-(s+i)\leq T+1$  (this is done using $\tau:=T+1$ in \eqref{eq:chi-prop}). 
\end{itemize} 
Then, for every $(s,i)\in\dom\sola$ with $s+i\geq\kappa$
there exists a solution $\sol$ to $(C,F,D,G)$ such that 
\begin{align*}
\gph_{t+j\leq T}\pcTsi & \subset\gph_{t+j\leq T+1}\sol+.5\ve\ball, \\
\gph_{t+j\leq T}\sol & \subset\gph_{t+j\leq T+1}\pcTsi+.5\ve\ball, 
\end{align*}
and, consequently,
\begin{align*}
\gph_{t+j\leq T}\sola^{(s,i)} & 
\subset\gph_{t+j\leq T+1}\sol+\ve\ball\subset\gph\sol+\ve\ball, \\
\gph_{t+j\leq T}\sol & 
\subset\gph_{t+j\leq T+1}\sola^{(s,i)}+\ve\ball\subset\gph\sola^{(s,i)}+\ve\ball. 
\end{align*}
Lemma \ref{lemma: unwind your mind} implies that $\sol$ is an
asymptotic solution to (\ref{eq:hybrid}).
\end{proof}

The next corollary combines
the result of Proposition \ref{proposition:PAS-2-pseudo} 
with Corollaries \ref{corollary: obvious 1}
and \ref{corollary: obvious 4}.

\begin{corollary}
\label{corollary:itc-for-perturbed-asymptotic-solutions}
Under the Hybrid Basic Conditions, the $\omega$-limit set of every
(bounded, complete) vanishing inflation solution
of (\ref{eq:hybrid}) is nonempty, compact, weakly invariant, and internally chain transitive, and thus
internally $\S^c_{\H|_K}$-chain transitive.
\end{corollary}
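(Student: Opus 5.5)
The corollary follows by chaining Proposition \ref{proposition:PAS-2-pseudo} with the results of Section \ref{section: ex-plaining}. Let $\sola$ be a bounded, complete vanishing inflation solution of \eqref{eq:hybrid}, and set $K:=\omega(\sola)$.

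\textbf{Step 1: reduce to an asymptotic solution.} Under the Hybrid Basic Conditions, Proposition \ref{proposition:PAS-2-pseudo} shows that $\sola$ is an asymptotic solution, that is, an asymptotic $\S_\H$-curve. Proposition \ref{proposition: PSG or Real} then gives that $K$ is nonempty and compact, that $\sola$ converges to $K$, and that $K$ is weakly $\S_\H$-invariant. This invariance already comes in the backward form stated there, with a complete solution in $K$ passing through each $x\in K$ at arbitrarily late times. Forward weak invariance follows from the same statement by taking tails, since $\S_\H$ is tail invariant by Proposition \ref{proposition: CFDG is like L}.

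\textbf{Step 2: chain transitivity.} Corollary \ref{corollary: obvious 3} should give internal chain transitivity of $K$, and Corollary \ref{corollary: obvious 4} the sharper $\S^c_{\H|_K}$ version. Both rest on Corollary \ref{corollary:a confirmation provided}, which needs $\S_\H$ to be locally bounded, and Proposition \ref{proposition: CFDG is like L} grants this only when every solution is bounded or complete. That is not assumed here, so I would localize.

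Since $K$ is compact, fix a compact neighborhood $N:=K+\ball$. Let $\L$ be the set of $\S_\H$-curves with range in $N$, or equivalently the solution set of the hybrid inclusion with data restricted to $N$. This restricted data still satisfies the Hybrid Basic Conditions, and every solution is bounded, so $\L$ is locally bounded, nominally well posed, tail invariant and closed under concatenation. Late tails of $\sola$ lie in $K+\delta\ball\subset N$ by \eqref{ride to result}. Their graphical limits are $\S_\H$-curves with range in $K$, hence $\L$-curves. So $\sola$, with an initial segment discarded, is an asymptotic $\L$-curve with the same $\omega$-limit. Theorem \ref{theorem:L-chain-igt} and Lemma \ref{lemma:from-generalized-2-not} then give internal $\L$-chain transitivity of $K$, and since $\L\subset\S_\H$, internal $\S_\H$-chain transitivity.

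\textbf{Step 3: strengthen to $\S^c_{\H|_K}$-chains.} $\S_\H$ is closed under concatenation and $K$ is weakly forward invariant. So I would replace each link of an internal chain by its truncation at $(t_k,j_k)$, concatenated with a complete solution in $K$ starting from the link's endpoint. The truncated part already lies in $K$, so the result is a complete solution with range in $K$, hence an element of $\S^c_{\H|_K}$. The chain conditions are unchanged, since only the part of each link up to $(t_k,j_k)$ enters them.

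\textbf{Main obstacle.} The main point needing care is the local-boundedness hypothesis in Step 2, and the localization to $N$ is the fix. The one thing to check there is that $\sola$ is asymptotic for the restricted system. This follows because the graphical limits of late tails have range in $K$, which lies in the interior of $N$.
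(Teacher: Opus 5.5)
Your proof is correct and has the same skeleton as the paper's. The paper's entire argument is to combine Proposition~\ref{proposition:PAS-2-pseudo} with Corollaries~\ref{corollary: obvious 1} and~\ref{corollary: obvious 4}. The differences are in how the two halves are justified.

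For nonemptiness, compactness and weak invariance, the paper goes through chain recurrence, via Corollary~\ref{corollary: obvious 1}, i.e.\ Proposition~\ref{proposition:brilliant-Benaim}. You instead read them off Proposition~\ref{proposition: PSG or Real} together with tail invariance. That is more direct and does not depend on Step~2.

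For chain transitivity, the paper cites Corollaries~\ref{corollary: obvious 3} and~\ref{corollary: obvious 4} as black boxes. These rest on Corollary~\ref{corollary:a confirmation provided}, which needs $\S_\H$ to be locally bounded. Proposition~\ref{proposition: CFDG is like L} grants local boundedness only under the extra ``every solution bounded or complete'' hypothesis, which this statement does not impose. Your localization to $\S_\H$-curves with range in a compact set makes that step legitimate. Your Step~3 also writes out the concatenation argument that the paper only sketches before Corollary~\ref{corollary: obvious 4}.

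Two minor points on Step~2. First, you do not need the restricted data to satisfy the Hybrid Basic Conditions, and it may fail to, since $G(x)\cap N$ can be empty for some $x\in D\cap N$. Local boundedness, nominal well-posedness, tail invariance and closure under concatenation of your $\L$ follow directly from those of $\S_\H$ and the closedness of $N$. Second, you can take $N=K$ and discard no initial segment, because the definition of an asymptotic $\L$-curve only constrains graphical limits of late tails, and these already have range in $K$. Then Theorem~\ref{theorem:L-chain-igt} and Lemma~\ref{lemma:from-generalized-2-not} give internal chains whose links already lie in $\S_{\H|_K}$, and Step~3 is needed only to make the links complete.
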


\section{Asymptotic simulations of a hybrid system}
\label{section:asymptotic simulations of a hybrid system}

\subsection{Hybrid sequences}

A set $E \subset \Re^2$ is a {\em compact, hybrid sequence domain} 
if
$$
E=\left(\{k_0,\dots,k_1\},0\vphantom{\frac{}{}}\right)
\cup\left(\{k_1,\dots,k_2\},1\vphantom{\frac{}{}}\right)
\cup\dots\cup\left(\{k_{J-1},\dots,k_J\},J-1\vphantom{\frac{}{}}\right)
$$
where $J \in \nats$ and $0 =k_{0} \leq k_{1} \leq \cdots \leq k_J$ form
a finite sequence of integers. 
It is a {\em hybrid sequence domain} if it is the union
of a nondecreasing sequence of compact hybrid sequence domains.
A mapping $\sol:\dom \sol \rightarrow \Re^{n}$ is a
{\em hybrid sequence} if its domain is a
hybrid sequence domain. It is {\em complete} if its
domain is unbounded. 
It is {\em complete in the $k$
direction} if the set of all $k$'s defining
its domain is unbounded.
It is {\em complete in the $j$
direction} if the set of all $j$'s defining
its domain is unbounded.

Given a hybrid sequence $\sol$, define
\begin{subequations}
\label{eq:the-overline-functions}
\begin{align}
\overline{\jmath}_{k}  & :=
 \inf \left\{ j \in \ZZ_{\geq 0}: (k+1,j) \in \dom \sol  \right\} \\
\overline{k}_{j} & : =  \inf \left\{ k \in \ZZ_{\geq 0}: (k,j+1) \in \dom \sol  \right\}
\end{align}
\end{subequations}
with the convention that $\inf(\emptyset) = \infty$. 
For each $k \in \ZZ_{\geq 0}$,
$(k,j),(k+1,j) \in \dom \sol$ if and only if
$j=\overline{\jmath}_{k}$; also $(k,\overline{\jmath}_{k}) \in \dom \sol$
if and only if $(k+1,\overline{\jmath}_{k}) \in \dom \sol$.
If $\phi$ is complete in the $k$ direction then,
for all $k \in \ZZ_{\geq 0}$,
$\overline{\jmath}_{k} < \infty$.
Similarly, 
for each $j \in \ZZ_{\geq 0}$,
$(k,j),(k,j+1) \in \dom \sol$ if and only if
$k=\overline{k}_{j}$; also $(\overline{k}_{j},j) \in \dom \sol$
if and only if $(\overline{k}_{j},j+1) \in \dom \sol$.
If $\phi$ is complete in the $j$ direction then, for all $j \in \ZZ_{\geq 0}$,
$\overline{k}_{j} < \infty$. 


\subsection{Asymptotic simulations}
\label{subsection: asymptotic simulations}

In this section, the concept of an
asymptotic simulation of a hybrid inclusion
is defined.
An asymptotic simulation includes a
hybrid sequence and
a sequence of positive step sizes
that play a role in approximating the flows
of the hybrid inclusion. A sequence
of step sizes $\left\{ h_{k} \right\}_{k=1}^{\infty}$ is said to
be {\em admissible} if $h_{k}>0$ for each $k \in \ZZ_{\geq 1}$ and the sequence converges to
zero but is not summable.
Given an admissible sequence of step sizes $\left\{ h_{k} \right\}_{k=1}^{\infty}$, 
define
\begin{subequations}
\begin{align}
    \tau_{k} & : = \sum_{i=0}^{k-1} h_{i+1} \qquad \forall k \in \ZZ_{\geq 0} \label{eq:tau-k} \\
m(t) &  : = \max \left\{k \in \ZZ_{\geq 0}: \tau_{k} \leq t \right\} 
\qquad \forall t \in \Re_{\geq 0} .
\label{BarcaChelsea}
\end{align}
\end{subequations}

A hybrid sequence $\sol$ and a sequence
of admissible step sizes $\left\{ h_{k} \right\}_{k=1}^{\infty}$ form
a {\em asymptotic simulation of \eqref{eq:hybrid}} 
if $\phi$ is bounded and complete and the following properties hold:
\begin{enumerate}
    \item If $\sol$ is complete in $k$ direction
    then there exists a bounded sequence $\left\{ f_{k} \right\}_{k=0}^{\infty}$ of vectors in $\Re^{n}$ 
    such that
    \begin{align}
    \label{eq:CT-nonstandard-dataless}
    \limsup_{k \rightarrow \infty} \left(\sol(k,\overline{\jmath}_{k}),f_{k} \right) \subset
    \mbox{\rm graph}(F) \cap (C \times \Re^{n})
    \end{align}
    and,
    with the definition
    \begin{align}
  \label{eq:widehat-f-def}
\widehat{f}_{k+1}:=\frac{\sol(k+1,\overline{\jmath}_{k})-\sol(k,\overline{\jmath}_{k})}{h_{k+1}} \qquad \forall k \in \ZZ_{\geq 0} ,
    \end{align}
    the following limit holds for some
    $T>0$:
    \begin{align}
\label{eq:Benaim-bound-from-variance-dataless}  
    \lim_{k \rightarrow \infty}
    \sup_{k+1 \leq n \leq m(\tau_{k}+T)}
    \left| \sum_{i=k}^{n-1} h_{i+1} (\widehat{f}_{i+1}-f_{i} ) \right| = 0.
    \end{align}
    \item If $\sol$ is complete in the $j$ direction then
\begin{align}
\label{eq:DT-nonstandard-dataless}
    \limsup_{j \rightarrow \infty} \left( \sol(\overline{k}_{j},j),\sol(\overline{k}_{j},j+1) \right) \subset
    \mbox{\rm graph}(G) \cap (D \times \Re^{n}) .
\end{align}
\end{enumerate}

The $\limsup$ in the conditions above indicates the outer limit of the sequences, that is, the set of all
accumulation points of the sequences. 
See \cite{RockafellarWets98}
or the introductory \cite{Goebel24book}.

The next subsection, Section \ref{section:asymptotic-simulators}, contains a discussion of simulator
models that produce asymptotic simulations. In the meantime,
the main result regarding asymptotic simulations of \eqref{eq:hybrid} 
is stated below. 
Its proof is facilitated by first compressing the domain
of the hybrid sequence $\phi$
of an asymptotic simulation to reflect the step sizes
$\left\{ h_{k} \right\}_{i=1}^{\infty}$,
and then by interpolating the resulting domain
to obtain a mapping defined on a hybrid time domain. This is done
in Subsection \ref{subsection: compressing}, and then the proof
of the theorem is executed in Subsection 
\ref{subsection:omega-limits-for-asymptotic-simulations-dataless}. 

\begin{theorem}
\label{theorem: for-stochastic-simulators}
Pose the Hybrid Basic Conditions.
Let $(\sol,\left\{ h_{k} \right\}_{k=1}^{\infty})$ be an asymptotic 
simulation of \eqref{eq:hybrid}.
Then the $\omega$-limit set of $\sol$ is nonempty, compact, and weakly invariant and internally chain transitive for \eqref{eq:hybrid}.
\end{theorem}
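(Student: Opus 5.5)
The plan is to turn the hybrid sequence $\sol$ into a mapping on a hybrid time domain and show it is an asymptotic solution of \eqref{eq:hybrid}. The conclusions then follow from Proposition \ref{proposition: PSG or Real} and Corollaries \ref{corollary: obvious 3} and \ref{corollary: obvious 4}.

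First, compress the domain using the step sizes. Each point $(k,j)\in\dom\sol$ is sent to $(\tau_k,j)$. Between $(\tau_k,\overline{\jmath}_k)$ and $(\tau_{k+1},\overline{\jmath}_k)$ we interpolate linearly, giving a hybrid arc $\sola$ on a hybrid time domain. Its flow derivative on $[\tau_k,\tau_{k+1}]$ is exactly $\widehat f_{k+1}$, and its jumps reproduce the pairs $(\sol(\overline{k}_j,j),\sol(\overline{k}_j,j+1))$.

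Three properties of $\sola$ are then checked.
\begin{itemize}
\item[(a)] $\sola$ is bounded, since it interpolates a bounded sequence.
\item[(b)] $\sola$ is complete: if $\sol$ is complete in the $k$ direction, $\tau_k\to\infty$ because the step sizes are not summable; otherwise $j$ is unbounded.
\item[(c)] The $\omega$-limits agree, $\omega(\sola)=\omega(\sol)$. Since $h_k\to0$ and \eqref{eq:Benaim-bound-from-variance-dataless} holds with $n=k+1$, the increments $h_{k+1}(\widehat f_{k+1}-f_k)$ vanish, and $h_{k+1}f_k\to0$ by boundedness of $f_k$. So interpolated points are asymptotically close to grid points.
\end{itemize}

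Next, show that $\sola$ is a vanishing inflation solution and invoke Proposition \ref{proposition:PAS-2-pseudo}. Define the perturbation
\[
\chi(s,i,t,j)=\int_s^t (f(r)-\dot\sola(r))\,dr,
\]
where $f$ is the piecewise constant signal equal to $f_k$ on $[\tau_k,\tau_{k+1})$ and the integral runs only over flow intervals in the hybrid window. Then $\sola^{(s,i)}+\chi$ has derivative $f_k$ at flow times and the same jumps. The required decay \eqref{eq:chi-prop} is essentially \eqref{eq:Benaim-bound-from-variance-dataless}, evaluated at the endpoints of partial steps. Two points need care. First, \eqref{eq:Benaim-bound-from-variance-dataless} is assumed for some $T$ only; iterating it over consecutive windows extends it to every horizon $\tau$, because a finite number of windows covers $[\tau_k,\tau_k+\tau]$. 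Second, jumps interleaved within a window must be accounted for; since $\chi$ only accumulates along flow intervals, the sums split over consecutive flow pieces.

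To see that the perturbed tail solves the $\varepsilon$-inflated system for late $(s,i)$, use \eqref{eq:CT-nonstandard-dataless}. For large $k$, $(\sol(k,\overline{\jmath}_k),f_k)$ lies within any $\eta$ of $\gph F\cap(C\times\reals^n)$. Since $|\chi|$ is small and a flow step moves the state by only $O(h_{k+1})$, each point of the perturbed flow piece is within $\varepsilon$ of some $x'\in C$ with $f_k\in F(x')+\eta\ball$. Hence the perturbed state lies in $C_\varepsilon$ and its derivative in $F_\varepsilon$. Likewise \eqref{eq:DT-nonstandard-dataless} puts late jump pairs near $\gph G\cap(D\times\reals^n)$, and the small $\chi$ preserves this within $\varepsilon$. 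If $\sol$ is not complete in one direction, only the other condition is needed in the tail.

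Proposition \ref{proposition:PAS-2-pseudo} then shows $\sola$ is an asymptotic solution. Proposition \ref{proposition: PSG or Real} gives that $\omega(\sola)$ is nonempty, compact and weakly invariant, and Corollary \ref{corollary: obvious 3} gives internal chain transitivity. Transferring these through $\omega(\sola)=\omega(\sol)$ completes the argument. I expect the main obstacle to be the bookkeeping for $\chi$: defining it on hybrid time and verifying \eqref{eq:chi-prop} uniformly over windows that contain interleaved jumps and partial steps.
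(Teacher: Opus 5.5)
Your proposal is correct and is essentially the paper's proof. Both use the same compression and piecewise-linear interpolation, the same $\chi(s,i,t,j)=\int_s^t U(r)\,dr$ with $U=f_k-\widehat f_{k+1}$ on $[\tau_k,\tau_{k+1})$ (shown to lie in $\mathcal{P}(\sola)$ after extending \eqref{eq:Benaim-bound-from-variance-dataless} to all horizons), the same check of the inflated data, and the same conclusion via Proposition \ref{proposition:PAS-2-pseudo} and the $\omega$-limit corollaries; the paper returns to $\sol$ by using Lemma \ref{lemma:same-tails} to show that $\widetilde{\sol}$ is itself an asymptotic solution, whereas you use $\omega(\sola)=\omega(\sol)$ directly. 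One slip: a flow step moves the state by $h_{k+1}|\widehat f_{k+1}|$, which need not be $O(h_{k+1})$ because $\widehat f_{k+1}$ can be unbounded, but it does tend to zero by your item (c), and that is all the argument needs.
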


Theorem \ref{theorem: for-stochastic-simulators} for the special
case where it is possible to take $f_{k} = \widehat{f}_{k+1}$ for all $k \in \ZZ_{\geq 0}$ in the characterization of an asymptotic simulation is
explained and illustrated via examples in \cite{GoebelTeel2026CDC}.
The full generality of Theorem \ref{theorem: for-stochastic-simulators} is
used to develop results for asymptotic simulations of two time-scale
hybrid systems in \cite{Crisafulli2026}.

To conclude this subsection, it is observed that if 
\eqref{eq:Benaim-bound-from-variance-dataless}
holds for some $T>0$, then it holds for all $T>0$. 
Indeed, and more generally, 
let $T>0$ and a sequence $\{a_i\}_{i=0}^{\infty}$ of vectors in $\reals^n$ be such that 
$$\lim_{k\to\infty} \sup_{k+1\leq n\leq m(\tau_k+T)} \left|\sum_{i=k}^{n-1} a_i\right|=0.$$
Pick $\ve>0$. There exists $k^* \in \ZZ_{\geq 0}$ such that, for every integer $k\geq k^*$, 
$$\sup_{k+1\leq n\leq m(\tau_k+T)} \left|\sum_{i=k}^{n-1} a_i\right|\leq\ve/2.$$
In particular, this holds for every $k\geq k^*$ when $k$ above is replaced by $m(\tau_k+T)$,
because
$m(\tau_k+T)\geq k$ for every $k$. 
Now, notice that, because $h_k\to 0$,
$$m(\tau_k+1.5T)\leq m\left(\tau_{m(\tau_k+T)}+T\right)$$
for all large enough $k$, in particular for all $k\geq k^*$ subject to increasing $k^*$. 
Indeed, because the sequence $\{h_k\}$ converges to $0$ but is not summable,
$\tau_{m(\tau_k+T)}$ gets arbitrarily close to $\tau_k+T$ for large $k$, 
and then $\tau_k+1.5T<\tau_{m(\tau_k+T)}+T$. 
Then, for all $k\geq k^*$,
\begin{align*}
\sup_{k+1\leq n\leq m(\tau_k+1.5T)} \left|\sum_{i=k}^{n-1} a_i\right|
\leq
\sup_{k+1\leq n\leq m\left(\tau_{m(\tau_k+T)}+T\right)} \left|\sum_{i=k}^{n-1} a_i\right|.
\end{align*}
The quantity on the right above equals
\begin{align*}
\max \left\{
\sup_{k+1\leq n\leq m(\tau_k+T)} \left|\sum_{i=k}^{n-1} a_i\right|,
\sup_{m(\tau_k+1)+1\leq n\leq m\left(\tau_{m(\tau_k+T)}+T\right)} \left|\sum_{i=k}^{n-1} a_i\right|
\right\}
\end{align*}
The first of the two suprema above is $\leq\ve/2$. 
In the second supremum, 
\begin{align*}
\left|\sum_{i=k}^{n-1} a_i\right| 
= 
\left|\sum_{i=k}^{m(\tau_k+1)-1} a_i+\sum_{i=m(\tau_k+1)}^{n-1} a_i\right| 
\leq 
\left|\sum_{i=k}^{m(\tau_k+1)-1} a_i\right| + \left|\sum_{i=m(\tau_k+1)}^{n-1} a_i\right| 
\end{align*}
and each of the two summands in the last expression is $\leq\ve/2$. Thus, the second
supremum is $\leq\ve$. Consequently, for all $k\geq k^*$,
\begin{align*}
\sup_{k+1\leq n\leq m(\tau_k+1.5T)} \left|\sum_{i=k}^{n-1} a_i\right|
\leq\ve.
\end{align*}
Thus, if \eqref{eq:Benaim-bound-from-variance-dataless}
holds for some $T>0$ then it holds if $T$ is replaced by $1.5T$. 
Clearly, if \eqref{eq:Benaim-bound-from-variance-dataless}
holds for some $T>0$ then it holds if $T$ is replaced by anything smaller. 
This, and iterating the ``$1.5$ argument'' shows that 
if \eqref{eq:Benaim-bound-from-variance-dataless}
holds for some $T>0$ then it holds for all $T>0$.


\subsection{Asymptotic simulators}
\label{section:asymptotic-simulators}

This section briefly discusses when non-stochastic and then stochastic simulators
of a hybrid inclusion, studied before in
\cite{GoebelTeel25SCLsubmission} and \cite{TeelSanfeliceGoebel25ARC}, 
produce asymptotic simulations. 

\subsubsection{Non-stochastic simulators}

In \cite[eq. (11)]{GoebelTeel25SCLsubmission}, simulators for \eqref{eq:hybrid} of the form
\begin{subequations}
    \begin{align}
     & x \in C  \quad \quad \ x^{+} -x   \in  h^{+} \widehat{F}(x,h^{+}) \\
     & x \in D  \qquad \qquad    x^{+}   \in  G(x) 
    \end{align}
\end{subequations}
are considered. A 
simulation is a hybrid sequence $\phi: \dom \phi \rightarrow \Re^{n}$ such that
\begin{enumerate}
    \item
$(k,j),(k+1,j) \in \dom \phi$ implies $\phi(k,j) \in C$ and $\phi(k+1,j)-\phi(k,j) \in h_{k+1} \widehat{F}(\phi(k,j),h_{k+1})$, and
\item
$(k,j),(k,j+1) \in \dom \phi$ implies $\phi(k,j) \in D$ and
$\phi(k,j+1) \in G(\phi(k,j))$.
\end{enumerate}
In \cite[eq. (12)]{GoebelTeel25SCLsubmission}, it is assumed that there exists a continuous, non-decreasing function $\gamma:\Re_{\geq 0} \rightarrow
\Re_{\geq 0}$ such that
\begin{align}
\label{eq:from-SCL}
    \widehat{F}(x,h_{k+1}) \subset F(x) + h_{k+1} \gamma(|x|) \Ball
    \qquad \forall (x,k) \in C \times \ZZ_{\geq 0} .
\end{align}
From this condition, it can be verified that, when $\left\{ h_{k} \right\}_{k=1}^{\infty}$ is admissible and $\phi$ is bounded and complete, the
pair $(\phi,\left\{ h_{k} \right\}_{k=1}^{\infty})$ is an asymptotic simulation in
the sense of the previous section. In particular,
the sequence $\left\{ f_{k} \right\}_{k=0}^{\infty}$ with
$f_{k}:= \widehat{f}_{k+1}$ for all $k \in \ZZ_{\geq 0}$, where $\widehat{f}_{k+1}$ is defined in (\ref{eq:widehat-f-def}), satisfies
(\ref{eq:CT-nonstandard-dataless}) and (\ref{eq:Benaim-bound-from-variance-dataless}).

Work on model-based, non-stochastic, non-asymptotic
simulators for hybrid systems includes
\cite{SanfeliceTeel10} and the references therein.

\subsubsection{Stochastic simulators}

In \cite[eq. (2)]{TeelSanfeliceGoebel25ARC}, stochastic simulators for
\eqref{eq:hybrid} of the form
\begin{subequations}
\label{eq:TSG25ARC-simulators}
    \begin{align}
     & x \in C  \quad \quad \ x^{+} -x   \in  h^{+} \widehat{F}(x,y^{+},h^{+}) \\
     & x \in D  \qquad \qquad    x^{+}   \in  G(x) 
    \end{align}
\end{subequations}
are considered,
where $y^{+}$ is a placeholder for a sequence of random variables
$\left\{ \mathbf{y}_{k} \right\}_{k=1}^{\infty}$ defined on a probability space
$(\Omega,\mathcal{F},\mathbb{P})$. As indicated in
\cite[Section 4]{TeelSanfeliceGoebel25ARC}, a
solution $\mathbf{x}$, defined on $\Omega$,
has certain measurability properties adapted to the minimal filtration of
$\left\{ \mathbf{y}_{k} \right\}_{k=1}^{\infty}$
and
is such that, for almost every $\omega \in \Omega$, the sample path
$\phi:= \mathbf{x}(\omega)$ is a hybrid sequence such that
\begin{enumerate}
    \item
$(k,j),(k+1,j) \in \dom \phi$ implies $\phi(k,j) \in C$ and $\phi(k+1,j)-\phi(k,j) \in h_{k+1} \widehat{F}(\phi(k,j),\mathbf{y}_{k+1}(\omega),h_{k+1})$, and
\item
$(k,j),(k,j+1) \in \dom \phi$ implies $\phi(k,j) \in D$ and
$\phi(k,j+1) \in G(\phi(k,j))$.
\end{enumerate}
When discussing the asymptotic behavior of the sample paths of $\mathbf{x}$
in \cite[Section 8]{TeelSanfeliceGoebel25ARC}, a containment like
(\ref{eq:from-SCL}) above is imposed but it is expressed in terms of a
conditional expectation. Further, the step sizes are assumed to be square summable
and a bound on a conditional variance is imposed
to guarantee the ability to construct a sequence $\left\{ f_{k} \right\}_{k=0}^{\infty}$ so that (\ref{eq:CT-nonstandard-dataless}) and (\ref{eq:Benaim-bound-from-variance-dataless}) hold for almost every sample path, i.e., so that, together with the step sizes,
almost every bounded, complete sample path is an asymptotic simulation in the sense
of the previous section.
See \cite[Lemma 8.3]{TeelSanfeliceGoebel25ARC}. Stochastic simulations
are also discussed in Section \ref{section:stochastic-asymptotic-simulations} below, where
Proposition \ref{prop:appendix-flows} resembles and extends \cite[Lemma 8.3]{TeelSanfeliceGoebel25ARC}.


\subsection{Compressing and interpolating
an asymptotic simulation's time domain}
\label{subsection: compressing}

To make connection to earlier results in the paper,
the time domain of a simulation component $\sol$ is compressed
and interpolated based on the associated step sizes $\left\{ h_{k} \right\}_{k=1}^{\infty}$
and their running sums
$\left\{ \tau_{k} \right\}_{k=1}^{\infty}$
defined in \eqref{eq:tau-k}.
Given a simulation pair $\left(\phi,\left\{ h_{k} \right\}_{k=1}^{\infty}\right)$
define the domain-compressed version $\widetilde{\phi}$ by
\begin{subequations}
\label{eq:set-up-simulator-solution-to-be-pseudo}
    \begin{align}
        \dom \widetilde{\phi} & : = \bigcup_{(k,j) \in \footnotesize \dom \sol} (\tau_{k},j) \\
        \widetilde{\phi}(\tau_{k},j) & := \sol(k,j) \qquad \forall (k,j) \in \dom \sol
    \end{align}
\end{subequations}
and the
interpolated hybrid arc
$\psi$ as follows:
\begin{itemize}
\item its domain is 
\begin{align}
\label{eq:psi-dom-def}
\dom\sola:=\bigcup \left\{[t,t']\times\{j\}\, :\, t,t',j\ \mbox{such that}\ (t,j), (t',j)\in\dom \widetilde{\sol} \right\} ;
\end{align}
    \item if $(k,j),(k+1,j) \in \dom \sol$ then
    \begin{align}
        \label{eq:psi-flows}
    \psi(t,j) := \sol(k,j) +
    \frac{t-\tau_{k}}{h_{k+1}} \left( \phi(k+1,j)- \phi(k,j) \right) \qquad \forall t \in [\tau_{k},\tau_{k+1}] ;
    \end{align}
    \item if $(k,j),(k,j+1) \in \dom \phi$ then
    \begin{align}
        \label{eq:psi-jumps}
    \psi(\tau_{k},j) := \phi(k,j) \qquad \psi(\tau_{k},j+1) := \phi(k,j+1) . 
    \end{align}
\end{itemize}

\begin{lemma}
\label{lemma:about-simulations}
Suppose that $\left(\phi,\left\{h_{k} \right\}_{k=1}^{\infty} \right)$ is
an asymptotic simulation of \eqref{eq:hybrid} and $\phi$ is complete
in the $k$ direction.
Then
    $\lim_{k \rightarrow \infty} | \phi(k+1,\overline{\jmath}_{k})
    -\phi(k,\overline{\jmath}_{k})| = 0$.
\end{lemma}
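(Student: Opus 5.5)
Write $\Delta_k:=\sol(k+1,\overline{\jmath}_k)-\sol(k,\overline{\jmath}_k)=h_{k+1}\widehat f_{k+1}$. The plan is to apply condition \eqref{eq:Benaim-bound-from-variance-dataless} with the shortest admissible window, $n=k+1$. This is allowed once $m(\tau_k+T)\geq k+1$, that is, once $\tau_{k+1}=\tau_k+h_{k+1}\leq\tau_k+T$. Since $h_k\to 0$, this holds for all large $k$, whatever $T>0$ is used in the definition. With $n=k+1$ the sum in \eqref{eq:Benaim-bound-from-variance-dataless} is the single term $h_{k+1}(\widehat f_{k+1}-f_k)=\Delta_k-h_{k+1}f_k$. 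The condition therefore gives
$$\lim_{k\to\infty}\left|\Delta_k-h_{k+1}f_k\right|=0 .$$

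Next I control $h_{k+1}f_k$. The sequence $\{f_k\}$ is bounded by the definition of an asymptotic simulation, and $h_{k+1}\to0$ because the step sizes are admissible. Hence $h_{k+1}f_k\to0$. The triangle inequality then yields
$$|\Delta_k|\leq|\Delta_k-h_{k+1}f_k|+h_{k+1}|f_k|\to 0,$$
which is the claim.

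There is no real obstacle. The one point to check is that the index range $k+1\leq n\leq m(\tau_k+T)$ is nonempty for large $k$, so that the single-term choice $n=k+1$ is permitted. This follows from $h_{k+1}\to0$, and in any case the paper notes that \eqref{eq:Benaim-bound-from-variance-dataless} holds for every $T>0$. Completeness in the $k$ direction guarantees that $\overline{\jmath}_k<\infty$ for every $k$, so $\Delta_k$ is defined for all $k$. Boundedness of $\sol$ is not needed.
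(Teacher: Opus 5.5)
Your proposal is correct and matches the paper's proof: you take $n=k+1$ in the averaging condition to get $|h_{k+1}(\widehat{f}_{k+1}-f_k)|\to 0$, and you combine this with $h_{k+1}f_k\to 0$ (from boundedness of $\{f_k\}$ and $h_k\to 0$) via the triangle inequality. Your check that $n=k+1$ lies in the index range for large $k$, i.e.\ that $h_{k+1}\leq T$, is a small detail the paper leaves implicit.
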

\begin{proof}
Since $\left(\phi,\left\{h_{k} \right\}_{k=1}^{\infty} \right)$ is
an asymptotic simulation of \eqref{eq:hybrid} and $\sol$ is complete in the $k$ direction, 
the bounded sequence $\left\{ f_{k} \right\}_{k=0}^{\infty}$ in \eqref{eq:CT-nonstandard-dataless} 
in the definition of an asymptotic simulation is well defined.
Because $\left\{h_{k} \right\}_{k=1}^{\infty}$ is admissible, 
$h_{k}=0$ converge to $0$ and then $\lim_{k \rightarrow \infty} |h_{k+1} f_{k}|=0$.
This, the definition of $\widehat{f}_{k+1}$ in \eqref{eq:widehat-f-def}, 
and \eqref{eq:Benaim-bound-from-variance-dataless} with $n=k+1$ ensure that
\begin{align*}
        \limsup_{k \rightarrow \infty} |\phi(k+1,\overline{\jmath}_{k})
    -\phi(k,\overline{\jmath}_{k})|
& = 
\limsup_{k \rightarrow \infty} |h_{k+1} \widehat{f}_{k+1}| \\
& \leq  
\limsup_{k \rightarrow \infty} | h_{k+1} (\widehat{f}_{k+1} - f_{k})| + \limsup_{k \rightarrow \infty} | h_{k+1} f_{k} | \\ 
& =  0.
\end{align*}
This establishes the result.
\end{proof}

\begin{lemma}
\label{lemma:same-tails}
Suppose $\left(\phi,\left\{h_{k} \right\}_{k=1}^{\infty} \right)$ is
an asymptotic simulation of \eqref{eq:hybrid}. 
Let $\widetilde{\sol}$ be the compressed-domain version 
\eqref{eq:set-up-simulator-solution-to-be-pseudo} of $\phi$ 
and  $\sola$ be the interpolated mapping 
\eqref{eq:psi-dom-def}-\eqref{eq:psi-jumps}.
Then, the graphical
limits of the tails of $\sola$ agree with the graphical limits 
of the tails of $\widetilde{\sol}$.
\end{lemma}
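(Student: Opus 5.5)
The plan is to compare the graphs of $\sola$ and $\widetilde{\sol}$ directly and show that, far enough out in hybrid time, each lies in a small neighborhood of the other. Graphical limits of tails are insensitive to perturbations that vanish as $t+j\to\infty$, so this suffices. Concretely, I will show that for every $\ve>0$ there is $\kappa$ such that
$$\gph\widetilde{\sol}|_{t+j\geq\kappa}\subset\gph\sola \quad\mbox{and}\quad \gph\sola|_{t+j\geq\kappa}\subset\gph\widetilde{\sol}|_{t+j\geq\kappa-1}+\ve\ball .$$

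\textbf{First inclusion.} This holds with no restriction at all. By \eqref{eq:psi-dom-def}, every point $(\tau_k,j)\in\dom\widetilde{\sol}$ belongs to $\dom\sola$. By \eqref{eq:psi-flows} and \eqref{eq:psi-jumps}, $\sola(\tau_k,j)=\sol(k,j)=\widetilde{\sol}(\tau_k,j)$. So $\gph\widetilde{\sol}\subset\gph\sola$.

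\textbf{Second inclusion.} Points of $\dom\sola$ not already in $\dom\widetilde{\sol}$ lie in segments $[\tau_k,\tau_{k+1}]\times\{j\}$ with $j=\overline{\jmath}_k$. On such a segment, $\sola(t,j)$ is a convex combination of $\sol(k,j)$ and $\sol(k+1,j)$. Hence
$$|\sola(t,j)-\widetilde{\sol}(\tau_k,j)|\leq|\sol(k+1,\overline{\jmath}_k)-\sol(k,\overline{\jmath}_k)| \quad\mbox{and}\quad |t-\tau_k|\leq h_{k+1}.$$
The two estimates needed are:
\begin{itemize}
\item[(a)] $h_{k+1}\to0$, which holds by admissibility;
\item[(b)] $|\sol(k+1,\overline{\jmath}_k)-\sol(k,\overline{\jmath}_k)|\to0$ as $k\to\infty$, which is Lemma \ref{lemma:about-simulations} whenever $\sol$ is complete in the $k$ direction.
\end{itemize}
If $\sol$ is not complete in the $k$ direction, there are only finitely many flow segments. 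Past some hybrid time, $\dom\sola$ and $\dom\widetilde{\sol}$ then coincide as sets of isolated points $(\tau_{K},j)$, and $\sola=\widetilde{\sol}$ there.

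One small point needs care: relating $t+j\to\infty$ in $\dom\sola$ to $k\to\infty$. If $t+j\geq\kappa$ with $\kappa$ large and $t\in[\tau_k,\tau_{k+1}]$ for a flow segment, then $\tau_{k+1}+j\geq\kappa$. Both $t$ and $j$ are bounded in terms of $\tau_k$ and $j$, and each segment carries only one $j$. So either $k$ is large, or $j$ is large with $k$ fixed. In the latter case the flow segment at fixed $k$ occurs at most once, at the single index $j=\overline{\jmath}_k$. Hence only finitely many segments meet $\{t+j\leq\kappa'\}$ for any $\kappa'$, and all remaining segments have large $k$, where (a) and (b) apply.

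\textbf{Passing to tails.} Fix $(t_n,j_n)$ with $t_n+j_n\to\infty$. The two-sided inclusion gives, for any $T,\ve$ and all large $n$, the closeness \eqref{tau epsilon close} between the tails $\sola^{(t_n,j_n)}$ and the corresponding shifted tails of $\widetilde{\sol}$. Here I need to be careful that the base points lie in both domains. For $(t_n,j_n)\in\dom\widetilde{\sol}$ this is automatic. For base points in $\dom\sola$ only, I will shift to the nearest $\tau_k$, at distance at most $h_{k+1}\to0$. This shift does not change the graphical limit. Since both sequences are bounded ($\sol$ is bounded), \cite[Theorem 4.10]{RockafellarWets98} then gives that one converges graphically if and only if the other does, and that the limits coincide.

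The main obstacle is this bookkeeping for base points and segment endpoints near $t+j=T$. It is handled by the same slack of $+1$ in $T$ used in the proof of Lemma \ref{lemma: unwind your mind}.
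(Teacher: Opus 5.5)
Your proposal is correct and follows essentially the same route as the paper. Both arguments rest on three ingredients: the inclusion $\gph\widetilde{\sol}\subset\gph\sola$; the estimate that each interpolation segment stays within $h_{k+1}$ and $|\sol(k+1,\overline{\jmath}_k)-\sol(k,\overline{\jmath}_k)|$ of its left grid point (Lemma \ref{lemma:about-simulations}); and moving a base point of $\dom\sola$ to a neighboring grid point, with the non-$k$-complete case handled separately. The paper compares tails directly through inner and outer limits, whereas you compare the graphs far out and invoke the two-sided closeness characterization of \cite[Theorem 4.10]{RockafellarWets98}, which additionally makes explicit that $t+j\to\infty$ on flow segments forces $k\to\infty$.
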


\begin{proof}
If $\phi$ is not complete in the $k$ direction, then $\sola$ and $\widetilde{\sol}$ 
are equal to one another for all large enough $t+j$, and the conclusion of the lemma
is clear. Suppose that $\phi$ is complete in the $k$ direction, so that 
$\dom\widetilde{\sol}$ and $\dom\sola$ are unbounded in the $k$ and $t$ directions, respectively.
For $(t_n.j_n)\in\dom\widetilde{\sol}$, let $(t'_n.j_n)$ 
be the next point in $\dom\widetilde{\sol}$, if it exists. More precisely, if
$(t_n,j_n)$ is such that $t_n=\tau_k$ and $(t_n,j_n+1)\not\in\dom\widetilde{\sol}$,
then let $t'_n=\tau_{k+1}$. 
Lemma \ref{lemma:about-simulations}, the decrease of step sizes to $0$, and the nature 
of the interpolation imply that, for every $t_n,s_n,j_n$ such that
$(t_n.j_n)\in\dom\widetilde{\sol}$, $(s_n,j_n)\in\dom\sola$, 
and $t_n\leq s_n<t'_n$,
\begin{equation}
\label{Barca} 
\gph\widetilde{\sol}^{(t'_n,j_n)} \subset 
\gph\widetilde{\sol}^{(t_n,j_n)} \subset 
\gph\sola^{(t_n,j_n)}
\end{equation}
and for every $\ve>0$ and all large enough $t_n+j_n$,
\begin{equation}
\label{Chelsa}
\gph\sola^{(s_n,j_n)} \subset \gph\wisol^{(t_n,j_n)}+\ve\ball 
\subset \gph\wisol^{(t'_n,j_n)}+2\ve\ball.
\end{equation}
(The inclusions involving $t'_n$ in \eqref{Barca} and \eqref{Chelsa} should be ignored
if $t'_n$ does not exist.)
Suppose that $t_n+j_n\to\infty$ and $\wisol^{(t_n,j_n)}$ converge graphically to
some mapping $\chi$. Then, given any $\ve>0$,
\begin{eqnarray*}
\gph\chi & = & 
\lim_{n\to\infty} \gph \wisol^{(t_n,j_n)} =
\liminf_{n\to\infty} \gph \wisol^{(t_n,j_n)} \subset 
\liminf_{n\to\infty} \gph \sola^{(t_n,j_n)} \\
& \subset & 
\limsup_{n\to\infty} \gph\sola^{(t_n,j_n)} 
\subset \limsup_{n\to\infty} \gph\wisol^{(t_n,j_n)} +\ve\ball 
=\gph\chi+\ve\ball.
\end{eqnarray*}
Because $\ve>0$ is arbitrary and all the limits above are closed sets,
$$\liminf_{n\to\infty} \gph \sola^{(t_n,j_n)}=
\limsup_{n\to\infty} \gph\sola^{(t_n,j_n)}=\gph\chi,$$
which implies that $\sola^{(t_n,j_n)}$ converge graphically to $\chi$.
On the other hand, suppose that $s_n+j_n\to\infty$, $\sola^{(s_n,j_n)}$ converge graphically
to some mapping $\eta$. Pick $(t_n,j_n)\in\dom\wisol$ so that
$t_n=s_n$, if possible, or $t_n<s_n<t'_n$. Then, 
subject to replacing the only mention of $t'_n$ below by $t_n$ in the case where
$t'_n$ does not exist and $t_n=s_n$, one has
\begin{eqnarray*}
\gph\eta
& = & 
\lim_{n\to\infty} \gph\sola^{(s_n,j_n)} =
\liminf_{n\to\infty} \gph\sola^{(s_n,j_n)} \subset
\liminf_{n\to\infty} \gph\wisol^{(t_n,j_n)} +\ve\ball  \\ 
& \subset & 
\limsup_{n\to\infty} \gph\wisol^{(t'_n,j_n)} + 2\ve\ball \subset 
\limsup_{n\to\infty} \gph\sola^{(s_n,j_n)} + 2\ve\ball \\ 
& = & \gph \eta+2\ve\ball
\end{eqnarray*}
for any $\ve>0$. Again, this implies that $\wisol^{(t_n,j_n)}$ converge graphically
to $\eta$.
\end{proof}

\subsection{Properties of the omega-limit set of
an asymptotic simulation}
\label{subsection:omega-limits-for-asymptotic-simulations-dataless}

The omega-limit set of an asymptotic simulation $\phi$ is investigated
by studying the omega-limit set of its interpolated mapping $\psi$.
The definition of the interpolated mapping's evolution during 
flows is given in (\ref{eq:psi-flows}) and its time derivative
in the interval $(\tau_{k},\tau_{k+1})$
is seen to be equal to the value $\widehat{f}_{k+1}$ 
defined in \eqref{eq:widehat-f-def}. On the other hand, 
when $\phi$ is complete in the $k$ direction, it is
the sequence $\left\{ \left( \phi(k,\overline{\jmath}_{k}),f_{k} \right) \right\}_{k=0}^{\infty}$, rather than the sequence $\left\{ \left( \phi(k,\overline{\jmath}_{k}),\widehat{f}_{k+1} \right) \right\}_{k=0}^{\infty}$, whose
outer limit is assumed to belong to $\mbox{\rm graph}(F) \cap (C \times \Re^{n})$,
as in (\ref{eq:CT-nonstandard-dataless}).
To account for the potential mismatch between the values of
$\widehat{f}_{k+1}$ and $f_{k}$, which is characterized by
\eqref{eq:Benaim-bound-from-variance-dataless}, an appropriate function 
$\chi$
is added to the tails of the interpolated mapping $\psi$. 
That function is
\begin{align}
\label{eq:chi-def}
    \chi(s,i,t,j): = \int_{s}^{t} U(r) dr  
    \quad \forall (s,i),(t,j) \in \dom \psi, \, s+i \leq t+j
\end{align}
where $U:\Re_{\geq 0} \rightarrow \Re^{n}$ is defined as
\begin{align}
\label{eq:U-def}
    U(t) : = f_{k} - \widehat{f}_{k+1}  \qquad \forall t \in [\tau_{k},\tau_{k+1})
    \qquad \forall k \in \ZZ_{\geq 0} .
\end{align}
The domain of $U$ is $\Re_{\geq 0}$ due to the definition of $\tau_k$ in
(\ref{eq:tau-k}) and the fact that the step sizes
are not summable. The following lemma asserts that
$\chi \in \mathcal{P}(\psi)$.

\begin{lemma}
\label{lemma:chi-is-in-calP}
Let $(\sol,\left\{ h_{k} \right\}_{k=1}^{\infty})$ be an asymptotic simulation of \eqref{eq:hybrid} and let $\psi$ be its interpolated mapping.
Then the function $\chi$ defined in (\ref{eq:chi-def})-(\ref{eq:U-def}) satisfies
    $\chi \in \mathcal{P}(\psi)$.
\end{lemma}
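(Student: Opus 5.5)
We verify the three defining properties of $\mathcal{P}(\psi)$ in turn, with the main work in item iii.

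\emph{Items i.\ and ii.} Item i.\ holds by construction, since \eqref{eq:chi-def} defines $\chi$ exactly on $\{(s,i,t,j): (s,i),(t,j)\in\dom\psi,\ s+i\leq t+j\}$. The integral is well defined because $U$ is piecewise constant on the intervals $[\tau_k,\tau_{k+1})$, hence locally integrable. If $\phi$ is not complete in the $k$ direction, there are no $f_k$; in that case we interpret $U\equiv 0$ beyond the last flow index, so $\chi$ vanishes for large $s$ and everything is trivial. For item ii., fix $s$; then $t\mapsto\int_s^t U(r)\,dr$ is the integral of a locally bounded function and is therefore absolutely (indeed Lipschitz) continuous on any $[t_1,t_2]$. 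Local boundedness of $U$ on compacts is immediate: only finitely many $k$ have $[\tau_k,\tau_{k+1})$ meeting $[t_1,t_2]$.

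\emph{Reduction of item iii.\ to grid points.} Fix $\tau>0$ and assume $\phi$ is complete in the $k$ direction. Since $s+i\leq t+j$ and $j\geq i$, we have $t-s\leq t+j-(s+i)\leq\tau$. It therefore suffices to show
\[
\sup_{s\leq t\leq s+\tau}\Bigl|\int_s^t U\Bigr|\to 0 \quad\mbox{as } s\to\infty .
\]
When $s+i\to\infty$ but $s$ stays bounded, the quantity $t-s\leq\tau$ is controlled, and the same bound applies once $s$ is large. If $s$ stays bounded while $i\to\infty$, then $\dom\psi$ is eventually at a fixed $t$, so $t=s$ and $\chi=0$.

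Write $s\in[\tau_k,\tau_{k+1})$ and $t\in[\tau_n,\tau_{n+1})$. Then
\[
\int_s^t U=\sum_{i=k}^{n-1}h_{i+1}\bigl(f_i-\widehat f_{i+1}\bigr)\;-\;(s-\tau_k)\bigl(f_k-\widehat f_{k+1}\bigr)\;+\;(t-\tau_n)\bigl(f_n-\widehat f_{n+1}\bigr).
\]
The two boundary terms have magnitude at most $h_{k+1}|f_k-\widehat f_{k+1}|$ and $h_{n+1}|f_n-\widehat f_{n+1}|$. Each tends to zero, by the argument in the proof of Lemma~\ref{lemma:about-simulations}: $h_{i+1}\widehat f_{i+1}\to 0$, and $h_{i+1}f_i\to 0$ because $\{f_i\}$ is bounded and $h_i\to 0$.

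\emph{Main step: the sum.} Since $t\leq s+\tau<\tau_{k+1}+\tau$, we get $n\leq m(\tau_k+\tau+h_{k+1})\leq m(\tau_k+2\tau)$ for large $k$. The sum is then controlled by \eqref{eq:Benaim-bound-from-variance-dataless} with $T$ replaced by $2\tau$. This is legitimate by the observation at the end of Subsection~\ref{subsection: asymptotic simulations}: if the limit holds for some $T$, it holds for all $T>0$.

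The obstacle I expect is bookkeeping rather than substance. One must ensure $n\geq k+1$; the case $n=k$ has an empty sum and only the boundary terms remain. One must also handle the indexing $m(\tau_k+T)$ versus $n$ exactly. Since $k\to\infty$ as $s\to\infty$, all three terms tend to zero uniformly, which gives \eqref{eq:chi-prop}.
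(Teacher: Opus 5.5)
Your proof is correct and is essentially the paper's argument: split $\int_s^t U$ into the two partial-step boundary pieces and a grid sum, then control everything through \eqref{eq:Benaim-bound-from-variance-dataless} with the horizon doubled, which the ``some $T$ implies all $T$'' observation justifies. The differences are cosmetic. You start the grid sum at $k=m(s)$ and dispose of the boundary terms via Lemma~\ref{lemma:about-simulations}, whereas the paper bounds all three pieces by three times a supremum of partial sums with starting index in $[m(s),m(t)]$. Also, your case of $s$ staying bounded while $s+i\to\infty$ cannot occur: completeness in the $k$ direction forces $s\to\infty$ in that limit.
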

\begin{proof}
That the first item in the characterization of $\mathcal{P}(\psi)$ holds for $\chi$
is evident by the definition of $\chi$. That the second item holds follows from $U(\cdot)$ being piecewise constant and thus locally integrable.
That the condition
\eqref{eq:Benaim-bound-from-variance-dataless} implies that the third item in the definition of 
$\mathcal{P}(\psi)$ holds, i.e., for every $\tau>0$, \eqref{eq:chi-prop} holds,
follows as asserted in the proof of \cite[Proposition 1.3]{Benaimetal2005}. 

In particular, let $T>0$
and let $s^{*} \in \Re_{\geq 0}$ be sufficiently large such that
$m(\tau_{m(s)+1}+T)+1 \leq m(\tau_{m(s)}+2T)$ for all $s \geq s^{*}$.
Now consider $(s,i,t,j) \in \dom \chi$ with $s \geq s^{*}$, 
$s+i \leq t+j$, so that $s \leq t$, and $t \leq s+T$, and observe that
\begin{align*}
 & |\chi(s,i,t,j)| = \left|  \int_{s}^{t} U(r) dr \right|
  \leq h_{m(s)+1} | f_{m(s)} - \widehat{f}_{m(s)+1} | + \\
  & \qquad \qquad \qquad h_{m(t)+1} | f_{m(t)}- \widehat{f}_{m(t)+1} |
  + \left| \sum_{i=m(s)+1}^{m(t)-1} h_{i+1} (f_{i} - \widehat{f}_{i+1} ) \right| 
\\
  & \leq 3 \left(  \sup_{\begin{array}{c} m(s) \leq k \leq m(t), \\
  k+1 \leq n \leq m(t)+1
  \end{array}}
  \left| \sum_{i=k}^{n-1} h_{i+1} \left( f_{i} - \widehat{f}_{i+1} \right) \right| \right) 
\end{align*}
Because $m(t)\leq m(s+T) \leq m(\tau_{m(s)+1}+T)\leq m(\tau_{m(s)}+2T)-1$ and then
$m(t)+1\leq m(\tau_{m(s)}+2T)\leq m(\tau_{k}+2T)$ for $k\geq m(s)$,
\eqref{eq:Benaim-bound-from-variance-dataless} with $T$ replaced by $2T$
implies
\eqref{eq:chi-prop} since $m(s) \rightarrow \infty$ as $s \rightarrow \infty$.
\end{proof}

\begin{lemma}
\label{lemma: for-stochastic-simulators}
Pose the Hybrid Basic Conditions and let $(\sol,\left\{ h_{k} \right\}_{k=1}^{\infty})$ be an asymptotic simulation of \eqref{eq:hybrid}.
Then, $\sol$'s compressed-domain version $\widetilde{\sol}$ defined by
\eqref{eq:set-up-simulator-solution-to-be-pseudo}
is an asymptotic solution of \eqref{eq:hybrid}
and the interpolated mapping $\psi$ defined in
\eqref{eq:psi-dom-def}-\eqref{eq:psi-jumps} is a vanishing inflation 
solution and an asymptotic solution of \eqref{eq:hybrid}.
\end{lemma}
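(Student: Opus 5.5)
The plan is to prove first that $\sola$ is a vanishing inflation solution with the function $\chi$ from \eqref{eq:chi-def}--\eqref{eq:U-def}. Proposition \ref{proposition:PAS-2-pseudo} then shows that $\sola$ is an asymptotic solution. Lemma \ref{lemma:same-tails} transfers this to $\wisol$: the graphical limits of tails of $\wisol$ coincide with those of tails of $\sola$, hence are complete solutions. Boundedness of $\wisol$ follows from boundedness of $\sol$, and $\dom\wisol$ is unbounded because $\sol$ is complete and the step sizes are not summable. Boundedness of $\sola$ holds because, on flow intervals, it is a convex interpolation of values of $\sol$. Completeness of $\sola$ follows in the same way. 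By Lemma \ref{lemma:chi-is-in-calP}, $\chi\in\mathcal{P}(\sola)$.

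The core step is the following. Fix $\ve,T>0$. I must find $\kappa$ such that, for $(s,i)\in\dom\sola$ with $s+i\ge\kappa$, the arc $\pcTsi(t,j)=\sola(s+t,i+j)+\chi(s,i,s+t,i+j)$ solves the $\ve$-inflation \eqref{eq:inflated-system}. I split into flows and jumps.

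\emph{Jumps.} At a jump $(t,j),(t,j+1)$ of $\pcTsi$, with $s+t=\tau_k$ and $k=\overline{k}_{i+j}$, the $\chi$ term is continuous in $t$ and does not change across the jump. So the pair $(\pcTsi(t,j),\pcTsi(t,j+1))$ equals $(\sol(k,i+j),\sol(k,i+j+1))$ shifted by the same vector $\chi$. For $s+i$ large, $|\chi|\le\ve/2$ by \eqref{eq:chi-prop} with $\tau=T$. By \eqref{eq:DT-nonstandard-dataless} and boundedness of $\sol$, the jump pairs for large $j$ are within $\ve/2$ of $\mbox{\rm graph}(G)\cap(D\times\reals^n)$. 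This requires that $s+i\to\infty$ forces the jump index $i+j\to\infty$ or excludes jumps altogether; if $\sol$ is not complete in $j$, there are no late jumps. Together these give $\pcTsi(t,j)\in D_\ve$ and $\pcTsi(t,j+1)\in G_\ve(\pcTsi(t,j))$.

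\emph{Flows.} On $(\tau_k,\tau_{k+1})$ one has $\frac{d}{dt}\pcTsi=\widehat f_{k+1}+U=f_k$. By \eqref{eq:CT-nonstandard-dataless} and boundedness of $(\sol,f_k)$, for large $k$ there is a point $(z_k,w_k)\in\mbox{\rm graph}(F)$ with $z_k\in C$, $|z_k-\sol(k,\overline\jmath_k)|<\ve/4$ and $|w_k-f_k|<\ve$. The value $\pcTsi(t,j)$ differs from $\sol(k,\overline\jmath_k)$ by at most $|\sol(k+1,\overline\jmath_k)-\sol(k,\overline\jmath_k)|+|\chi|$. The first term tends to $0$ by Lemma \ref{lemma:about-simulations}, and the second is below $\ve/2$. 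Hence $\pcTsi(t,j)\in C_\ve$ and $z_k\in(\pcTsi(t,j)+\ve\ball)\cap C$, so $f_k\in F(z_k)+\ve\ball\subset F_\ve(\pcTsi(t,j))$. Absolute continuity in $t$ follows because $\pcTsi$ is piecewise linear in $t$. Finally, $\pcTsi(0,0)\in C_\ve\cup D_\ve$ by the same estimates.

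The main obstacle is the index bookkeeping: showing uniformly that $s+i\ge\kappa$ with horizon $T$ forces every relevant flow index $k$ and jump index $j$ to be large. The difficulty is that $s+i$ large means either $s$ large, which gives $k=m(s+t)$ large, or $i$ large, which gives $j$ large, but not necessarily both. I would handle this case by case. If $\sol$ is complete in the $k$ direction, any flow at time $s+t\ge s$ with $s$ small occurs only if $i$ is large. In that case I use that $\overline\jmath_k$ is nondecreasing in $k$, so a flow at jump count $\ge i$ requires $k$ beyond the (finitely many) indices with $\overline\jmath_k<i$. Symmetrically, a jump at hybrid time $(s+t,i+j)$ with $i+j$ large is automatically a late jump. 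If completeness in one direction fails, that mode occurs only finitely often, and choosing $\kappa$ beyond its last occurrence removes it.
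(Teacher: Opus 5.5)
Your proposal is correct and follows essentially the paper's route: the same $\chi$ built from $U$, the observations that the tails $\pcTsi$ have derivative $f_k$ on flow intervals and that $\chi$ does not change across jumps, and then Proposition~\ref{proposition:PAS-2-pseudo} and Lemma~\ref{lemma:same-tails}. One small repair: when $\sol$ is not complete in the $k$ direction the definition supplies no $f_k$, so $\chi$ from \eqref{eq:chi-def} is not available and you should take $\chi\equiv 0$ there (the paper treats this case separately, with $\sola$ a simple vanishing inflation solution); in the complete case your index bookkeeping can be shortened, as in the paper, by taking $\kappa=\tau_{k^*}+\underline{\jmath}^{*}$, which forces $s\ge\tau_{k^*}$ by the ordering of hybrid time domains.
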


\begin{proof}
First consider
the case where $\sol$ is not complete in the $k$ direction. Let $\overline{k}:=\max \left\{ k \in \ZZ_{\geq 0} : \exists j \in \ZZ_{\geq 0} \ \mbox{\rm s.t.} \ (k,j) \in \dom \sol \right\}$.
Using that $\sol$ is complete, and thus
complete in the $j$ direction,
let $j_{1}^{*} \in \ZZ_{\geq 0}$ be sufficiently large
such that $(\overline{k},j) \in \dom \sol$
for all $j \in \ZZ_{\geq j_{1}^{*}}$. 
Using the second property characterizing
an asymptotic simulation of
(\ref{eq:hybrid}) together with \cite[Theorem 4.10(b)]{RockafellarWets98} and
the boundedness of $\phi$, for each $\varepsilon>0$,
there exists $j_{2}^{*} \in \ZZ_{\geq j_{1}^{*}}$
such that, for all $j \in \ZZ_{\geq j_{2}^{*}}$,
\begin{align}
\label{eq:52-graph-plus-eps-contain}
    \left( \phi(\overline{k},j),\phi(\overline{k},j+1) \right) \in \left( \mbox{\rm graph}(G) \cap
    (D \times \Re^{n}) \right) + \varepsilon \Ball .
\end{align}
It follows from (\ref{eq:52-graph-plus-eps-contain}) that,
for all $j \in \ZZ_{\geq j_{2}^{*}}$,
\begin{align}
    \sol(\overline{k},j) \in D + \varepsilon \Ball
    , \quad \sol(\overline{k},j+1)
    \in G\left( \left( \sol(\overline{k},j) + \varepsilon \Ball \right) \cap D  \right) 
    + \varepsilon \Ball .
\end{align}
Then, from the definition of compressed-domain version 
$\widetilde{\phi}$ of $\phi$ in (\ref{eq:set-up-simulator-solution-to-be-pseudo}), 
and the definition of the interpolated mapping $\psi$
in \eqref{eq:psi-dom-def}-\eqref{eq:psi-jumps}, both
$\widetilde{\phi}$ and $\psi$ are (simple) vanishing inflation
solutions to (\ref{eq:hybrid})
and, using Proposition \ref{proposition:PAS-2-pseudo},  
also are asymptotic solutions of (\ref{eq:hybrid}), i.e., the conclusions 
of the lemma hold.

Henceforth, suppose $\sol$ is complete in the $k$ direction.
According to (\ref{eq:psi-flows}) and (\ref{eq:widehat-f-def}), for each
$k \in \ZZ_{\geq 0}$,
\begin{align}
\label{eq:psi-dot1-stochastic}
    \dot{\sola}(t,j) = \frac{\sol(k+1,j) - \sol(k,j)}{h_{k+1}}
    =: \widehat{f}_{k+1}  \qquad \forall
    t \in (\tau_{k},\tau_{k+1}) .
\end{align}
Let the bounded sequence $\left\{ f_{k} \right\}_{k=0}^{\infty}$ come
from the first item that characterizes
an asymptotic simulation
of (\ref{eq:hybrid}), i.e., be such that \eqref{eq:CT-nonstandard-dataless} holds.
Define $\chi \in \mathcal{P}(\sola)$ as in
(\ref{eq:chi-def})-(\ref{eq:U-def}).
Let $T>0$ and $\varepsilon>0$ be given. 
Let $k^{*} \in \ZZ_{\geq 0}$
be sufficiently large such that,
for all $k \in \ZZ_{\geq k^{*}}$,
the following bounds hold:
\begin{subequations}
\label{eq:two-quarter-epsilon-bounds}
\begin{align}
\label{eq:55a}
    h_{k+1} |f_{k}| & \leq 0.25 \varepsilon  \\
        \label{eq:integral-bound}
    \sup_{s \geq \tau_{k^{*}}, v \in [0,T] }   \left| \int_{s}^{s+v} U(r) dr \right| 
    & \leq 0.25 \varepsilon 
\end{align}
\end{subequations}
where $U(\cdot)$ was defined in \eqref{eq:U-def},
and the following properties hold: 
\begin{enumerate}[a)]
    \item
    \label{item:first-of-two-items}
if $(k,j),(k+1,j) \in \dom \sol$ then
\begin{align}
\label{eq:CT-in-proof}
\sol(k,j) \in C + 0.25 \varepsilon \Ball , \quad
f_{k} \in \overline{\mbox{\rm co}} F\left( \left( \sol(k,j)+0.25 \varepsilon \Ball \right) \cap C \right) + 
\varepsilon \Ball
\end{align}
\item
   \label{item:second-of-two-items}
if $(k,j),(k,j+1) \in \dom \sol$ then
\begin{align}
\label{eq:DT-in-proof}
\sol(k,j) \in D +  0.75 \varepsilon \Ball , \quad
\sol(k,j+1) \in G\left( \left( \sol(k,j)+ 0.75 \varepsilon \Ball \right) \cap D \right) + 
\varepsilon \Ball .
\end{align}
\end{enumerate}
There exists $k^{*} \in \ZZ_{\geq 0}$
such that (\ref{eq:55a}) holds for all $k \geq k^{*}$
since the sequence
$\left\{ f_{k} \right\}_{k=0}^{\infty}$ is bounded while
the sequence $\left\{ h_{k} \right\}_{k=1}^{\infty}$ converges
to zero.
There exists $k^{*} \in \ZZ_{\geq 0}$
such that (\ref{eq:integral-bound}) holds due to
Lemma \ref{lemma:chi-is-in-calP}, which asserts that $\chi \in \mathcal{P}(\psi)$,
and the third item in the characterization of $\mathcal{P}(\psi)$.
There exists $k^{*} \in \ZZ_{\geq 0}$ such that item \ref{item:first-of-two-items}) above holds for all $k \geq k^{*}$
due to \eqref{eq:CT-nonstandard-dataless}, the boundedness of
the sequence $\left(\sol(k,\overline{\jmath}_{k}),f_{k} \right)$,
and \cite[Theorem 4.10(b)]{RockafellarWets98}. 
Similarly, there
exists $k^{*} \in \ZZ_{\geq 0}$ such that item
\ref{item:second-of-two-items}) above holds for
all $k \geq k^{*}$ due to \eqref{eq:DT-nonstandard-dataless}, boundedness of
the sequence $\left( \sol(\overline{k}_{j},j),\sol(\overline{k}_{j},j+1) \right)$,
and \cite[Theorem 4.10(b)]{RockafellarWets98} since
either 1) the domain of $\phi$ is bounded in the $j$-direction, in which
case, for $k$ large enough, there does not exist $j \in \ZZ_{\geq 0}$ such that
$(k,j),(k,j+1) \in \dom \phi$, or 2) the domain of $\phi$ is unbounded in
the $j$-direction, in which case, for each $j^{*} \in \ZZ_{\geq 0}$ there
exists $k^{*} \in \ZZ_{\geq 0}$ such that $k \geq k^{*}$ and $(k,j) \in \dom \phi$
implies $j \geq j^{*}$.

It follows from 
the definition $\sola$ in \eqref{eq:psi-dom-def}-\eqref{eq:psi-jumps}, the definition of $U(\cdot)$
in \eqref{eq:U-def}, the relation (\ref{eq:psi-dot1-stochastic}),
and the bounds \eqref{eq:two-quarter-epsilon-bounds}
that, for all $(k,j) \in \dom \sol$ such that
$k \in \ZZ_{\geq k^{*}}$, and all $t \in [\tau_{k},\tau_{k+1}]$,
\begin{align}
\label{eq:new-bound-added}
\left| \sola(t,j)
    - \sol(k,j) \right| \leq 
    h_{k+1} |f_{k}|
    +h_{k+1} |\widehat{f}_{k+1} - f_{k} |
    \leq 0.5 \varepsilon .
\end{align}
Define $\underline{\jmath}^{*}:=\min \left\{j : (\tau_{k^{*}},j) \in
\dom \psi \right\}$, which is well defined since
$\phi$ is complete in the $k$ direction.
Define $\kappa:=\tau_{k^{*}}+\underline{\jmath}^{*}$,
let $(s,i) \in \dom \psi$ satisfy $s+i \geq \kappa$,
and recall the definition of $\pcTsi$ established
in (\ref{eq:sol-chi-def}), \eqref{eq:chi-def}, and \eqref{eq:U-def}.
Given $(t,j) \in \dom \pcTsi$, let
$k \in \ZZ_{\geq k^{*}}$ be such that $s+t  \in 
\left[ \tau_{k}, \tau_{k+1} \right]$.
It follows using
(\ref{eq:integral-bound}), (\ref{eq:new-bound-added}),
and the definition
of $\pcTsi$ that
\begin{align}
  &  |\pcTsi(t,j) - \sol(k,i+j)|
    \nonumber \\
 & \leq  | \pcTsi(t,j) - \sola(s+t,i+j)
    + \sola(s+t,i+j)
    - \sol(k,i+j)| 
   \nonumber \\
& \leq  \sup_{s \geq \tau_{k^{*}}, 0 \leq v \leq T} 
    \left| \int_{s}^{s+v}
    U(r) dr \right| + 0.5 \varepsilon
     \leq 0.75 \varepsilon . 
    \label{eq:sola-compared-to-sol-stochastic}
\end{align}
Then, combining \eqref{eq:psi-dot1-stochastic}, item
\ref{item:first-of-two-items}) above,
the definition of $\pcTsi$,
    and (\ref{eq:sola-compared-to-sol-stochastic}), it follows that if $(t_{1},j),(t_{2},j) \in \dom \pcTsi$ with $t_{1} < t_{2}$ then, for almost all $t \in [t_{1},t_{2}]$,
    \begin{subequations}
        \begin{align}
            \pcTsi(t,j) & \in C + \varepsilon  \Ball  \\
            \label{eq:sola-flows-stochastic}
            \pcTsidot(t,j)
            & \in \overline{\mbox{\rm co}} F\left( \left( \pcTsi(t,j) +
            \varepsilon  \Ball \right) \cap C \right)
            + \varepsilon \Ball .
        \end{align}
    \end{subequations}
    Also, using the definitions of $\psi$ and
    $\pcTsi$, (\ref{eq:integral-bound}), and
    item \ref{item:second-of-two-items}) above,
    if $(t,j),(t,j+1) \in \dom \pcTsi$ then
    \begin{subequations}
        \begin{align}
            \pcTsi(t,j) & \in D + \varepsilon  \Ball  \\
            \pcTsi(t,j+1)
            & \in G\left( \left( \pcTsi(t,j) + \varepsilon \Ball \right) \cap D \right) + \varepsilon \Ball .
        \end{align}
    \end{subequations}
It follows that $\sola$ is a vanishing inflation solution
of (\ref{eq:hybrid}).
It follows from Proposition \ref{proposition:PAS-2-pseudo} that $\sola$ is a 
asymptotic solution of (\ref{eq:hybrid}).
Also, from \eqref{eq:new-bound-added} and the fact that $\varepsilon>0$ is arbitrary, 
it follows from Lemma \ref{lemma:same-tails} that
the graphical limits of the tails of $\sola$ are the same as the graphical limits
of the tails of $\widetilde{\sol}$.
Thus, $\widetilde{\sol}$ is an asymptotic solution of (\ref{eq:hybrid}).
\end{proof}

Lemma \ref{lemma: for-stochastic-simulators} above, combined with Corollary 
\ref{corollary:itc-for-perturbed-asymptotic-solutions},
proves Theorem \ref{theorem: for-stochastic-simulators}.

\section{Stochastic signals as
asymptotic simulations}
\label{section:stochastic-asymptotic-simulations}

In this section, we consider a collection of hybrid sequences
defined on a probability space and
give statistical conditions under
which almost every sample path is
an asymptotic simulation of the
hybrid inclusion \eqref{eq:hybrid}.

First, adaptedness of the collection
is formulated. Then, conditional mean
and variance-like conditions are
provided to ensure the desired properties.

\subsection{Adaptedness}

Let $(\Omega,\mathcal{F},\mathbb{P})$ be a probability space
and let
$\left\{ \mathcal{H}_{k,j} \right\}_{(k,j) \in \ZZ_{\geq 0} \times \ZZ_{\geq 0}}$ 
be a two-parameter filtration of the probability space; 
that is, for each $(k,j) \in \ZZ_{\geq 0} \times \ZZ_{\geq 0}$,
$\mathcal{H}_{k,j} \subset \mathcal{F}$ is a $\sigma$-field
\cite[Section 1.2, Def. 1]{FristedtGray97},
which contains the empty set, and
    \begin{align}
    \label{eq:filtration}  
       \mathcal{H}_{k,j} \subset \left( \mathcal{H}_{k+1,j} 
        \cap  \mathcal{H}_{k,j+1} \right) .
    \end{align}
    
Let $\mathcal{X}$ be the collection of set-valued mappings
$\phi:\Re^{2} \rightrightarrows \Re^{n}$ 
such that $\mbox{\rm graph}(\phi)$ is
non-empty and closed.
    Let $\mathbf{x}:\Omega \rightarrow \mathcal{X}$ be such that,
    for almost every $\omega \in \Omega$, $\mathbf{x}(\omega)$
    is a hybrid sequence, also called a {\em sample path}. 
To save on notation, we write $\mathbf{x}(k,j)$ to represent 
$\mathbf{x}(\omega)(k,j)$ when $(k,j) \in \dom \mathbf{x}(\omega)$.

Extending the definitions in (\ref{eq:the-overline-functions}) to the stochastic case,
for each $(k,j) \in \ZZ_{\geq 0} \times \ZZ_{\geq 0}$, define the functions
\begin{subequations}
\begin{align}
\overline{\jmath}_{k}(\omega)  & :=
 \inf \left\{ j \in \ZZ_{\geq 0}: (k+1,j) \in \dom \mathbf{x}(\omega)  \right\}   \qquad \forall \omega \in \Omega \\
\overline{k}_{j}(\omega) & : =  \inf \left\{ k \in \ZZ_{\geq 0}: (k,j+1) \in \dom \mathbf{x}(\omega)  \right\}   \qquad \forall \omega \in \Omega
\end{align}
\end{subequations}
again with the convention that $\inf( \emptyset)= \infty$.

The mapping $\mathbf{x}$ is said to be {\em adapted}
if, for each $(k,j) \in \ZZ_{\geq 0} \times \ZZ_{\geq 0}$, the following properties hold:
\begin{enumerate}
\item 
the sets
$$\Omjk:=\left\{ \omega \in \Omega : \overline{\jmath}_{k}(\omega) \leq j \right\}, \qquad
\Omkj:=
\left\{ \omega \in \Omega : \overline{k}_{j}(\omega) \leq k \right\}$$
are elements of $\mathcal{H}_{k,j}$;
\item 
the mapping
\begin{align}
\label{eq:old64-new63}
    \omega \mapsto \begin{cases} \mathbf{x}(k,j)  & \mbox{\rm if} \ (k,j) \in \dom \mathbf{x}(\omega) \\
    \emptyset & \mbox{\rm otherwise} \end{cases}
\end{align}
is $\mathcal{H}_{k,j}$-measurable.
\end{enumerate}
If $\mathbf{x}$ is adapted, it is called
a {\em stochastic asymptotic simulation candidate}.

The first condition above is equivalent to the sets
$$\Omjke:=\left\{ \omega \in \Omega : \overline{\jmath}_{k}(\omega) = j \right\}, \qquad
\Omkje:=
\left\{ \omega \in \Omega : \overline{k}_{j}(\omega) = k \right\}$$
belonging to $\mathcal{H}_{j,k}$ for each $(k,j)
\in \ZZ_{\geq 0} \times \ZZ_{\geq 0}$. This is
because
\begin{align*}
& \Omjk = \cup_{i \in \left\{0,\ldots,j \right\}}
\Omjkei \qquad \mbox{and} \qquad
\Omjke = \Omjk \cap (\Omjkminusone)^{c} \\
& \Omkj = \cup_{i \in \left\{0,\ldots,k \right\}}
\Omkjei \qquad \mbox{and} \qquad
\Omkje = \Omkj \cap (\Omkjminusone)^{c} ,
\end{align*}
(\ref{eq:filtration}) holds, and
$\mathcal{H}_{k,j}$ is closed under
complements, countable unions and countable intersections.

The second condition above is equivalent to
$\mathcal{H}_{k,j}$-measurability of
the mapping
\begin{align}
\label{eq:old63-new64}
\omega \mapsto \mbox{\rm graph}(\mathbf{x}(\omega)) \cap \left( \left\{(k,j)\right\} \times \Re^{n} \right) ,
\end{align}
i.e., the mapping
\begin{align}
\label{eq:new64-bis}
    \omega \mapsto \begin{cases} (k,j,\mathbf{x}(k,j))  & \mbox{\rm if} \ (k,j) \in \dom \mathbf{x}(\omega) \\
    \emptyset & \mbox{\rm otherwise.} 
    \end{cases}
\end{align}
That $\mathcal{H}_{k,j}$-measurability of
the mapping in (\ref{eq:new64-bis}) implies
$\mathcal{H}_{k,j}$-measurability of
the mapping in (\ref{eq:old64-new63}) follows from
\cite[Theorem 14.13(a)]{RockafellarWets98} and the fact that the projection
from $\Re^{2} \times \Re^{n}$ to $\Re^{n}$ is single-valued and continuous. The opposition direction
follows from \cite[Theorem 14.13(b)]{RockafellarWets98}
where, in that result,
$S$ is the mapping in
(\ref{eq:old64-new63}), $m=n+2$, and 
$M(\omega,\cdot)= (k,j,\cdot)$.

In words, a collection of hybrid
sequences is adapted if, at the time $(k,j)$, 1) when there is a decision to be made between incrementing $k$ or incrementing $j$ (or neither),
the decision must depend only on the information contained in $\mathcal{H}_{j,k}$, and 2) 
when there is a decision to be made about the value of the state after an increment of $k$ or $j$, if $k$ is incremented then the next value of the state is $\mathcal{H}_{k+1,j}$-measurable while if $j$ is incremented then the next
value of the state is $\mathcal{H}_{k,j+1}$-measurable.

In the context of the
stochastic simulators in (\ref{eq:TSG25ARC-simulators}), where 
the filtration $\mathcal{H}_{k,j}$ is independent of $j$ for each $k$,
conditions for the existence of adapted solutions with complete sample paths 
are provided in \cite[Proposition 4.1]{TeelSanfeliceGoebel25ARC}.
The following example illustrates adaptedness
in the general $2$-parameter filtration case.
The case where $\mathcal{H}_{k,j}$ is independent of $j$ for each $k$ is discussed after this example.

\begin{example}
\label{example:hsd1}
{\rm 
    Let $\left\{ \mathbf{y}_{k} \right\}_{k=1}^{\infty}$ and
$\left\{ \mathbf{z}_{j} \right\}_{j=1}^{\infty}$ be mutually independent sequences
of iid random variables defined on the probability
space $(\Omega,\mathcal{F},\mathbb{P})$ taking values in $\left\{0,1 \right\}$.
Let $\left\{ \mathcal{F}_{k} \right\}_{k=0}^{\infty}$ and $\left\{ \mathcal{G}_{j} \right\}_{j=0}^{\infty}$
be the minimal filtrations of $\left\{ \mathbf{y}_{k} \right\}_{k=1}^{\infty}$ and 
$\left\{ \mathbf{z}_{j} \right\}_{j=1}^{\infty}$, respectively, with
$\mathcal{F}_{0} = \mathcal{G}_{0} : = \left\{ \emptyset, \Omega \right\}$.
Define $\mathcal{H}_{k,j}:= \mathcal{F}_{k} \vee \mathcal{G}_{j}$, i.e., the $\sigma$-algebra
generated by $\left\{ \mathbf{y}_{1},\ldots,\mathbf{y}_{k},\mathbf{z}_{1},\ldots,\mathbf{z}_{j} \right\}$.
For each $\omega \in \Omega$, create
the hybrid sequence $\mathbf{x}(\omega)$
as follows:
\begin{enumerate}
\item
Let $\mathbf{x}(0,0)=1$.
\item
For $(k,j) \in \dom \mathbf{x}(\omega)$, 
\begin{enumerate}
\item if $\mathbf{x}(k,j)=1$
then $(k+1,j) \in \dom \mathbf{x}(\omega)$ and $\mathbf{x}(k+1,j)=\mathbf{y}_{k+1}$; else,
\item
if $\mathbf{x}(k,j)=0$ then $(k,j+1) \in \dom \mathbf{x}(\omega)$ and $\mathbf{x}(k,j+1)=\mathbf{z}_{j+1}$.
\end{enumerate}
\end{enumerate}
The first observation to make in verifying the adaptedness property
for $\mathbf{x}$
is that,
with the definition $\mathbf{y}_{0}(\omega)=1$
for all $\omega \in \Omega$,
if $(k,j) \in \dom \mathbf{x}(\omega)$
then 
\begin{subequations}
\label{eq:formulas}
\begin{align}
    k & = \sum_{i=0}^{k-1} \mathbf{y}_{i}(\omega) +
\sum_{i=1}^{j-1} \mathbf{z}_{i}(\omega)  \\
    j & =
     \sum_{i=0}^{k-1} (1-\mathbf{y}_{i}(\omega)) +
\sum_{i=1}^{j-1} (1-\mathbf{z}_{i}(\omega)) .
\end{align}
\end{subequations}
It is clear that
these equalities holds for $(k,j)=(0,0)$,
which belongs to
$\dom \mathbf{x}(\omega)$ for all $\omega \in \Omega$. 
Fix $\omega \in \Omega$ and, for the purposes
of induction, suppose the pair
$(k,j) \in \ZZ_{\geq 0} \times \ZZ_{\geq 0}$
is such that $(k,j) \in \mathbf{x}(\omega)$, $k+j \geq 1$, and the equalities
in (\ref{eq:formulas})
hold for $(k-1,j)$ if $(k-1,j) \in \dom \mathbf{x}(\omega)$ or for $(k,j-1)$ if $(k,j-1) \in \mathbf{x}(\omega)$; since $\dom \mathbf{x}(\omega)$ is a hybrid sequence domain, one
and only one of $(k,j-1)$ and $(k-1,j)$ belongs
to $\dom \mathbf{x}(\omega)$. 
The value $(k,j)$ belongs to $\mathbf{x}(\omega)$ 
either because
$(k-1,j) \in \dom \mathbf{x}(\omega)$ and
$\mathbf{y}_{k}(\omega)=1$ or $(k,j-1) \in \dom \mathbf{x}(\omega)$ and $\mathbf{z}_{j}(\omega)=0$.
It thus follows that the equalities hold for
$(k,j) \in \dom \mathbf{x}(\omega)$.

Next, it follows that
\begin{subequations}
\begin{align}
     \overline{\jmath}_{k}(\omega) & = 
     \inf \left\{ j \in \ZZ_{\geq 0} :
     \sum_{i=0}^{k} (1-\mathbf{y}_{i}(\omega))
     + \sum_{i=1}^{j-1} (1-\mathbf{z}_{i}(\omega)) = j \right\} \\
          \overline{k}_{j}(\omega) & = 
     \inf \left\{ k \in \ZZ_{\geq 0} :
     \sum_{i=0}^{k-1} \mathbf{y}_{i}(\omega)
     + \sum_{i=1}^{j} \mathbf{z}_{i}(\omega) = k \right\} .
\end{align}
\end{subequations}
Then
\begin{subequations}
\begin{align} 
    \Omjk & = 
     \left\{ \omega \in \Omega :
     \sum_{i=0}^{k} (1-\mathbf{y}_{i}(\omega))
     + \sum_{i=1}^{j-1} (1-\mathbf{z}_{i}(\omega)) \leq j \right\} \\
     \Omkj
     & = 
     \left\{ \omega \in \Omega :
     \sum_{i=0}^{k-1} \mathbf{y}_{i}(\omega)
     + \sum_{i=1}^{j} \mathbf{z}_{i}(\omega) \leq k \right\} .
\end{align}
\end{subequations}
It is now evident that $\Omega_{\overline{\jmath}_{k} \leq j}$
and $\Omega_{\overline{k}_{j} \leq k}$ belong
to $\mathcal{H}_{k,j}$.

Finally, note that the mapping in (\ref{eq:old64-new63}) is given by
\begin{align}
\begin{cases}
   1 &  \mbox{\rm if} \ (k,j) = (k,\overline{\jmath}_{k}) \\
  0 & \mbox{\rm if} \ (k,j) = (\overline{k}_{j},j) \\
  \emptyset & \mbox{\rm otherwise,} 
  \end{cases}
\end{align}
which is $\mathcal{H}_{k,j}$-measurable
since $\Omjke$ and $\Omkje$ belong to $\mathcal{H}_{k,j}$.
\null \hfill \null $\blacksquare$}
\end{example}

When
$\mathcal{H}_{k,j}$ is independent of $j$ for each $k$ (or independent of $k$ for each $j$), the 
first condition for adaptedness can be simplified.
Indeed, in the proposition below, the condition
$\Omjkinf  := \left\{\omega \in \Omega:  \overline{\jmath}_{k}(\omega) < \infty \right\} \in \mathcal{H}_{k}$
for each $k \in \ZZ_{\geq 0}$ is satisfied
when every sample path is complete in the $k$-direction, as in this case $\Omega_{\overline{\jmath}_{k} < \infty}=\Omega$ for each $k$.

\begin{proposition}
    Suppose, for each $(k,j) \in \ZZ_{\geq 0}
    \times \ZZ_{\geq 0}$,
    $\mathcal{H}_{k,j}=\mathcal{H}_{k,0}=:
    \mathcal{H}_{k}$. 
    The mapping $\mathbf{x}$ is
    adapted if and only if, for each $k \in \ZZ_{\geq 0}$,
    $\Omjkinf :=\left\{\omega \in \Omega:  \overline{\jmath}_{k}(\omega) < \infty \right\} \in \mathcal{H}_{k}$ and
    the mapping
    \begin{align}
    \label{eq:66}
        \omega \mapsto \mbox{\rm graph}(\mathbf{x}(\omega)) \cap \left( \left\{k \right\}
        \times \Re \times \Re^{n} \right)
    \end{align}
    is $\mathcal{H}_{k}$-measurable.
\end{proposition}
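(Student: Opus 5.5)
The plan is to translate both directions of the definition of adaptedness through the structure of $\dom\mathbf{x}(\omega)$ as a hybrid sequence domain, using that $\mathcal{H}_{k,j}=\mathcal{H}_k$ for all $j$. The key observation is that, for fixed $k$, the slice $\dom\mathbf{x}(\omega)\cap(\{k\}\times\ZZ_{\geq 0})$ is a (possibly empty, possibly infinite) interval of integers $\{\underline{\jmath},\dots,\overline{\jmath}\}$. Here $\overline{\jmath}_k(\omega)$ equals the top element of this interval whenever the interval is finite and $(k+1,\cdot)$ is reached. It equals $\infty$ exactly when $k$ is the last $k$-value, or the $k$-slice is empty. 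Moreover, $\overline{\jmath}_{k-1}(\omega)$ equals its bottom element $\underline{\jmath}$ for $k\geq1$.

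For the ``only if'' direction, assume $\mathbf{x}$ is adapted. First, $\Omjkinf=\bigcup_{j\in\ZZ_{\geq0}}\Omjk$ is a countable union of sets in $\mathcal{H}_{k,j}=\mathcal{H}_k$, hence lies in $\mathcal{H}_k$. Second, the mapping \eqref{eq:66} is the countable union over $j$ of the mappings \eqref{eq:old63-new64}, each of which is $\mathcal{H}_{k,j}=\mathcal{H}_k$-measurable by the discussion after the definition of adaptedness. Countable unions of measurable closed-valued mappings are measurable by \cite[Proposition 14.11]{RockafellarWets98}. One must check that the union stays closed-valued, which holds because the $(k,j)$ slices are discrete in the $j$ coordinate.

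For the ``if'' direction, assume $\Omjkinf\in\mathcal{H}_k$ and that \eqref{eq:66} is $\mathcal{H}_k$-measurable. The second adaptedness condition at $(k,j)$ follows by intersecting \eqref{eq:66} with the closed set $\{(k,j)\}\times\Re^n$. This preserves measurability (\cite[Proposition 14.11]{RockafellarWets98}), and then the equivalence with \eqref{eq:old64-new63} stated earlier applies.

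For the first condition I will express the sets through domain information contained in \eqref{eq:66}. The set $\{\omega:(k,j)\in\dom\mathbf{x}(\omega)\}$ is the preimage of $\{(k,j)\}\times\Re^n$ under \eqref{eq:66}, so it lies in $\mathcal{H}_k$. Then
\[
\Omjk=\Omjkinf\cap\Bigl(\bigcup_{i\le j}\{\omega:(k,i)\in\dom\mathbf{x}(\omega)\}\Bigr)^{\!}\cap\{\omega: (k,i)\notin\dom\mathbf{x}(\omega)\ \forall i>j\}.
\]
This needs care, since $\overline{\jmath}_k\le j$ with $\overline{\jmath}_k<\infty$ means the slice at $k$ is finite with top $\le j$. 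The last set is a countable intersection of sets in $\mathcal{H}_k$.

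For $\Omkj$, I use that $\overline{k}_j\le k$ iff $(k',j+1)\in\dom\mathbf{x}(\omega)$ for some $k'\le k$. Each event $\{(k',j+1)\in\dom\}$ lies in $\mathcal{H}_{k'}\subset\mathcal{H}_k$, by the above and monotonicity of the filtration \eqref{eq:filtration}.

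The main obstacle is the exact set identity for $\Omjk$. In particular, I must handle the case where the $k$-slice is infinite or is the final slice, which is where the hypothesis $\Omjkinf\in\mathcal{H}_k$ is genuinely needed. The reason is that whether $k+1$ is ever reached cannot be read off from the $k$-slice of the graph alone. I will verify this identity carefully using the characterization of $\overline{\jmath}_k$ given after \eqref{eq:the-overline-functions}.
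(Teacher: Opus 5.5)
Your proposal is correct and follows the paper's proof essentially step by step. It uses countable unions over $j$ for the forward direction, and intersection of \eqref{eq:66} with $\{(k,j)\}\times\Re^n$ for the second adaptedness condition. It writes $\Omjk$ as $\Omjkinf$ intersected with the event that the $k$-slice of $\dom\mathbf{x}(\omega)$ lies in $\{0,\dots,j\}$, and $\Omkj$ as a finite union over $k'\le k$ of the events $(k',j+1)\in\dom\mathbf{x}(\omega)$. The only differences are cosmetic: you express the slice condition as a countable intersection of point events, and you include a redundant factor $\bigcup_{i\le j}\{(k,i)\in\dom\mathbf{x}(\omega)\}$, which is already implied by $\overline{\jmath}_k<\infty$; the paper instead invokes measurability of $\omega\mapsto\dom\mathbf{x}(\omega)\cap(\{k\}\times\Re)$ directly.
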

\begin{proof}
$\Longrightarrow$
By assumption,
$\Omjk
\in \mathcal{H}_{k,j}=\mathcal{H}_{k}$ for each
$(k,j) \in \ZZ_{\geq 0} \times \ZZ_{\geq 0}$.
Taking the countable union over $j \in \ZZ_{\geq 0}$ gives that $\Omjkinf \in \mathcal{H}_{k}$. Also, the mapping in (\ref{eq:66})
is the countable union over $j \in \ZZ_{\geq 0}$ of the mappings in (\ref{eq:old63-new64}), each of which
is $\mathcal{H}_{k}$-measurable. Thus, according
to \cite[Proposition 14.11(b)]{RockafellarWets98}, the mapping in
(\ref{eq:66}) is $\mathcal{H}_{k}$-measurable.

$\Longleftarrow$ The mapping
in (\ref{eq:old63-new64}) can be viewed as
the intersection of the mapping in
(\ref{eq:66}) with the constant, closed-valued mapping
$ \omega \mapsto (k,j) \times \Re^{n}$; thus, according
to \cite[Proposition 14.11(a)]{RockafellarWets98},
the mapping in (\ref{eq:old63-new64}) is
$\mathcal{H}_{k}$-measurable.

To show that $\Omjk
 \in \mathcal{H}_{k}$ for all $(k,j) \in \ZZ_{\geq 0} \times\ZZ_{\geq 0}$, note that
\cite[Proposition 14.13(a)]{RockafellarWets98}
gives that $\mathcal{H}_{k}$-measurability of
the mapping in (\ref{eq:66}) implies $\mathcal{H}_{k}$-measurability of the mapping
$\omega \mapsto \dom \mathbf{x}(\omega) \cap 
    \left( \left\{k \right\} \times \Re \right)$
    while 
    \begin{align}
     \left\{ \omega \in \Omega :
    \dom \mathbf{x}(\omega) \cap 
    \left( \left\{k+1 \right\} \times \Re \right)
    \neq \emptyset \right\} = \Omjkinf  \in \mathcal{H}_{k} .
    \label{eq:67}
    \end{align}
Then
\begin{align}
&\Omjk  = \left\{ 
\omega\in\Omega\, :\, \dom \mathbf{x}(\omega) \cap 
\left(\left\{k \right\} \times \reals\right) \subset 
\left(k,\{0,1,2,\dots,j\}\right)
\right\} \nonumber \\
&\qquad  \qquad \qquad \qquad \qquad \cap
\left\{ 
\omega\in\Omega\, :\, \dom\mathbf{x}(\omega) 
\cap \left( \left\{k+1 \right\} \times \reals\right)\not=\emptyset
\right\} . \nonumber
    \end{align}
    The intersection belongs to $\mathcal{H}_{k}$ since
    the second set in the intersection belongs
    to $\mathcal{H}_{k}$ due to (\ref{eq:67})
    while the first set in the intersection
    belongs to $\mathcal{H}_{k}$ due to the 
    $\mathcal{H}_{k}$-measurability of the mapping
    $\omega \mapsto \dom \mathbf{x}(\omega) \cap 
    \left( \left\{k \right\} \times \Re \right)$ and
    \cite[Theorem 14.3(i)]{RockafellarWets98}.

To show that $\Omkj
\in \mathcal{H}_{k}$ for all $(k,j) \in \ZZ_{\geq 0} 
\times \ZZ_{\geq 0}$, note that
the mapping
\begin{align}
  &  \omega \mapsto  \dom \mathbf{x}(\omega)
    \cap \left( \left\{0,\ldots,k \right\} \times
    \left\{ j+1 \right\} \right) \nonumber \\
 & \qquad \qquad = \bigcup_{i \in \left\{0,\ldots,k \right\}} 
 \dom \mathbf{x}(\omega)
    \cap \left( \left\{ i \right\} \times
    \left\{ j+1 \right\} \right)
    \label{eq:last-mapping}
\end{align}
is $\mathcal{H}_{k}$-measurable since it is
the finite union of $\mathcal{H}_{k}$-measurable
mappings \cite[Proposition 14.11(b)]{RockafellarWets98}.
Then note that
\begin{align*}
\Omkj
= \left\{ \omega \in \Omega :
\dom \mathbf{x}(\omega) \cap \left( \left\{0,\ldots,k \right\} \times \left\{j+1 \right\} \right) \cap \Re^{2} \neq \emptyset
\right\}
\end{align*}
the latter set belonging to $\mathcal{H}_{k}$ due
to the $\mathcal{H}_{k}$-measurability of the mapping
in (\ref{eq:last-mapping}).
\end{proof}

\vspace{.1in}

Example \ref{example:hsd1} can be modified
readily to illustrate
the case where $\mathcal{H}_{k,j}$ is
independent of $j$, as shown below.

\begin{example}
{\rm
Consider the collection of hybrid sequences
generated in Example
\ref{example:hsd1} but where
the random variables $\left\{ \mathbf{z}_{j} \right\}_{j=1}^{\infty}$ are replaced by
constant functions $\mathbf{z}_{j}(\omega)=1$
for all $j \in \ZZ_{\geq 1}$ and all $\omega \in \Omega$.
In this case, $\mathcal{H}_{k,j} = \mathcal{F}_{k}$
for all $(k,j) \in \ZZ_{\geq 0} \times \ZZ_{\geq 0}$,
where $\left\{ \mathcal{F}_{k} \right\}$ is the minimal
filtration of $\left\{ \mathbf{y}_{k} \right\}_{k=1}^{\infty}$ with $\mathcal{F}_{0}: = \left\{ \emptyset, \Omega \right\}$.
Also, for
each $\omega \in \Omega$,
 $\mathbf{x}(\omega)$ is generated
as follows:
\begin{enumerate}
\item
Let $\mathbf{x}(0,0)=1$.
\item
For $(k,j) \in \dom \mathbf{x}(\omega)$, 
\begin{enumerate}
\item if $\mathbf{x}(k,j)=1$
then $(k+1,j) \in \dom \mathbf{x}(\omega)$ and $\mathbf{x}(k+1,j)=\mathbf{y}_{k+1}$; else,
\item
if $\mathbf{x}(k,j)=0$ then $(k,j+1) \in \dom \mathbf{x}(\omega)$ and $\mathbf{x}(k,j+1)=1$.
\end{enumerate}
\end{enumerate}
In this case, the formulas in Example
\ref{example:hsd1} simplify to
\begin{subequations}
\label{eq:formulas-bis}
\begin{align}
    k & =\sum_{i=0}^{k-1} \mathbf{y}_{i}(\omega) +
    \max \left\{0,j-1 \right\} \\
    j & =
     \sum_{i=0}^{k-1} (1-\mathbf{y}_{i}(\omega)) 
\end{align}
\end{subequations}
for $(k,j) \in \dom \mathbf{x}(\omega)$,
\begin{subequations}
\begin{align}
    \overline{k}_{j}(\omega) & =
    \inf \left\{ k \in \ZZ_{\geq 0} : \sum_{i=0}^{k-1} \mathbf{y}_{i}(\omega) + j = k \right\} \\
    \overline{\jmath}_{k}(\omega) & =
     \sum_{i=0}^{k} (1-\mathbf{y}_{i}(\omega)) 
\end{align}
\end{subequations}
and
\begin{subequations}
    \begin{align}
    \Omkj
        & =
        \left\{ \omega \in \Omega : 
        \sum_{i=0}^{k-1} \mathbf{y}_{i}(\omega) + j \leq k \right\} \\
        \Omjk & =
        \left\{ \omega \in \Omega : 
        \sum_{i=0}^{k} (1-\mathbf{y}_{i}(\omega)) \leq j \right\} .
    \end{align}
\end{subequations}
All of the required measurability properties for adaptedness then hold
by inspection or by appealing to the
observations in Example \ref{example:hsd1}.
\null \hfill \null $\blacksquare$}
\end{example}

\subsection{Constructing two
$1$-parameter filtrations used
in conditional expectations}

The next lemma supposes the first property
in the definition of adaptedness and,
from that property, constructs two $1$-parameter
filtrations that are used subsequently
in conditional expectations. The filtrations
are similar to that constructed in
\cite[p. 191]{Doob94} via ``optional times''.

\begin{lemma}
Suppose that $\Omjk
\in \mathcal{H}_{k,j}$ (respectively,
$\Omkj
\in \mathcal{H}_{k,j}$)
for each $(k,j) \in \ZZ_{\geq 0} \times \ZZ_{\geq 0}$. Define $\mathcal{H}_{k,\overline{\jmath}_{k}}$
(respectively, $\mathcal{H}_{\overline{k}_{j},j}$) to be the collection of events
$H \in \mathcal{F}$ such that
$H  \cap \Omjk
\in \mathcal{H}_{k,j}$
for all 
$j \in \ZZ_{\geq 0}$
(respectively, 
$H  \cap 
\Omkj
\in \mathcal{H}_{k,j}$
for all $k \in \ZZ_{\geq 0}$).
Then the sequence
$\left\{ \mathcal{H}_{k,\overline{\jmath}_{k}} \right\}_{k=0}^{\infty}$ (respectively,
$\left\{ \mathcal{H}_{\overline{k}_{j},j} \right\}_{j=0}^{\infty}$)
is a filtration and $\overline{\jmath}_{k}$ is $\mathcal{H}_{k,\overline{j}_{k}}$-measurable for each $k \in \ZZ_{\geq 0}$
(respectively, $\overline{k}_{j}$ is $\mathcal{H}_{\overline{k}_{j},j}$-measurable for each $j \in \ZZ_{\geq 0}$.)
\end{lemma}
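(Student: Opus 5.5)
The argument mimics the classical construction of the $\sigma$-field of events prior to an optional time, in \cite[p. 191]{Doob94}. I treat the statement for $\overline{\jmath}_k$; the one for $\overline{k}_j$ follows by exchanging the roles of $k$ and $j$, using the other half of \eqref{eq:filtration}.

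\emph{Step 1: $\mathcal{H}_{k,\overline{\jmath}_k}$ is a $\sigma$-field.}
\begin{itemize}
\item It contains $\Omega$, because $\Omega\cap\Omjk=\Omjk\in\mathcal{H}_{k,j}$ by hypothesis. It also contains $\emptyset$.
\item It is closed under countable unions: intersection with $\Omjk$ distributes over unions.
\item It is closed under complements. If $H$ is in the collection, then $H^c\cap\Omjk=\Omjk\setminus(H\cap\Omjk)$, which is a difference of two elements of $\mathcal{H}_{k,j}$.
\end{itemize}

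\emph{Step 2: the sequence is nondecreasing, $\mathcal{H}_{k,\overline{\jmath}_k}\subset\mathcal{H}_{k+1,\overline{\jmath}_{k+1}}$.}
The structural fact needed is the monotonicity $\overline{\jmath}_k\le\overline{\jmath}_{k+1}$, which holds pathwise from the structure of hybrid sequence domains. If $(k+2,j)\in\dom\mathbf{x}(\omega)$, then $(k+1,j)$ is in the domain, and the domain at level $k+1$ starts at a $j'\ge \overline{\jmath}_k$. Consequently
$$\Omega_{\overline{\jmath}_{k+1}\le j}=\Omega_{\overline{\jmath}_{k+1}\le j}\cap\Omjk .$$
Now take $H\in\mathcal{H}_{k,\overline{\jmath}_k}$. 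Then
$$H\cap\Omega_{\overline{\jmath}_{k+1}\le j}=(H\cap\Omjk)\cap\Omega_{\overline{\jmath}_{k+1}\le j}.$$
The first factor lies in $\mathcal{H}_{k,j}\subset\mathcal{H}_{k+1,j}$ by \eqref{eq:filtration}. The second lies in $\mathcal{H}_{k+1,j}$ by hypothesis. Hence $H\in\mathcal{H}_{k+1,\overline{\jmath}_{k+1}}$.

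\emph{Step 3: measurability of $\overline{\jmath}_k$.}
Since $\overline{\jmath}_k$ takes values in $\ZZ_{\ge0}\cup\{\infty\}$, it suffices to show that $\{\overline{\jmath}_k\le i\}=\Omega_{\overline{\jmath}_k\le i}$ belongs to $\mathcal{H}_{k,\overline{\jmath}_k}$ for each $i$. For each $j$,
$$\Omega_{\overline{\jmath}_k\le i}\cap\Omjk=\Omega_{\overline{\jmath}_k\le\min(i,j)}.$$
This set lies in $\mathcal{H}_{k,\min(i,j)}$, which is contained in $\mathcal{H}_{k,j}$ by iterating \eqref{eq:filtration} in the $j$ direction. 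Thus $\Omega_{\overline{\jmath}_k\le i}\in\mathcal{H}_{k,\overline{\jmath}_k}$. The event $\{\overline{\jmath}_k=\infty\}$ is the complement of a countable union of such sets, so it is handled as well.

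The only point requiring care is Step 2, specifically the pathwise monotonicity of $\overline{\jmath}_k$ in $k$. I would verify it directly from the definition of a hybrid sequence domain: the $j$-indices attained at successive $k$ form nondecreasing consecutive blocks. The remaining steps are routine $\sigma$-field manipulations.
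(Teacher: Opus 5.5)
Your proposal is correct and follows the paper's proof essentially step by step. It makes the same $\sigma$-field check, obtains the filtration property from $\overline{\jmath}_k\le\overline{\jmath}_{k+1}$ together with $\mathcal{H}_{k,j}\subset\mathcal{H}_{k+1,j}$ (your identity $H\cap\Omega_{\overline{\jmath}_{k+1}\le j}=(H\cap\Omjk)\cap\Omega_{\overline{\jmath}_{k+1}\le j}$ spells out what the paper leaves implicit), and gets measurability from $\Omega_{\overline{\jmath}_k\le i}\cap\Omjk=\Omega_{\overline{\jmath}_k\le\min\{i,j\}}$.

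There is one small slip in Step 2. $(k+2,j)\in\dom\mathbf{x}(\omega)$ does not force $(k+1,j)\in\dom\mathbf{x}(\omega)$, because the $j$-th block may begin at $k+2$. It does give $(k+1,j')\in\dom\mathbf{x}(\omega)$ for some $j'\le j$, and that is all the inclusion $\Omega_{\overline{\jmath}_{k+1}\le j}\subset\Omjk$ requires.
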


{\bf Proof.} By symmetry, it is sufficient to provide the proof for the 
mapping $\overline{\jmath}_{k}$ and the sequence $\left\{  \mathcal{H}_{k,\overline{\jmath}_{k}} \right\}_{k=0}^{\infty}$.
The ideas used here are standard; see, for example, \cite[p. 191-192]{Doob94}.
It is evident that, for each $k \in \ZZ_{\geq 0}$, $\mathcal{H}_{k,\overline{\jmath}_{k}}$ includes the countable union
of its members, and it is closed under complements since $H \in \mathcal{H}_{k,\overline{\jmath}_{k}}$ implies
that 
\begin{align}
H^{c} \cap 
\Omjk
= \Omjk 
\cap \left( \vphantom{\frac{}{}} H \cap 
\Omjk \right)^{c}
\in \mathcal{H}_{k,\overline{\jmath}_{k}} .
\end{align}
Since $\mathcal{H}_{k,j} \subset \mathcal{H}_{k+1,j}$ for all $(k,j) \in \ZZ_{\geq 0} \times \ZZ_{\geq 0}$
and and
$\overline{\jmath}_{k} \leq \overline{\jmath}_{k+1}$ for all
$k \in \ZZ_{\geq 0}$, it follows that
$\left\{  \mathcal{H}_{k,\overline{\jmath}_{k}} \right\}_{k=0}^{\infty}$ is a filtration. Finally,
$\overline{\jmath}_{k}$ is $\mathcal{H}_{k,\overline{\jmath}_{k}}$-measurable for each $k \in \ZZ_{\geq 0}$ since,
for each $(k,j,i) \in \ZZ_{\geq 0} \times \ZZ_{\geq 0} \times \ZZ_{\geq 0}$,
\begin{align}
\Omega_{\overline{\jmath}_{k} \leq i} \cap
\Omjk
= \Omega_{\overline{\jmath}_{k} \leq \min \left\{ i , j \right\}}
\in \mathcal{H}_{k,j}  
\end{align}
where the last relationship has used that if $\ell \leq j$ then $\mathcal{H}_{k,\ell} \subset \mathcal{H}_{k,j}$.
\null \hfill \null $\blacksquare$

\vspace{.1in}

The two $1$-parameter filtrations defined
in the previous lemma are used in
the next two propositions.

\begin{proposition}
If $\mathbf{x}$ is adapted then:
\begin{itemize}
\item
for each $k \in \ZZ_{\geq 0}$, the mapping 
\begin{align}
    M_{k}(\omega) := \begin{cases}
         \mathbf{x}(k,\overline{\jmath}_{k}) & \mbox{\rm if} \
         (k,\overline{\jmath}_{k}) \in \dom \mathbf{x}(\omega) \\
         \emptyset & \mbox{\rm otherwise}
    \end{cases}
\end{align}
is
$\mathcal{H}_{k,\overline{\jmath}_{k}}$-measurable;
\item
for each $j \in \ZZ_{\geq 0}$, the mapping 

\begin{align}
    N_{j}(\omega) := \begin{cases}
         \mathbf{x}(\overline{k}_{j},j) & \mbox{\rm if} \
         (\overline{k}_{j},j) \in \dom \mathbf{x}(\omega) \\
         \emptyset & \mbox{\rm otherwise}
    \end{cases}
\end{align}
is
$\mathcal{H}_{\overline{k}_{j},j}$-measurable.
\end{itemize}
\end{proposition}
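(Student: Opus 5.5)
By the symmetry already exploited in the preceding lemma, I will treat only $M_k$; the argument for $N_j$ is identical after exchanging the roles of $k$ and $j$ and using $\Omkj$ in place of $\Omjk$. The plan is to use the definition of $\mathcal{H}_{k,\overline{\jmath}_k}$ directly. For set-valued mappings, measurability with respect to a $\sigma$-field means that $M_k^{-1}(O):=\{\omega : M_k(\omega)\cap O\neq\emptyset\}$ belongs to that $\sigma$-field for every open $O\subset\reals^n$. So I fix an open $O$ and aim to show that
\[
H:=M_k^{-1}(O)
\]
satisfies $H\cap\Omjk\in\mathcal{H}_{k,j}$ for every $j\in\ZZ_{\geq 0}$.

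The key step is a decomposition according to the value of $\overline{\jmath}_k$. On $\Omjk$ one has $\overline{\jmath}_k=i$ for exactly one $i\in\{0,\dots,j\}$. On $\Omjkei$ the point $(k,i)$ belongs to $\dom\mathbf{x}(\omega)$, by the remark after \eqref{eq:the-overline-functions} that $(k,\overline{\jmath}_k)\in\dom$ iff $(k+1,\overline{\jmath}_k)\in\dom$. Hence $M_k(\omega)=\mathbf{x}(\omega)(k,i)$ there. Writing $X_{k,i}$ for the mapping \eqref{eq:old64-new63} at $(k,i)$, this gives
\[
H\cap\Omjk=\bigcup_{i=0}^{j}\left(\Omjkei\cap X_{k,i}^{-1}(O)\right).
\]
On $\{\overline{\jmath}_k=\infty\}$ the mapping $M_k$ is empty, so that set contributes nothing to $H$; this is consistent, since it is disjoint from $\Omjk$ anyway.

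Each term on the right lies in $\mathcal{H}_{k,j}$. First, $X_{k,i}^{-1}(O)\in\mathcal{H}_{k,i}\subset\mathcal{H}_{k,j}$, by the second adaptedness condition and the monotonicity \eqref{eq:filtration}. Second, $\Omjkei=\Omega_{\overline{\jmath}_k\leq i}\cap(\Omega_{\overline{\jmath}_k\leq i-1})^c$ belongs to $\mathcal{H}_{k,i}\subset\mathcal{H}_{k,j}$, as noted after the definition of adaptedness. A finite union of such sets stays in $\mathcal{H}_{k,j}$, so $H\in\mathcal{H}_{k,\overline{\jmath}_k}$.

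For $N_j$, the corresponding decomposition is over $\overline{k}_j=l\le k$, and it uses $\mathcal{H}_{l,j}\subset\mathcal{H}_{k,j}$ for $l\le k$.

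The main obstacle is bookkeeping rather than any real difficulty. I need to confirm that $\mathcal{H}_{k,\overline{\jmath}_k}$ is being tested only on the sets $\Omjk$, so that no condition is imposed on $\{\overline{\jmath}_k=\infty\}$. I also need to handle the empty-valued convention correctly: the inverse image of an open set under $M_k$ excludes those $\omega$ for which $M_k(\omega)=\emptyset$. If one prefers closed-set or graph-based measurability, I would invoke the equivalences in \cite[Theorem 14.3]{RockafellarWets98} to justify working with open sets.
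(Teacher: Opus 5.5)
Your proposal is correct and follows essentially the paper's own argument. The paper also fixes an open set $\mathcal{O}$, writes $\{\omega:M_k(\omega)\cap\mathcal{O}\neq\emptyset\}\cap\Omjk$ as the finite union over $i\in\{0,\dots,j\}$ of $\{\omega:\mathbf{x}(k,i)\cap\mathcal{O}\neq\emptyset\}\cap\Omjkei$, notes that each term lies in $\mathcal{H}_{k,j}$ by adaptedness and monotonicity of the filtration, and handles $N_j$ by symmetry; your added checks (that $(k,i)\in\dom\mathbf{x}(\omega)$ on $\Omjkei$, and that $\{\overline{\jmath}_k=\infty\}$ contributes nothing) only make explicit what the paper leaves implicit.
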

\begin{proof}
The result follows like
in \cite[p.192]{Doob94}.
Indeed, for the first item, from the adaptedness property,
for each open set $\mathcal{O} \subset \Re^{n}$,
\begin{align*}
& \left\{ \omega \in \Omega : \mathbf{x}(k,\overline{\jmath}_{k}) \cap \mathcal{O} \neq \emptyset \right\}
\cap \Omjk  
\\
& \qquad \qquad  =  \bigcup_{i \in \left\{0,\ldots,j \right\}} \left( \vphantom{\frac{}{}} \left\{ \omega \in \Omega : \mathbf{x}(k,i) \cap \mathcal{O}  \neq \emptyset \right\}
 \cap 
 \Omjkei
 \right) \in \mathcal{H}_{k,j} 
 \end{align*}
 where $\mathbf{x}(k,i) \cap \mathcal{O} \neq \emptyset$
 if and only if $(k,i) \in \dom \mathbf{x}(\omega)$
 and $\mathbf{x}(k,i) \in \mathcal{O}$.
The second item of the proposition follows by symmetry.
\end{proof}

\begin{proposition}
If $\mathbf{x}$ is adapted then
 \begin{itemize}
     \item for each $k \in \ZZ_{\geq 0}$,
     \begin{align}
      \label{eq:Omega_k+}
     \widetilde{\Omega}_{k}:=\left\{ \omega \in \Omega: (k+1,\overline{\jmath}_{k})
     \in \dom \mathbf{x}(\omega) \right\} \in \mathcal{H}_{k,\overline{\jmath}_{k}};
     \end{align}
     \item for each $j \in \ZZ_{\geq 0}$,
     \begin{align}
     \label{eq:Omega_j+}
     \widehat{\Omega}_{j}:=\left\{ \omega \in \Omega: (\overline{k}_{j},j+1)
     \in \dom \mathbf{x}(\omega) \right\} \in \mathcal{H}_{\overline{k}_{j},j}.
     \end{align}
 \end{itemize}
 \end{proposition}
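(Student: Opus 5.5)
The plan is to prove the first item directly from the definition of $\mathcal{H}_{k,\overline{\jmath}_{k}}$; the second item then follows by the symmetric argument, exchanging the roles of $k$ and $j$ and of $\overline{\jmath}_k$ and $\overline{k}_j$. By that definition, it suffices to show that for every $j\in\ZZ_{\geq 0}$,
$$\widetilde{\Omega}_k\cap\Omjk\in\mathcal{H}_{k,j}.$$
The key observation is that, by the facts recalled after \eqref{eq:the-overline-functions}, $(k,\overline{\jmath}_k)\in\dom\mathbf{x}(\omega)$ if and only if $(k+1,\overline{\jmath}_k)\in\dom\mathbf{x}(\omega)$. Moreover, $(k+1,\overline{\jmath}_k)\in\dom\mathbf{x}(\omega)$ forces $\overline{\jmath}_k(\omega)<\infty$.

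I would decompose over the value of $\overline{\jmath}_k$:
$$\widetilde{\Omega}_k\cap\Omjk=\bigcup_{i=0}^{j}\left(\Omjkei\cap\left\{\omega\in\Omega:(k+1,i)\in\dom\mathbf{x}(\omega)\right\}\right).$$
I then claim that on $\Omjkei$ the set $\{(k+1,i)\in\dom\mathbf{x}(\omega)\}$ coincides with $\{(k,i)\in\dom\mathbf{x}(\omega)\}$. Indeed, $\overline{\jmath}_k=i$ means exactly that $(k+1,i)$ is the first point of the domain in column $k+1$. Given that $\overline{\jmath}_k=i<\infty$, this already places $(k+1,i)$ in the domain, and by the equivalence above $(k,i)$ is in the domain as well. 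So on $\Omjkei$ both events are the whole set, and
$$\widetilde{\Omega}_k\cap\Omega_{\overline{\jmath}_k=i}=\Omega_{\overline{\jmath}_k=i}.$$
Hence $\widetilde{\Omega}_k\cap\Omjk=\Omjk$, which lies in $\mathcal{H}_{k,j}$ by adaptedness.

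If one prefers not to rely on that identification, a fallback works as well. The event $\{\omega\in\Omega:(k,i)\in\dom\mathbf{x}(\omega)\}$ is $\mathcal{H}_{k,i}\subset\mathcal{H}_{k,j}$-measurable, being the set where the $\mathcal{H}_{k,i}$-measurable mapping in \eqref{eq:old64-new63} is nonempty; this uses \cite[Theorem 14.3]{RockafellarWets98} with $\mathcal{O}=\reals^n$. Also $\Omjkei\in\mathcal{H}_{k,i}\subset\mathcal{H}_{k,j}$ by \eqref{eq:filtration}. The finite union is therefore in $\mathcal{H}_{k,j}$.

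The main, if mild, obstacle is the bookkeeping around $\overline{\jmath}_k$ being infinite and what the domain looks like there. When $\overline{\jmath}_k(\omega)=\infty$, the point $(k+1,\overline{\jmath}_k)$ is not a point of $\reals^2$, so $\omega\notin\widetilde{\Omega}_k$; this is consistent, because such $\omega$ lie outside every $\Omjk$. For the second item, the same computation gives $\widehat{\Omega}_j\cap\Omkj=\Omkj\in\mathcal{H}_{k,j}$ for all $k$, which is the required membership in $\mathcal{H}_{\overline{k}_j,j}$.
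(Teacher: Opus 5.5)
Your proof is correct and follows essentially the paper's argument. The paper notes directly that $\widetilde{\Omega}_k=\Omega_{\overline{\jmath}_k<\infty}$, since the infimum defining $\overline{\jmath}_k$ is attained when finite, so $\widetilde{\Omega}_k\cap\Omega_{\overline{\jmath}_k\leq j}=\Omega_{\overline{\jmath}_k\leq j}\in\mathcal{H}_{k,j}$, and the second item follows by symmetry; your decomposition over the values $i\leq j$ and the fallback are a longer route to the same identity and are not needed.
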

 \begin{proof}
 The claim that $\widetilde{\Omega}_{k} \in \mathcal{H}_{k,\overline{\jmath}_{k}}$ for 
 each $k \in \ZZ_{\geq 0}$
follows by noting from the definition of $\overline{\jmath}_{k}$ that
    $\widetilde{\Omega}_{k} = \Omjkinf$
and then
    $\widetilde{\Omega}_{k} \cap 
    \Omjk
     = \Omjk 
     \in \mathcal{H}_{k,j}$.
 The reasoning for the second item is identical.
\end{proof}

 \vspace{.1in}

 Also, for future use, define
 \begin{align}
    \Omega_{k-\mbox{\rm \footnotesize complete}}:= \bigcap_{k \in \ZZ_{\geq 0}} \widetilde{\Omega}_{k},
    \quad \Omega_{j-\mbox{\rm \footnotesize complete}} : = \bigcap_{j \in \ZZ_{\geq 0}} \widehat{\Omega}_{j} .
\end{align}

\subsection{Conditional mean and variance-like conditions}

Let the mapping $\mathbf{x}:\Omega \rightarrow \mathcal{X}$ be a stochastic asymptotic simulation
candidate and
let the sequence of step sizes $\left\{ h_{k} \right\}_{k=1}^{\infty}$ be admissible.
For a set $\widecheck{\Omega} \subset \Omega$,
define
\begin{align}
\label{eq:indicator-deffy}
    \mathbf{1}_{\widecheck{\Omega}}(\omega) := 
    \begin{cases} 1 & \omega \in \widecheck{\Omega} \\
    0 & \omega \notin \widecheck{\Omega} .
\end{cases}
\end{align}
Observe that, for each $k \in \ZZ_{\geq 0}$, $\mathbf{1}_{\widetilde{\Omega}_{k}}$ is
$\mathcal{H}_{k,\overline{j}_{k}}$ measurable and, for each
$j \in \ZZ_{\geq 0}$,
$\mathbf{1}_{\widehat{\Omega}_{j}}$ is $\mathcal{H}_{\overline{k}_{j},j}$-measurable.
Next, for each $(k,j) \in \ZZ_{\geq 0} \times \ZZ_{\geq 0}$,
define
\begin{subequations}
\begin{align}
\label{eq:define-widehat-mathbf-f}
    \widehat{\mathbf{f}}_{k+1} & := 
   \displaystyle 
   \mathbf{1}_{\widetilde{\Omega}_{k}} \cdot \left( \frac{\mathbf{x}(k+1,\overline{\jmath}_{k})-
    \mathbf{x}(k,\overline{\jmath}_{k})}{h_{k+1}} \right)
      \\
   \widehat{\mathbf{g}}_{j+1} & : = 
   \mathbf{1}_{\widehat{\Omega}_{j}} \cdot \mathbf{x}(\overline{k}_{j},j+1)  .
   \label{eq:bf-g-j+1-hat-def}
\end{align}
\end{subequations}
For each $k \in \ZZ_{\geq 0}$, 
$\widehat{\mathbf{f}}_{k+1}$ is $\mathcal{H}_{k+1,\overline{\jmath}_{k}}$-measurable,
and, for each $j \in \ZZ_{\geq 0}$, $\widehat{\mathbf{g}}_{j+1}$ is $\mathcal{H}_{\overline{k}_{j}j+1}$-measurable.

For almost every $\omega \in \Omega$,
define
\begin{subequations}
\begin{align}
    \mathbf{f}_{k}(\omega) & : = \mathbb{E} \left[ \,
    \widehat{\mathbf{f}}_{k+1} \, | \, \mathcal{H}_{k,\overline{\jmath}_{k}} \, \right](\omega) 
      \label{eq:bf-f-j-def} \\
        \mathbf{g}_{j}(\omega) & : = \mathbb{E} \left[ \,
    \widehat{\mathbf{g}}_{j+1} \, | \, \mathcal{H}_{\overline{k}_{j},j} \, \right](\omega) . 
    \label{eq:bf-g-j-def}
\end{align}
\end{subequations}
The mapping $\mathbf{x}$ is said to be an 
{\em almost sure asymptotic simulation of 
(\ref{eq:hybrid}) using conditional means} if,
for each $i \in \ZZ_{\geq 0}$, the following properties hold:
\begin{enumerate}
\item
for almost all $\omega \in \Omega_{k-\mbox{\rm \footnotesize complete}}$:
\begin{align}
\label{eq:as-simulation-uCMs-flows}
& \limsup_{k \rightarrow \infty}
\left( \mathbf{x}(k,\overline{\jmath}_{k}) \cap i \Ball , \mathbf{f}_{k} \right)
 \subset \mbox{\rm graph}(F) \cap (C \times \Re^{n});
 \end{align}
 \item
 for almost all $\omega \in \Omega_{j-\mbox{\rm \footnotesize complete}}$:
 \begin{align}
 \label{eq:as-simulation-uCMs-jumps}
\limsup_{j \rightarrow \infty}
\left( \mathbf{x}(\overline{k}_{j},j) \cap i \Ball ,
\mathbf{g}_{j} \right)
 \subset \mbox{\rm graph}(G) \cap (D \times \Re^{n}) .
\end{align}
\end{enumerate}

In the context of the definition
of an asymptotic
simulation of (\ref{eq:hybrid}),
for a fixed sample path $\mathbf{x}(\omega)$,
consider taking
\begin{subequations}
\begin{align}
   & f_{k} = \mathbf{f}_{k}(\omega), \quad \widehat{f}_{k+1} =
    \widehat{\mathbf{f}}_{k+1}(\omega)  \\
    &    g_{j} = \mathbf{g}_{j}(\omega), \quad \widehat{g}_{j+1} =
    \widehat{\mathbf{g}}_{j+1}(\omega) .
\end{align}
\end{subequations}
In this case, conditions on the two
quantities
\begin{subequations}
\begin{align}
 \mathbf{v}_{k+1} & : = \widehat{\mathbf{f}}_{k+1}
  - \mathbf{f}_{k} \\
\mathbf{w}_{j+1} & : = \widehat{\mathbf{g}}_{j+1}
  - \mathbf{g}_{j}
\end{align}
\end{subequations}
and the step sizes
can be given to guarantee that, almost 
every bounded, complete sample path is
an asymptotic simulation of (\ref{eq:hybrid}).
Such conditions are contained in the following
assumptions.

\begin{assumption}
\label{assume:relate-data-stochastic-appendix-flows}
At least one of the following conditions holds:
\begin{enumerate}
    \item There exists $p \in [1,\infty)$ such that
$\sum_{k=0}^{\infty} h_{k+1}^{1+p} < \infty$ and
there exists a continuous, nondecreasing function
$\gamma:\Re_{\geq 0} \rightarrow \Re_{\geq 0}$
such that, almost surely, for all $k \in \ZZ_{\geq 0}$,
\begin{align}
\label{eq:variance-appendix}
  \mathbb{E} \left[ \,   |\mathbf{v}_{k+1}|^{2p} \, | \, \mathcal{H}_{k,\overline{\jmath}_{k}} \,
  \right] \leq \gamma \left(
    |\mathbf{x}(k,\overline{\jmath}_{k})| \right) .
\end{align}
\item
\label{assume:relate-data-stochastic-appendix-2}
    For each $c>0$, $\sum_{k=0}^{\infty} \exp(-c/h_{k+1}) < \infty$
    and there exists a continuous, nondecreasing function
    $\gamma:\Re_{\geq 0} \rightarrow \Re_{\geq 0}$ such that,
    almost surely,
    for all $\theta \in \Re^{n}$ and all $k \in \ZZ_{\geq 0}$,
    \begin{align}
    \label{eq:subgaussian}
    \mathbb{E} \left[ \exp \left( \langle \theta, \mathbf{v}_{k+1} \rangle \right) \, | \, \mathcal{H}_{k,\overline{\jmath}_{k}} \, \right] \leq \exp \left( 
    \vphantom{\frac{}{}}\gamma(|\mathbf{x}(k,\overline{\jmath}_{k})|) |\theta|^{2} \right) .
\end{align}
\end{enumerate}
\end{assumption}

Both conditions in Assumption \ref{assume:relate-data-stochastic-appendix-flows}
are inspired by similar conditions
for differential inclusions
given in \cite[Proposition 1.4]{Benaimetal2005}.
A special case where the second
(sub-Gaussian)
condition holds, or the first condition holds for every $p \in [1,\infty)$, is when
there exists a continuous, nondecreasing function
$\gamma:\Re_{\geq 0} \rightarrow \Re_{\geq 0}$ such that, almost surely, for all $k \in \ZZ_{\geq 0}$, $|\mathbf{v}_{k+1}| \leq \gamma(|\mathbf{x}(k,\overline{\jmath}_{k})|)$. 

\begin{proposition}
\label{prop:appendix-flows}
If Assumption \ref{assume:relate-data-stochastic-appendix-flows}
holds
then,
for each $T>0$ and almost
all $\omega \in \Omega$ such that $\mathbf{x}(\omega)$ is bounded:
   \begin{align}
    \label{eq:Benaim-bound-appendix-flows}
                \lim_{k \rightarrow \infty}
        \sup_{k+1 \leq n \leq m(\tau_{k}+T)}
        \left| \sum_{i=k}^{n-1} h_{i+1} (\widehat{\mathbf{f}}_{i+1}(\omega)
        -\mathbf{f}_{i}(\omega )) \right| = 0 .
    \end{align}
\end{proposition}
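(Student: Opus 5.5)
The plan follows the proof of \cite[Proposition 1.4]{Benaimetal2005}. The idea is to treat $h_{i+1}\mathbf{v}_{i+1}$ as martingale differences with respect to the filtration $\{\mathcal{H}_{k,\overline{\jmath}_{k}}\}_{k=0}^\infty$, localize on bounded sample paths, and then apply a maximal inequality block by block together with Borel--Cantelli.

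\emph{Step 1: filtration and martingale differences.} Since $\overline{\jmath}_k\leq\overline{\jmath}_{k+1}$ and $\mathcal{H}_{k+1,j}\subset\mathcal{H}_{k+1,j'}$ for $j\leq j'$, the variable $\widehat{\mathbf{f}}_{k+1}$, which is $\mathcal{H}_{k+1,\overline{\jmath}_k}$-measurable, should also be $\mathcal{H}_{k+1,\overline{\jmath}_{k+1}}$-measurable. I would check this with the same optional-time argument as in the earlier lemma on $\mathcal{H}_{k,\overline{\jmath}_k}$. Granted this, $\mathbf{v}_{k+1}=\widehat{\mathbf{f}}_{k+1}-\mathbb{E}[\widehat{\mathbf{f}}_{k+1}|\mathcal{H}_{k,\overline{\jmath}_k}]$ is a martingale difference for this filtration. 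I expect this measurability bookkeeping, together with the localization below, to be the main technical obstacle; the probabilistic estimates themselves are standard.

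\emph{Step 2: localization.} Fix $R\in\nats$ and set $\mathbf{v}^R_{k+1}:=\mathbf{1}_{\{|\mathbf{x}(k,\overline{\jmath}_k)|\leq R\}}\mathbf{v}_{k+1}$. The indicator is $\mathcal{H}_{k,\overline{\jmath}_k}$-measurable by the proposition on $M_k$, so $\mathbf{v}^R_{k+1}$ is still a martingale difference. Under Assumption \ref{assume:relate-data-stochastic-appendix-flows} it satisfies uniform bounds: in case 1, $\mathbb{E}[|\mathbf{v}^R_{k+1}|^{2p}|\mathcal{H}_{k,\overline{\jmath}_k}]\leq\gamma(R)$; in case 2, a sub-Gaussian bound with constant $\gamma(R)$. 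On $\{\omega:\mathbf{x}(\omega)\subset R\ball\}$ we have $\mathbf{v}^R=\mathbf{v}$. Taking the union over $R\in\nats$ and over rational $T$ (enough, since the property for one $T$ implies it for all $T$, as shown after Theorem \ref{theorem: for-stochastic-simulators}) reduces the claim to proving \eqref{eq:Benaim-bound-appendix-flows} almost surely for $\mathbf{v}^R$.

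\emph{Step 3: block estimates and Borel--Cantelli.} Let $k_l:=m(lT)$ and define
\[
\Delta_l:=\max_{k_l<n\leq k_{l+1}}\Bigl|\sum_{i=k_l}^{n-1}h_{i+1}\mathbf{v}^R_{i+1}\Bigr|.
\]
Because $h_k\to0$, every window $\{k,\dots,m(\tau_k+T)\}$ eventually lies within at most two consecutive blocks. Hence the supremum in \eqref{eq:Benaim-bound-appendix-flows} is at most $2(\Delta_l+\Delta_{l+1})$ for the appropriate $l$, and it suffices to show $\Delta_l\to0$ almost surely.

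In case 1, Burkholder's inequality and then H\"older's inequality (using $\sum_{\rm block}h_{i+1}\leq T+1$) give
\[
\mathbb{E}\Delta_l^{2p}\leq C\,\mathbb{E}\Bigl(\sum_{\rm block}h_{i+1}^2|\mathbf{v}^R_{i+1}|^2\Bigr)^{p}\leq C(T+1)^{p-1}\gamma(R)\sum_{\rm block}h_{i+1}^{1+p}.
\]
Markov's inequality gives $\mathbb{P}(\Delta_l>\delta)\leq\delta^{-2p}C'\sum_{\rm block}h_{i+1}^{1+p}$. This is summable in $l$ because $\sum h^{1+p}<\infty$, so Borel--Cantelli yields $\Delta_l\to0$ almost surely.

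In case 2, the exponential supermartingale $\exp(\langle\theta,\cdot\rangle-\gamma(R)|\theta|^2\sum h^2)$ and Doob's inequality, applied coordinatewise with optimized $\theta$, give
\[
\mathbb{P}(\Delta_l>\delta)\leq 2n\exp\Bigl(-\delta^2/\bigl(4n\gamma(R)\textstyle\sum_{\rm block}h_{i+1}^2\bigr)\Bigr)\leq 2n\exp(-c/\bar h_l),
\]
where $\sum_{\rm block}h^2\leq(T+1)\bar h_l$ and $\bar h_l$ is the maximal step size in block $l$. Summability in $l$ then follows from $\sum_k\exp(-c/h_{k+1})<\infty$, as in \cite{Benaimetal2005}, so $\Delta_l\to0$ almost surely in this case as well.
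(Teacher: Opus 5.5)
Your proposal is correct and follows essentially the paper's route: you localize so that the conditional moment or sub-Gaussian bound holds uniformly with constant $\gamma(R)$, and then run the argument of \cite[Proposition 1.4]{Benaimetal2005}, which the paper cites rather than reproves (for case 1 it defers to \cite[Lemma 8.3]{TeelSanfeliceGoebel25ARC}). The differences are cosmetic. You truncate with the pointwise indicator $\mathbf{1}_{\{|\mathbf{x}(k,\overline{\jmath}_k)|\leq R\}}$ rather than the stopping-time indicator $\mathbf{1}(k<\mathbf{s}_i)$, and both are $\mathcal{H}_{k,\overline{\jmath}_k}$-measurable and agree with the untruncated increments on paths bounded by $R$; your check that $\widehat{\mathbf{f}}_{k+1}$ is $\mathcal{H}_{k+1,\overline{\jmath}_{k+1}}$-measurable makes explicit a point the paper uses only implicitly.
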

\begin{proof}
The proof for the case where the first item
of Assumption \ref{assume:relate-data-stochastic-appendix-flows} holds
is identical to the proof 
of \cite[Lemma 8.3]{TeelSanfeliceGoebel25ARC} (see also
\cite[Lemma 4]{TeelNOLCOS25}), which
relies heavily on \cite[Proposition 1.4]{Benaimetal2005}),
and thus
is omitted.

The proof for the case where the second item
of Assumption \ref{assume:relate-data-stochastic-appendix-flows} holds also relies heavily on \cite[Proposition 1.4]{Benaimetal2005}) in a similar way. The details are as follows. For each $i \in \ZZ_{\geq 0}$,
let $\mathbf{s}_{i}:\Omega \rightarrow
\ZZ_{\geq 0}  \cup \left\{ \infty \right\}$ 
be defined as
\begin{align}
\label{eq:si-def}
\mathbf{s}_{i}: = \inf \left\{ k \in \ZZ_{\geq 0}
 : |\mathbf{x}(k,\overline{\jmath}_{k})| > i \right\}.
\end{align}
Then,
for each $k \in \ZZ_{\geq 0}$,
\begin{align}
    \left\{ \mathbf{s}_{i} \leq k
\right\}
    = \bigcup_{\ell=0}^{k} \left\{ 
    | \mathbf{x}(\ell,\overline{\jmath}_{\ell}) | > i \right\}
    \in \mathcal{H}_{k,\overline{\jmath}_{k}} .
\end{align}
A consequence is that,
for each $k \in \ZZ_{\geq 0}$, the indicator
 function
$$
    \mathbf{1}(k < \mathbf{s}_{i})   : = 
    \begin{cases}
        1 & k < \mathbf{s}_{i} \\
        0 & \mbox{\rm otherwise}
    \end{cases}
$$
is $\mathcal{H}_{k,\overline{\jmath}_{k}}$-measurable. 
For each $k \in \ZZ_{\geq 0}$, define
\begin{subequations}
\begin{align}
    \widehat{\mathbf{f}}_{k+1}^{\mathbf{s}_{i}} & : = 
    \mathbf{1}(k < \mathbf{s}_{i}) 
    \widehat{\mathbf{f}}_{k+1} \\
    \mathbf{f}_{k}^{\mathbf{s}_{i}} & : = \mathbb{E} \left[  \, \widehat{\mathbf{f}}_{k+1}^{\mathbf{s}_{i}} \, | \, \mathcal{H}_{k,\overline{\jmath}_{k}} \,  \right] 
    \\
    \mathbf{u}_{k+1}^{\mathbf{s}_{i}} & : = \widehat{\mathbf{f}}_{k+1}^{\mathbf{s}_{i}} - \mathbf{f}_{k}^{\mathbf{s}_{i}} = 
    \mathbf{1}(k < \mathbf{s}_{i}) \left[\widehat{\mathbf{f}}_{k+1} - \mathbf{f}_{k} \right].
    \end{align}
\end{subequations}
It follows that
\begin{align}
    \mathbb{E} \left[ \, \mathbf{u}_{k+1}^{\mathbf{s}_{i}} \, | \, \mathcal{H}_{k,\overline{\jmath}_{k}} \, \right] 
        & =  \mathbb{E} \left[  \, \widehat{\mathbf{f}}_{k+1}^{\mathbf{s}_{i}} \, | \, \mathcal{H}_{k,\overline{\jmath}_{k}} \,  \right] - \mathbf{f}_{k}^{\mathbf{s}_{i}} = 0 .
\end{align}
Next, note that, for each $v \in \Re$,
\begin{align}
    \exp \left( \vphantom{\frac{}{}}
    \mathbf{1}(k < \mathbf{s}_{i}) v \right)
 = 1 - \mathbf{1}(k < \mathbf{s}_{i})  + 
  \mathbf{1}(k < \mathbf{s}_{i}) \exp(v) .
\end{align}
Thus, using the definition of $\mathbf{s}_{i}$ in 
(\ref{eq:si-def}) and the assumed condition
(\ref{eq:subgaussian}),
    \begin{align}
  &      \mathbb{E} \left[ \, \exp \left( \langle \theta, \mathbf{u}_{k+1}^{\mathbf{s}_{i}} \rangle \right) \, | \, \mathcal{H}_{k,\overline{\jmath}_{k}} \, \right] \nonumber \\
  & \qquad \qquad       = 
         1 - \mathbf{1}(k < \mathbf{s}_{i})  + 
  \mathbf{1}(k < \mathbf{s}_{i}) \mathbb{E} \left[ \exp \left( \langle \theta , 
   \left[\widehat{\mathbf{f}}_{k+1} - \mathbf{f}_{k} \right] \rangle \right) \, | \, \mathcal{H}_{k,\overline{\jmath}_{k}} \, \right] \nonumber \\  
        & \qquad \qquad \leq 1 - \mathbf{1}(k < \mathbf{s}_{i}) + \mathbf{1}(k < \mathbf{s}_{i}) \exp \left( 
    \vphantom{\frac{}{}}\gamma(|\mathbf{x}(k,\overline{\jmath}_{k})|) |\theta|^{2} \right) \nonumber \\
    & \qquad\qquad = \exp \left( 
    \vphantom{\frac{}{}} \mathbf{1}(k < \mathbf{s}_{i}) \gamma(|\mathbf{x}(k,\overline{\jmath}_{k})|) |\theta|^{2} \right) \nonumber \\
        & \qquad \qquad \leq  \exp \left( 
    \vphantom{\frac{}{}} \gamma(i) |\theta|^{2} \right) .
        \label{eq:74b}
        \end{align}
Using
\cite[Proposition 1.4(ii)]{Benaimetal2005},
it follows that \eqref{eq:Benaim-bound-appendix-flows} holds 
almost surely with
$\widehat{\mathbf{f}}_{k+1}^{\mathbf{s}_{i}}$
and $\mathbf{f}_{k}^{\mathbf{s}_{i}}$ in
place of 
$\widehat{\mathbf{f}}_{k+1}$ and
$\mathbf{f}_{k}$. Now suppose $\omega \in \Omega$ is such
that $\mathbf{x}(\omega)$ is bounded.
Then there exists $i^{*} \in \ZZ_{\geq 0}$ such that
$\mathbf{s}_{i}(\omega) = \infty$ for all
$i \geq i^{*}$. Therefore,
\eqref{eq:Benaim-bound-appendix-flows} holds for almost all
$\omega \in \Omega$ such that $\mathbf{x}(\omega)$ is bounded.
\end{proof}

\vspace{.1in}

\begin{assumption}
\label{assume:relate-data-stochastic-appendix-jumps}
At least one of the following conditions holds:
\begin{enumerate}
    \item There exists $p \in [1,\infty)$,
a continuous, nondecreasing function
$\gamma:\Re_{\geq 0} \rightarrow \Re_{\geq 0}$, and a summable sequence
of positive real numbers $\left\{ \rho_{j} \right\}_{j=1}^{\infty}$
such that, almost surely, for all $j \in \ZZ_{\geq 0}$,
\begin{align}
\label{eq:variance-appendix-jumps}
    \mathbb{E} \left[ |\mathbf{w}_{j+1}|^{2p} \, | \, \mathcal{H}_{\overline{k}_{j},j} \right]  \leq \rho_{j+1} \gamma \left(
    |\mathbf{x}(\overline{k}_{j},j)| \right) .
\end{align}
\item
There exists 
a continuous, nondecreasing function
$\gamma:\Re_{\geq 0} \rightarrow \Re_{\geq 0}$ and a sequence of positive real numbers
$\left\{ \rho_{j} \right\}_{j=1}^{\infty}$ converging to zero 
such that, almost surely, for all $j \in \ZZ_{\geq 0}$, 
$|\widehat{\mathbf{g}}_{j+1} - \mathbf{g}_{j}| \leq \rho_{j+1} \gamma(|\mathbf{x}(\overline{k}_{j},j)|)$.
\end{enumerate}
\end{assumption}
\begin{proposition}
\label{prop:appendix-jumps}
If Assumption \ref{assume:relate-data-stochastic-appendix-jumps}
holds
then,
for  almost
all $\omega \in \Omega$ such that $\mathbf{x}(\omega)$ is bounded:
   \begin{align}
    \label{eq:Benaim-bound-appendix-jumps}
        \lim_{j \rightarrow \infty}
        \left|  \widehat{\mathbf{g}}_{j+1}(\omega)
        -\mathbf{g}_{j}(\omega ) \right| = 0 .
    \end{align}
\end{proposition}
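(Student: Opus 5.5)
The plan is to treat the two items of Assumption \ref{assume:relate-data-stochastic-appendix-jumps} separately, using the same localization by stopping times as in the proof of Proposition \ref{prop:appendix-flows}, now indexed by $j$ rather than $k$. For each $i\in\ZZ_{\geq 0}$ define $\mathbf{r}_i:=\inf\{j\in\ZZ_{\geq 0}: |\mathbf{x}(\overline{k}_j,j)|>i\}$. Then $\{\mathbf{r}_i\leq j\}\in\mathcal{H}_{\overline{k}_j,j}$ because $\{\mathcal{H}_{\overline{k}_j,j}\}_j$ is a filtration and $N_\ell$ is $\mathcal{H}_{\overline{k}_\ell,\ell}$-measurable. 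Consequently $\mathbf{1}(j<\mathbf{r}_i)$ is $\mathcal{H}_{\overline{k}_j,j}$-measurable, and the same holds for $\gamma(|\mathbf{x}(\overline{k}_j,j)|)\mathbf{1}(j<\mathbf{r}_i)\leq\gamma(i)$. If $\mathbf{x}(\omega)$ is bounded, then $\mathbf{r}_i(\omega)=\infty$ for all large $i$. It therefore suffices to prove \eqref{eq:Benaim-bound-appendix-jumps} almost surely for the stopped sequence $\mathbf{w}^{i}_{j+1}:=\mathbf{1}(j<\mathbf{r}_i)\mathbf{w}_{j+1}$ for each fixed $i$, and then take the countable intersection over $i$.

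Under the second item the conclusion is deterministic on each sample path. Almost surely $|\mathbf{w}_{j+1}|\leq\rho_{j+1}\gamma(|\mathbf{x}(\overline{k}_j,j)|)$. On a bounded path with bound $R$, monotonicity of $\gamma$ gives $|\mathbf{w}_{j+1}|\leq\rho_{j+1}\gamma(R)\to 0$, so no stopping argument is needed in this case.

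Under the first item, pulling the measurable indicator out of the conditional expectation gives
$$\mathbb{E}\left[|\mathbf{w}^i_{j+1}|^{2p}\, \middle|\, \mathcal{H}_{\overline{k}_j,j}\right]\leq\rho_{j+1}\gamma(i).$$
Taking expectations and summing over $j$ then gives $\sum_j\mathbb{E}|\mathbf{w}^i_{j+1}|^{2p}\leq\gamma(i)\sum_j\rho_{j+1}<\infty$. For any $\eta>0$, Markov's inequality yields $\sum_j\mathbb{P}(|\mathbf{w}^i_{j+1}|>\eta)\leq\eta^{-2p}\gamma(i)\sum_j\rho_{j+1}<\infty$. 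Borel--Cantelli then gives $\limsup_j|\mathbf{w}^i_{j+1}|\leq\eta$ almost surely, and letting $\eta=1/m$ with $m\in\nats$ gives $\mathbf{w}^i_{j+1}\to 0$ almost surely. (Equivalently, monotone convergence gives $\sum_j|\mathbf{w}^i_{j+1}|^{2p}<\infty$ a.s., which forces the terms to vanish.) No martingale structure is needed here, in contrast with the flow case.

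The main obstacle is bookkeeping rather than analysis. One has to handle paths that are not complete in the $j$ direction, for which $\widehat{\mathbf{g}}_{j+1}=0$ eventually because of the indicator $\mathbf{1}_{\widehat{\Omega}_j}$. One also has to interpret $|\mathbf{x}(\overline{k}_j,j)|$ when $(\overline{k}_j,j)\notin\dom\mathbf{x}(\omega)$. Both should be settled by treating the empty value as contributing nothing: take $\mathbf{1}(j<\mathbf{r}_i)$ and $\gamma(\cdot)$ at an empty argument to be harmless, for instance by setting $|\emptyset|:=0$. The stopped bound above is unaffected by this convention, and the conclusion \eqref{eq:Benaim-bound-appendix-jumps} is immediate for those indices.
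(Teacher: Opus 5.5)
Your proposal is correct and follows the paper's proof essentially step for step: you use the same stopping times $\inf\{j : |\mathbf{x}(\overline{k}_j,j)|>i\}$ and the same stopped differences, whose conditional $2p$-th moments are bounded by $\rho_{j+1}\gamma(i)$; summability then gives almost sure convergence to zero (your Markov--Borel--Cantelli step is exactly what the paper delegates to Fristedt--Gray), and the stopping is removed on bounded paths. Your direct pathwise argument for the second item is the ``similar but more direct'' proof whose details the paper omits.
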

\begin{proof} We borrow from the proof of Proposition \ref{prop:appendix-flows}. Suppose the first condition
in Assumption \ref{assume:relate-data-stochastic-appendix-jumps}
holds. For each $i \in \ZZ_{\geq 0}$, let $\mathbf{s}_{i}:\Omega \rightarrow
\ZZ_{\geq 0}  \cup \left\{ \infty \right\}$ 
be defined as
\begin{align}
\label{eq:si-def-jumps}
\mathbf{s}_{i}: = \inf \left\{ j \in \ZZ_{\geq 0}
 : |\mathbf{x}(\overline{k}_{j},j)| > i \right\}.
\end{align}
Then,
for each $j \in \ZZ_{\geq 0}$,
\begin{align}
    \left\{ \mathbf{s}_{i} \leq j
\right\}
    = \bigcup_{\ell=0}^{j} \left\{ 
    | \mathbf{x}(\overline{k}_{\ell}, \ell) | > i \right\}
    \in \mathcal{H}_{\overline{k}_{j},j} .
\end{align}
A consequence is that,
for each $j \in \ZZ_{\geq 0}$, the indicator
 function
    $\mathbf{1}(j < \mathbf{s}_{i})$
is $\mathcal{H}_{\overline{k}_{j},j}$-measurable. 
For each $j \in \ZZ_{\geq 0}$, define
\begin{subequations}
\begin{align}
    \widehat{\mathbf{g}}_{j+1}^{\mathbf{s}_{i}} & : = 
    \mathbf{1}(j < \mathbf{s}_{i}) 
    \widehat{\mathbf{g}}_{j+1} \\
    \mathbf{g}_{j}^{\mathbf{s}_{i}} & : = \mathbb{E} \left[  \, \widehat{\mathbf{g}}_{j+1}^{\mathbf{s}_{i}} \, | \, \mathcal{H}_{\overline{k}_{j},j} \,  \right] 
    \\
    \mathbf{u}_{j+1}^{\mathbf{s}_{i}} & : = \widehat{\mathbf{g}}_{j+1}^{\mathbf{s}_{i}} - \mathbf{g}_{j}^{\mathbf{s}_{i}} = 
    \mathbf{1}(j < \mathbf{s}_{i}) \left[\widehat{\mathbf{g}}_{j+1} - \mathbf{g}_{j} \right].
    \end{align}
\end{subequations}
It follows that, for each $(i,j) \in \ZZ_{\geq 0} \times \ZZ_{\geq 0}$,
\begin{subequations}
\begin{align}
    \mathbb{E} \left[ \, \mathbf{u}_{j+1}^{\mathbf{s}_{i}} \, | \, \mathcal{H}_{\overline{k}_{j},j} \, \right] 
        & =  \mathbb{E} \left[  \, \widehat{\mathbf{g}}_{j+1}^{\mathbf{s}_{i}} \, | \, \mathcal{H}_{\overline{k}_{j},j} \,  \right] - \mathbf{g}_{j}^{\mathbf{s}_{i}} = 0  \\
               \mathbb{E} \left[ \, |\mathbf{u}_{j+1}^{\mathbf{s}_{i}}|^{2p} \, | \, \mathcal{H}_{\overline{k}_{j},j} \, \right] 
        & = 
        \mathbf{1}(j < \mathbf{s}_{i})  \mathbf{w}^{p}(\omega) \\
        & \leq  \mathbf{1}(j < \mathbf{s}_{i}) \rho_{j+1}
        \gamma \left( |\mathbf{x}(\overline{k}_{j},j)| \right)
        \leq \rho_{j+1} \gamma(i) .
\end{align}
\end{subequations}
Then, using properties of conditional expectations, for each $(i,j) \in \ZZ_{\geq 0} \times \ZZ_{\geq 0}$,
\begin{align}
\mathbb{E} \left[ \, |\mathbf{u}_{j+1}^{\mathbf{s}_{i}} |^{2p} \right] \leq \rho_{j+1} \gamma(i) .
\end{align}
Since the sequence $\left\{ \rho_{j} \right\}_{j=1}^{\infty}$ is summable, it follows that, for each $i \in \ZZ_{\geq 0}$,
$\lim_{j \rightarrow \infty} \mathbf{u}_{j+1}^{\mathbf{s}_{i}} = 0$ almost surely (see, for example, the proof of 
\cite[Lemma 7 (Section 12.2)]{FristedtGray97}).
Now suppose $\omega \in \Omega$ is such
that $\mathbf{x}(\omega)$ is bounded.
Then there exists $i^{*} \in \ZZ_{\geq 0}$ such that
$\mathbf{s}_{i}(\omega) = \infty$ for all
$i \geq i^{*}$. Therefore,
\eqref{eq:Benaim-bound-appendix-jumps} holds for almost all
$\omega \in \Omega$ such that $\mathbf{x}(\omega)$ is bounded.

The proof in the case that the second condition in Assumption \ref{assume:relate-data-stochastic-appendix-jumps} holds
is similar but more direct; the details are omitted.
\end{proof}

\vspace{.1in}

The following corollary is the result of combining Theorem \ref{theorem: for-stochastic-simulators} 
with Propositions \ref{prop:appendix-flows} and \ref{prop:appendix-jumps}.

\begin{corollary}
\label{corollary:simulator-dataless-corollary}
Pose the Hybrid Basic Conditions,
let the mapping $\mathbf{x}$ 
be
an almost sure asymptotic simulation of (\ref{eq:hybrid}) using conditional means,
and let the step sizes $\left\{ h_{k} \right\}_{k=1}^{\infty}$ be admissible.
If Assumptions \ref{assume:relate-data-stochastic-appendix-flows} and 
\ref{assume:relate-data-stochastic-appendix-jumps} hold
then almost every bounded, complete sample path of $\mathbf{x}$ is an asymptotic simulation
of \eqref{eq:hybrid} and
the $\omega$-limit set of each such sample path is nonempty,
compact, and weakly invariant and
internally  chain transitive for \eqref{eq:hybrid}.
\end{corollary}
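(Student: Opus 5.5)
The corollary follows once almost every bounded, complete sample path $\phi=\mathbf{x}(\omega)$, paired with $\{h_k\}$, is shown to be an asymptotic simulation of \eqref{eq:hybrid} in the sense of Subsection \ref{subsection: asymptotic simulations}. Theorem \ref{theorem: for-stochastic-simulators} then gives that the $\omega$-limit is nonempty, compact, weakly invariant, and internally chain transitive. The plan is to build a full-measure set $\Omega^*$ on which every bounded, complete path satisfies the two defining items, with $f_k=\mathbf{f}_k(\omega)$ and $\widehat f_{k+1}=\widehat{\mathbf{f}}_{k+1}(\omega)$.

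\textbf{Constructing $\Omega^*$.} It is the intersection of five events. The first is the full-measure event where \eqref{eq:as-simulation-uCMs-flows} holds for every $i\in\ZZ_{\geq 0}$, intersected with $\Omega_{k-\mbox{\rm \footnotesize complete}}$, together with its complement within $\Omega$. The second is the analogous event for \eqref{eq:as-simulation-uCMs-jumps}. Since these are countable families, the intersections keep full measure. The third is the event where $\mathbf{x}(\omega)$ is a hybrid sequence. The fourth and fifth are the events from Propositions \ref{prop:appendix-flows} and \ref{prop:appendix-jumps}. For Proposition \ref{prop:appendix-flows} it suffices to take $T=1$, since by the observation at the end of Subsection \ref{subsection: asymptotic simulations} the limit then holds for every $T$.

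\textbf{Flow item.} Fix $\omega\in\Omega^*$ with $\phi$ bounded, complete, and complete in the $k$ direction, so that $\omega\in\Omega_{k-\mbox{\rm \footnotesize complete}}$. Choose an integer $i$ exceeding $\sup|\phi|$. Then $\mathbf{x}(k,\overline{\jmath}_k)\cap i\Ball=\phi(k,\overline{\jmath}_k)$, and \eqref{eq:as-simulation-uCMs-flows} becomes \eqref{eq:CT-nonstandard-dataless}. Indicators equal $1$ on $\Omega_{k-\mbox{\rm \footnotesize complete}}$, so $\widehat{\mathbf f}_{k+1}(\omega)$ coincides with \eqref{eq:widehat-f-def}, and \eqref{eq:Benaim-bound-appendix-flows} is exactly \eqref{eq:Benaim-bound-from-variance-dataless}.

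\textbf{Boundedness of $\{f_k\}$ (main obstacle).} The definition of an asymptotic simulation requires $\{f_k\}$ to be bounded, and the outer-limit condition does not immediately give this: an unbounded subsequence of $\mathbf{f}_k$ would simply contribute no accumulation point. The first attempt is to write $h_{k+1}f_k=(\phi(k+1,\overline{\jmath}_k)-\phi(k,\overline{\jmath}_k))-h_{k+1}\mathbf{v}_{k+1}$. With $n=k+1$ in \eqref{eq:Benaim-bound-from-variance-dataless} this gives $h_{k+1}|f_k|\le 2\sup|\phi|+o(1)$, which is not enough. The next step is to use the local boundedness of $F$ from the Hybrid Basic Conditions together with \eqref{eq:as-simulation-uCMs-flows}. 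If $|f_{k_n}|\to\infty$, normalize, or pass to a subsequence along which the pairs with bounded $f$ converge; since $\phi(k,\overline{\jmath}_k)$ ranges in the compact set $i\Ball$, any bounded subsequence of $f_k$ has limit points in $F(\mbox{compact}\cap C)$, which is bounded. If unbounded subsequences cannot be ruled out this way, the fallback is to replace $f_k$ by $f_k'$, the projection of $f_k$ onto $\overline{\mbox{co}}F((\phi(k,\overline{\jmath}_k)+\Ball)\cap C)+\Ball$ whenever that set is nonempty. This changes $f_k$ only on indices where it is far from $F$. Showing that this changes the sums in \eqref{eq:Benaim-bound-from-variance-dataless} negligibly is the step I expect to need the most care. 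Alternatively, it may be that boundedness of $\{f_k\}$ is implicitly intended to be part of the definition of an almost sure asymptotic simulation.

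\textbf{Jump item.} For paths complete in the $j$ direction, take $i>\sup|\phi|$ again, so that \eqref{eq:as-simulation-uCMs-jumps} yields an outer limit of $(\phi(\overline k_j,j),\mathbf g_j)$ inside $\mbox{graph}(G)\cap(D\times\Re^n)$. By \eqref{eq:Benaim-bound-appendix-jumps}, $\widehat{\mathbf g}_{j+1}-\mathbf g_j\to0$, and $\widehat{\mathbf g}_{j+1}=\phi(\overline k_j,j+1)$. Hence the accumulation points of $(\phi(\overline k_j,j),\phi(\overline k_j,j+1))$ coincide with those of the previous sequence, which gives \eqref{eq:DT-nonstandard-dataless}.

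Combining the flow and jump items with Theorem \ref{theorem: for-stochastic-simulators} completes the proof.
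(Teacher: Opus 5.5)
Your route is the paper's route. The paper's entire proof is the sentence that the corollary follows by combining Theorem \ref{theorem: for-stochastic-simulators} with Propositions \ref{prop:appendix-flows} and \ref{prop:appendix-jumps}. Your full-measure set, the choice of an integer $i>\sup|\phi|$, the identification of $\widehat{\mathbf f}_{k+1}(\omega)$ with \eqref{eq:widehat-f-def} on $\Omega_{k-\mbox{\rm \footnotesize complete}}$, and the jump item via $\widehat{\mathbf g}_{j+1}-\mathbf g_j\to 0$ all correctly fill in that sentence.

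The step you single out, boundedness of $\{f_k\}=\{\mathbf f_k(\omega)\}$, is a genuine gap, and neither repair you propose closes it. Local boundedness of $F$ only constrains bounded subsequences of $\mathbf f_k$; an unbounded subsequence has no accumulation points and is invisible to \eqref{eq:as-simulation-uCMs-flows}. Replacing $f_k$ by a bounded projection $f'_k$ keeps \eqref{eq:Benaim-bound-from-variance-dataless} (look at $n=k+1$) only if $h_{k+1}|f_k-f'_k|\to 0$. Since $h_{k+1}f'_k\to 0$, that means only if $h_{k+1}\mathbf f_k\to 0$, whereas the hypotheses give merely that $h_{k+1}\mathbf f_k$ is bounded.

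In fact the gap cannot be closed from the stated hypotheses. It sits in the corollary itself, and the paper's one-line argument passes over it silently. Here is a counterexample.
\begin{itemize}
\item Data and setting: $n=1$, $C=\Re$, $F\equiv\{0\}$, $D=\emptyset$, $h_k=1/k$, a one-point probability space with the trivial filtration.
\item Path: deterministic, with $\dom\phi=\ZZ_{\geq 0}\times\{0\}$, constant except at indices $k_1<k_2<\cdots$ where it switches between $0$ and $1$.
\item Hypotheses hold: $\mathbf f_k=\widehat{\mathbf f}_{k+1}$ equals $0$ except $\mathbf f_{k_m}=\pm(k_m+1)$. So $\mathbf v\equiv 0$ and both items of Assumption \ref{assume:relate-data-stochastic-appendix-flows} hold. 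The jump-related conditions hold trivially since there are no jumps. Every accumulation point of $(\phi(k,0),\mathbf f_k)$ lies in $\{0,1\}\times\{0\}$, so \eqref{eq:as-simulation-uCMs-flows} holds.
\item Conclusions fail: the increments $\phi(k+1,0)-\phi(k,0)$ do not tend to $0$, so by the argument of Lemma \ref{lemma:about-simulations} no bounded $\{f_k\}$ satisfies \eqref{eq:Benaim-bound-from-variance-dataless}. Also $\omega(\phi)=\{0,1\}$ is not internally chain transitive: every link of an internal chain is a constant solution with value in $\{0,1\}$, so with $\varepsilon<1$ a chain starting at $0$ never leaves $0$.
\end{itemize}

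Your last guess is therefore the right diagnosis: the statement needs an extra hypothesis. One option is $\sup_k|\mathbf f_k(\omega)|<\infty$ for almost every $\omega$ with bounded sample path. This holds, for example, if $|\mathbf f_k|\le\gamma(|\mathbf x(k,\overline{\jmath}_k)|)$ almost surely, or under a conditional-mean version of \eqref{eq:from-SCL}. With such a hypothesis added, your argument is complete.
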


\section{An example: simulated annealing}
\label{section:an example}

\subsection{Problem statement and assumptions}

This section's example is related to 
stochastic-approximation-based
simulated annealing algorithms, which combine stochastic
gradient
descent with random search for global minimization. 
See \cite[Chapter 8]{Spall2003} for a general
discussion of simulated annealing algorithms and 
their connections 
to the stochastic approximation literature,
which
includes the detailed analysis of
stochastic-approximation-based simulated annealing given in \cite{Kushner87}. 
The approach taken in this section is to consider an algorithm that can be viewed as a stochastic approximation of a hybrid system.

The scalar-valued function to minimize is denoted
$\Suzie$. It is defined on an open set $\mathcal{O}
\subset \Re^{n}$ and is assumed to be
nonpathological; see \cite{Valadier88} or
\cite[Definition 3]{BacciottiCeragioli06Automatica}.
In particular, $\Suzie$ is locally Lipschitz and
satisfies a convenient chain rule when composed
with an absolutely continuous function, e.g.,
a solution to a differential inclusion. The set
over which $\Suzie$ is to be minimized is
denoted $S \subset \mathcal{O}$. The symbol
$\SuzieS$ is used to denote the restriction
of $\Suzie$ to the set $S$. The set $S$ is assumed to be closed and prox-regular in the sense of
 \cite[Definition 1.1]{PRock2000}; equivalently, according to
 \cite[Theorem 1.3(a)$\Leftrightarrow$(k)]{PRock2000},
 $S$ is closed 
and the closest point projection, i.e., the mapping that projects points in $\Re^n$ onto $S$,
denoted $P_{S}$, is single valued near $S$. An additional assumption
 imposed on $S$ and $\Suzie$ pertains to the so-called
 {\em regular values} of $\SuzieS$.
 A point $x \in S$ is a {\em critical point}
of $\SuzieS$  if $0 \in \partial \Suzie(x)$, 
where $\partial \Suzie:\mathcal{O} \rightrightarrows \Re^{n}$ is the Clarke generalized gradient of
$\Suzie$. Since $\Suzie$ is
assumed to be locally Lipschitz on $\mathcal{O}$, 
$\partial \Suzie$ is well defined with nonempty, convex values \cite[Definition 10.3]{Clarke2013}, locally bounded
\cite[Proposition 10.5]{Clarke2013}, and
outer semicontinuous 
\cite[Proposition 10.10]{Clarke2013}. 
Thus, it satisfies the conditions imposed on a flow map in the Hybrid Basic Conditions.
 The set
of critical points of $\SuzieS$ is denoted
$\mbox{\rm Cr}(\left. \Suzie \right|_{S})$.
A value
$r \in \Suzie(S)$ is a regular value of $\left. \Suzie \right|_{S}$ if 
$r \notin \Theta(\mbox{\rm Cr}(\left. \Suzie \right|_{S}))$. It is assumed that the regular values
of $\SuzieS$ are dense in $\Suzie(S)$. 
By the Morse-Sard theorem, this property holds
when $\Suzie$ is $n$ times continuously differentiable \cite{Morse39AM}, \cite{Sard42}.

One situation covered by the assumptions is when
$S=\Re^{m} \times \mathcal{Q}$, where $\mathcal{Q}
\subset \Re$ is a finite set, and
$\mathcal{O} =\Re^{m} \times (\mathcal{Q}+\varepsilon \Ball^{\circ})$, where $\varepsilon>0$ small enough
so that the projection of a point in $\mathcal{Q}+\varepsilon \Ball^{\circ}$ onto $\mathcal{Q}$ is unique. In this case,
$\Suzie:\mathcal{O} \rightarrow \Re$ is assumed
to have the form
$\Suzie(z,r)=\Suzie_{P_{Q}(r)}$ for all $(z,r) \in
\mathcal{O}$, where
$\Suzie_{q}: \Re^{m} \rightarrow \Re$ is nonpathological
for each $q \in \mathcal{Q}$
with regular values that are dense in $\Theta_{q}(\Re^{m})$.

\subsection{The underlying hybrid system}

The stochastic minimization algorithm considered
is constructed as a stochastic approximation of
a hybrid system. That hybrid system's dynamics is? the combination of a gradient descent differential
inclusion and a hybrid automaton that enforces an
average dwell-time constraint on the number of jumps
 experienced by a solution in a given amount of flow time \cite{Hespanha99a}.
 In the hybrid system considered, the jumps do not change the state that is executing gradient descent. In the stochastic approximation algorithm, the jumps can change the state that is executing (stochastic, approximate)
gradient descent, but to an extent that
fades to zero with time almost surely.
Such jumps provide
an opportunity for the algorithm to
randomly search for a global minimum of $\SuzieS$.

Letting $N \in \ZZ_{\geq 1}$ and $\delta>0$, the
hybrid system's data is
\begin{subequations}
\label{eq:simulated-annealing-data}
\begin{align}
  &  C:=S \times [0,N], \quad F(y,\tau) := - \partial \Suzie(y) \times [0,\delta]
    \label{eq:simulated-annealing-data-CT} \\
  & D:=S \times [1,N], \quad
  G(y,\tau):=(y,\tau-1) .
  \label{eq:simulated-annealing-data-DT}
\end{align}
\end{subequations}
The hybrid system (\ref{eq:hybrid}) with the
data (\ref{eq:simulated-annealing-data}) is referred to as the underlying
hybrid system throughout this example.
It includes the constrained differential inclusion
$y \in S$, $\dot{y} \in -\partial \Suzie(y)$ together with jumps $y^{+}=y$, which
do not change the state $y$. The jumps satisfy an average dwell-time
condition due to the presence of the hybrid automaton 
\begin{subequations}
    \begin{align}
        \tau \in [0,N] & \qquad \dot{\tau} \in [0,\delta] \\
        \tau \in [1,N] & \qquad \tau^{+} = \tau -1 .
    \end{align}
\end{subequations}
The result \cite[Appendix, Proposition 1.1]{CaiTeelGoebel08TAC},
discusses this hybrid automaton and shows that a
hybrid time domain $E$ can be generated by this automaton
if and only if, for every pair of times $(s,i),(t,j) \in E$ with $t+j \geq s+i$,
$j-i \leq \delta (t-s) +N$.
Because the jumps
satisfy this average dwell-time condition, solutions
of the underlying hybrid system 
that are complete 
are necessarily complete in the $t$ direction.
Since $\Suzie$ is nonpathological,
every bounded, complete solution
of the underlying hybrid system
converges to the set $\mbox{\rm Cr}(\SuzieS) \times [0,N]$.
Since $\Suzie$ is nonpathological and the
regular values of $\SuzieS$ are dense
in $\Suzie(S)$, it can be shown using
the ideas used in 
\cite[Example 7.4]{TeelSanfeliceGoebel25ARC}
that if the set $K \subset \Re^{n+1}$ is compact and internally chain
transitive for the underlying hybrid system  then
$K \subset \mbox{\rm Cr}(\SuzieS) \times [0,N]$. The latter is relevant
for characterizing the omega-limit sets
of asymptotic solutions of the underlying
hybrid system, such as those that arise
from stochastic approximation, which
is discussed next.

\subsection{Stochastic approximation}

A prescription for generating
an almost sure asymptotic simulation of 
the underlying hybrid system
 is now given. Attention is paid
to making sure that
the conditions of Corollary \ref{corollary:simulator-dataless-corollary} are satisfied. In that
case, and using the assertion at the end of
the previous subsection,
it follows that the omega-limit set of
almost every complete,
bounded sample path of the asymptotic simulation
is contained in
$\mbox{\rm Cr}(\SuzieS) \times [0,N]$. The discussion
is broadened in the next subsection to discuss how to make it
more likely that the omega-limit set of
almost every complete, bounded sample path
is contained in $\mbox{\rm M}(\SuzieS) \times [0,N]$
where $\mbox{\rm M}(\SuzieS) \subset \mbox{\rm Cr}(\SuzieS)$ denotes the set of minimizers of $\SuzieS$.

Suppose
the stochastic collection of
hybrid sequences $\mathbf{x}=(\mathbf{y},\tau)$
is such that, for almost every
$\omega \in \Omega$, if
    $(k,j),(k,j+1) \in \dom  \mathbf{x}(\omega)$
then
\begin{subequations}
\begin{align}
\label{eq:simulated-annealing-jumps}
    \mathbf{y}(k,j+1) & \in \mbox{\rm argmin}_{p \in \left\{ \mathbf{y}(k,j),
    P_{S}(\mathbf{y}(k,j) + \ell_{j+1}
\mathbf{z}_{j+1} )   \right\}} \Suzie(p)  \\
\tau(k,j+1) & = \tau(k,j) - 1 
\end{align}
\end{subequations}
where the sequence of random variables $\left\{ \mathbf{z}_{j} \right\}_{j=1}^{\infty}$ take values in $\Re^{n}$
and where each element of the sequence of real
numbers
$\left\{ \mathbf{\ell}_{j} \right\}_{j=1}^{\infty}$
is positive.
In order to guarantee that every relatively
open subset of $S$ is explored with positive probability
during jumps, it is desired that,
for every open set $U$ such that
$S \cap U$ is nonempty, 
the values $P_{S}(\mathbf{y}(k,j) + \ell_{j+1}
\mathbf{z}_{j+1})$ belong to the set $S \cap U$ with positive
probability for every $j \in \ZZ_{\geq 0}$ snd every
value of $\mathbf{y}(k,j)$. To assure
this property holds it is enough to assume that,
for each open set $\mathcal{O}
\subset \Re^{n}$ and each $j \in \ZZ_{\geq 1}$,
\begin{align}
\label{eq:101}
\mathbb{P}(\mathbf{z}_{j} \in \mathcal{O})>0 .
\end{align}
To see how this condition is effective, let $\mathcal{N}$
be a neighborhood of $S$ such that the restriction
of $P_{S}$ to
$\mathcal{N}$ is a continuous
function. The existence of $\mathcal{N}$ follows from
$P_{S}$ being single valued near $S$, which is a consequence of $S$ being prox-regular, together
with the fact that $P_{S}$ is everywhere outer semicontinuous and locally bounded
in general \cite[Example 5.23(a)]{RockafellarWets98}
so that it is continuous where it is single valued
\cite[Corollary 5.20]{RockafellarWets98}. Let $U$ be an open set such that $S \cap U$ is nonempty. Define $\mathcal{O}_{U}:=P_{S}^{-1}(S \cap U) \cap \mathcal{N}$. Since
the restriction of $P_{S}$ to $\mathcal{N}$ is continuous, $\mathcal{O}_{U}$ is open; moreover $P_{S}(\mathcal{O}_{U}) = S \cap U$. 
Taking $\mathcal{O}:= \ell_{j+1}^{-1} \left(-\mathbf{y}(k,j) +
\mathcal{O}_{U}\right)$, it follows from (\ref{eq:101}) that, as desired,
\begin{align}
   0< \mathbb{P}(\mathbf{z}_{j+1} \in \mathcal{O})
   & =  \mathbb{P}\left(\mathbf{y}(k,j) + \ell_{j+1} \mathbf{z}_{j+1} \in \mathcal{O}_{U}\right) \\ \nonumber
   & = \mathbb{P}\left( \vphantom{\frac{}{}} P_{S}(\mathbf{y}\left( k,j) + \ell_{j+1} \mathbf{z}_{j+1}\right) \in S \cap U \right) .
\end{align}

To satisfy the conditions of Corollary \ref{corollary:simulator-dataless-corollary},
 the sequence of positive real numbers
$\left\{ \mathbf{\ell}_{j} \right\}_{j=1}^{\infty}$ is chosen
such that,
for almost all $\omega \in \Omega$,
\begin{align}
\label{eq:limit-with-z}
    \lim_{j \rightarrow \infty} \ell_{j} \mathbf{z}_{j} = 0 .
\end{align}
This condition is not ruled out by (\ref{eq:101}). Indeed, 
such a sequence can be found due to the Borel-Cantelli lemma. To see this fact, note that,
for each $j \in \ZZ_{\geq 1}$, there exists $c_{j} \geq 1$ be such that
\begin{align}
\label{eq:for-BC}
    \mathbb{P}(|\mathbf{z}_{j}| > c_{j})
    \leq 2^{-j}  .
\end{align}
For each $j \in \ZZ_{\geq 1}$,
define $\ell_{j}:=1/(j c_{j})$.
It follows from \eqref{eq:for-BC} that
\begin{align}
    \sum_{j=1}^{\infty} \mathbb{P}(|\mathbf{z}_{j}| > c_{j}) < \infty
\end{align}
and thus, from the Borel-Cantelli lemma, that
for almost every $\omega \in \Omega$, there
exists $j^{*} \in \ZZ_{\geq 1}$ such that, for
all $j \in \ZZ_{\geq j^{*}}$,
    $|\mathbf{z}_{j}(\omega)| \leq c_{j}$.
In turn,
$\ell_{j} |\mathbf{z}_{j}(\omega)|
    \leq 1/j$,  
which establishes (\ref{eq:limit-with-z}).

The simplest way to guarantee that the remaining
conditions of Corollary \ref{corollary:simulator-dataless-corollary} hold is to assume that
$(k,j),(k+1,j) \in \dom \mathbf{x}(\omega)$
implies
\begin{subequations}
\label{eq:approximate-flows-example}
\begin{align}
\frac{
\mathbf{y}(k+1,j)-\mathbf{y}(k,j)}{h_{k+1}}
& \in -\partial \Suzie(\mathbf{y}(k,j)) \\
\frac{\tau(k+1,j) - \tau(k,j)}{h_{k+1}} & \in [0,\delta] .
\end{align}
\end{subequations}
However, stochastic algorithms where the condition
expectation of the quantities on the left-hand
side of (\ref{eq:approximate-flows-example}) belong to the sets on
the right-hand side of (\ref{eq:approximate-flows-example}) as $k$
tends to infinity are also
possible. 

It then follows from Corollary
\ref{corollary:simulator-dataless-corollary} that, for almost
every $\omega$ for which $\mathbf{x}(\omega)$
is bounded and complete,
$\mbox{\rm omega}(\mathbf{x}(\omega)) \subset
\mbox{\rm Cr}(\SuzieS) \times [0,N]$.

\subsection{Toward global minimization}

The interest in focusing on bounded sample
paths that are complete in the $j$
direction is that those sample paths attempt
to repeatedly execute a global, random search to locate
a global minimum, if one exists, as described above.
However, this global search effect
dies off with $j$ almost surely
under the condition (\ref{eq:limit-with-z}).
Still, there is interest in sequences
$\left\{ \ell_{j} \right\}_{j=1}^{\infty}$
that results in this global search
dying off slowly, to promote the likelihood
of finding a global minimum. For example,
suppose 
$\left\{ \widetilde{\ell}_{j} \right\}_{j=1}^{\infty}$ is a sequence, perhaps converging to zero, such that
for each $M>0$,
$|\widetilde{\ell}_{j} \mathbf{z}_{j}| > M$ happens infinitely many times
almost surely. 
Clearly, such a sequence  does not satisfy (\ref{eq:limit-with-z})
almost surely. On the other hand, if the sequence $\left\{ \ell_{j} \right\}_{j=1}^{\infty}$ does satisfy (\ref{eq:limit-with-z})
almost surely then so does the sequence
\begin{align}
\label{eq:where-the-parameter-L-shows-up}
\widehat{\ell}_{j,\beta}: = \min \left\{ \widetilde{\ell}_{j}, \beta \ell_{j} \right\}
\end{align}
for each $\beta>0$. In turn, the larger that $\beta>0$ is, the longer 
$\widehat{\ell}_{j,\beta} =\widetilde{\ell}_{j}$.
In this case, beyond $\mbox{\rm omega}(\mathbf{x}(\omega)) \subset
\mbox{\rm Cr}(\Suzie) \times [0,N]$ as indicated
in the previous subsection, it becomes more likely
that $\mbox{\rm omega}(\mathbf{x}(\omega))
\subset \mbox{\rm M}(\SuzieS) \times [0,N]$.

To guarantee convergence to the set of
global minima almost surely,
there is interest in
asymptotic simulations that do not
satisfy (\ref{eq:limit-with-z}). Such
asymptotic simulations would retain
a stochastic nature in the limit as time grows unbounded, similar to the behavior
of a stochastic
hybrid inclusion with randomness in the jumps,
as considered in \cite{TeelARC13} for example.
Such generalizations go beyond what has been
provided in this paper.

\section{Comments on boundedness}

All of the results above pertain to simulations
that are bounded. A simulation of a well-behaved
dynamical system may not be bounded. For example,
consider the forward-Euler approximation of the 
differential equation
$\dot{z} = - z^{3}$, which has the origin globally
asymptotically stable,
using the admissible step sizes $h_{k} = 1/k^{0.75}$
for $k \in \ZZ_{\geq 0}$. That is, consider the iteration
    $z_{k+1} = z_{k} - h_{k+1} z_{k}^{3}$.
It is straightforward to establish that if
$z_{0}^{2} \geq 3$ then
$h_{k+1} z_{k}^{2} \geq 3$ for all $k \in \ZZ_{\geq 0}$ and thus $z_{k}$ grows unbounded.

This phenomenon can be mitigated using resets
within the framework of hybrid systems. For example,
the differential equation $\dot{z}=-z^{3}$ can be
embedded in a hybrid system employing an automaton
with state $\tau$ that enables resets of the state $z$. For example, consider the ideal hybrid system with state
$x:=(z,\tau) \in \Re^{2}$, and data
\begin{subequations}
\label{eq:simple-system-data}
    \begin{align}
 & C:= \Re \times [0,1] \qquad f(x) = \left[ \begin{array}{c} - z^{3} \\ 1 \end{array} \right] \\
 &  D:= \Re \times [1,\infty) \qquad 
 g(x) = \left[ \begin{array}{c} z \\ 0 \end{array} \right]
    \end{align}
\end{subequations}
and a simulation model that uses a forward-Euler approximation for flows and jumps that, in place of
$g$, use the jump map
\begin{align}
    \widehat{g}_{c}(x) = \left[ \begin{array}{c} \mbox{\rm sgn}(z) \min \left\{ |z| , c \right\} \\ 0 \end{array} \right] 
\end{align}
where $c>0$. It is not difficult to establish that complete simulations of the type described are bounded and are asymptotic simulations of the ideal hybrid system with data in (\ref{eq:simple-system-data}), which also has the set $\left\{ 0 \right\} \times [0,1]$ globally asymptotically stable.

More elaborate situations where
resets can be invoked to guarantee boundedness while
also guaranteeing that simulations are asymptotic simulations of the (possibly hybrid) system without
resets are discussed in
\cite[Section V.A]{GoebelTeel2026CDC}.

\vspace{.25in}

\noindent
{\bf Acknowledgments}
\vspace{.1in}

Andrew R. Teel's research supported in part by
ARO grant W911NF-26-1-0001 and AFOSR grant
FA9550-25-1-0186.

\bibliographystyle{elsarticle-num}
\bibliography{bibfile}

\end{document}